\providecommand{\N}{\mathbb{N}}
\providecommand{\Z}{\mathbb{Z}}
\providecommand{\R}{\mathbb{R}}
\providecommand{\C}{\mathbb{C}}
\providecommand{\E}{\mathbb{E}}
\providecommand{\T}{\mathbb{T}}
\providecommand{\V}{\mathcal{V}}
\providecommand{\F}{\mathcal{F}}
\providecommand{\D}{\mathbb{D}}
\providecommand{\M}{\mathcal{M}}
\providecommand{\leqsim}{\lesssim}
\providecommand{\geqsim}{\gtrsim}
\renewcommand{\:}{\colon}
\renewcommand{\S}{\mathcal{S}}
\DeclareMathOperator{\supp}{supp}
\numberwithin{equation}{section}
\newtheorem{theorem}{Theorem}[section]
\newtheorem{proposition}[theorem]{Proposition}
\newtheorem{corollary}[theorem]{Corollary}
\newtheorem{lemma}[theorem]{Lemma}
\theoremstyle{definition}
\theoremstyle{remark}
\newtheorem*{remark}{Remark}
\DeclareRobustCommand\widecheck[1]{{\mathpalette\@widecheck{#1}}}
\def\@widecheck#1#2{%
    \setbox\z@\hbox{\m@th$#1#2$}%
    \setbox\tw@\hbox{\m@th$#1%
       \widehat{%
          \vrule\@width\z@\@height\ht\z@
          \vrule\@height\z@\@width\wd\z@}$}%
    \dp\tw@-\ht\z@
    \@tempdima\ht\z@ \advance\@tempdima2\ht\tw@ \divide\@tempdima\thr@@
    \setbox\tw@\hbox{%
       \raise\@tempdima\hbox{\scalebox{1}[-1]{\lower\@tempdima\box
\tw@}}}%
    {\ooalign{\box\tw@ \cr \box\z@}}}
\title{Variational Estimates for Bilinear Ergodic Averages Along Sublinear Sequences}
\author{Maximilian O'Keeffe}
\date{}
\begin{document}
\maketitle

\begin{abstract}
    We prove long variational estimates for the bilinear ergodic averages \[ A_{N;X}(f,g)(x) = \frac{1}{N} \sum_{n=1}^N f(T^{\lfloor \sqrt{n} \rfloor}x) g(T^nx) \] on an arbitrary measure preserving system $(X,\mu,T)$ for the full expected range, i.e. whenever $f \in L^{p_1}(X)$ and $g \in L^{p_2}(X)$ with $1<p_1,p_2<\infty$. In particular, if $\frac{1}{p}=\frac{1}{p_1}+\frac{1}{p_2}$ we show that the long $r$-variation of $A_{N;X}$ maps $L^{p_1}(X) \times L^{p_2}(X)$ into $L^p(X)$ for any $p>\frac{1}{2}$, which is sharp up to the endpoint. If $p \geq 1$ we obtain long variational estimates for the full expected range $r>2$ and if $p<1$ we obtain a range of $r>2+\varepsilon_{p_1,p_2}$ where $\varepsilon_{p_1,p_2}>0$ depends only on $p_1$ and $p_2$. As a consequence, we obtain bilinear maximal estimates \[ \left\| \sup_{N \in \N} |A_{N;X}(f,g)| \right\|_{L^p(X)} \leq C_{p_1,p_2} \|f\|_{L^{p_1}(X)} \|g\|_{L^{p_2}(X)} \] for any $1<p_1,p_2 \leq \infty$.
\end{abstract}

\tableofcontents

\section{Introduction}

\subsection{Non-Conventional Ergodic Averages}

Let $(X,\mu,T)$ be a \emph{measure preserving system}, so that $(X,\mu)$ forms a measure space (note we make no assumption here that $X$ has finite measure) and $T\:X \to X$ is an invertible bi-measurable map which is \emph{measure preserving} in the sense that $\mu(T(A))=\mu(A)$ for all measurable sets $A$. Let $f,g\:X \to \C$ be measurable. Then for $N \in\N$ one can form the so-called \emph{non-conventional} ergodic average (using terminology  introduced in \cite{Furstenberg88}) \begin{equation} \label{eqn:averagedef} A_{N;X}(f,g)(x) = \frac{1}{N} \sum_{n=1}^N f(T^{\lfloor \sqrt{n} \rfloor}x) g(T^n x). \end{equation}

The study of the convergence of ergodic averages as $N \to \infty$ was initiated by Birkhoff \cite{Birkhoff89} and von Neumann \cite{neumann1932proof}. Birkhoff's ergodic theorem states that for all $f \in L^p(X)$ with $p \geq 1$ the ergodic averages \[ \frac{1}{N} \sum_{n=1}^N f(T^nx) \] converge pointwise, for $\mu$-almost every $x \in X$, as $N \to \infty$ and von Neumann proved that they converge in $L^p(X)$ norm. These results have applications in ergodic theory, number theory, and additive combinatorics.

Birkhoff's and von Neumann's theorems were first generalised by Furstenberg \cite{furstenberg1977ergodic} in his ergodic theoretic proof of Szemerédi's theorem \cite{szemeredi1975sets}, which states that any subset $E$ of the natural numbers of \emph{positive upper density}, i.e. \[ \limsup_{N \to \infty} \frac{|E \cap \{1, \dotsc, N\}|}{N}>0 \] contains arithmetic progressions of arbitrary length. Thanks to a correspondence principle in that paper, one can deduce the existence of patterns of the form \[ \{x, x+P_1(n),\dotsc,P_m(n)\} \] in subsets of $\Z$ by proving that ergodic averages of the form \begin{equation} \label{eqn:multipleergodicaverages} \frac{1}{N} \sum_{n=1}^N \prod_{i=1}^m f_i(T^{P_i(n)} x) \end{equation} in $L^2(X)$ norm for any $f_1,\dotsc, f_m \in L^\infty(X)$. One main area of focus was to take $P_1, \dotsc, P_m$ to be polynomials with integer coefficients. Many such norm convergence results were proved in this direction \cite{furstenberg1977ergodic, bergelsonpet, furstenberg1996mean, host2005nonconventional} and this particular area of investigation concluded in 2005 when Leibman proved that (\ref{eqn:multipleergodicaverages}) converge in $L^2(X)$ norm for any polynomials $P_1,\dotsc,P_m$ \cite{leibman2005convergence}.

On the other hand, results about pointwise convergence have been much more limited historically. Bourgain had the first breakthrough in 1988 and 1989 in a series of papers \cite{bourgain1988maximal, bourgain1988pointwise, Bourgain89} in which he showed that \[ \frac{1}{N} \sum_{n=1}^N f(T^{P(n)}x) \] converges whenever $P$ has degree at least 2 and $f \in L^p(X)$ with $1<p<\infty$. In 1990 he then proved \cite{Bourgain90} that for any bounded functions $f$ and $g$ the averages \[ \frac{1}{N} \sum_{n=1}^N f(T^nx) g(T^{kn}x) \] where $k$ is a non-zero integer. For pointwise convergence of averages of the form (\ref{eqn:multipleergodicaverages}), nothing else was known until until 2022 when Krause, Mirek, and Tao \cite{KMT} established the pointwise convergence of the bilinear ergodic averages \[ \frac{1}{N} \sum_{n=1}^N f(T^nx) g(T^{P(n)}x) \] where $P \in \Z[n]$ is a polynomial of degree at least 2. This was achieved by proving a so-called long variational inequality.

The aim of this paper is to establish a long variational inequality for $A_{N;X}(f,g)$ whenever $f \in L^{p_1}(X)$ and $g \in L^{p_2}(X)$ with $1<p_1,p_2<\infty$. As a consequence we will have proved a maximal inequality for these averages. We emphasise that the range $1<p_1,p_2<\infty$ is the full expected range.

We remark here that in the case where $X$ has finite measure and $f,g \in L^\infty(X)$, the pointwise convergence of $A_{N;X}(f,g)$ was proved by Donoso, Koutsogiannis, and Sun in \cite{donoso2020pointwise}, and an unpublished argument of Wierdl establishes the pointwise convergence of $A_{N;X}(f,g)$ on an arbitrary measure space by using maximal inequalities of the pertaining linear operators and density arguments. Moreover, in this case the norm convergence of $A_{N;X}(f,g)$ was established by Frantzikinakis \cite{frantzikinakis2010multiple}. Our long variational inequality therefore will provide a new proof of pointwise and norm convergence although this is heavy machinery for such a task. Instead the perspective we take is that variational inequalities provide information on the rates of convergence for such averages, and this will be new. We note however that if one replaced the sequences $\sqrt{n}$ and $n$ by sparser sequences, such as $n^c$ with $c>1$ non-integer, then Wierdl's argument does not apply but proving a long variational inequality does.

\subsection{Statement of Results}

One method of proving that $A_{N;X}(f,g)$ converges pointwise is to prove a maximal inequality of the form \begin{equation} \label{eqn:maximalineq} \left\| \sup_{N \in \N} |A_{N;X}(f,g)| \right \|_{L^p(X)} \leqsim_{p_1,p_2,p} \|f\|_{L^{p_1}(X)} \|g\|_{L^{p_2}(X)} \end{equation} for some $p>0$ and then to establish pointwise convergence for a dense subclass of functions. This can be used, for example, to prove that Birkhoff's ergodic averages converge pointwise. The issue with this method when dealing with non-conventional averages is it becomes less clear what dense subclass of functions one should take.

An alternative method is to prove a so-called \emph{$r$-variational inequality}. For each $x \in X$ one has a sequence $(A_{N;X}(f,g)(x))_{N \in \N}$ of complex numbers. We can therefore define the $r$-variation norm of this sequence as \begin{equation} V^r(A_{N;X}(f,g)(x))_{N \in \N} = \sup_{N \in \N} |A_{N;X}(f,g)(x)| + \V^r(A_{N;X}(f,g)(x))_{N \in \N}  \end{equation} where \begin{equation} \V^r(A_{N;X}(f,g)(x))_{N \in \N} = \sup_{J \in \N} \sup_{\substack{N_0 \leq \dotsb \leq N_J \\ N_j \in \N}} \left( \sum_{j=0}^{J-1} |A_{N_{j+1};X}(f,g)(x) - A_{N_j;X}(f,g)(x)|^r \right)^\frac{1}{r} \end{equation} if $0<r<\infty$ and where \begin{equation} \V^\infty(A_{N;X}(f,g)(x))_{N \in \N} = \sup_{M<N} |A_{M;X}(f,g)(x)-A_{N;X}(f,g)(x)|. \end{equation} Note in these definitions one could replace $\N$ with some other set $\D$ so that we take $N,N_0, \dotsc, N_J \in \D$. This norm is relevant in the study of pointwise convergence because if \begin{equation} V^r(A_{N;X}(f,g)(x))_{N \in \N}<\infty \label{eqn:finitevariation} \end{equation} then the sequence $A_{N;X}(f,g)(x)$ converges as $N \to \infty$. The map \[ x \mapsto V^r(A_{N;X}(f,g)(x))_{N \in \N} \] is measurable so its $L^p(X)$ norm is well-defined. If one can show that this $L^p(X)$ norm is finite for some $p$ and $r$, then we will have (\ref{eqn:finitevariation}) for $\mu$-almost every $x \in X$, and hence we will have pointwise convergence almost everywhere. Thus proving an $r$-variational inequality would involve proving an inequality of the form \[ \left\| V^r(A_{N;X}(f,g))_{N \in \N} \right \|_{L^p(X)} \leqsim_{p_1,p_2,p,r} \|f\|_{L^{p_1}(X)} \|g\|_{L^{p_2}(X)} \] for some $p>0$ and $r\geq 1$.

The benefit of an $r$-variational inequality is that it implies the maximal inequality (\ref{eqn:maximalineq}) and it implies both norm and pointwise convergence for all $f \in L^{p_1}(X)$ and $g \in L^{p_2}(X)$. This remains true if instead of taking $N \in \N$, one takes $N$ in an arbitrary $\lambda$-lacunary subset $\D \subseteq \N$ where $\lambda>1$ (thus for all $N_1,N_2 \in \D$ with $N_2>N_1$ we have $\frac{N_2}{N_1}>\lambda$). In this case, an inequality of the form \[  \left\| V^r(A_{N;X}(f,g))_{N \in \D} \right \|_{L^p(X)} \leqsim_{p_1,p_2,p,r,\lambda} \|f\|_{L^{p_1}(X)} \|g\|_{L^{p_2}(X)} \] whenever $\D$ is $\lambda$-lacunary is known as a \emph{long variational inequality}. This method and terminology was used in \cite{KMT} where the authors established the pointwise convergence of the ergodic averages \[ \frac{1}{N} \sum_{n=1}^N f(T^nx) g(T^{P(n)}x) \] where $P \in \Z[n]$ is a polynomial of degree at least 2.

Thus our main result can be stated as follows.

\begin{theorem} \label{thm:mainthm}
Let $(X,\mu,T)$ be a measure preserving system. Let $f \in L^{p_1}(X)$ and $g \in L^{p_2}(X)$ with $1<p_1,p_2<\infty$ and define $\frac{1}{p} = \frac{1}{p_1} + \frac{1}{p_2}$. Let \[ c_{p_1,p_2} = \begin{cases} \max(p_1',p_2') & p<1, \\ 2 & p \geq 1. \end{cases} \] For any $r>c_{p_1,p_2}$ and $\lambda>1$ we have the long variational inequality \begin{equation} \|V^r(A_{N;X}(f,g))_{N \in \D} \|_{L^p(X)} \leqsim_{p_1,p_2,r,\lambda} \|f\|_{L^{p_1}(X)} \|g\|_{L^{p_2}(X)} \label{eqn:variationineq} \end{equation} whenever $\D \subseteq \N$ is $\lambda$-lacunary.
\end{theorem}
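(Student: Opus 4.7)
The plan is to follow the general strategy of Krause, Mirek, and Tao \cite{KMT} for bilinear polynomial ergodic averages, suitably adapted to the sublinear sampling $\lfloor\sqrt{n}\rfloor$. First I would apply the Calder\'on transference principle to reduce \eqref{eqn:variationineq} to the model system $(\Z,\text{counting measure},\text{shift})$, so the problem becomes one in discrete harmonic analysis. The structural observation that makes the sublinearity tractable is that $\lfloor\sqrt{n}\rfloor=m$ precisely when $m^2\leq n\leq (m+1)^2-1$, which recasts the average as
\[
A_{N;\Z}(f,g)(x)=\frac{1}{N}\sum_{m=0}^{\lfloor\sqrt{N}\rfloor} f(x+m)\sum_{\substack{0\leq k\leq 2m \\ m^2+k\leq N}} g(x+m^2+k).
\]
Thus $g$ is sampled in a window of width $\sim m$ around the square $m^2$, so the operator is morally a bilinear polynomial average attached to $P(m)=m^2$, with an additional smoothing of $g$ on scale $m$.

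Next I would take Fourier transforms on $\Z$ and express $A_{N;\Z}(f,g)$ as the action of a bilinear Fourier multiplier roughly of the form
\[
\int_0^1 t\, e(-Mt\xi_1-M^2 t^2\xi_2)\,\Phi(Mt\xi_2)\,dt, \qquad M=\lfloor\sqrt{N}\rfloor,
\]
for a Schwartz $\Phi$. The circle method then partitions $(\xi_1,\xi_2)\in\T^2$ into major arcs, near rationals $(a_1/q,a_2/q)$ with small denominator, and a minor-arc complement. On the minor arcs, Weyl-type exponential sum bounds for the pair $(m,m^2)$ at scale $M$ yield polynomial gain in $N$, which upgrades to $r$-variation for any $r>2$ via trivial inequalities. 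On the major arcs, I would approximate the multiplier by a sum of smooth cutoffs localised near each rational pair, invoking the Ionescu--Wainger theorem in the $\xi_1$-direction together with a continuous approximant centred on $\xi_2=0$ in the $\xi_2$-direction.

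Because $\D$ is $\lambda$-lacunary, each dyadic block $[2^k,2^{k+1}]$ contains only $O_\lambda(1)$ elements of $\D$, and so the long $r$-variation collapses to an $\ell^r(k)$-summation of the dyadic differences $A_{2^{k+1};\Z}(f,g)-A_{2^k;\Z}(f,g)$. The major-arc contributions are then controlled by a bilinear multi-frequency Rademacher--Menshov square function inequality, which converts frequency smoothness into $r$-variation with $r=2$ in the regime $p\geq 1$. When $p<1$, the natural proof of the bilinear multi-frequency inequality proceeds by dualising separately in $L^{p_1}$ and $L^{p_2}$, and it is precisely this loss that forces the restriction $r>\max(p_1',p_2')$. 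The maximal inequality \eqref{eqn:maximalineq} for $1<p_1,p_2\leq\infty$ then follows from the long variational bound (which dominates any lacunary supremum) combined with a standard discretisation handling the continuous supremum over $N\in\N$.

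The principal obstacle is the bilinear major-arc analysis in the regime $p<1$: one cannot linearise by Cauchy--Schwarz, and the hybrid polynomial/sublinear structure of the sampling means the KMT bilinear circle method does not apply verbatim, since the $g$-variable sees a moving averaging window of width $\sqrt{N}$ rather than a pure polynomial sample. The minor-arc Weyl estimates must be tailored to this window to give the required $N^{-\delta}$ gain for $\delta$ depending on the Diophantine properties of both $\xi_1$ and $\xi_2$, and the major-arc approximations must retain and correctly propagate the additional smoothing factor on scale $\sqrt{N}$ through the multi-frequency analysis. Combining this with the long variational summation, uniformly in the lacunarity parameter $\lambda$, is the technical heart of the argument.
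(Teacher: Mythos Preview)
Your outline misses the central structural feature of the sublinear case and, as written, contains a genuine gap in the minor-arc step. The paper's key observation is that the exponential sum $\frac{1}{N}\sum_{n\leq N}1_{n>N/2}\,e(\zeta\lfloor\sqrt{n}\rfloor+\xi n)$ can be large only when $\xi$ lies in the \emph{principal} arc $\|\xi\|_\T\lesssim\delta^{-O(1)}N^{-1}$, not merely near some rational $a/q$: after the change of variables you describe, the inner geometric sum contributes a Dirichlet kernel factor that kills all nonzero rational arcs (this is Lemma~\ref{lem:exponentialsum}). Consequently there is no multi-frequency structure here at all---no Ionescu--Wainger, no sum over $q$, no Rademacher--Menshov multi-frequency square function. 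The major-arc analysis is genuinely single-frequency, and it is precisely this simplification that buys the full range $1<p_1,p_2<\infty$; the KMT-style multi-frequency machinery you propose would, if it worked, presumably inherit the range restrictions present in \cite{KMT}.

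The minor-arc step as you state it does not go through. Weyl-type bounds on the symbol give $L^\infty$ decay of the bilinear multiplier, but for a \emph{bilinear} operator this does not by itself yield an $\ell^1$ (or $\ell^p$) bound on $\tilde A_N(f,g)$---there is no bilinear Plancherel. This is exactly why \cite{KMT} needed Peluse's inverse theorem, and the paper here needs its analogue, Theorem~\ref{thm:inversethm}: one shows that if $\sum_x f_0(x)\tilde A_N(f_1,f_2)(x)$ is large then $f_1$ correlates with a long average $\E_{n\in[N']}f_1(\cdot-n)$ with $N'\sim N^{1/2}$, hence $\hat f_1$ must be large on the principal arc of width $\delta^{-O(1)}N^{-1/2}$. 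The proof goes via the change of variables to an $(m,m^2)$-form, an appeal to the Peluse--Prendiville $U^5$-control (Theorem~\ref{thm:peluseprendiville}), passage to the dual function, and a degree-lowering argument (Theorem~\ref{thm:degreelowering}) that exploits the principal-arc-only exponential sum bound to descend from $U^5$ to $U^2$. Only after this inverse theorem is in hand does one get the Sobolev-type minor-arc estimate (Theorem~\ref{thm:sobolevestimate}) via a Hahn--Banach decomposition of the dual. Your proposal skips this entire mechanism, and the ``trivial inequalities'' you invoke do not exist in the bilinear setting.
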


As mentioned above, Theorem \ref{thm:mainthm} implies a maximal inequality and both pointwise and norm convergence.

\begin{corollary} \label{cor:normmaximal}
Let $(X,\mu,T)$ be a measure preserving system. Let $f \in L^{p_1}(X)$ and $g \in L^{p_2}(X)$ with $1<p_1,p_2<\infty$ and define $\frac{1}{p} = \frac{1}{p_1}+\frac{1}{p_2}$. Then one has the maximal inequality \begin{equation} \label{eqn:maximal} \left\| \sup_{N \in \N} |A_{N;X}(f,g)| \right\|_{L^p(X)} \leqsim_{p_1,p_2} \|f\|_{L^{p_1}(X)} \|g\|_{L^{p_2}(X)}, \end{equation} and the averages $A_{N;X}(f,g)$ converge both pointwise almost everywhere and in $L^p(X)$ norm as $N \to \infty$.
\end{corollary}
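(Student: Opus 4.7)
The plan is to deduce both claims from Theorem \ref{thm:mainthm}, using a dyadic comparison for the maximal inequality and a density (Banach principle) argument for convergence.

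First I would establish (\ref{eqn:maximal}) by reducing the supremum over $N \in \N$ to a supremum over the $2$-lacunary set $\D = \{2^k : k \geq 0\}$. For $N \in [2^k, 2^{k+1})$, the pointwise estimate
\[ |A_{N;X}(f,g)(x)| \leq \frac{1}{N}\sum_{n=1}^{2^{k+1}} |f(T^{\lfloor\sqrt{n}\rfloor}x)||g(T^n x)| \leq 2\, A_{2^{k+1};X}(|f|,|g|)(x) \]
gives $\sup_{N \in \N}|A_{N;X}(f,g)| \leq 2 \sup_{k \geq 0} A_{2^k;X}(|f|,|g|)$, and the latter is trivially bounded by $V^r(A_{N;X}(|f|,|g|))_{N \in \D}$ for any $r$. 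Picking any $r > c_{p_1,p_2}$ and applying Theorem \ref{thm:mainthm} with $\lambda = 2$ to the pair $(|f|,|g|)$, whose $L^{p_i}$-norms are unchanged, yields the desired maximal estimate.

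Next, for convergence I would invoke a Banach-principle argument: the maximal inequality just established shows that the sublinear operator $(f,g) \mapsto \sup_N |A_{N;X}(f,g)|$ is bounded from $L^{p_1}(X) \times L^{p_2}(X)$ into $L^p(X)$, so the set of pairs $(f,g)$ for which $A_{N;X}(f,g)$ converges $\mu$-almost everywhere (respectively, in $L^p$-norm) is closed in $L^{p_1} \times L^{p_2}$. It therefore suffices to exhibit a dense subclass on which both modes of convergence are already known. The class $(L^{p_1}(X) \cap L^\infty(X)) \times (L^{p_2}(X) \cap L^\infty(X))$ is such a subclass: for bounded $f,g$, norm convergence is due to Frantzikinakis \cite{frantzikinakis2010multiple}, and pointwise convergence follows from Donoso-Koutsogiannis-Sun \cite{donoso2020pointwise} in the finite-measure setting, extended to arbitrary measure-preserving systems via Wierdl's unpublished linear-maximal-inequality density argument noted in the introduction.

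The hard part conceptually is the convergence claim rather than the maximal inequality: Theorem \ref{thm:mainthm} gives only long variational control along lacunary sets $\D$, which in isolation yields convergence only along such $\D$ and not along all of $\N$. A self-contained proof would require a short-variation estimate within dyadic blocks $[2^k, 2^{k+1})$, which is not established in this paper; the route I take instead borrows the dense-subclass convergence from the earlier works cited above and combines it with the new maximal inequality.
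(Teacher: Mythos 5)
Your argument is correct and, as far as one can tell (the paper states the corollary with only a one-line pointer back to the introductory discussion), it coincides with what the authors intend. The maximal-inequality step is exactly right: positivity gives $\sup_{N\in\N}|A_{N;X}(f,g)|\le 2\sup_{k\ge 0}A_{2^{k+1};X}(|f|,|g|)$ pointwise, and the right-hand side is dominated by $V^r(A_{N;X}(|f|,|g|))_{N\in\D}$ for the $2$-lacunary set $\D=\{2^k\}$, to which Theorem \ref{thm:mainthm} applies with the $L^{p_i}$ norms of $|f|,|g|$ unchanged. Your caveat about the convergence claim is a genuine subtlety that the paper glosses over: the introduction asserts that the long variational inequality over lacunary $\D$ ``implies both norm and pointwise convergence,'' but by itself it only yields a.e.\ convergence along each fixed lacunary subsequence, and no countable family of $\lambda$-lacunary sets with $\lambda$ bounded away from $1$ exhausts $\N$ (nor does the constant in (\ref{eqn:variationineq}) stay bounded as $\lambda\to 1$). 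Closing the gap via the bilinear Banach/Calder\'on principle — the maximal bound makes the set of pairs $(f,g)$ with a.e.\ (resp.\ $L^p$-norm) convergence closed in $L^{p_1}\times L^{p_2}$, and bounded functions form a dense subclass on which convergence is already known from Donoso--Koutsogiannis--Sun, Wierdl, and Frantzikinakis — is exactly the route the paper's own discussion of prior literature suggests, so your proof fills in the implicit argument correctly. One minor streamlining: once a.e.\ convergence and the $L^p$ majorant $\sup_N|A_{N;X}(f,g)|$ are in hand, $L^p$-norm convergence follows by dominated convergence (valid for all $p>0$), so the separate appeal to Frantzikinakis's mean ergodic theorem is not actually needed.
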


\subsubsection{Reductions} \label{sec:reductions}

As in \cite{KMT} there are a couple of reductions we can make which we list here.

\begin{enumerate}[(i)]
    \item One of the assumptions of Theorem \ref{thm:mainthm} is that $\frac{1}{p_1}+\frac{1}{p_2}=\frac{1}{p}$. Thus the Calderón transference principle \cite{Transference} applies, so to prove that (\ref{eqn:variationineq}) holds for an arbitrary measure preserving system it suffices to prove it in the case $(X,\mu,T) = (\Z,\mu_\Z,T_\Z)$ where $\mu_\Z$ is the counting measure on $\Z$ and $T_\Z\:\Z \to \Z$ is the \emph{shift map} defined by $T_\Z(x)=x-1$. In this case we will denote $A_{N;\Z}$ simply by $A_N$. Note in this case we have \[ A_N(f,g)(x) = \frac{1}{N} \sum_{n=1}^N f(x-\lfloor \sqrt{n} \rfloor) g(x-n). \]
    
    \item In order to prove (\ref{eqn:variationineq}) it suffices to replace $A_{N;X}$ by the so-called \emph{upper-half operator} (using terminology from \cite{KMT}) defined by \[ \Tilde{A}_{N;X}(f,g)(x) = \frac{1}{N} \sum_{n=1}^N 1_{n>\frac{N}{2}} f(T^{\lfloor \sqrt{n} \rfloor} x) g(T^nx). \]
\end{enumerate}

\begin{remark}
In \cite{KMT}, another reduction is made: if one can prove that (\ref{eqn:variationineq}) holds for all $\lambda>0$ and all \emph{finite} $\lambda$-lacunary subsets $\D \subseteq \N$ then Theorem \ref{thm:mainthm} follows. Since our proof does not rely on the finiteness of $\D$ at any point we will not need this reduction.
\end{remark}

Combining these two reductions, we see that Theorem \ref{thm:mainthm} is implied by the following theorem.

\begin{theorem} \label{thm:reducedmainthm}
Let $f \in \ell^{p_1}(\Z)$ and $g \in \ell^{p_2}(\Z)$ with $1<p_1,p_2<\infty$ and define $\frac{1}{p} = \frac{1}{p_1} + \frac{1}{p_2}$. Let \[ c_{p_1,p_2} = \begin{cases} \max(p_1',p_2') & p<1, \\ 2 & p \geq 1. \end{cases} \] For any $r>c_{p_1,p_2}$ and $\lambda>1$ we have the long variational inequality \begin{equation} \|V^r(\Tilde{A}_N(f,g))_{N \in \D} \|_{\ell^p(\Z)} \leqsim_{p_1,p_2,r,\lambda} \|f\|_{\ell^{p_1}(\Z)} \|g\|_{\ell^{p_2}(\Z)} \label{eqn:reducedvariationineq} \end{equation} whenever $\D \subseteq \N$ is $\lambda$-lacunary.
\end{theorem}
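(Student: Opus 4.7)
The plan is to work in the reduced setting of Theorem \ref{thm:reducedmainthm} and apply a Fourier-analytic circle method approach in the spirit of \cite{KMT}. Writing $f,g$ via their Fourier inversion on $\T$, we have
\[ \Tilde{A}_N(f,g)(x) = \int_{\T^2} \hat{f}(\xi)\hat{g}(\eta)\, e^{2\pi i x(\xi+\eta)} m_N(\xi,\eta)\, d\xi\, d\eta \]
where
\[ m_N(\xi,\eta) = \frac{1}{N}\sum_{N/2<n\leq N} e^{-2\pi i(\lfloor\sqrt{n}\rfloor\xi + n\eta)}. \]
Restricting attention to a lacunary scale set $\D = \{N_k\}$, I would decompose this multiplier into major and minor arc components relative to appropriate height parameters, handle each separately in the bilinear variational norm, and then sum.

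First I would rewrite $m_N$ by grouping integers $n \in [m^2,(m+1)^2)$ according to the common value $\lfloor\sqrt{n}\rfloor = m$. This decouples the phase so that $m_N$ becomes essentially a product of a linear Weyl sum in $\xi$ over $m \sim \sqrt{N}$ and an inner Dirichlet-kernel factor in $\eta$ modulated by a quadratic phase $e^{-2\pi i m^2\eta}$. The problem is thereby reduced to a bilinear operator with phase functions of the familiar Hardy-Littlewood type (one linear, one quadratic), except at mismatched scales $\sqrt{N}$ versus $N$. On the minor arc region, standard Weyl differencing and van der Corput estimates applied to the two phases give a power saving $|m_N(\xi,\eta)| \leqsim N^{-\delta}$. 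Promoting this saving to the bilinear multiplier operator in $L^{p_1}\times L^{p_2}\to L^p$ via bilinear interpolation against trivial bounds, and then summing geometrically over $N_k \in \D$, controls the minor arc contribution to the variational norm by way of the crude inequality $V^r \leq \left(\sum_k |\cdot|^r\right)^{1/r}$.

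For the major arc piece, near rationals $(a_1/q_1, a_2/q_2)$ with denominators below a suitable power $N^\tau$, I would approximate $m_N$ by a product of a Gauss-sum-type arithmetic factor and a continuous model multiplier $m_N^{\mathrm{cts}}(\xi - a_1/q_1,\eta - a_2/q_2)$. The continuous model corresponds to a bilinear convolution with Fejér-type kernels whose long variational behaviour can be addressed by Lépingle's inequality together with the bilinear Ionescu-Wainger theory on $\Z$ developed in \cite{KMT}. This yields the full range $r>2$ when $p \geq 1$ directly from $L^2$ orthogonality. For $p<1$, where two-factor $L^2$ arguments break down, I would instead use a Rademacher-Menshov-type square function decomposition of $V^r$ and interpolate between an $r=2$ short variation bound obtained at a symmetric endpoint and $\ell^\infty$-type oscillation bounds coming from the sampled Ionescu-Wainger projections; this is the mechanism that should produce precisely the range $r > \max(p_1', p_2')$.

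The main obstacle will be executing the $p<1$ analysis cleanly in the presence of the sublinear phase. The difficulty is twofold: the sublinearity of $\lfloor\sqrt{n}\rfloor$ makes the arithmetic scales in the two factors mismatched, so the Ionescu-Wainger sampling must be calibrated to produce projections at both $\sqrt{N}$ and $N$ scales simultaneously, and the floor function obstructs a direct appeal to polynomial exponential sum technology. Both issues should be handled by the $m$-block regrouping described above, which converts the floor-$\sqrt{\cdot}$ sum into an honest quadratic exponential sum in $m$ at the cost of a Dirichlet factor in $\eta$ that is absorbed into the major arc analysis. Controlling the resulting bilinear operator uniformly in the two scales, and then extracting the sharp dependence of the variational exponent on $(p_1,p_2)$, is where I expect the bulk of the technical work to lie.
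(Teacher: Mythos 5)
Your plan diverges from the paper in two substantive ways, one of which is a genuine gap and one of which is a missed simplification that would undermine the claimed range of exponents.

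The gap is in your minor arc treatment. You propose to obtain $|m_N(\xi,\eta)| \lesssim N^{-\delta}$ off the major arcs via Weyl differencing and van der Corput, and then to ``promote this saving to the bilinear multiplier operator $L^{p_1}\times L^{p_2}\to L^p$ via bilinear interpolation against trivial bounds.'' This does not work: a pointwise decay estimate on a bilinear multiplier gives no control on the corresponding bilinear operator norm, because there is no bilinear Plancherel theorem. (In the linear case one interpolates an $L^\infty$ multiplier bound with $L^2\to L^2$ boundedness via Plancherel; the bilinear analogue fails.) The paper's route is to prove a genuine $\ell^2\times\ell^2\to\ell^1$ smoothing estimate (Theorem~\ref{thm:sobolevestimate}) stating that if $\hat f$ or $\hat g$ vanishes on the relevant principal major arc then $\|\Tilde{A}_N(f,g)\|_{\ell^1}$ gains a power of $2^{-cl}$. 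This cannot be read off from exponential sum bounds alone; it requires the full additive-combinatorial machinery of Section~\ref{sec:inversethm}: a Peluse--Prendiville-type inverse theorem adapted to the sublinear sequence (Theorem~\ref{thm:inversethm}), Gowers $U^5$-norm control of the dual function (Proposition~\ref{prop:dualinverse}), degree lowering (Theorem~\ref{thm:degreelowering}), and a Hahn--Banach decomposition of the dual function (Corollary~\ref{cor:dualdecomp}). None of this appears in your outline, and without it the minor arc piece is not controlled.

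The missed simplification is on the major arcs. You propose approximating $m_N$ near all rationals $(a_1/q_1,a_2/q_2)$ with small denominator, introducing Gauss-sum factors and invoking the bilinear Ionescu--Wainger theory of \cite{KMT}. But the central structural observation of this paper is that for the pair $(\lfloor\sqrt{n}\rfloor, n)$ only the \emph{principal} major arc centred at $0$ contributes: Lemma~\ref{lem:exponentialsum} shows that $|m_{N;\Z}(\zeta,\xi)|\geq\delta$ forces $\|\xi\|_\T\lesssim\delta^{-O(1)}N^{-1}$, with no denominator $q>1$ appearing. Consequently the paper needs no multifrequency analysis and no Ionescu--Wainger projections at all, and this is precisely what allows the full range $1<p_1,p_2<\infty$. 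If you followed the Ionescu--Wainger route you would inherit the more restricted range of exponents that \cite{KMT} obtain. Relatedly, your proposed $p<1$ mechanism (Rademacher--Menshov square functions) is not what the paper does; it instead interpolates the $p_1=p_2=2$ variational bound of Proposition~\ref{prop:modeloperatorIpgeq1} against a $V^\infty$ maximal bound obtained pointwise from the Hardy--Littlewood maximal function (Subsection~\ref{subsec:breakingduality}), which is what gives $r>\max(p_1',p_2')$. Your $m$-block regrouping of $\lfloor\sqrt{n}\rfloor=m$ does match what the paper does, but it is used at a different stage (inside the inverse theorem and the exponential sum lemma), not to produce a Gauss-sum factorisation of the multiplier.
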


The paper is dedicated to proving Theorem \ref{thm:reducedmainthm}. From now on we will fix $p_1,p_2,p,r,\lambda$ and assume they satisfy the hypotheses of Theorem \ref{thm:reducedmainthm}. All implied constants will be allowed to depend on these parameters. We will also fix the finite $\lambda$-lacunary subset $\D$ but we do not allow implied constants to depend on $\D$.

\subsubsection{Sharpness}

Before moving on to the proof of Theorem \ref{thm:reducedmainthm} we briefly discuss the sharpness of Theorem \ref{thm:mainthm}, Corollary \ref{cor:normmaximal}, and Theorem \ref{thm:reducedmainthm}.

We will show that Theorem \ref{thm:mainthm} is sharp up to the endpoint in the following sense.

\begin{proposition} \label{prop:sharpness}
Fix $p_1,p_2>0$ and define $\frac{1}{p}=\frac{1}{p_1}+\frac{1}{p_2}$. Let $(X,\mu)$ be a non-atomic probability space and let $T\:X \to X$ be ergodic. Suppose one has (\ref{eqn:maximal}) for all $f \in L^{p_1}(X)$ and $g \in L^{p_2}(X)$. Then necessarily $p \geq \frac{1}{2}$.
\end{proposition}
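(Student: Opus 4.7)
The plan is to saturate the maximal inequality (\ref{eqn:maximal}) by testing it on the diagonal pair $f = g = \mathbf{1}_B$ for a set $B \subseteq X$ of small measure, and to exploit that on an ergodic system the averages $A_{N;X}(\mathbf{1}_B, \mathbf{1}_B)$ converge back to the product $\mu(B)^2$. This will make the left-hand side of (\ref{eqn:maximal}) of order $\mu(B)^2$ while the right-hand side scales like $\mu(B)^{1/p}$; sending $\mu(B) \to 0$ will then force $p \geq 1/2$.

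Concretely, I would argue as follows. Since $(X,\mu)$ is non-atomic, for any $\varepsilon \in (0,1)$ one can select a measurable $B \subseteq X$ with $\mu(B) = \varepsilon$. The known convergence results for these averages on ergodic finite-measure systems --- namely the norm convergence of Frantzikinakis \cite{frantzikinakis2010multiple} and the pointwise convergence of Donoso--Koutsogiannis--Sun \cite{donoso2020pointwise} applied to the bounded functions $\mathbf{1}_B$ --- identify the limit as the product of means,
\[ A_{N;X}(\mathbf{1}_B, \mathbf{1}_B) \longrightarrow \mu(B)^2 \quad \text{as } N \to \infty, \]
where the convergence holds $\mu$-almost everywhere (and in $L^2(X)$). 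In particular, for $\mu$-a.e.\ $x$ one has $\sup_N A_{N;X}(\mathbf{1}_B, \mathbf{1}_B)(x) \geq \mu(B)^2$, and since $\mu$ is a probability measure this gives
\[ \left\| \sup_N |A_{N;X}(\mathbf{1}_B, \mathbf{1}_B)| \right\|_{L^p(X)} \geq \mu(B)^2. \]
Combining this lower bound with the assumed inequality (\ref{eqn:maximal}) together with the identity $\|\mathbf{1}_B\|_{L^{p_i}(X)} = \mu(B)^{1/p_i}$ yields
\[ \mu(B)^2 \leq C_{p_1,p_2}\, \mu(B)^{1/p_1 + 1/p_2} = C_{p_1,p_2}\, \mu(B)^{1/p}. \]
Since $\mu(B) = \varepsilon$ can be taken arbitrarily small, this can hold uniformly only if $2 \geq 1/p$, i.e.\ $p \geq 1/2$.

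The only delicate input is the identification of the limit as the factored product $\mu(B)^2$: one needs both the existence of a limit and the fact that it factors into the product of means, the latter being where ergodicity of $T$ and the ``sublinear gap'' $n - \lfloor\sqrt{n}\rfloor \to \infty$ enter (these decouple the two coordinates so that on an ergodic system the correlation collapses to the product of integrals). The cited convergence theorems supply this as a black box; everything else in the argument is a clean scaling computation and a standard consequence of non-atomicity. If the factorisation were not available in the literature, one could reprove it directly via a spectral/Kronecker-factor analysis, but this seems unnecessary given the existing results.
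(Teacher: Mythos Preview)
Your argument is correct and follows essentially the same approach as the paper: test the maximal inequality on $f=g=\mathbf{1}_B$ for sets $B$ of small measure, use the (ergodic) convergence $A_{N;X}(\mathbf{1}_B,\mathbf{1}_B)\to\mu(B)^2$, and send $\mu(B)\to 0$ to force $p\geq\tfrac12$. The only cosmetic difference is in how the inequality $\mu(B)^2\lesssim\mu(B)^{1/p}$ is extracted: the paper bounds $\|A_N\|_{L^p}\leq\|\sup_N A_N\|_{L^p}$ and passes to the limit via dominated convergence, whereas you use the pointwise bound $\sup_N A_N\geq\lim_N A_N=\mu(B)^2$ directly; and the paper cites its own internal equidistribution corollary for the limit identification while you invoke \cite{frantzikinakis2010multiple} and \cite{donoso2020pointwise}.
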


\begin{proof}
Suppose \[ \| \sup_{N \in \N} |A_{N;X}(f,g)| \|_{L^p(X)} \leqsim_{p_1,p_2} \|f\|_{L^{p_1}(X)} \|g\|_{L^{p_2}(X)} \] holds for all $f \in L^{p_1}(X)$ and $g \in L^{p_2}(X)$. Take a sequence $(A_k)$ of measurable sets of positive measure such that $\mu(A_k) \to 0$. Fix $k$ and let $f=g=1_{A_k}$. Then $\|1_{A_k}\|_{L^q(X)} = \mu(A_k)^\frac{1}{q}$ for any $q>0$ so for all $N \in \N$ we have \[ \|\E_{n \in [N]} T^{\lfloor \sqrt{n} \rfloor} 1_{A_k} \cdot T^n1_{A_k} \|_{L^p(X)} \leqsim_{p_1,p_2} \mu(A_k)^{\frac{1}{p_1}+\frac{1}{p_2}} = \mu(A_k)^\frac{1}{p}. \] Since $X$ is a probability space and $T$ is ergodic, we have \[ \E_{n \in [N]} T^{\lfloor \sqrt{n} \rfloor} 1_{A_k} \cdot T^n1_{A_k} \to \mu(A_k)^2 \] almost everywhere by Corollary \ref{cor:equidistribution}. By dominated convergence, it follows that $\mu(A_k)^2 \leqsim \mu(A_k)^\frac{1}{p}$. Taking $k \to \infty$ yields the claim.
\end{proof}

Proposition \ref{prop:sharpness} immediately shows that the maximal inequality (\ref{eqn:maximal}) in Corollary \ref{cor:normmaximal} fails in general if $p<\frac{1}{2}$. Since, for a fixed choice of $1<p_1,p_2<\infty$, (\ref{eqn:variationineq}) implies (\ref{eqn:maximal}), we see that if $p<\frac{1}{2}$ then Theorem \ref{thm:mainthm} also fails. Moreover, Theorem \ref{thm:reducedmainthm} implies \ref{thm:mainthm} for all measure preserving systems by the Calder\'on transference principle so if $p<\frac{1}{2}$ then Theorem \ref{thm:reducedmainthm} fails too.

\subsection{Overview of Proof}

Our proof will use the method employed in \cite{KMT}.  Working on the integers will allow tools from harmonic analysis to be employed. However, in \cite{KMT} the ergodic average is sampled along the sequences $n$ and $P(n)$, which are both polynomial while in this paper the sequences $\lfloor \sqrt{n} \rfloor$ and $n$ are instead rational functions (or one could even view them as coming from a Hardy field). This leads to some key differences between the proofs.

These differences stem from the relevant exponential sums one needs to consider when looking at averaging operators. Note that if $P(n)$ and $Q(n)$ are sequences then \[ \frac{1}{N} \sum_{n=1}^N f(x-P(n))g(x-Q(n)) = \sum_{y_1,y_2} K(y_1,y_2) f(x-y_1) g(x-y_2) \] where \[ K(y_1,y_2) = \int_{\T^2} \frac{1}{N} \sum_{n=1}^N e^{2\pi i(\zeta P(n) + \xi Q(n))} e(\zeta y_1+\xi y_2) \,d\zeta \,d\xi. \] The study of averaging operators therefore necessarily involves the study of the corresponding exponential sums. In particular, an analysis of when these exponential sums are ``large" is important. For the averages considered in \cite{KMT}, note that if $P$ is a polynomial of degree $d \geq 2$ with leading coefficient $a_d$ and \[ \delta \leq \left| \frac{1}{N} \sum_{n=1}^N e^{2\pi i(\zeta n + \xi P(n)} \right| \] then one can deduce that there exists $q \leqsim \delta^{-O_d(1)}$ such that $\| q a_d \xi\|_\T \leqsim \delta^{-O_d(1)} N^{-d}$. In other words, if the exponential sum is large then $\xi$ lies in an arc centred at some rational with denominator at most $\frac{1}{q a_d}$ of width $O(\delta^{-O_d(1)} N^{-d})$ (in other words, $\xi$ lies in the set of \emph{major arcs}). In this paper, we are able to reduce even further, and show that if \[ \delta \leq \left| \frac{1}{N} \sum_{n=1}^N e^{2\pi i(\zeta \lfloor \sqrt{n} \rfloor + \xi n)} \right| \] then $\|\xi\|_\T \leqsim \delta^{-O(1)} N^{-1}$. Thus if the relevant exponential sum is large, then $\xi$ lies in an arc of width $O(\delta^{-O(1)} N^{-1})$ centred at the origin, and not just in an arc centred at \emph{some} rational. In this case we would say that $\xi$ lies in the \emph{principal} major arc, or the major arc centred at 0.

As a consequence, in \cite{KMT} the authors reduce their analysis to the case where $f$ and $g$ have Fourier transforms which are supported on the set of major arcs, and multifrequency analysis is needed. In this paper we will instead be able to reduce even further to the case where $f$ and $g$ have Fourier transforms which are supported on the principal major arc. The benefit is then that single frequency analysis can be utilised instead of multifrequency analysis. 
The payoff of the work we do to reduce to the principal major arc case is that we will be able to establish pointwise convergence for the full expected range, i.e. we will establish pointwise convergence of $A_{N;X}(f,g)$ for all $f \in L^{p_1}(X)$ and $g \in L^{p_2}(X)$ with $1<p_1,p_2<\infty$.

Another consequence of the difference in the exponential sum bounds is in the minor arc estimate. The main tool used in \cite{KMT} is the following inverse theorem of Peluse \cite{peluse} from additive combinatorics.

\begin{theorem}[Peluse] \label{thm:peluseinverse}
Let $N \in \N$ and $0<\delta \leq 1$. Let $P\:\Z \to \Z$ be a polynomial of degree $d \geq 2$. Suppose $f_0,f_1,f_2\:\Z \to \C$ are $1$-bounded functions supported on $[-N_0,N_0]$ with $N_0 \sim N^d$ and assume \[ \delta N^d \leq \left| \sum_{x \in \Z} f_0(x) \cdot \frac{1}{N} \sum_{n=1}^N f_1(x-n) f_2(x-P(n)) \right|. \] Then either $N \leqsim \delta^{-O(1)}$ or there exists $q \leqsim \delta^{-O(1)}$ and $N' \in \N$ with $\delta^{O(1)}N \leqsim N' \leq N$ such that \[ \sum_{x \in \Z} \left| \frac{1}{N'} \sum_{n=1}^{N'} f_1(x+qn) \right| \geqsim \delta^{O(1)} N^d. \]
\end{theorem}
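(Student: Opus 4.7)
The plan is to follow Peluse's degree-lowering strategy combined with a PET-induction-style iteration of the Cauchy-Schwarz inequality. The rough idea is first to use Cauchy-Schwarz to remove $f_0$, then iteratively to reduce the complexity of the polynomial pattern until we obtain control of $f_1$ by a low-order Gowers uniformity norm, which we then invert to produce arithmetic progression structure.

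First I would apply Cauchy-Schwarz in $x$ to eliminate $f_0$; this gives a lower bound on an expression bilinear in $f_1$ and $f_2$ (with one factor of each appearing twice via the inner product). I would then apply van der Corput (i.e.\ Cauchy-Schwarz in the $n$-variable) a bounded number of times, following the PET (Polynomial Exhaustion Technique) recipe of Bergelson--Leibman, to swap the roles of $f_1$ and $f_2$ and reduce the polynomial $P(n)$ of degree $d$ to something of strictly lower degree. After $O(d)$ such steps one reaches a statement of the form $\|f_1\|_{U^{s+1}(I)}^{2^{s+1}} \geqsim \delta^{O(1)}$ for some interval $I \subseteq \Z$ of length comparable to $N^d$, where $s$ depends on $d$.

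The next main step is Peluse's degree-lowering argument: one shows by a bootstrap that in the particular configuration at hand, $U^{s+1}$-control of $f_1$ must actually upgrade to $U^s$-control, and iterating brings us all the way down to $s=1$. The $U^2$-inverse theorem then yields a frequency $\xi \in \T$ such that $f_1$ correlates with $e(\xi \cdot)$ at scale $N^d$. Combined with the fact that $P$ has degree $d \geq 2$, Weyl-type minor-arc estimates for exponential sums of the form $\frac{1}{N} \sum_n e(\xi n + \eta P(n))$ force $\xi$ to lie in a major arc centred at some rational $a/q$ with $q \leqsim \delta^{-O(1)}$. This major-arc structure is exactly what is required: $f_1$ must be approximately constant on progressions of common difference $q$ and length $N' \geqsim \delta^{O(1)} N$, and summing the resulting local $\ell^1$ bounds in $x$ yields the claimed estimate, unless $N$ itself is already of size $\leqsim \delta^{-O(1)}$ (which is the other alternative in the conclusion).

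The hard part will be the degree-lowering step, in which one must track polynomial-in-$\delta$ losses through each iteration without incurring quasi-polynomial dependence. Peluse's key insight is that the passage from $U^{s+1}$ to $U^s$ can be effected by another round of Cauchy-Schwarz plus dualisation, but verifying the necessary equidistribution of the dual function itself reduces back to Weyl-type exponential sum estimates for polynomial phases of degree at most $d$, so the argument is delicate and requires the careful bookkeeping that Peluse developed. Everything else (PET, $U^2$-inverse, major-arc reduction) is by now fairly standard machinery once the quantitative degree-lowering is in place.
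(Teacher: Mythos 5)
This statement is quoted by the paper as a black-box result of Peluse; the paper does not reprove it, but it carries out a closely analogous argument for the $\lfloor\sqrt{n}\rfloor$ averages (Propositions \ref{prop:U5inverse} and \ref{prop:dualinverse}, Theorem \ref{thm:degreelowering}, and the proof of Theorem \ref{thm:inversethm}), so that chain is the natural benchmark for your outline.

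Your sketch has the correct overall scaffolding (Cauchy--Schwarz to strip off $f_0$, a bounded number of van der Corput steps, degree-lowering, the $U^2$-inverse theorem, a Weyl-type major-arc reduction), but it mislocates the degree-lowering bootstrap. You write that ``$U^{s+1}$-control of $f_1$ must actually upgrade to $U^s$-control'', with the dual function appearing only as an auxiliary whose equidistribution must be verified. In fact the bootstrap cannot run on $f_1$ at all: $f_1$ is an arbitrary $1$-bounded function and there is no relation between its $U^{s+1}$ and $U^s$ norms. The van der Corput step yields $U^s$-control of $f_2$ (the function sampled along $P(n)$); this is then transferred to the \emph{dual function} $x \mapsto \E_{n \in [N]} f_0(x+n)f_1(x+n-P(n))$ (the analogue of the paper's Proposition \ref{prop:dualinverse}), and it is the dual function, not $f_1$, on which degree-lowering is performed. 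The dual function carries the polynomial averaging in its very definition, so the dual--difference interchange (Lemma \ref{lem:dualdifferenceinterchange}), combined with the Weyl bound, forces the frequencies $\xi(h)$ produced by the pointwise $U^2$-inverse step to be low-rank as a function of $h$, which is exactly what lets Lemma \ref{lem:lowrank} lower the degree. Only after the dual function is driven down to a single dominant major-arc frequency does one unwrap its definition to extract the arithmetic-progression correlation of $f_1$ claimed in the conclusion. Without the dual function at the centre of the bootstrap, the degree-lowering step has no foothold, so this is a genuine gap rather than a cosmetic imprecision.
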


This theorem roughly states that if the bilinear average \[ \frac{1}{N} \sum_{n=1}^N f_1(x-n) f_2(x-P(n)) \] correlates highly with some $1$-bounded function then the Fourier transform of $f_1$ has a large contribution on some major arc. By the previous discussion, an analogous result in this paper should show that the Fourier transform of $f_1$ has a large contribution on the principal major arc, or in other words we should have the following.

\begin{theorem} \label{thm:inverse} Let $N \in \N$ and $0<\delta \leq 1$. Suppose $f_0,f_1,f_2\:\Z \to \C$ are $1$-bounded functions supported on $[-N_0,N_0]$ with $N_0 \sim N$ and suppose \[ \delta N \leq \left| \sum_{x \in \Z} f_0(x) \cdot \frac{1}{N} \sum_{n=1}^N f_1(x-\lfloor \sqrt{n} \rfloor) f_2(x-n) \right|. \] Then either $N \leqsim \delta^{-O(1)}$ or there exists $\delta^{O(1)} N \leqsim N' \leq N$ such that \[ \sum_{x \in \Z} \left| \frac{1}{N'} \sum_{n=1}^{N'} f_1(x-n) \right| \geqsim \delta^{O(1)} N \]
\end{theorem}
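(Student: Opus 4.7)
The plan is to combine the block structure of $\lfloor \sqrt n \rfloor$ with a Peluse-style Cauchy--Schwarz argument, taking advantage of the fact that only the principal major arc need be considered. Since $\lfloor \sqrt n \rfloor = k$ precisely when $n \in [k^2, k^2 + 2k]$, setting $K = \lfloor \sqrt N \rfloor$ and $I_k = [k^2, k^2 + 2k] \cap [1, N]$ one can rewrite
\[ \frac{1}{N} \sum_{n=1}^N f_1(x - \lfloor \sqrt n \rfloor) f_2(x - n) = \frac{1}{N} \sum_{k=1}^K f_1(x - k) \sum_{j \in I_k} f_2(x - j). \]
A dyadic pigeonholing in $k$ reduces the hypothesis to a single range $k \sim K'$; since the contribution from that range is trivially bounded by $O(N \cdot K'^2)$, we must have $K' \gtrsim \delta^{O(1)} \sqrt N$, and otherwise we fall into the alternative $N \lesssim \delta^{-O(1)}$.

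Next I would apply Cauchy--Schwarz in $x$, using $\|f_0\|_2^2 \lesssim N$, to obtain
\[ \sum_x \left| \sum_{k \sim K'} f_1(x - k) \sum_{j \in I_k} f_2(x - j) \right|^2 \gtrsim \delta^{O(1)} N^3. \]
Expanding the square, substituting $y = x - k_2$, setting $h = k_1 - k_2 \in [-O(K'), O(K')]$, and pigeonholing in $h$ produces a correlation of the form
\[ \left| \sum_y f_1(y - h) \overline{f_1(y)} W_h(y) \right| \gtrsim \delta^{O(1)} N^{5/2}, \]
where $W_h(y) = \sum_{k_2 \sim K'} \sum_{j_1 \in I_{k_2 + h}} \sum_{j_2 \in I_{k_2}} f_2(y + k_2 - j_1) \overline{f_2(y + k_2 - j_2)}$ is assembled from products of two localized sums of $f_2$ of length $\sim K'$ at shifts of size $\sim K'^2$. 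A second application of Cauchy--Schwarz in $y$, or an equivalent Fourier-side argument, then removes $W_h$ and reduces matters to a single-frequency assertion about $f_1$.

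The key harmonic-analytic input is the exponential sum bound from the introduction: large values of $\frac{1}{N} \sum_n e(\zeta \lfloor \sqrt n \rfloor + \xi n)$ force $\|\xi\|_\T \lesssim \delta^{-O(1)}/N$ and admit no rational frequencies other than $0$. Combining this with the previous reduction allows one to conclude that $\hat f_1$ carries significant $L^2$ mass on an arc $|\xi| \lesssim \delta^{-O(1)}/N'$ for some $N' \gtrsim \delta^{O(1)} N$, and a short Plancherel argument converts this Fourier concentration into the desired $L^1$ bound on $\sum_x \left| \frac{1}{N'} \sum_{n=1}^{N'} f_1(x - n) \right|$. The principal obstacle throughout is bookkeeping: one must ensure each Cauchy--Schwarz and pigeonhole loses only polynomially in $\delta$, cleanly discard the diagonal contribution $k_1 = k_2$ (which is strictly smaller than the off-diagonal target once $N \gg \delta^{-O(1)}$), and handle the truncated boundary blocks where $k^2 + m > N$. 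The crucial simplification over \cite{KMT} is that only the principal arc arises, which is exactly what allows single-frequency analysis to succeed in place of the multifrequency machinery used there.
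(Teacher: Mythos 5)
Your opening moves are on target: the block decomposition of $\lfloor\sqrt{n}\rfloor$ into intervals $I_k=[k^2,k^2+2k]$, the dyadic pigeonhole in $k$ (forcing $K'\gtrsim\delta^{O(1)}\sqrt{N}$), and the first Cauchy--Schwarz to eliminate $f_0$ closely parallel the paper's Proposition \ref{prop:U5inverse} and Proposition \ref{prop:dualinverse}. However, the step ``a second application of Cauchy--Schwarz in $y$, or an equivalent Fourier-side argument, then removes $W_h$ and reduces matters to a single-frequency assertion about $f_1$'' is a genuine gap, and it sits precisely where the theorem is hard. The weight $W_h(y)=\sum_{k_2}S_{k_2+h}(y+k_2)\overline{S_{k_2}(y+k_2)}$ is built from the arbitrary $1$-bounded function $f_2$, so there is no control on $\widehat{W_h}$ beyond trivial bounds, and a direct Cauchy--Schwarz in $y$ only reproduces the trivial estimate $\sum_y|W_h(y)|^2\lesssim N\cdot K'^6\sim N^4$, which together with $\sum_y|\Delta_hf_1(y)|^2\lesssim N$ is exactly the scale of your target $\delta^{O(1)}N^5$ and yields no structural information about $f_1$. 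Moreover, even if $W_h$ could be discarded, you have pigeonholed to a single shift $h$, so what remains is at best a statement about the one differenced function $\Delta_hf_1$, not about $\widehat{f_1}$ on the principal arc; passing from such statements to Fourier concentration of $f_1$ itself requires retaining the full average over $h$ and invoking a $U^2$-type inverse theorem, which in turn requires high-degree Gowers control. The exponential sum bound you cite (Lemma \ref{lem:exponentialsum}) is indeed the key analytic input, but in the paper it enters only during degree lowering, after the relevant quantity has already been reduced to an $\ell^2$-averaged exponential sum; your reduction never produces such an exponential sum.

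What the paper does instead is substantially longer. After the change of variables $n\mapsto m^2+n$ it invokes the Peluse--Prendiville inverse theorem (Theorem \ref{thm:peluseprendiville}) to get $\|f_2\|_{U^5}^{2^5}\gtrsim\delta^{O(1)}N^6$ (Proposition \ref{prop:U5inverse}); it then transfers this to $U^5$-control of the dual function $\Tilde{A}_N^{**}(f_0,f_1)$ (Proposition \ref{prop:dualinverse}); and it performs three rounds of degree lowering (Theorem \ref{thm:degreelowering}) using the dual--difference interchange lemma, the low-rank lemma, and the principal-arc exponential sum estimate to descend from $U^5$ to $U^2$. Only at the $U^2$ level does one obtain a single Fourier frequency, and even then a further change of variables, a rapid-decay smoothing of $1_{[N']}$, and summation by parts are needed to extract the final $\ell^1$ bound on $\E_{n\in[N']}f_1(x-n)$ at scale $N'\sim\delta^{O(1)}N^{1/2}$ (note that the version proved in the body of the paper, Theorem \ref{thm:inversethm}, has $N'\sim\delta^{O(1)}N^{1/2}$, consistent with your own $K'\gtrsim\delta^{O(1)}\sqrt{N}$, rather than $N'\sim\delta^{O(1)}N$). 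None of this machinery is implicit in two Cauchy--Schwarz steps, so the proposal would need to be rebuilt around the dual-function and degree-lowering mechanism to succeed.
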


One might expect that a proof of Theorem \ref{thm:inverse} could be done without any appeal to Peluse's results, which deal entirely with polynomial sequences. Indeed, fractional monomials are smooth, slowly varying, and only the principal major arc plays a role when considering exponential sums sampled along them. However, it seems necessary to perform a ``change of variables" to approximate the average \[ \left| \sum_{x \in \Z} f_0(x) \cdot \frac{1}{N} \sum_{n=1}^N f_1(x-\lfloor \sqrt{n} \rfloor) f_2(x-n) \right| \] by a polynomial average \[ \left| \sum_{x \in \Z} f_0(x) \cdot \frac{1}{N} \sum_{n=1}^N f_1(x-m) f_2(x-m^2-n) \right| \] for some $n$. We could at this stage also cite Theorem \ref{thm:peluseinverse}. However, instead we do some more work and cite an earlier result in \cite{peluse} which is itself used to prove Theorem \ref{thm:peluseinverse}. By modifying Peluse's proof (using the principal major arc behaviour of our exponential sums) we are able to reduce further and obtain principal major arc behaviour in the Fourier transform of $f_1$.

\section{Rates of Convergence} \label{sec:examples}

In this section we quickly demonstrate how our main results provide quantitative information on the rate of convergence of ergodic averages. For any unfamiliar notation see Section \ref{sec:preliminaries}.

We introduce the \emph{jump counting function}. Given $\delta>0$ and an indexing set $\D$ we let $\mathcal{N}_\delta(a_N)_{N \in \D}$ denote the number of $\delta$ jumps of the sequence $(a_N)$ with times in $\D$. More precisely, $\mathcal{N}_\delta(a_N)_{N \in \D}$ is the supremum over all $J \in \N$ for which there exist times $N_0 \leq \dotsb N_J$, with $N_j \in \D$, such that $|a_{N_{j+1}}-a_{N_j}| \geq \delta$ for each $j \in \{0, \dotsc, J-1 \}$. Bounds on $\mathcal{N}_\delta(a_N)_{N \in \D}$ provide quantitative information on the rate of convergence of the sequence $(a_N)$.

Given any such set of indices $\D$, $r>0$, and sequence $(a_N)$, we have \begin{equation} \label{eqn:jumpcountingfunction} \delta \mathcal{N}_\delta(a_N)_{N \in \D}^\frac{1}{r} \leq V^r(a_N)_{N \in \D}. \end{equation} Indeed, for any $J \in \N$ for which we have times $N_0 \leq \dotsb \leq N_J$ with $|a_{N_{j+1}}-a_{N_j}| \geq \delta$ for all $j$ we have \[ V^r(a_N)_{N \in \D} \geq \left( \sum_{j=0}^{J-1} |a_{N_{j+1}}-a_{N_j}|^r \right)^\frac{1}{r} \geq \left( \sum_{j=0}^{J-1} \delta^r \right)^\frac{1}{r} = \delta J^\frac{1}{r}. \]

\begin{proposition}
Let $(X,\mu,T)$ be a measure preserving system and let $f \in L^{p_1}(X)$ and $g \in L^{p_2}(X)$ with $1<p_1,p_2<\infty$. Define $\frac{1}{p}=\frac{1}{p_1}+\frac{1}{p_2}$. Then \[ \sup_{\delta>0} \| \delta \mathcal{N}_\delta(A_{N;X}(f,g))_{N \in \D}^\frac{1}{r} \|_{L^p(X)} \leqsim_{p_1,p_2,r,\lambda} \|f\|_{L^{p_1}(X)} \|g\|_{L^{p_2}(X)} \] whenever $\D$ is $\lambda$-lacunary and $r>2$.
\end{proposition}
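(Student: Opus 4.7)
The proof is immediate from the pointwise jump--variation inequality (\ref{eqn:jumpcountingfunction}) combined with Theorem \ref{thm:mainthm}; the plan is simply to chain these two together.

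First I would apply (\ref{eqn:jumpcountingfunction}) pointwise, taking the sequence there to be $a_N = A_{N;X}(f,g)(x)$ for each fixed $x \in X$. This yields the pointwise estimate
\[
\delta \, \mathcal{N}_\delta(A_{N;X}(f,g)(x))_{N \in \D}^{1/r} \leq V^r(A_{N;X}(f,g)(x))_{N \in \D},
\]
valid for every $\delta > 0$, every $r > 0$, and every $x \in X$. Taking $L^p(X)$ norms preserves the inequality, and since the right-hand side is independent of $\delta$, one may pull the supremum in $\delta > 0$ through the norm on the left to obtain
\[
\sup_{\delta > 0} \|\delta \, \mathcal{N}_\delta(A_{N;X}(f,g))_{N \in \D}^{1/r}\|_{L^p(X)} \leq \|V^r(A_{N;X}(f,g))_{N \in \D}\|_{L^p(X)}.
\]
Applying Theorem \ref{thm:mainthm} to the right-hand side, which requires $r > c_{p_1,p_2}$, then gives the upper bound $\leqsim_{p_1,p_2,r,\lambda} \|f\|_{L^{p_1}(X)} \|g\|_{L^{p_2}(X)}$, as desired.

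When $p \geq 1$ one has $c_{p_1,p_2} = 2$, so the argument supplies the proposition in the full advertised range $r > 2$. When $p < 1$ one has $c_{p_1,p_2} = \max(p_1',p_2') > 2$, so this argument gives the bound only for $r > \max(p_1',p_2')$, reflecting the same $\varepsilon_{p_1,p_2} > 0$ loss at the endpoint $r = 2$ already present in Theorem \ref{thm:mainthm} and noted in the introduction. There is no real obstacle in the proof: it is a one-line consequence of (\ref{eqn:jumpcountingfunction}), which is already derived earlier in this section, together with the main variational estimate.
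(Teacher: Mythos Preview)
Your proposal is correct and follows exactly the approach the paper intends: the proposition is stated without proof because it is an immediate consequence of the pointwise inequality (\ref{eqn:jumpcountingfunction}) together with Theorem \ref{thm:mainthm}, which is precisely the chain you write down. Your observation about the range of $r$ is also accurate: the paper states the proposition for $r>2$, but the argument via Theorem \ref{thm:mainthm} only delivers $r>c_{p_1,p_2}$, so for $p<1$ the stated range $r>2$ is not actually covered by the method --- this appears to be a minor oversight in the paper's statement rather than a gap in your reasoning.
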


By specialising to the case where $X$ is the torus and $T$ is a rotation we obtain information on the rate of convergence of exponential sums.

\begin{corollary}
Given any $\lambda$-lacunary set $\D$ and any $\delta>0$, we have \[ \mathcal{N}_\delta \left( \frac{1}{N} \sum_{n=1}^N e(\zeta \lfloor \sqrt{n} \rfloor+\xi n) \right)_{N \in \D} \leqsim_{\varepsilon,\lambda} \delta^{-(2+\varepsilon)} \] for any $\varepsilon>0$. The implied constant is uniform in $\zeta,\xi \in \T$.
\end{corollary}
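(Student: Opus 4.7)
The plan is to realize the exponential sum in question as an ergodic average on a torus rotation system, and then to invoke the preceding proposition directly. There is no real obstacle; the corollary is essentially a repackaging of the variational estimate supplied by Theorem \ref{thm:mainthm} once the rotation system is set up correctly.

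First I would take the measure preserving system $(X,\mu,T) = (\T^2, \mathrm{Leb}, T_{\zeta,\xi})$, where $T_{\zeta,\xi}(x,y) = (x+\zeta, y+\xi)$ is translation by $(\zeta,\xi)$; this is invertible and measure preserving, and $\T^2$ is a probability space. Define $f,g \: \T^2 \to \C$ by $f(x,y) = e(x)$ and $g(x,y) = e(y)$. Since $|f| \equiv |g| \equiv 1$ we have $\|f\|_{L^{p_1}(\T^2)} = \|g\|_{L^{p_2}(\T^2)} = 1$ for any $p_1,p_2$. A direct computation then gives
\[
A_{N;\T^2}(f,g)(x,y) = \frac{1}{N} \sum_{n=1}^N e(x + \zeta \lfloor \sqrt{n} \rfloor) \, e(y + \xi n) = e(x+y) \cdot S_N(\zeta,\xi),
\]
where $S_N(\zeta,\xi) = \frac{1}{N} \sum_{n=1}^N e(\zeta \lfloor \sqrt{n} \rfloor + \xi n)$ is the exponential sum of interest. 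The prefactor $e(x+y)$ is a unit complex number independent of $N$, so that $|A_{N';\T^2}(f,g)(x,y) - A_{N;\T^2}(f,g)(x,y)| = |S_{N'}(\zeta,\xi) - S_N(\zeta,\xi)|$ for all $N, N'$. In particular, the jump counting function $\mathcal{N}_\delta(A_{N;\T^2}(f,g))_{N \in \D}(x,y)$ is constant in $(x,y)$ and equals the deterministic quantity $\mathcal{N}_\delta(S_N(\zeta,\xi))_{N \in \D}$.

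Now I would apply the preceding proposition with $p_1 = p_2 = 2$ (so that $p = 1 \geq 1$) and $r = 2 + \varepsilon$ for an arbitrary $\varepsilon > 0$. Because $\T^2$ is a probability space and the integrand is constant, the left-hand side reduces to $\delta \, \mathcal{N}_\delta(S_N(\zeta,\xi))_{N \in \D}^{1/r}$, while the right-hand side is bounded by a constant $C_{\varepsilon,\lambda} \|f\|_{L^2(\T^2)}\|g\|_{L^2(\T^2)} = C_{\varepsilon,\lambda}$. Rearranging yields
\[
\mathcal{N}_\delta(S_N(\zeta,\xi))_{N \in \D} \leqsim_{\varepsilon,\lambda} \delta^{-(2+\varepsilon)},
\]
and the implied constant is manifestly uniform in $\zeta,\xi \in \T$ since these parameters enter only through the choice of the system $T_{\zeta,\xi}$, whereas the constant supplied by the preceding proposition depends only on $p_1,p_2,r,\lambda$.
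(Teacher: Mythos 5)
Your proof is correct and follows the same basic strategy as the paper — realize the exponential sum as an ergodic average on a torus rotation system and feed it into the jump-counting estimate — but your choice of system is actually cleaner. The paper works on the one-dimensional torus $\T$ with $Tx = x+\alpha$, $f(x) = e(\beta x)$, $g(x) = e(\gamma x)$, and then claims to choose $\alpha,\beta,\gamma \in \T$ with $\alpha\beta = \zeta$ and $\alpha\gamma = \xi$. This requires some care: for $f$ and $g$ to be well-defined functions on $\T$ the frequencies $\beta,\gamma$ must be integers, and the simultaneous constraints $\alpha\beta=\zeta$, $\alpha\gamma=\xi$ with integer $\beta,\gamma$ cannot be met for arbitrary $\zeta,\xi \in \T$. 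Your two-dimensional system $(\T^2,\mathrm{Leb},T_{\zeta,\xi})$ with characters $f(x,y)=e(x)$, $g(x,y)=e(y)$ sidesteps this issue entirely, since the shift $(x,y)\mapsto(x+\zeta,y+\xi)$ is well-defined on $\T^2$ for every $\zeta,\xi$ and the test functions are genuine characters. The remaining algebra — pulling out the unit prefactor $e(x+y)$, observing that the jump count is constant over the probability space, applying the preceding proposition with $p_1=p_2=2$, $r=2+\varepsilon$, and rearranging — is straightforward and correct, and the uniformity in $(\zeta,\xi)$ is immediate because those parameters only determine the system, not the constants in the estimate.
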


\begin{proof}
Consider the torus $\T$ with the normalised Lebesgue measure and define $T\:\T \to \T$ by $Tx=x+\alpha$ for some fixed $\alpha \in \T \setminus \{0\}$. Let $f(x) = e(\beta x)$ and $g(x)=e(\gamma x)$ for some fixed $\beta,\gamma \in \T$. Then \[ \|f\|_{L^2(\T)} = \|g\|_{L^2(\T)}=1 \] and \[ A_{N;\T}(f,g)(x) = e((\beta+\gamma)x) \frac{1}{N} \sum_{n=1}^N e(\alpha \beta \lfloor \sqrt{n} \rfloor+\alpha \gamma n), \] so \[ \|V^r(A_{N;\T}(f,g))_{N \in \D} \|_{L^1(\T)} = V^r \left( \frac{1}{N} \sum_{n=1}^N e(\alpha \beta \lfloor \sqrt{n} \rfloor+\alpha \gamma n) \right)_{N \in \D}. \] Taking $\alpha,\beta,\gamma \in \T$ such that $ \alpha \beta = \zeta$ and $\alpha \gamma = \xi$, part (iv) of Theorem \ref{thm:mainthm} implies \[ V^r\left( \frac{1}{N} \sum_{n=1}^N e(\zeta \lfloor \sqrt{n} \rfloor+\xi n) \right)_{N \in \D} \leqsim_{r,\lambda} 1 \] for any $r>2$, uniformly in $\zeta,\xi \in \T$.

\end{proof}

\section{Preliminaries} \label{sec:preliminaries}

\subsection{Notation}

We will use the convention that $\N = \{1,2,\dotsc\}$. For a real number $x$ we denote the greatest integer less than or equal to $x$ by $\lfloor x \rfloor$. For $x \in \R$ or $x \in \T$ we denote the complex exponential by $e(x) = e^{2\pi i x}$. All logarithms in this paper will be base 2.

Given a finite set $A$ and a function $f\:A \to \C$ we write \[ \E_{x \in A} f(x) = \frac{1}{|A|} \sum_{x \in A} f(x). \] Often, we will take $A = \{1, \dotsc, N\}$ or $A = \{-N, \dotsc, N\}$. We will denote these two sets by $[N]$ and $[[N]]$ respectively. More generally, if $N>0$ is a real number then we define $[N] = [ \lfloor N \rfloor ]$ and $[[N]] = [[ \lfloor N \rfloor]]$. We denote the indicator function of a set $A$ by $1_A$. Note in particular that if $A$ is a subset of a finite set $B$ then \[ \E_{x \in B} 1_A(x) = \frac{|A|}{|B|} \] denotes the density of $A$ in $B$.

We will work with the Fourier transform on $\Z$ primarily, but also on $\R$. The Fourier transform and inverse Fourier transform of a function $f\:\Z \to \C$  will be denoted by $\hat{f}$ and $\check{f}$ respectively. For functions on $\R$ we will always denote the Fourier transform and its inverse by $\F_\R$ and $\F_\R^{-1}$ respectively.

Recall that the Haar measure on $\Z$ is the counting measure, and the pontyagrin dual $\Z$ is $\T$ with the normalised Lebesgue measure. The Haar measure on $\R$ is the Lebesgue measure and the pontyagrin dual of $\R$ is itself.

\subsection{Normed and Quasi-Normed Spaces}

We will be dealing with various $\ell^p(\Z)$ norms throughout this paper. Although we assume in Theorem \ref{thm:reducedmainthm} that $f \in \ell^{p_1}(\Z)$ and $g \in \ell^{p_2}(\Z)$ with $1<p_1,p_2<\infty$, the condition $\frac{1}{p} = \frac{1}{p_1}+\frac{1}{p_2}$ does not ensure that $p \geq 1$. Therefore, while $f$ and $g$ will always be assumed to lie in a normed space, if $p<1$ then we will be viewing $A_N(f,g)$ as an element of $\ell^p(\Z)$ which is only a quasi-normed space. We outline the tools and differences we need to work in (quasi-)normed spaces.

Let $\{f_i\}_i$ be a countable collection of elements of $\ell^p(\Z)$. If $p \geq 1$ then one has the triangle inequality \[ \left\| \sum_i f_i \right\|_{\ell^p(\Z)} \leq \sum_i \|f_i\|_{\ell^p(\Z)}. \] However, if $p<1$ then we no longer have the triangle inequality. Instead we have the \emph{quasi-triangle inequality} \begin{equation} \label{eqn:quasitriangleinequality} \left\| \sum_i f_i \right\|_{\ell^p(\Z)}^p \leq \sum_i \|f_i\|_{\ell^p(\Z)}^p.
\end{equation} A partial resolution is to note that if $k \in \N$ and $f_1, \dotsc, f_k \in \ell^p(\Z)$ then by (\ref{eqn:quasitriangleinequality}) and Hölder's inequality we have \begin{equation} \label{eqn:alternativequasi} \left\| \sum_{i=1}^k f_i \right\|_{\ell^p(\Z)} \leq k^{\frac{1}{p}-1} \sum_{i=1}^k \|f_i\|_{\ell^p(\Z)}. \end{equation} Thus if $k = O(1)$, for example, we are free to replace the triangle inequality with (\ref{eqn:alternativequasi}) freely.

A simple but useful consequence of the triangle inequality in $\ell^p(\Z)$ is that $A_N$ is bounded as an operator from $\ell^{p_1}(\Z) \times \ell^{p_2}(\Z)$ to $\ell^p(\Z)$ whenever $\frac{1}{p_1}+\frac{1}{p_2}=\frac{1}{p} \leq 1$. Indeed, by applying the triangle inequality in $\ell^p(\Z)$, Hölder's inequality, and translation invariance we obtain \begin{equation} \label{eqn:HolderBanach} \|A_N(f,g)\|_{\ell^p(\Z)} \leq \|f\|_{\ell^{p_1}(\Z)} \|g\|_{\ell^{p_2}(\Z)}. \end{equation} Since we will be dealing with cases where $p<1$, we would like an analogue of (\ref{eqn:HolderBanach}) 

\begin{lemma} \label{lem:holdernonbanach}
Let $1<p_1,p_2<\infty$ and define $\frac{1}{p} = \frac{1}{p_1}+\frac{1}{p_2}$. Then for any measure preserving system $(X,\mu,T)$ we have \begin{equation} \label{eqn:holdernonbanach} \|A_{N;X}(f,g)\|_{L^p(X)} \leqsim_{p_1,p_2} \|f\|_{L^{p_1}(X)} \|g\|_{L^{p_2}(X)} \end{equation} for all $N\geq 1$, $f \in L^{p_1}(X)$, and $g \in L^{p_2}(X)$. The same holds with $A_{N;X}$ replaced by $\Tilde{A}_{N;X}$.
\end{lemma}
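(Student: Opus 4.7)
My plan is to handle the cases $p \geq 1$ and $p < 1$ separately, since the first admits a direct Minkowski argument while the second requires more care in the quasi-Banach setting.

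For $p \geq 1$, $L^p(X)$ is a Banach space, so the triangle inequality is available. Bringing the $L^p(X)$-norm inside the average, applying H\"older's inequality (valid since $\tfrac{1}{p_1}+\tfrac{1}{p_2}=\tfrac{1}{p}\leq 1$), and using the translation invariance $\|h\circ T^k\|_{L^q(X)} = \|h\|_{L^q(X)}$ yields
\[
\|A_{N;X}(f,g)\|_{L^p(X)} \leq \frac{1}{N}\sum_{n=1}^N \|f\|_{L^{p_1}(X)}\|g\|_{L^{p_2}(X)} = \|f\|_{L^{p_1}(X)}\|g\|_{L^{p_2}(X)},
\]
with implicit constant equal to $1$, which is just the analogue of (\ref{eqn:HolderBanach}) on a general measure space.

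For $p<1$ the triangle inequality fails and the naive quasi-triangle inequality only yields a constant of order $N^{(1-p)/p}$, so a more delicate argument is required. The plan is to dominate $A_{N;X}(f,g)$ pointwise by a product of ergodic maximal functions: for any $r_1,r_2>1$ satisfying $\tfrac{1}{r_1}+\tfrac{1}{r_2}=1$, pointwise H\"older's inequality in $n$ gives
\[
|A_{N;X}(f,g)(x)| \leq \Bigl(\frac{1}{N}\sum_{n=1}^N |f(T^{\lfloor\sqrt{n}\rfloor}x)|^{r_1}\Bigr)^{1/r_1}\Bigl(\frac{1}{N}\sum_{n=1}^N |g(T^n x)|^{r_2}\Bigr)^{1/r_2}\leq \bigl(M^{\mathrm{sub}}_T(|f|^{r_1})(x)\bigr)^{1/r_1}\bigl(M_T(|g|^{r_2})(x)\bigr)^{1/r_2},
\]
where $M^{\mathrm{sub}}_T h(x)=\sup_N \tfrac{1}{N}\sum_{n=1}^N |h(T^{\lfloor\sqrt{n}\rfloor}x)|$ is easily dominated by a constant multiple of the standard Birkhoff maximal function $M_T$ via an Abel-summation comparison. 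Applying the $L^p(X)$ H\"older inequality with outer exponents $p_1,p_2$, followed by the Birkhoff maximal inequality on $L^{p_1/r_1}$ and $L^{p_2/r_2}$, closes the estimate to $\lesssim \|f\|_{L^{p_1}(X)}\|g\|_{L^{p_2}(X)}$ provided $r_1<p_1$ and $r_2<p_2$.

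The main obstacle is that the simultaneous constraints $\tfrac{1}{r_1}+\tfrac{1}{r_2}=1$, $r_1<p_1$ and $r_2<p_2$ are compatible only when $\tfrac{1}{p_1}+\tfrac{1}{p_2}<1$, i.e., exactly the range $p>1$ already handled. The remaining quasi-Banach range $p\in(\tfrac{1}{2},1]$ must be reached by interpolation: I would combine the strong $p=1$ bound obtained as the boundary of the $p\geq 1$ case with the trivial endpoint bounds $A_{N;X}\colon L^{p_1}(X)\times L^\infty(X)\to L^{p_1}(X)$ and $A_{N;X}\colon L^\infty(X)\times L^{p_2}(X)\to L^{p_2}(X)$ (both uniform in $N$, again via the maximal inequalities), and invoke bilinear real interpolation in the quasi-Banach category to extend the strong bound throughout $p\in(\tfrac{1}{2},1]$. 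This interpolation step is the technical heart of the argument.
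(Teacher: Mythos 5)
Your $p\geq 1$ step is correct and coincides with the paper's observation (\ref{eqn:HolderBanach}). The real content of the lemma, however, is the quasi-Banach range $\tfrac12<p<1$, and there your proposal has a genuine gap.

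You correctly note that the maximal-function domination only closes when $\tfrac{1}{r_1}+\tfrac{1}{r_2}=1$, $r_1<p_1$, $r_2<p_2$, which forces $\tfrac{1}{p_1}+\tfrac{1}{p_2}<1$, i.e.\ the case you already handled. But your fallback — interpolating from the strong $p=1$ bound together with $A_{N;X}\colon L^{p_1}\times L^\infty\to L^{p_1}$ and $A_{N;X}\colon L^\infty\times L^{p_2}\to L^{p_2}$ — cannot reach the target range. In the reciprocal-exponent diagram, every endpoint you propose has output exponent $\tfrac{1}{q}\leq 1$: the $p=1$ line has $\tfrac{1}{q}=1$ and the two ``$L^\infty$ in one slot'' endpoints have $\tfrac{1}{q}=\tfrac{1}{p_1}<1$ or $\tfrac{1}{q}=\tfrac{1}{p_2}<1$. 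Bilinear real interpolation fills in the convex hull of the input and output exponent tuples, so any interpolant will again have $\tfrac{1}{q}\leq 1$. The region you actually need, $\tfrac{1}{p_1}+\tfrac{1}{p_2}>1$ (equivalently $\tfrac{1}{q}>1$), lies strictly outside that hull; there is no $\theta\in[0,1]$ that produces it. ``Breaking duality'' to $p<1$ always requires some input beyond convexity of the already-known bounds, and your setup does not supply one.

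The paper avoids this by a localization-plus-improving argument rather than interpolation. After transferring to $\Z$, one writes $|A_N(f,g)|\leq\sum_{I\in\mathcal I}1_I\cdot A_N(|f|1_I,|g|1_I)$ for a cover $\mathcal I$ of $\Z$ by $O(N)$-length intervals with bounded overlap. For $f,g$ supported on a single $I$, the output is supported on an $O(N)$-interval, so $\|A_N(f,g)\|_{\ell^p}\leqsim N^{\tfrac1p-1}\|A_N(f,g)\|_{\ell^1}$; the loss $N^{\tfrac1p-1}$ is then exactly cancelled by the $L^p$-improving estimate for $B_Ng(x)=\tfrac1N\sum_{n\leq N}g(x+\lfloor\sqrt n\rfloor-n)$ (Proposition~\ref{prop:improving}), since $\|A_N(f,g)\|_{\ell^1}\leq\|f\|_{\ell^{p_1}}\|B_N|g|\|_{\ell^{p_1'}}\leqsim N^{-(\tfrac1p-1)}\|f\|_{\ell^{p_1}}\|g\|_{\ell^{p_2}}$. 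Summing over $I$ via the quasi-triangle inequality in $\ell^p$ and bounded overlap then gives the claim. The essential ingredient you are missing is precisely this improving estimate for the single shifted average, which is what makes the $p<1$ case accessible without any interpolation.
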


\begin{proof}
By the Calderón Transference Principle it suffices to prove (\ref{eqn:holdernonbanach}) in the integer case. Applying the triangle inequality and utilising non-negativity, we may bound \begin{equation} \label{eqn:reducetointervals} |A_N(f,g)(x)| \leq \sum_{I \in \mathcal{I}} 1_I(x) A_N(|f|1_I, |g|1_I)(x) \end{equation} where $\mathcal{I}$ is a cover of $\Z$ by intervals of length $O(N)$ which overlap $O(1)$. Assume that $f$ and $g$ are supported on one such interval $I$. Then $A_N(f,g)$ is supported on an interval of length $O(N)$ so by Hölder's inequality we have \[ \|A_N(f,g)\|_{\ell^p(\Z)} \leqsim N^{\frac{1}{p}-1} \|A_N(f,g)\|_{\ell^1(\Z)}. \] By the triangle inequality, Fubini's theorem, and translation invariance we can bound \[ \| A_N(f,g)\|_{\ell^1(\Z)} \leq \left\| |f| \cdot \frac{1}{N} \sum_{n=1}^N |g(\cdot+\lfloor \sqrt{n} \rfloor-n)| \right\|_{\ell^1(\Z)} = \| |f| \cdot B_N(|g|) \|_{\ell^1(\Z)} \] where \[ B_Ng(x) = \frac{1}{N} \sum_{n=1}^N g(x+\lfloor \sqrt{n} \rfloor-n). \] Then Hölder's inequality again and Proposition \ref{prop:improving} (see Section \ref{sec:majorarc}) yield \[ \|A_N(f,g)\|_{\ell^1(\Z)} \leq \|f\|_{\ell^{p_1}(\Z)} \|B_Ng\|_{\ell^{p_1'}(\Z)} \leqsim N^{-(\frac{1}{p}-1)} \|f\|_{\ell^{p_1}(\Z)} \|g\|_{\ell^{p_2}(\Z)} \] hence we obtain (\ref{eqn:holdernonbanach}) in the case where $f$ and $g$ are supported on $I$. The result follows by applying the quasi-triangle inequality to (\ref{eqn:reducetointervals}) and using Hölder's inequality.

\end{proof}

\subsection{Littlewood--Paley Theory}

Throughout this paper we fix a cutoff function $\Psi \in C_c^\infty(\R;[0,1])$ which is non-negative, even, supported on $[-1,1]$, and equal to 1 on $[-\frac{1}{2},\frac{1}{2}]$. We allow all implied constants to depend on $\Psi$. For a real number $x$, we define \[ \Psi_{\leq x} = \Psi\left(\frac{\cdot}{2^{\lceil \log x \rceil}}\right), \]  \[ \Psi_x = \Psi_{\leq x} - \Psi_{\leq \frac{x}{2}}, \] and \[ \Psi_{>x} = 1-\Psi_{\leq x}. \]Then $\Psi_{\leq x}$ is supported on $\{ |\xi| \leq 2^{\lceil \log x \rceil} \}$ with $2^{\lceil \log x \rceil} \sim x$. Similarly $\Psi_x$ is supported on $\{ 2^{\lceil \log x \rceil-1}< |\xi| \leq 2^{\lceil \log x \rceil} \}$. We denote the supports of $\Psi_{\leq x}$ and $\Psi_x$ by $\M_{\leq x}$ and $\M_x$ respectively.

We will often localise a function $f\:\Z \to \C$ in frequency space to $\{|\xi| \leq 2^{-j} \}$ by convolving $f$ with $\widecheck{\Psi}_{\leq 2^{-j}}$. We note that the operator $f \mapsto \widecheck{\Psi}_{\leq 2^{-j}}*f$ is bounded on $\ell^p(\Z)$ for any $p \geq 1$. Indeed, the triangle inequality implies $\|\widecheck{\Psi}_{\leq 2^{-j}}*f\|_{\ell^p(\Z)} \leq \|\widecheck{\Psi}_{\leq 2^{-j}} \|_{\ell^1(\Z)} \|f\|_{\ell^p(\Z)}$ so it suffices to show that $\|\widecheck{\Psi}_{\leq 2^{-j}} \|_{\ell^1(\Z)} \leqsim 1$. But this is easily done by splitting the domain of summation into $|y| \leq 2^j$ and $|y|>2^j$. By writing $\Psi_{2^{-j}} = \Psi_{\leq 2^{-j}} - \Psi_{\leq 2^{-j-1}}$ and $\Psi_{>2^{-j}} = 1-\Psi_{\leq 2^{-j}}$ and applying the triangle inequality we see that \begin{equation} \|\widecheck{\Psi}_{\leq 2^{-j}}*f \|_{\ell^p(\Z)}, \|\widecheck{\Psi}_{2^{-j}} *f \|_{\ell^p(\Z)}, \|\widecheck{\Psi}_{> 2^{-j}} *f\|_{\ell^p(\Z)} \leqsim \|f\|_{\ell^p(\Z)} \label{eqn:psiboundednessZ} \end{equation} for all $f \in \ell^p(\Z)$ and $p \geq 1$.

\subsection{Variation Norms}

We will use various standard facts about variation norms throughout the course of the paper. Let $\D$ be any countable ordered set and let $a,b\:\D \to \C$ be sequences of complex numbers. In this paper the ordered sets will be subsets of the natural numbers. As discussed previously, for $0<r <\infty$ the $V^r$ norm of the sequence $(a_N)$ is defined as \[ V^r(a_N)_{N \in \D} = \sup_{N \in \D} |a_N| + \sup_{J \in \N} \sup_{\substack{N_0<\dotsb<N_J \\ N_j \in \D}} \left( \sum_{j=0}^{J-1} |a_{N_{j+1}}-a_{N_j}|^r \right)^\frac{1}{r} \] and \[ V^\infty(a_N)_{N \in \D} = \sup_{N \in \D} |a_N| + \sup_{\substack{M<N \\ M,N \in \D}} |a_M-a_N|. \] Then $V^r$ is a norm on the space of functions from $\D$ to $\C$.

The first fact we will commonly use is that one can bound a $V^r$ norm by an $\ell^r$ norm. Since $\D$ is countable one can then bound an $\ell^r$ norm above by an $\ell^1$ norm. Thus we have
\begin{equation} V^r(a_N)_{N \in \D} \leqsim \left( \sum_{N \in \D} |a_N|^r \right)^\frac{1}{r} \leq \sum_{N \in \D} |a_N|. \label{eqn:variationvsellr} \end{equation}

We now look at the $V^r$ norm of a product. This will be important because we would like to make use of Hölder's inequality. To do so, we will need to split the $V^r$ norm of a product into a product of $V^r$ norms. Indeed, we have
\begin{equation}
\label{eqn:variationofproduct} V^r(a_N b_N)_{N \in \D} \leqsim V^r(a_N)_{N \in \D} V^r(b_N)_{N \in \D}.
\end{equation}

In order to obtain lower bounds on the elements of the indexing set $\D$, the following fact is useful. One has \begin{equation} \label{eqn:orderedpartition} V^r(a_N)_{N \in \D} \leqsim V^r(a_N)_{N \in \D_1} + V^r(a_N)_{N \in \D_2} \end{equation} whenever $\D_1$ and $\D_2$ form a partition of $\D$ such that $N_1<N_2$ whenever $N_1 \in \D_1$ and $N_2 \in \D_2$.

Finally, we note that the $V^\infty$ norm is comparable to the $\ell^\infty$ norm: \begin{equation} \label{eqn:Vinfinity} V^\infty(a_N)_{N \in \D}  \sim \| a\|_{\ell^\infty(\D)}. \end{equation}

\subsection{Gowers Norms}

A common tool used in additive combinatorics is a class of norms called \emph{Gowers norms}.

Given a function $f\:\Z \to \C$ and $h \in \Z$, we define the \emph{differencing} operator $\Delta_h$ by \[ \Delta_h f(x) = f(x) \overline{f(x+h)}. \] Then given $h_1, \dotsc, h_s \in \Z$ we inductively define \begin{equation} \Delta_{h_1, \dotsc, h_s} f(x) = \Delta_{h_1} \dotsm \Delta_{h_s} f(x). \label{eqn:differencingoperator} \end{equation} Note that this definition is independent of any permutations of $h_1, \dotsc, h_s$. Concretely, if $s=2$ then we have \[ \Delta_{h_1,h_2}f(x) = f(x) \overline{f(x+h_1) f(x+h_2)} f(x+h_1+h_2) \] and if $s=3$ we have \begin{multline*} \Delta_{h_1,h_2,h_3} f(x) = \\ f(x) \overline{f(x+h_1) f(x+h_2) f(x+h_1)} f(x+h_1+h_2) f(x+h_1+h_3) f(x+h_2+h_3) \overline{f(x+h_1+h_2+h_3)}. \end{multline*} More generally, if $\sigma \:\C \to \C$ denotes complex conjugation, then \[ \Delta_{h_1, \dotsc, h_s} f(x) = \prod_{\omega_1, \dotsc, \omega_s \in \{0,1\} } \sigma^{\omega_1+\dotsb+\omega_s} f(x+\omega_1 h_1+\dotsb+\omega_s h_s). \] If $h=(h_1, \dotsc, h_s)$ we may denote $\Delta_{h_1, \dotsc, h_s}$ by $\Delta_h$ for convenience.

With this definition in hand we can define the Gowers norms. Let $s \in \N$ and suppose $f\:\Z \to \C$ is supported on a finite set. The unnormalised or global (\emph{Gowers}) \emph{$U^s$ norm} of $f$ is defined by \begin{equation} \label{eqn:Usdefinition} \|f\|_{U^s(\Z)}^{2^s} = \sum_{x,h_1, \dotsc, h_s \in \Z} \Delta_{h_1, \dotsc, h_s} f(x). \end{equation} Note by (\ref{eqn:Usdefinition}) and (\ref{eqn:differencingoperator}) that if $k \leq s$ then \begin{equation} \|f\|_{U^s(\Z)}^{2^s} = \sum_{h_1, \dotsc, h_{s-k} \in \Z} \| \Delta_{h_1, \dotsc, h_{s-k}} f \|_{U^k(\Z)}^{2^k}. \label{eqn:Gowersintermsofgowers} \end{equation} The $U^1$ norm is in fact only a seminorm, not a norm, but for $s \geq 2$ the $U^s$ norm is a genuine norm.

The $U^2$ norm is particularly useful because of the Fourier analytic relation \begin{equation} \label{eqn:U2characterisation}
\|f\|_{U^2(\Z)} = \| \hat{f} \|_{L^4(\T)}.
\end{equation} This connection between the $U^2$ norm and the Fourier transform allows us to use the following tool, which can be found in \cite{PelusePrendivilleInverseNonLinearRoth}, for example.

\begin{lemma} \label{lem:U2inverse}
Let $f\:\Z \to \C$ be a $1$-bounded function with support with finite support. Then \[ \|f\|_{U^2(\Z)}^4 \leq |\supp f| \left| \sum_{x \in \Z} f(x)e(x\xi) \right|^2 \] for some $\xi \in \T$.
\end{lemma}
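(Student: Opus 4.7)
The plan is to prove this as a straightforward consequence of the Fourier analytic description (\ref{eqn:U2characterisation}) of the $U^2$ norm, namely $\|f\|_{U^2(\Z)} = \|\hat{f}\|_{L^4(\T)}$, combined with Parseval's identity and the fact that an $L^4$ norm can always be bounded by an $L^\infty$ norm times an $L^2$ norm.

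More precisely, the first step is to write
\[
\|f\|_{U^2(\Z)}^4 = \int_\T |\hat{f}(\xi)|^4 \, d\xi \leq \|\hat{f}\|_{L^\infty(\T)}^2 \int_\T |\hat{f}(\xi)|^2 \, d\xi = \|\hat{f}\|_{L^\infty(\T)}^2 \|f\|_{\ell^2(\Z)}^2,
\]
where the last equality is Plancherel's theorem on $\Z$. Since $f$ is $1$-bounded and supported on $\supp f$, we have $\|f\|_{\ell^2(\Z)}^2 \leq |\supp f|$, which handles the second factor.

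For the first factor, observe that because $f$ has finite support, $\hat{f}$ is a trigonometric polynomial and hence continuous on the compact group $\T$, so the supremum $\|\hat{f}\|_{L^\infty(\T)}$ is attained at some $\xi_0 \in \T$. Depending on the sign convention used to define $\hat{f}$, the value $|\hat{f}(\xi_0)|$ is either $|\sum_{x} f(x) e(x\xi_0)|$ or $|\sum_x f(x) e(-x\xi_0)|$; in the latter case we simply replace $\xi_0$ by $-\xi_0$, so in either case we can pick $\xi \in \T$ with $\|\hat{f}\|_{L^\infty(\T)}^2 = \left|\sum_x f(x) e(x\xi)\right|^2$. Substituting this into the previous display gives the claim.

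There is no real obstacle here; the statement is essentially a textbook application of Hölder interpolation between $L^2$ and $L^\infty$ for $\hat{f}$, dressed up in combinatorial language via (\ref{eqn:U2characterisation}). The only minor point to be careful about is matching the sign convention of the Fourier transform on $\Z$ with the form of the linear exponential sum appearing on the right-hand side, which as noted above is handled trivially by sending $\xi \mapsto -\xi$.
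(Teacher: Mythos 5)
Your proof is correct and follows the paper's own argument essentially verbatim: invoke the identity $\|f\|_{U^2(\Z)} = \|\hat f\|_{L^4(\T)}$, bound $\|\hat f\|_{L^4}^4 \leq \|\hat f\|_{L^\infty}^2 \|\hat f\|_{L^2}^2$, apply Plancherel, and use $1$-boundedness with finite support to get $\|f\|_{\ell^2}^2 \leq |\supp f|$. The extra remarks about attainment of the $L^\infty$ norm and the sign convention are harmless elaborations that the paper leaves implicit.
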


\begin{proof}
Recall from (\ref{eqn:U2characterisation}) that $\|f\|_{U^2(\Z)} = \|\hat{f} \|_{L^4(\T)}$. Thus by Plancherel's theorem we have \[ \|f\|_{U^2(\Z)}^4 = \|\hat{f} \|_{L^4(\T)}^4 \leq \|\hat{f} \|_{L^\infty(\T)}^2 \|\hat{f} \|_{L^2(\T)}^2 = \| \hat{f} \|_{L^\infty(\T)}^2 \|f \|_{\ell^2(\Z)}^2 \leq |\supp f| \cdot \|\hat{f} \|_{L^\infty(\T)}^2, \] and the result follows.
\end{proof}

\subsection{Shifted Square Functions}

The following theorem on $\ell^p$ bounds of shifted square functions was used in \cite{KMT} (with $\Z$ and $\T$ both replaced by $\R$). We present the integer formulation here and briefly discuss its proof.

\begin{theorem} \label{thm:shifted}
Let $\D$ be a finite $\lambda$-lacunary set for some $\lambda>1$, and let $A>0$, $0<C<\frac{1}{2}$, $d>1$, and $K>1$. Fix $\eta \in \S(\T)$ which vanishes at the origin and suppose $\eta$ is supported on $[-C,C]$ and satisfies \[ \| \partial^{(j)} \eta\|_{L^\infty(\T)} \leq C \] for each $j \in \{0,1,2\}$. For each $N \in \D$ let $\lambda_N \in [-K,K]$ and let $\eta_N=\eta(AN^d \xi)$. Then for any $1<p<\infty$ we have \[ \left\| \left( \sum_{N \in \D} |\F_\R^{-1} \eta_N(\cdot-\lambda_N A N^d) *f|^2 \right)^\frac{1}{2} \right\|_{\ell^p(\Z)} \leqsim_{C,\lambda,d,p} \log K \|f\|_{\ell^p(\Z)} \] for all $f \in \ell^p(\Z)$.
\end{theorem}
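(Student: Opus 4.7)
The plan is to prove Theorem \ref{thm:shifted} by the standard vector-valued Calder\'on--Zygmund framework: handle the $p=2$ case via Plancherel, then upgrade to general $1<p<\infty$ by verifying a H\"ormander-type condition in which the shift parameters $\lambda_N AN^d$ force a $\log K$ loss.

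For the $\ell^2$ endpoint, I would reinterpret the square function as the vector-valued operator $Tf = (T_N f)_{N\in\D}$ taking values in $\ell^2(\Z;\ell^2(\D))$, where $T_N f = (\F_\R^{-1}\eta_N)(\cdot-\lambda_N AN^d)*f$. Since $\eta_N$ is supported in $\{|\xi|\leq C/(AN^d)\}$ and the frequency shift acts by a unimodular factor, Plancherel reduces matters to the uniform estimate $\sum_{N\in\D}|\eta(AN^d\xi)|^2\leqsim 1$. Here the hypotheses that $\eta$ vanishes at the origin and has a bounded derivative are essential: together they give $|\eta(AN^d\xi)|\leqsim \min(1, AN^d|\xi|)$, and then the lacunarity of $\D$ controls the sum by a geometric series on either side of the critical scale $AN^d\sim |\xi|^{-1}$. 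The resulting $\ell^2$ bound is independent of $K$.

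For general $p$, I would view $T$ as an $\ell^2(\D)$-valued Calder\'on--Zygmund operator with kernel $\vec{K}(x)=(K_N(x))_{N\in\D}$ defined by
\[ K_N(x)=(\F_\R^{-1}\eta_N)(x-\lambda_N AN^d)=\frac{1}{AN^d}\bigl(\F_\R^{-1}\eta\bigr)\!\left(\frac{x-\lambda_N AN^d}{AN^d}\right). \]
Because $\eta$ is smooth and compactly supported, each $K_N$ is an approximate identity at scale $AN^d$ centered at $\lambda_N AN^d\in [-KAN^d,KAN^d]$. The main obstacle is then to verify the vector-valued H\"ormander-type inequality
\[ \int_{|x|>2|y|}\|\vec{K}(x-y)-\vec{K}(x)\|_{\ell^2(\D)}\,dx\leqsim \log K, \]
together with its adjoint analogue. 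The logarithmic loss is forced by the fact that the effective support of $K_N$ is spread over $\sim\log K$ dyadic annuli $|x|\sim 2^j AN^d$ due to the shift, and each such annulus contributes $O(1)$ to the integral once one applies the mean-value estimate on $K_N$ furnished by the derivative bounds on $\eta$.

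With the $\ell^2\to\ell^2(\ell^2)$ bound and the H\"ormander condition in hand, vector-valued Calder\'on--Zygmund theory yields a weak-type $(1,1)$ estimate with constant $O(\log K)$. Marcinkiewicz interpolation with the $p=2$ bound extends this to strong $\ell^p\to\ell^p(\ell^2)$ for $1<p<2$, and duality, using that the adjoint has the same structural form with $\lambda_N$ replaced by $-\lambda_N$, covers $2\leq p<\infty$. The principal technical challenge throughout is the sharp logarithmic dependence on $K$, which requires a careful decomposition of the kernel across the shifted dyadic scales.
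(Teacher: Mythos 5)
Your proposal is correct and matches the paper's proof in all essential respects: the $p=2$ estimate via Plancherel (using vanishing at the origin plus lacunarity to control $\sum_N |\eta_N(\xi)|$, uniformly in $K$), followed by a Calder\'on--Zygmund argument in which the mean-zero property of the bad part and the derivative bounds on $\eta$ yield a $\log K$ loss because the shifted kernels occupy $\sim \log K$ dyadic annuli. The only organizational difference is that the paper first applies Khintchine's inequality to linearize the square function into a single randomized scalar multiplier $\tilde\eta = \sum_N \varepsilon_N\eta_N(\cdot)e(-\lambda_N AN^d\,\cdot)$ and then runs scalar CZ theory (weak-$(1,1)$, Marcinkiewicz interpolation, and duality), whereas you keep the operator $\ell^2(\D)$-valued and invoke vector-valued CZ theory directly; these are standard equivalent packagings of the same argument.
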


\begin{proof}
By Khintchine's inequality it suffices to prove that \[ \left\| \sum_{N \in \D} \varepsilon_N \F_\R^{-1}\eta_N(\cdot-\lambda_NAN^d)*f \right\|_{\ell^p(\Z)} \leqsim_{C,\lambda,d,p} 
\log K \|f\|_{\ell^p(\Z)} \] for any $1$-bounded complex numbers $\varepsilon_N$ for $N \in \D$.

Let $\Tilde{\eta} = \sum_{N \in \D} \varepsilon_N \eta_N$. The assumption that $\eta$ vanishes at the origin and is supported on $|\xi| \leq C$ yields $\|\Tilde{\eta}\|_{L^\infty(\T)} \leqsim_{C,\lambda} 1$. Thus Plancherel's theorem yields the result for $p=2$. By duality and the Marcinkiewicz interpolation theorem it then suffices to prove the weak-$(1,1)$ inequality \[ |\{x \in \Z \mid |(\F_\R^{-1}\Tilde{\eta}*f)(x)| \geq \alpha \}| \leqsim_{C,\lambda} \frac{\log K\|f\|_{\ell^1(\Z)}}{\alpha} \] for all $\alpha>0$. One can perform a Calderón--Zygmund decomposition on the integers, and the usual proof that Calderón--Zygmund operators are weak-$(1,1)$ reduces to showing that \[ \sum_{x \in (100Q_j)^c} |(\F_\R^{-1} \Tilde{\eta}* b_j)(x)| \leqsim_{C,\lambda} \log K \|f\|_{\ell^1(Q_j)} \] where $b_j=b1_{Q_j}$, $b$ is the ``bad part" of $f$, and $\{Q_j\}_{j=1}^\infty$ are a disjoint collection of dyadic blocks whose union is \[ \{x \in \Z \mid |(\F_\R^{-1}\Tilde{\eta}* f)(x)| \geq \alpha \}. \] By writing \[ \F_\R^{-1} \Tilde{\eta} = \sum_{N \in \D} \varepsilon_N \sum_{y \in \Z} \frac{1}{AN^d} \F_\R^{-1} \eta\left(\frac{\cdot}{AN^d} \right) \] and using the fact that $b_j$ has mean zero, we may bound \begin{multline*} \sum_{x \in (100Q_j)^c} |(\F_\R^{-1} \Tilde{\eta}* b_j)(x)| \\ \leq \sum_{x \in (100Q_j)^c} \sum_{N \in \D} \sum_{y \in Q_j} \frac{1}{AN^d} \left|\F_\R^{-1} \eta\left(\frac{x-y-\lambda_N AN^d}{AN^d} \right) - \F_\R^{-1} \eta\left(\frac{x-\lambda_N AN^d}{AN^d} \right) \right| |b_j(y)|. \end{multline*} At this point the argument is standard and can be found in Chapter 1 of \cite{stein1993harmonic}.
\end{proof}

\section{Additive Combinatorial Inverse Theorem} \label{sec:inversethm}

We would like to prove a minor arc estimate which will allow us to reduce to the case where $f$ and $g$ are Fourier supported on major arcs.

To control the minor arc contributions, we will need an analogue of Theorem \ref{thm:peluseinverse} for our operator which informally says that if $\Tilde{A}_N(f,g)$ is ``larger" than some threshold $\delta$, then $f_1$ has a Fourier transform with a ``large" contribution on the principal major arc $\M_{\leq \delta^{-O(1)} N^{-\frac{1}{2}}}$. This section is dedicated to proving such an analogue; the minor arc estimate will be handled in the next section.

The goal of this section is to prove the following theorem.

\begin{theorem} \label{thm:inversethm}
Let $N \geq 1$ and $0<\delta \leq 1$. Let $f_0,f_1,f_2\:\Z \to \C$ be $1$-bounded functions supported on $[[N_0]]$ for some $N_0 \sim N$ and suppose \begin{equation} \label{eqn:largeaverage} \left| \sum_{x \in \Z} f_0(x)  \Tilde{A}_N(f_1,f_2)(x) \right| \geq \delta N. \end{equation} Then either $N \leqsim \delta^{-O(1)}$ or there exists $N' \in \N$ with $\delta^{O(1)} N^\frac{1}{2} \leqsim N' \leq N^\frac{1}{2}$ such that \[ \sum_{x \in \Z} |\E_{n \in [N']} f_1(x-n)| \geqsim \delta^{O(1)}N. \]
\end{theorem}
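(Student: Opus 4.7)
The plan is to reduce the bilinear sublinear average to a polynomial-type average, and then to run through a Peluse-type PET/Cauchy--Schwarz scheme while tracking where the ``denominator'' $q$ would appear, showing that in our setting it can always be taken to be $1$.

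First I would perform a change of variables. Writing $n = m^2 + k$ with $m = \lfloor \sqrt{n} \rfloor$ and $0 \leq k \leq 2m$, the upper-half cutoff $\1_{n > N/2}$ together with $n \leq N$ forces $m \sim N^{1/2}$ and $k \in [0, 2m]$ (subject to the cutoff). This rewrites the hypothesis as
\[
\left| \frac{1}{N} \sum_{m \sim N^{1/2}} \sum_{k=0}^{2m} \1_{E}(m,k) \sum_{x \in \Z} f_0(x) f_1(x-m) f_2(x-m^2-k) \right| \geq \delta N,
\]
where $E$ is an explicit range set. Pigeonholing in $k$, there is some $k_* = O(N^{1/2})$ such that, after setting $\tilde f_2(\cdot) = f_2(\cdot - k_*)$ and $M \sim N^{1/2}$, we have
\[
\left| \sum_{x \in \Z} f_0(x) \cdot \frac{1}{M} \sum_{m=1}^M \1_{E_{k_*}}(m) f_1(x-m) \tilde f_2(x-m^2) \right| \gtrsim \delta^{O(1)} M^2.
\]
This is now a genuinely polynomial bilinear average of the type appearing in Peluse's work, but with total length $M \sim N^{1/2}$ rather than $N$.

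Next I would apply an \emph{adapted} version of the argument underlying Theorem \ref{thm:peluseinverse}. In Peluse's proof, one applies PET/Cauchy--Schwarz to reduce to controlling $\|f_1\|_{U^2(\Z)}$ via Lemma \ref{lem:U2inverse}, which in turn produces a frequency $\xi \in \T$ at which $\hat{f_1}$ is large. At this step, the exponential sum estimate associated to the defining sequences dictates where $\xi$ must lie. For the polynomial pair $(n, P(n))$ the localization is to a union of major arcs centered at rationals $a/q$, which is why Peluse's conclusion carries a denominator $q$. In our situation the sequence pair $(\lfloor \sqrt n \rfloor, n)$ satisfies the stronger exponential sum bound advertised in the overview: if $\bigl|\frac{1}{N}\sum_{n=1}^N e(\zeta \lfloor\sqrt n\rfloor + \xi n)\bigr| \geq \delta$, then $\|\xi\|_\T \lesssim \delta^{-O(1)} N^{-1}$. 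Replaying Peluse's PET scheme with this improved bound at each Cauchy--Schwarz step then pins the extracted frequency $\xi$ to the principal major arc $\M_{\leq \delta^{-O(1)} M^{-1}}$, so no nontrivial $q$ is ever produced and one recovers the large $\ell^1$ mass $\sum_x |\E_{n \in [N']} f_1(x-n)| \gtrsim \delta^{O(1)} N$ directly, for some $N' \in \N$ with $\delta^{O(1)} M \lesssim N' \leq M$.

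The main obstacle is the last step: one must run Peluse's PET/degree-lowering induction carefully so that each Cauchy--Schwarz application preserves the principal-major-arc structure. Concretely, after the change of variables the $\tilde f_2(x - m^2)$ factor brings in a quadratic phase whose large-spectrum is a priori constrained only to a union of major arcs; the improved exponential sum bound is needed precisely to show that, when combined with the linear factor $f_1(x-m)$ coming from a \emph{sublinear} sequence, the joint minor arc contribution already forces the relevant $\xi$ into the principal arc. Once this has been verified, the density-increment step of Peluse (which converts a Fourier concentration into $\ell^1$ mass of a local average) goes through unchanged with $q = 1$, yielding the conclusion after rewriting the resulting $N' \sim M \sim N^{1/2}$ bound in the form stated.
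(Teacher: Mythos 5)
There is a genuine gap, and it is precisely in the step you flag as the main obstacle. Pigeonholing in $k$ to fix a single $k_*$ and pass to the polynomial average
\[
\sum_{x \in \Z} f_0(x) \cdot \frac{1}{M}\sum_{m=1}^M f_1(x-m)\,\Tilde{f}_2(x-m^2)
\]
irrevocably discards the information that forces the principal major arc to dominate. The reason the exponential sum $\E_{n\in[N]}\1_{n>N/2}\,e(\zeta\lfloor\sqrt n\rfloor+\xi n)$ has \emph{principal}-arc behaviour (Lemma~\ref{lem:exponentialsum}) is that after the same change of variables it acquires a Dirichlet-kernel weight $D_{2m+1}(\xi)/\sqrt N$ multiplying the Weyl phase $e(\zeta m+\xi m^2)$; that weight is small off the principal arc and is what kills the contributions from nontrivial rationals $a/q$. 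Once you pigeonhole in $k$, that weight is gone, and all that remains is a genuine quadratic Weyl sum. If you now replay Peluse's PET scheme on the polynomial form, the exponential sum that comes out at each Cauchy--Schwarz step is $\E_{m\in[M]}e(\zeta m+\xi m^2)$, which is large near \emph{every} rational $a/q$ with $q\leqsim\delta^{-O(1)}$; you will therefore recover Theorem~\ref{thm:peluseinverse}'s conclusion with a denominator $q$ that you cannot remove. Your sentence ``the linear factor $f_1(x-m)$ coming from a sublinear sequence'' is where the confusion enters: after the change of variables $m$ is a linear (not sublinear) index, and the sublinear structure lives only in the discarded $k$-sum.

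The paper avoids this by compartmentalizing the change of variables. It is used \emph{only} to invoke Peluse--Prendiville (Theorem~\ref{thm:peluseprendiville}) and extract $\|f_2\|_{U^5(\Z)}^{2^5}\geqsim\delta^{O(1)}N^6$ (Proposition~\ref{prop:U5inverse}); information about $f_1$ is not sought at that stage. A separate dual-function argument (Proposition~\ref{prop:dualinverse}) then yields $U^5$ control of $\Tilde{A}_N^{**}(f_0,f_1)$, and it is crucial that this dual function retains the \emph{original} sequence pair $(\lfloor\sqrt n\rfloor,n)$. Degree lowering (Theorem~\ref{thm:degreelowering}) is applied to \emph{this} object, and inside the degree-lowering proof the exponential sum produced after Cauchy--Schwarz and Plancherel is $\E_{n\in[N]}\1_{n>N/2}\,e(-\zeta\lfloor\sqrt n\rfloor-\xi n)$, to which Lemma~\ref{lem:exponentialsum} applies and pins $\xi$ to the principal arc. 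Only after three rounds of degree lowering and the $U^2$ inverse lemma does the proof return to the change of variables (now at the level of $(\Tilde{A}_N^{**}(f_0,f_1))^{\widehat{}}(\xi)$, with $\xi$ already known to be small), followed by pigeonholing and summation by parts to extract $\sum_x|\E_{n\in[N']}f_1(x-n)|\geqsim\delta^{O(1)}N$. To repair your proposal you would need to swap the order: set up the dual-function/degree-lowering machinery before discarding the Dirichlet-kernel structure, which is essentially what the paper does.
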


Note that the Fourier transform of $\E_{n \in [N']} f_1(x-n)$ is \[ \hat{f}_1(\xi) \E_{n \in [N']} e(-n\xi) = O(\delta^{-O(1)} N^{-\frac{1}{2}} \|\xi\|_\T^{-1}) \hat{f}_1(\xi), \] which is large when $\xi \in \M_{\leq \delta^{-O(1)} N^{-\frac{1}{2}}}$, formalising the intuition discussed above.

As discussed above, we will manipulate (\ref{eqn:largeaverage}) by a change of variables into a form where one can use Theorem \ref{thm:peluseinverse} directly. However, since our operator only has a large contribution on the principal major arc, we will use Theorem 5.6 of \cite{PelusePrendivilleInverseNonLinearRoth}, which is one of the earlier results used to prove Theorem \ref{thm:peluseinverse} (at least in the case where $P(n)=n^2$).

\begin{theorem}[Peluse, Prendiville] \label{thm:peluseprendiville}
Let $N \geq 1$ and $0<\delta \leq 1$. Let $f_0,f_1,f_2\:\Z \to \C$ be $1$-bounded functions supported on $[[N_0]]$ for some $N_0 \sim N$ and suppose \begin{equation} \left| \sum_{x \in \Z} f_0(x) \cdot \frac{1}{N} \sum_{n=1}^N f_1(x-n) f_2(x-n^2) \right| \geq \delta N^2. \end{equation} Then either $N \leqsim \delta^{-O(1)}$ or $\|f_2\|_{U^5(\Z)}^{2^5} \geqsim \delta^{O(1)} N^6$.
\end{theorem}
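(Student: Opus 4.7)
The plan is to establish Theorem~\ref{thm:peluseprendiville} via the PET (polynomial exhaustion technique) induction scheme of Bergelson--Leibman, implemented through a sequence of Cauchy--Schwarz / van der Corput iterations that isolate $f_2$ and control the trilinear form by a Gowers norm on $f_2$ alone. Writing
\[ \Lambda(f_0,f_1,f_2) = \sum_{x,n} f_0(x) f_1(x-n) f_2(x-n^2), \]
the hypothesis reads $|\Lambda| \gtrsim \delta N^2$. A preliminary substitution $y = x - n$ puts $f_1$ in the form $f_1(y)$ with no polynomial shift, making it the natural target for a first Cauchy--Schwarz.

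I would first apply Cauchy--Schwarz in $y$ to obtain $|\Lambda|^2 \lesssim N \sum_y |H(y)|^2$, where $H(y) = \sum_n f_0(y+n) f_2(y+n-n^2)$. Expanding $|H(y)|^2$ by parametrising $m = n + h$ and then performing an $n$-dependent shift of $y$ eliminates one copy of $f_0$ in favour of its difference $\Delta_h f_0$, while producing two copies of $f_2$ whose inner polynomial arguments in $n$ now differ by a term linear in $n$ (the quadratic $n^2$ contributions cancel, leaving a cross term $2nh$). This is the standard van der Corput degree-lowering step. I would then iterate the process four more times, each Cauchy--Schwarz introducing a fresh difference parameter $h_i$, each reducing the effective polynomial complexity of the remaining $f_0,f_1$-type factors, and each doubling the number of copies of $f_2$. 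After five rounds the $f_0,f_1$-type factors can be bounded trivially by $1$, while the $f_2$ factors assemble into $\Delta_{h_1,\dotsc,h_5} f_2$ evaluated at arguments depending polynomially on $n$ and the $h_i$. A final summation over $n$ (by equidistribution of the residual linear polynomial, or crude counting) collapses those shifts and yields $\|f_2\|_{U^5(\Z)}^{2^5} \gtrsim \delta^{O(1)} N^6$, unless the iteration forces $N \lesssim \delta^{-O(1)}$ in order to keep all shifted indices inside $[[N_0]]$.

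The main obstacle is bookkeeping: tracking the polynomial system through each Cauchy--Schwarz iteration to verify that the five successive shifts $h_1,\dotsc,h_5$ remain ``linearly independent enough,'' so that the endpoint is a genuine $U^5$ norm rather than a degenerate lower-order quantity; controlling boundary terms where shifted indices exit $[[N_0]]$; and propagating the losses $\delta^{O(1)}$ consistently through the $2^5$-fold doubling so as to retain polynomial dependence on $\delta$ at the end. This is precisely the content of Theorem~5.6 of \cite{PelusePrendivilleInverseNonLinearRoth}, whose argument I would follow.
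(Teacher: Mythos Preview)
The paper does not prove this theorem; it is quoted from the literature as Theorem~5.6 of \cite{PelusePrendivilleInverseNonLinearRoth} and used as a black box. Your proposal correctly identifies both the source and the PET/van~der~Corput mechanism behind it, so there is nothing to compare against on the paper's side.

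That said, your sketch of the argument is imprecise in a way that would matter if you actually tried to carry it out. The naive ``five rounds of Cauchy--Schwarz'' picture is not quite how the $U^5$ exponent arises: the PET induction for the system $(n,n^2)$ does not simply produce one new differencing parameter per step applied directly to $f_2$. Rather, the first few van~der~Corput steps linearise the quadratic, reducing to a multilinear form whose complexity (in the sense of the number of further Cauchy--Schwarz steps needed to isolate $f_2$) determines the Gowers degree; the count comes out to five for this particular configuration, but verifying this requires tracking the full polynomial system through the Bergelson--Leibman weight ordering rather than asserting that each step contributes exactly one $h_i$ to a differencing of $f_2$. Your statement that ``the quadratic $n^2$ contributions cancel, leaving a cross term $2nh$'' is correct for the first step, but the subsequent steps involve forms with several linear-in-$n$ shifts on copies of $f_0$ and $f_2$ simultaneously, and disentangling these is where the actual bookkeeping lies. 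Since you defer to the cited reference for the details this is not a gap in your proposal, but the informal narrative you give would not by itself yield the correct exponent.
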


We will then modify the proof used in \cite{PelusePrendivilleInverseNonLinearRoth} in order to obtain principal major arc behaviour in $f_1$ instead of just major arc behaviour.

\subsection{Gowers Norms Control Averages}

The first step in the proof of Theorem \ref{thm:inversethm} is to show that Gowers norms control the averages $\Tilde{A}_N(f_1,f_2)$. More precisely, we show that under the assumptions of Theorem \ref{thm:inversethm} the $U^s$ norm of $f_2$ must be large for some $s$.

\begin{proposition} \label{prop:U5inverse}
Under the hypotheses of Theorem \ref{thm:inversethm} either $N \leqsim \delta^{-O(1)}$ or $\|f_2\|_{U^5(\Z)}^{2^5} \geqsim \delta^{O(1)} N^6$.
\end{proposition}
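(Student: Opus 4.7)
The strategy is a change of variables to convert $\Tilde{A}_N(f_1, f_2)$ into a polynomial average matching the hypothesis of Theorem~\ref{thm:peluseprendiville}, after which that theorem is applied directly. Writing each $n \in (N/2, N]$ uniquely as $n = m^2 + k$ with $m = \lfloor \sqrt n \rfloor$ and $0 \leq k \leq 2m$, I first rewrite
\[
\sum_{x \in \Z} f_0(x) \Tilde{A}_N(f_1, f_2)(x) = \frac{1}{N} \sum_{(m, k) \in \mathcal R} \sum_x f_0(x) f_1(x - m) f_2(x - m^2 - k),
\]
where $\mathcal R = \{(m,k) : 0 \leq k \leq 2m,\ N/2 < m^2 + k \leq N\}$; for $m$ in the interior of $[\sqrt{N/2}, \sqrt N]$ the full range $\{0, 1, \ldots, 2m\}$ of $k$ is admissible, and the $O(1)$ boundary values of $m$ contribute at most $O(\sqrt N)$ to the total, which is absorbed into the case $N \lesssim \delta^{-O(1)}$ provided $N \gtrsim \delta^{-2}$.

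I would then pigeonhole over $k \in \{0, 1, \ldots, 2\lfloor\sqrt N\rfloor\}$, a set of size $\sim \sqrt N$, to produce some $k^\ast$ for which
\[
\left| \sum_{m \in I} \sum_x f_0(x) f_1(x - m) f_2(x - m^2 - k^\ast) \right| \gtrsim \delta N^{3/2},
\]
where $I \subseteq [\sqrt{N/2}, \sqrt N]$ is an interval of length $L \sim \sqrt N$. Letting $\Tilde{f}_2 = f_2(\cdot - k^\ast)$ so that $f_2(\cdot - m^2 - k^\ast) = \Tilde{f}_2(\cdot - m^2)$, and dividing by $L$, the bound becomes
\[
\left| \sum_x f_0(x) \cdot \frac{1}{L} \sum_{m \in I} f_1(x - m) \Tilde{f}_2(x - m^2) \right| \gtrsim \delta L^2,
\]
which is (up to translating the summation range from $[1, L]$ to the interval $I$) exactly the hypothesis of Theorem~\ref{thm:peluseprendiville} with $L$ playing the role of $N$ and the functions supported on an interval of size comparable to $L^2 \sim N$.

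Applying Theorem~\ref{thm:peluseprendiville} then yields a Gowers $U^5$ lower bound on $\Tilde{f}_2$, and by translation invariance of the Gowers norm $\|\Tilde{f}_2\|_{U^5(\Z)} = \|f_2\|_{U^5(\Z)}$, which transfers the bound to $f_2$. The main technical obstacle I anticipate is justifying the translation-invariance of Theorem~\ref{thm:peluseprendiville} (i.e.\ that its conclusion still holds when the averaging runs over an arbitrary interval of length $L$ rather than the canonical $[1, L]$, which should follow from the translation-invariant nature of its proof), together with the careful bookkeeping of how the exponents of $\delta$ and $N$ propagate through the pigeonhole step and the various shifts in order to produce the final bound $\|f_2\|_{U^5(\Z)}^{2^5} \gtrsim \delta^{O(1)} N^6$ claimed in the proposition.
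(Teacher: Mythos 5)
Your overall strategy — reparametrizing $n = m^2+k$, pigeonholing in the offset $k$, and applying Theorem~\ref{thm:peluseprendiville} to $f_2(\cdot-k^*)$ via translation invariance of the $U^5$ norm — is exactly the paper's strategy, and the bookkeeping of powers of $\delta$ and $N$ works out. However, there is a genuine gap in the final step. After pigeonholing in $k$, the interval $I=I_{k^*}\subseteq[\sqrt{N/2},\sqrt{N}]$ of admissible $m$ is \emph{not} guaranteed to have length $L\sim\sqrt{N}$: all you get from the trivial bound $|\sum_{m\in I}\sum_x(\cdots)|\lesssim |I|\cdot N$ is $L\gtrsim\delta\sqrt{N}$. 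For $k^*$ near $2\sqrt N$ the constraints $k^*\le 2m$ and $m^2+k^*\le N$ pinch $I$ to length $o(\sqrt{N})$, in which case the functions (supported on an interval of size $\sim N$) are not supported at the scale $L^2$ required by Theorem~\ref{thm:peluseprendiville} with parameter $L$. Moreover, the ``translation invariance'' of Theorem~\ref{thm:peluseprendiville} you are invoking is not automatic: shifting $m\mapsto m+a$ turns $m^2$ into $m^2+2am+a^2$, producing a cross term $2am$ that couples $m$ with the argument of $f_2$, so you would really need a version of the theorem for the polynomial $P(m)=m^2+2am$ with $a\sim\sqrt{N}$ rather than $m^2$ — which is not what is cited here.

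The paper closes this gap with a second pigeonhole, which is the missing idea: since $I=(a,b]$ has both endpoints $a,b\sim\sqrt{N}$ (even when $|I|$ is small), write $\sum_{m\in I}=\sum_{m\le b}-\sum_{m\le a}$ and pigeonhole to a single prefix sum $\sum_{m=1}^{M}$ with $M\sim\sqrt{N}$. This yields
\[
\delta M^2 \lesssim \left|\sum_{x}f_0(x)\cdot\frac{1}{M}\sum_{m=1}^M f_1(x-m)f_2(x-k^*-m^2)\right|,
\]
which matches the hypothesis of Theorem~\ref{thm:peluseprendiville} exactly (prefix sum from $1$, functions supported on $[[M_0]]$ with $M_0\sim M^2\sim N$) with no appeal to translation of the summation range. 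Once you have this, your remaining steps — translation invariance of the Gowers norm and the transfer back to $f_2$ — are correct.
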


\begin{proof}
By assumption we have \[ \delta N \leq \left| \sum_{x \in \Z} f_0(x) \cdot \frac{1}{N} \sum_{n=1}^N 1_{n>\frac{N}{2}} f_1(x-\lfloor \sqrt{n} \rfloor) f_2(x-n) \right|. \] We split $[N]$ into intervals on which $\lfloor \sqrt{n} \rfloor$ is constant. Thus \[ \delta N \leq \left| \sum_{x \in \Z} f_0(x) \cdot \frac{1}{N} \sum_{m \leq \sqrt{N}} \sum_{m^2 \leq n <(m+1)^2} 1_{n>\frac{N}{2}} f_1(x-m) f_2(x-n) \right| + O (N^\frac{1}{2}) \] so as long as $N \geqsim \delta^{-O(1)}$ we have \[ \delta N \leqsim \left| \sum_{x \in \Z} f_0(x) \cdot \frac{1}{N} \sum_{m \leq \sqrt{N}} \sum_{m^2 \leq n<(m+1)^2} 1_{n>\frac{N}{2}} f_1(x-m) f_2(x-n) \right|. \] We change variables in $n$ to obtain \[ \delta N \leqsim \left| \sum_{x \in \Z} f_0(x) \cdot \frac{1}{N} \sum_{m \leq \sqrt{N}} \sum_{n=0}^{2m} 1_{n+m^2>\frac{N}{2}} f_1(x-m) f_2(x-n-m^2) \right|, \] and pull out the sum in $n$ and apply the triangle inequality so that \[ \delta N \leqsim \frac{1}{\sqrt{N}} \sum_{n \leq 2\sqrt{N}} \left| \sum_{x \in \Z} f_0(x) \cdot \frac{1}{\sqrt{N}} \sum_{m \leq \sqrt{N}} 1_{n+m^2>\frac{N}{2}} 1_{0 \leq n \leq 2m} f_1(x-m) f_2(x-n-m^2) \right|. \] By the pigeonhole principle there exists $n$ such that \[ \delta N \leqsim \left| \sum_{x \in \Z} f_0(x) \cdot \frac{1}{\sqrt{N}} \sum_{m \leq \sqrt{N}} 1_{m \geq \max(\sqrt{\frac{N}{2}-n},\frac{n}{2})} f_1(x-m) f_2(x-n-m^2) \right|. \] Note that for all $N \geqsim 1$ we have $\max(\sqrt{\frac{N}{2}-n},\frac{n}{2}) \sim \sqrt{N}$. Thus by the triangle inequality and the pigeonhole principle we have \begin{equation} \delta M^2 \leqsim \left| \sum_{x \in \Z} f_0(x) \cdot \frac{1}{M} \sum_{m=1}^M f_1(x-m) f_2(x-n-m^2) \right| \label{eqn:U5changeofvariables} \end{equation} for some $M \sim \sqrt{N}$ (which may depend on $n$). Since $f_0,f_1,f_2(\cdot-n)$ are $1$-bounded functions supported on $[[M_0]]$ with $M_0 \sim M^2$, we may apply Theorem \ref{thm:peluseprendiville} to obtain $\|f_2(\cdot-n) \|_{U^5(\Z)}^{2^5} \geqsim \delta^{O(1)} (M^2)^6 \sim \delta^{O(1)} N^6$. But Gowers norms are translation invariant so $\|f_2\|_{U^5(\Z)}^{2^5} \geqsim \delta^{O(1)} N^6$.
\end{proof}

\subsection{Gowers Norms Control the Dual Function}

The conclusion of Proposition \ref{prop:U5inverse} does not contain any information about the sequences $\lfloor \sqrt{n} \rfloor$ and $n$ along which we are sampling our functions. The solution to this is to show that under the same hypotheses, a \emph{dual function} associated to the trilinear form \[ (f_0,f_1,f_2) \mapsto \sum_{x \in \Z} f_0(x) \Tilde{A}_N(f_1,f_2)(x) \] (which does contain information about these sequences) also has a large $U^5$ norm. The dual function is defined as \[ \Tilde{A}_N^{**}(f_0,f_1)(x) = \frac{1}{N} \sum_{n=1}^N 1_{n>\frac{N}{2}} f_0(x+n) f_1(x-\lfloor \sqrt{n} \rfloor+n). \] Note that \[ \sum_{x \in \Z} f_0(x) \Tilde{A}_N(f_1,f_2)(x) = \sum_{x \in \Z} f_2(x) \Tilde{A}_N^{**}(f_0,f_1)(x). \] The choice of notation is explained by the fact that there are two associated dual functions but only this one will relevant for now. We will see the other in the next section.

\begin{proposition} \label{prop:dualinverse}
Under the hypotheses of Theorem \ref{thm:inversethm} one also has $\|\Tilde{A}_N^{**}(f_0,f_1) \|_{U^5(\Z)}^{2^5} \geqsim \delta^{O(1)} N^6$.
\end{proposition}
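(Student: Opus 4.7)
The approach is a bootstrap that re-applies Proposition \ref{prop:U5inverse} with the (conjugated) dual function placed in the third slot of the trilinear form. The key identity, valid for any finitely-supported $g_0,g_1,g_2\:\Z \to \C$, is the change-of-variable relation
\[ \Lambda(g_0,g_1,g_2) := \sum_{x \in \Z} g_0(x) \Tilde{A}_N(g_1,g_2)(x) = \sum_{x \in \Z} g_2(x) \Tilde{A}_N^{**}(g_0,g_1)(x), \]
which was already recorded in the discussion preceding the statement of Proposition \ref{prop:dualinverse}.

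First I would set $F := \Tilde{A}_N^{**}(f_0,f_1)$ and rewrite the hypothesis of Theorem \ref{thm:inversethm} in the form $|\sum_x f_2(x) F(x)| \geq \delta N$. Since $f_2$ is $1$-bounded and supported on $[[N_0]]$ with $N_0 \sim N$, one has $\|f_2\|_{\ell^2(\Z)}^2 \leqsim N$, so Cauchy--Schwarz gives
\[ \|F\|_{\ell^2(\Z)}^2 \geqsim \delta^2 N. \]
I note here that $F$ is $1$-bounded (as a normalised sum of at most $N$ products of $1$-bounded terms) and, by a brief support computation, is supported on $[[N_0']]$ for some $N_0' \sim N$.

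Next, applying the same change-of-variable identity with $g_2 = \overline{F}$, the above lower bound translates into
\[ |\Lambda(f_0,f_1,\overline{F})| = \sum_x |F(x)|^2 \geqsim \delta^2 N. \]
Thus the triple $(f_0, f_1, \overline{F})$ satisfies the hypotheses of Theorem \ref{thm:inversethm} with a new threshold $\delta' \sim \delta^2$. Applying Proposition \ref{prop:U5inverse} to this triple yields either $N \leqsim (\delta')^{-O(1)} = \delta^{-O(1)}$ (the excluded case) or
\[ \|\overline{F}\|_{U^5(\Z)}^{2^5} \geqsim (\delta')^{O(1)} N^6 = \delta^{O(1)} N^6. \]
The $U^5$ norm is invariant under complex conjugation---the defining product over $\omega \in \{0,1\}^5$ in (\ref{eqn:Usdefinition}) simply flips the roles of conjugated and unconjugated factors, and the resulting sum equals the complex conjugate of $\|F\|_{U^5(\Z)}^{2^5}$, which is real---so $\|F\|_{U^5(\Z)}^{2^5} \geqsim \delta^{O(1)} N^6$, as claimed.

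The only mildly subtle step is recognising that Proposition \ref{prop:U5inverse} can be re-applied with $\overline{F}$ in the third slot; once this is set up, the remainder is a routine combination of the change-of-variable identity, the Cauchy--Schwarz inequality, and Proposition \ref{prop:U5inverse} itself, with no new analytic input required.
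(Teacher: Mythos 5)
Your proof is correct and follows essentially the same route as the paper: lower-bound $\|\Tilde{A}_N^{**}(f_0,f_1)\|_{\ell^2(\Z)}^2$ via Cauchy--Schwarz, recognise that quantity as the trilinear form $\Lambda(f_0,f_1,\overline{F})$, then invoke Proposition \ref{prop:U5inverse} together with conjugation invariance of the $U^5$ norm. The only difference is presentational: the paper verifies the identity $\|F\|_{\ell^2}^2 = \Lambda(f_0,f_1,\overline{F})$ by expanding the square and changing variables explicitly, whereas you appeal directly to the duality identity stated just before the proposition; the underlying computation is identical.
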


\begin{proof}
By assumption we have \[ \delta N \leq \left| \sum_{x \in \Z} f_0(x) \cdot \frac{1}{N} \sum_{n=1}^N 1_{n>\frac{N}{2}} f_1(x-\lfloor \sqrt{n} \rfloor) f_2(x-n) \right|. \] Making the change of variables $x \mapsto x+n$ yields \[ \delta N \leq \left|\sum_{x \in \Z} f_2(x) \cdot \frac{1}{N} \sum_{n=1}^N 1_{n>\frac{N}{2}} f_0(x+n) f_1(x-\lfloor \sqrt{n} \rfloor+n) \right| \] hence the Cauchy--Schwarz inequality implies \[ \delta^2 N \leqsim \sum_{x \in \Z} \left| \frac{1}{N} \sum_{n=1}^N 1_{n>\frac{N}{2}} f_0(x+n) f_1(x-\lfloor \sqrt{n} \rfloor+n) \right|^2. \] Expanding out the square, we have \[ \delta^2 N \leqsim \sum_{x \in \Z} \frac{1}{N} \sum_{n=1}^N 1_{n>\frac{N}{2}} f_0(x+n) f_1(x-\lfloor \sqrt{n} \rfloor+n) \overline{\frac{1}{N} \sum_{m=1}^N 1_{m>\frac{N}{2}} f_0(x+m)f_1(x-\lfloor \sqrt{m} \rfloor+m)}. \] Finally, making the change of variables $x \mapsto x-n$ yields \[ \delta^2 N \leqsim \sum_{x \in \Z} f_0(x) \cdot \frac{1}{N} \sum_{n=1}^N 1_{n>\frac{N}{2}} f_1(x-\lfloor \sqrt{n} \rfloor) \overline{\Tilde{A}_N^{**}(f_0,f_1)(x-n)}, \] or in other words \[ \delta^2 N \leqsim \left| \sum_{x \in \Z} f_0(x) \Tilde{A}_N(f_1, \overline{\Tilde{A}_N^{**}(f_0,f_1)})(x) \right|. \] Note that $\overline{\Tilde{A}_N^{**}(f_0,f_1)}$ is a $1$-bounded function supported on $[N_0']$ with $N_0' \sim N$. Thus we can apply Proposition \ref{prop:U5inverse} and the invariance of Gowers norms under complex conjugation to deduce $\|\Tilde{A}_N^{**}(f_0,f_1)\|_{U^5(\Z)}^{2^5} \geqsim \delta^{O(1)}N^6$.
\end{proof}

\subsection{Degree Lowering}

We now have $U^5$ control of the dual function $\Tilde{A}_N^{**}(f_0,f_1)$. However, due to the connections between the $U^2$ norm and the Fourier transform (as demonstrated by (\ref{eqn:U2characterisation}) and Lemma \ref{lem:U2inverse}) we would like to have $U^2$ control. The point of this subsection is to show that $U^s$ control of the dual function implies $U^{s-1}$ control, a result known as degree lowering.

We will need the following two lemmas, which can be found in \cite{PelusePrendivilleInverseNonLinearRoth}. The first allows us to ``swap" differencing operators and dual functions.

\begin{lemma}[Dual--Difference Interchange] \label{lem:dualdifferenceinterchange}
For each $n \in [N]$ let $F_n\:\Z \to \C$ be a $1$-bounded function supported on $[[N_0]]$ with  $N_0 \sim N$. Set \[ F(x)= \E_{n \in [N]} F_n(x).\] Then for any function $\xi \:\Z^s \to \T$ and finite set $X \subseteq \Z^s$ we have \begin{multline*} \left(\frac{1}{N^{s+1}} \sum_{h \in X} \left| \sum_{x \in \Z} \Delta_h F(x) e(\xi(h)x) \right| \right)^{2^s} \\ \leqsim_s \frac{1}{N^{2s+1}} \sum_{h^{(0)},h^{(1)} \in X} \left| \sum_{x \in \Z} \E_{n \in [N]} \Delta_{h^{(0)}-h^{(1)}} F_n(x) e(\xi(h^{(0)};h^{(1)})x) \right|, \end{multline*} where \[ \xi(h^{(0)};h^{(1)}) = \sum_{\omega \in \{0,1\}^s} (-1)^{|\omega|} \xi(h^{(\omega)}) \] and $h^{(\omega)} = (h^{(\omega_1)}_1, \dotsc, h^{(\omega_s)}_s)$.
\end{lemma}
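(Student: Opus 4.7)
This is a dual--difference interchange of the type proved by Peluse and Prendiville (compare Lemma 5.4 in \cite{PelusePrendivilleInverseNonLinearRoth}). I will follow their iterated Cauchy--Schwarz strategy: apply Cauchy--Schwarz $s$ times, once per coordinate of $h$, each application doubling one coordinate $h_i$ into a pair $(h_i^{(0)}, h_i^{(1)})$ while simultaneously contracting the $n$-indices toward a single surviving average $\E_n$.

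First, introduce phases $c \colon X \to \C$ with $|c(h)| = 1$ and $c(h) \sum_x \Delta_h F(x) e(\xi(h) x) = |\sum_x \Delta_h F(x) e(\xi(h) x)|$, so that the quantity on the LHS equals the real number $L := \frac{1}{N^{s+1}} \sum_{h \in X} c(h) \sum_x \Delta_h F(x) e(\xi(h) x)$. Expand $\Delta_h F(x) = \prod_{\omega \in \{0,1\}^s} \sigma^{|\omega|} F(x + \omega \cdot h)$ and substitute $F = \E_{n \in [N]} F_n$ in each of the $2^s$ factors, producing $2^s$ independent averages $\E_{n_\omega}$.

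Now iterate the following step $s$ times, once for each coordinate $i \in \{1,\ldots,s\}$. At the $i$-th step, apply the Cauchy--Schwarz inequality in such a way that the coordinate $h_i$ is doubled into $(h_i^{(0)}, h_i^{(1)})$: this squares the current quantity (so after $s$ steps the LHS becomes $L^{2^s}$), pairs up the $n_\omega$-indices that differ only in the $i$-th coordinate of $\omega$ via the diagonalization inherent in $|{\cdot}|^2$, and contributes a difference $\xi(\cdots, h_i^{(0)}, \cdots) - \xi(\cdots, h_i^{(1)}, \cdots)$ to the accumulated phase. A change of variables $x \mapsto x - (\text{linear shift in the new } h^{(0)}, h^{(1)})$ is performed at each step to realign the $F_{n_\omega}$ arguments so that the pairing produces a differencing $\Delta_{h_i^{(0)} - h_i^{(1)}}$ in the newly doubled coordinate.

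After all $s$ applications, the $h$ variable has been fully doubled to $(h^{(0)}, h^{(1)}) \in X \times X$, all $2^s$ indices $n_\omega$ have collapsed into a single $\E_n$, the product becomes $\Delta_{h^{(0)} - h^{(1)}} F_n(x)$, and the phase is the alternating sum $\xi(h^{(0)}; h^{(1)}) = \sum_\omega (-1)^{|\omega|} \xi(h^{(\omega)})$. The prefactor $1/N^{2s+1}$ comes from combining $(1/N^{s+1})^{2^s}$ with the accumulated $|X|/N^{s+1}$ factors from the $s$ Cauchy--Schwarz steps (using $|X| \le N^s$, which is the natural regime). The main obstacle is the combinatorial and algebraic bookkeeping through the iteration: verifying that the successive $n_\omega$ pairings telescope to a single $\E_n$, that the accumulated phase has the correct alternating-sign structure, that the successive $x$-shifts align to produce the target differencing, and that the power counting in $N$ balances out. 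These verifications are essentially a translation of the corresponding calculations in \cite{PelusePrendivilleInverseNonLinearRoth}.
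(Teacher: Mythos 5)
The paper does not prove this lemma; it is quoted verbatim from Peluse and Prendiville (the text preceding the statement says ``We will need the following two lemmas, which can be found in \cite{PelusePrendivilleInverseNonLinearRoth}''), so there is no in-paper argument to compare against. Your sketch correctly identifies the source and the high-level strategy of $s$ iterated Cauchy--Schwarz applications, one per coordinate of $h$, but the mechanism you describe for handling the $n$-averages is not correct and would not close the argument.

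Substituting $F = \E_n F_n$ into all $2^s$ factors up front and then asserting that the resulting $2^s$ independent indices $n_\omega$ ``pair up'' and ``collapse'' into a single $\E_n$ under the ``diagonalization inherent in $|\cdot|^2$'' is where the plan breaks down. Cauchy--Schwarz does not identify two independent averaging indices: the identity $|\E_n a_n|^2 = \E_{n,m} a_n\overline{a_m}$ produces a genuine off-diagonal double sum, and the useful inequality $|\E_n a_n|^2 \le \E_n |a_n|^2$ does not restrict a double sum to its diagonal — it must be applied \emph{before} the square is expanded. Already at $s=1$ your route leads, after doubling $h$, to $\E_{n^{(0)},n^{(1)}} F_{n^{(0)}}(y-h^{(0)})\overline{F_{n^{(1)}}(y-h^{(1)})}$, a two-index average that does not match the single-index object $\E_n \Delta_{h^{(0)}-h^{(1)}}F_n$ appearing in the conclusion. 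The correct mechanism keeps essentially one $\E_n$ alive: the other $2^s-1$ factors of $F$ are treated purely as $1$-bounded weights supported on $[[O(N)]]$ and are absorbed by the $\ell^2$-norm side of the successive Cauchy--Schwarz steps (this is also the source of the $O(N)$ factors and the exponent in $N^{2s+1}$, together with the observation that $\Delta_h F \equiv 0$ unless $h \in [[O(N)]]^s$, which is what legitimately lets you use $|X| \lesssim N^s$). At the last stage one invokes $|\E_n a_n|^2 \le \E_n |a_n|^2$ to move the single surviving average outside the square before expanding, so that the same $n$ multiplies both $h^{(0)}$- and $h^{(1)}$-shifted copies and yields $\E_n \Delta_{h^{(0)}-h^{(1)}}F_n$. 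With this correction your bookkeeping for the $h_i$-doubling, the accumulated alternating phase $\xi(h^{(0)};h^{(1)})$, and the $x$-translations at each step is the right picture and does go through as in Peluse--Prendiville.
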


The next lemma will require some terminology. If $\phi\:\Z^{s-1} \to \C$ is a function of $s-1$ variables and $h_1, \dotsc, h_s \in \Z$ we write \[ \phi(h_1, \dotsc, h_{i-1}, \hat{h}_i, h_{i+1}, \dotsc, h_s) = \phi(h_1, \dotsc, h_{i-1},h_{i+1}, \dotsc, h_s). \] We now say that a function $\phi\:\Z^s \to \C$ is \emph{low rank} if there exist functions $\phi_1, \dotsc, \phi_s\:\Z^{s-1} \to \C$ such that \begin{equation} \phi(h_1, \dotsc, h_s) = \sum_{i=1}^s \phi_i(h_1, \dotsc, \hat{h}_i, \dotsc, h_s). \end{equation}

It can be shown by (\ref{eqn:Gowersintermsofgowers}), the popularity principle, and Lemma \ref{lem:U2inverse} that if $f$ is supported on $[[N_0]]$ with $N_0 \sim N$ and $\|f\|_{U^{s+2}}^{2^{s+2}} \geq \delta N^{s+3}$ then \[ \delta N^{s+1} \leqsim \sum_{h_1, \dotsc, h_s \in \Z} \left| \sum_{x \in \Z} \Delta_{h_1, \dotsc, h_s} f(x) e(\xi(h_1, \dotsc, h_s)x) \right| \] for some function $\xi\:\Z^s \to \C$. The next lemma says that if $\xi$ is low rank then we can bound the right hand side in terms of $\|f\|_{U^{s+1}}$.

\begin{lemma} \label{lem:lowrank}
Let $f\:\Z \to \C$ be a $1$-bounded function supported on $[[N_0]]$ with $N_0 \sim N$. For functions $\xi_1, \dotsc, \xi_m\:\Z^{s-1} \to \T$ with $m \leq s$ we have \[ \frac{1}{N^{s+1}} \sum_{h_1, \dotsc, h_s \in \Z} \left| \sum_{x \in \Z} \Delta_{h_1, \dotsc, h_s} f(x) e \left( \sum_{i=1}^m \phi_i(h_1, \dotsc, \hat{h}_i, \dotsc, h_s)x \right) \right| \leqsim_s \left( \frac{\|f\|_{U^{s+1}}^{2^{s+1}}}{N^{s+2}} \right)^\frac{1}{2^{m+1}} \]
\end{lemma}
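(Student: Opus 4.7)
The plan is to induct on $m$, with $m+1$ applications of Cauchy--Schwarz producing the exponent $2^{-(m+1)}$: each of the first $m$ applications strips off one low-rank phase, and the final one identifies the $U^{s+1}$ norm.

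\textbf{Base case} $(m=0)$. Since $f$ is supported on $[[N_0]]$ with $N_0 \sim N$, the map $h \mapsto \sum_x \Delta_h f(x)$ vanishes outside a box of volume $O_s(N^s)$, so Cauchy--Schwarz gives
\[
\left(\sum_{h_1, \dotsc, h_s} \left|\sum_x \Delta_h f(x)\right|\right)^2 \leqsim_s N^s \sum_{h_1, \dotsc, h_s} \left|\sum_x \Delta_h f(x)\right|^2.
\]
Expanding the modulus-squared and substituting $y = x + h_{s+1}$ in the conjugated copy identifies the right-hand side as $\|f\|_{U^{s+1}(\Z)}^{2^{s+1}}$ via (\ref{eqn:Usdefinition}). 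Dividing by $N^{2(s+1)}$ yields the claim with exponent $\tfrac{1}{2}$.

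\textbf{Inductive step} $(m \geq 1)$. Assuming the lemma for $m-1$ phases (in any number of variables), I would apply Cauchy--Schwarz in the variable $h_m$, which $\phi_m$ does \emph{not} depend on. After expanding the resulting modulus-squared via $y = x + h_{s+1}$, one obtains
\[
L^2 \leqsim_s \frac{1}{N^{s+2}} \sum_{h_1, \dotsc, h_{s+1}} e\bigl(-\phi(h) h_{s+1}\bigr) \sum_x \Delta_{h_1, \dotsc, h_{s+1}} f(x),
\]
where $L$ denotes the left-hand side of the lemma. The factor $e(-\phi_m(h_1, \dotsc, \hat h_m, \dotsc, h_s) h_{s+1})$ is independent of $h_m$, and since the full expression $\sum_h |a_h|^2$ is real and non-negative, we may insert absolute values around the inner $h_m$-sum, which strips this factor out. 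What remains are only the $m-1$ modified low-rank phases $\phi_i(h_1, \dotsc, \hat h_i, \dotsc, h_s) h_{s+1}$ (for $i < m$), each still independent of one of the variables $h_1, \dotsc, h_{m-1}$. After relabelling, this matches the template of the lemma with $s$ replaced by $s+1$ and $m$ by $m-1$, and applying the inductive hypothesis gives the claim with exponent $\tfrac{1}{2} \cdot \tfrac{1}{2^m} = \tfrac{1}{2^{m+1}}$.

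\textbf{Main obstacle.} The principal technical difficulty is the relabelling step after the Cauchy--Schwarz manipulation. At that point, the inner object is a sum over $(h_m, x)$ of an $(s+1)$-fold difference of $f$ against a phase depending on $h_m$ but not on $x$; interpreting this as an $x$-sum in the form required by the inductive statement, and tracking how the normalization $N^{-(s+1)-1}$ interacts with the transition from $\|f\|_{U^{s+1}}^{2^{s+1}}$ to the apparent $\|f\|_{U^{s+2}}^{2^{s+2}}$ (which must be inverted by a compatibility argument to deliver the $\|f\|_{U^{s+1}}^{2^{s+1}}$ target), requires careful bookkeeping. This is precisely the degree-lowering mechanism developed in Peluse--Prendiville~\cite{PelusePrendivilleInverseNonLinearRoth} for polynomial exponential sums, and the argument transfers to the present setup without essential modification since the low-rank structure of the phases is the only feature used.
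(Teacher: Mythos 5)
The paper does not supply its own proof of this lemma; it is cited directly from Peluse--Prendiville~\cite{PelusePrendivilleInverseNonLinearRoth}, so there is no paper argument to compare against. Your base case $m=0$ is correct. However, your inductive step does not close, for a reason you partly acknowledge but do not resolve. For the displayed inequality to be what you claim, the Cauchy--Schwarz must be taken over all of $h_1,\dots,h_s$ (using the $O_s(N^s)$ support), not only $h_m$; that produces the expansion you write, but that expansion now carries $s+1$ differencing variables. Applying the inductive hypothesis with $s'=s+1$ and $m'=m-1$ therefore yields a bound by $\bigl(\|f\|_{U^{s+2}}^{2^{s+2}}/N^{s+3}\bigr)^{1/2^{m}}$, hence $L\lesssim\bigl(\|f\|_{U^{s+2}}^{2^{s+2}}/N^{s+3}\bigr)^{1/2^{m+1}}$, rather than the required $\bigl(\|f\|_{U^{s+1}}^{2^{s+1}}/N^{s+2}\bigr)^{1/2^{m+1}}$. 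There is no ``compatibility argument'' to reverse this: Gowers norm monotonicity gives $\bigl(\|f\|_{U^{s+1}}^{2^{s+1}}/N^{s+2}\bigr)^2\leq \|f\|_{U^{s+2}}^{2^{s+2}}/N^{s+3}$, which runs in the wrong direction. Concretely, for $f(x)=e(\alpha x^{s+1})1_{[N]}(x)$ with generic $\alpha$ the normalized $U^{s+1}$ quantity is $o(1)$ while the normalized $U^{s+2}$ quantity is $\sim 1$, so a $U^{s+2}$ bound does not imply the $U^{s+1}$ bound. Iterating your step produces $U^{s+m+1}$, which is useless for the application in Theorem~\ref{thm:degreelowering}, whose entire purpose is to lower the Gowers degree by exactly one.

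There is a second gap in the relabelling itself. After inserting absolute values around the inner $h_m$-sum, the surviving phases $\phi_i(h_1,\dots,\hat h_i,\dots,h_s)\,h_{s+1}$ with $i<m$ are attached to $h_{s+1}$ (a variable summed \emph{outside} the absolute value) and, since $i\neq m$, each genuinely depends on $h_m$ (the variable you want to reinterpret as the new inner ``$x$''). Your inner sum thus runs over two variables $(h_m,x)$ against a phase depending on $h_m$, and there is no direct bijection of variables that recasts it as $\sum_{x'}\Delta_{h'}f(x')\,e(\phi'(h')x')$ with a low-rank $\phi'$; indeed the outer sum has only $s$ variables, so the normalisation $N^{-(s+2)}$ does not match either $s'=s$ or $s'=s+1$. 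The actual Peluse--Prendiville argument keeps the number of differencing variables fixed at $s$ throughout the phase-removal steps, by writing $\Delta_h f(x)=G(x)\overline{G(x+h_m)}$ with $G=\Delta_{h_1,\dots,\hat h_m,\dots,h_s}f$ and arranging Cauchy--Schwarz so that the $h_m$-independent unimodular factor $e(\phi_m(h_{\neq m})x)$ is cancelled outright rather than by adding a new variable; only the terminal $m=0$ step introduces $h_{s+1}$ and produces $U^{s+1}$. Your outline, adding a fresh variable at every stage, cannot reach the stated conclusion, so the essential mechanism of the proof is absent.
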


The final tool needed is an $L^\infty$ type bound on the exponential sum \[ m_{N;\Z}(\zeta,\xi) = \frac{1}{N} \sum_{n=1}^N 1_{n>\frac{N}{2}} e(\zeta \lfloor \sqrt{n} \rfloor + \xi n). \] In \cite{PelusePrendivilleInverseNonLinearRoth} the precise bound used is that if \[ \delta \leq \left| \frac{1}{N} \sum_{n=1}^N e(\zeta n + \xi n^2) \right| \] then either $N \leqsim \delta^{-O(1)}$ or there exists $q \leqsim \delta^{-O(1)}$ such that $\|q \xi \|_\T \leqsim \delta^{-O(1)} N^{-2}$. We will see shortly that if $|m_{N;\Z}(\zeta,\xi)| \geq \delta$ then a change of variables yields \[ \delta \leqsim \left| \frac{1}{M} \sum_{m=1}^M e(\zeta m+\xi m^2) \right| \] where $\delta \sqrt{N} \leqsim M \leqsim \sqrt{N}$. As before, we could at this point cite the same result that Peluse and Prendiville used, but as usual we are aiming to obtain principal major arc behaviour so we will prove a stronger result.

\begin{lemma} \label{lem:exponentialsum}
If \[ \delta \leq \left| \frac{1}{N} \sum_{n=1}^N 1_{n>\frac{N}{2}} e(\zeta \lfloor \sqrt{n} \rfloor + \xi n) \right| \] then either $N \leqsim \delta^{-O(1)}$ or $\|\xi\|_\T \leqsim \delta^{-O(1)} N^{-1}$.
\end{lemma}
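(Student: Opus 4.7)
The plan is to reduce the sum to a standard quadratic exponential sum via a change of variables, apply Weyl's inequality to conclude $\|q\xi\|_\T \leqsim \delta^{-O(1)}/N$ for some $q \leqsim \delta^{-O(1)}$, and then argue that this $q$ must equal $1$. The main obstacle is this last step: simply quoting Weyl does not rule out non-principal major arcs $a/q$ with $q \ge 2$, and a naive pointwise perturbation $|e(\epsilon n) - 1| \leqsim |\epsilon| N$ is too lossy to give a contradiction. The key will be to use Abel summation together with a structural bound on certain partial sums.

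First I would decompose $n \in (N/2, N]$ as $n = m^2 + k$ with $m = \lfloor \sqrt{n} \rfloor$ and $k \in \{0, \ldots, 2m\}$, so that $e(\zeta \lfloor \sqrt{n} \rfloor + \xi n) = e(\zeta m + \xi m^2) e(\xi k)$. Interchanging the order of summation rewrites the sum as $\sum_k e(\xi k) T_k$ with $T_k = \sum_{m \in M_k} e(\zeta m + \xi m^2)$, where $M_k$ is an integer interval of length $\leqsim \sqrt{N}$ and $k$ ranges over an index set of size $\leqsim \sqrt{N}$. Pigeonholing in $k$ produces a single $k$ with $|T_k| \geqsim \delta\sqrt{N}$; shifting $M_k$ to $\{1, \ldots, M\}$ and absorbing the resulting linear phase into $\zeta$ gives $M \in [c\delta\sqrt{N}, \sqrt{N}]$ and some $\zeta'$ with $|\frac{1}{M} \sum_{m=1}^M e(\zeta' m + \xi m^2)| \geqsim \delta$. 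The Weyl inequality for quadratic polynomials then either gives $M \leqsim \delta^{-O(1)}$ (whence $N \leqsim \delta^{-O(1)}$ via $M \geqsim \delta\sqrt{N}$) or produces coprime integers $a, q$ with $1 \le q \leqsim \delta^{-O(1)}$ and $|\xi - a/q| \leqsim \delta^{-O(1)}/(qM^2) \leqsim \delta^{-O(1)}/(qN)$. When $q = 1$ this yields $\|\xi\|_\T \leqsim \delta^{-O(1)}/N$ immediately.

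To rule out $q \ge 2$, I use that $\|a/q\|_\T \ge 1/q$ gives the geometric sum bound $|\sum_{n \in I_m} e((a/q) n)| \leqsim q$ on each interval $I_m = [m^2, m^2+2m]$ and on any subinterval. Decomposing by $m = \lfloor \sqrt{n} \rfloor$ this bounds $|m_{N;\Z}(\zeta, a/q)| \leqsim q/\sqrt{N}$ and, more importantly, gives a uniform estimate $|A_n| \leqsim q\sqrt{N}$ on partial sums $A_n = \sum_{k \le n} 1_{k > N/2} e(\zeta \lfloor \sqrt{k} \rfloor + (a/q) k)$. Writing $\epsilon = \xi - a/q$ with $|\epsilon| \leqsim \delta^{-O(1)}/(qN)$, Abel summation applied to $\sum_n 1_{n > N/2} e(\zeta \lfloor \sqrt{n} \rfloor + (a/q) n)(e(\epsilon n) - 1)$, using $|e(\epsilon(n+1)) - e(\epsilon n)| \leqsim |\epsilon|$ and $|e(\epsilon N) - 1| \leqsim |\epsilon|N$, yields $|m_{N;\Z}(\zeta, \xi) - m_{N;\Z}(\zeta, a/q)| \leqsim q|\epsilon|\sqrt{N} \leqsim \delta^{-O(1)}/\sqrt{N}$. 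Combining, $|m_{N;\Z}(\zeta, \xi)| \leqsim \delta^{-O(1)}/\sqrt{N}$, so the hypothesis $|m_{N;\Z}(\zeta, \xi)| \ge \delta$ forces $N \leqsim \delta^{-O(1)}$. Thus in the regime $N \gg \delta^{-O(1)}$ only $q = 1$ is possible, giving the desired bound. The only remaining routine task is bookkeeping the various $\delta^{-O(1)}$ exponents through each step to confirm a single finite $O(1)$ exponent in the final conclusion.
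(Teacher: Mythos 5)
Your proposal is correct and reaches the same conclusion by a structurally similar argument: decompose via $n = m^2 + k$ with $m = \lfloor\sqrt{n}\rfloor$, reduce to a quadratic Weyl sum of length comparable to $\delta\sqrt{N}$, apply Weyl's inequality to place $\xi$ near a rational $a/q$ with $q \leqsim \delta^{-O(1)}$, and then use a geometric-series (Dirichlet kernel) bound on the inner $k$-sum to exclude $q \geq 2$. The paper executes the two halves slightly differently. To pass to the quadratic sum, the paper Abel-sums over the outer variable $m$ against the weight $\psi_\xi(m) = D_{2m+1}(\xi)/\sqrt{N}$, using $\|\psi_\xi\|_{\ell^\infty} + \|\psi_\xi(\cdot+1)-\psi_\xi\|_{\ell^1} \leqsim 1$; you interchange and pigeonhole over the inner variable $k$. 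Both produce a quadratic partial sum of length $M$ with $\delta\sqrt{N} \leqsim M \leq \sqrt{N}$ and normalized size $\geqsim \delta$, so both feed into Weyl identically. For the exclusion of $q \geq 2$, the paper's route is shorter: once one knows $\|\xi\|_\T \geqsim \delta^{O(1)}$ (since $\xi$ is within $O(\delta^{-O(1)}N^{-1})$ of $a/q$ with $q \geq 2$ and $q \leqsim \delta^{-O(1)}$, so $\|\xi\|_\T \geqsim 1/q - o(1)$ for $N$ large), the Dirichlet kernel bound gives $|D_{2m+1}(\xi)| \leqsim \delta^{-O(1)}$ uniformly in $m$, and a single triangle inequality in $m$ yields $\delta \leqsim \delta^{-O(1)}/\sqrt{N}$, the desired contradiction. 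Your version achieves the same by splitting $\xi = a/q + \epsilon$, bounding $|m_{N;\Z}(\zeta,a/q)| \leqsim q/\sqrt{N}$ directly, and controlling the perturbation $\sum_n a_n(e(\epsilon n)-1)$ via a second Abel summation against the uniform partial-sum bound $|A_n| \leqsim q\sqrt{N}$; this is a valid but somewhat longer route to the same estimate $|m_{N;\Z}(\zeta,\xi)| \leqsim \delta^{-O(1)}/\sqrt{N}$. In both cases the crux is the same: the Dirichlet-kernel bound $\leqsim 1/\|\xi\|_\T$ on the sum over the blocks $[m^2, (m+1)^2)$ is too strong for $\xi$ to sit near $a/q$ with $q \geq 2$ while the full average stays $\geq \delta$.
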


\begin{proof}
By the triangle inequality and the pigeonhole principle we have \[ \delta \leqsim \left| \frac{1}{N} \sum_{n=1}^{N'} e(\zeta \lfloor \sqrt{n} \rfloor + \xi n) \right| \] with $N' \in \{\frac{N}{2},N \}$. We split the interval $[N']$ into intervals on which $\lfloor \sqrt{n} \rfloor$ is constant, so that \[ \frac{1}{N} \sum_{n=1}^{N'} e(\zeta \lfloor \sqrt{n} \rfloor + \xi n) = \frac{1}{N} \sum_{m \leq \sqrt{N'}} \sum_{m^2 \leq n<(m+1)^2} e(\zeta m + \xi n) + O\left(\frac{1}{\sqrt{N}} \right). \] Thus if $N \geqsim \delta^{-O(1)}$ we have \[ \delta \leqsim \left|\frac{1}{N} \sum_{m \leq \sqrt{N'}} e(\zeta m) \sum_{m^2 \leq n <(m+1)^2} e(\xi n) \right|, \] and by changing variables in $n$ we may rewrite this as \[ \delta \leqsim \left| \frac{1}{N} \sum_{m \leq \sqrt{N'}} e(\zeta m+\xi m^2) \sum_{n=0}^{2m} e(\xi n) \right|. \] Let \[ D_k(\xi) = \sum_{j=0}^{k-1} e(\xi j) = e\left(\frac{k-1}{2} \right) \frac{\sin(k \pi \xi)}{\sin(\pi \xi)} \] denote the unnormalised Dirichlet kernel. Then we have \[ \delta \leqsim \left| \frac{1}{\sqrt{N}} \sum_{m \leq \sqrt{N'}} e(\xi m^2+\zeta m) \cdot \frac{D_{2m+1}(\xi)
}{\sqrt{N}} \right|. \] For each $m \in [\sqrt{N'}]$ let \[ \psi_\xi(m) = \frac{D_{2m+1}(\xi)
}{\sqrt{N}} \] and notice that $\psi_\xi$ satisfies the bound \[ \|\psi_\xi\|_{\ell^\infty([\sqrt{N'}])} + \|\psi_\xi(\cdot+1)-\psi_\xi \|_{\ell^1([\sqrt{N'}])} \leqsim 1. \] Moreover, for any interval $I \subseteq [\sqrt{N'}]$ we have \[ \left| \frac{1}{\sqrt{N}} \sum_{m \in I} e(\xi m^2+\zeta m) \right| \leqsim \sup_{M \leq \sqrt{N'}} \left| \frac{1}{\sqrt{N}} \sum_{m=1}^M e(\xi m^2+\zeta m) \right|. \] Thus summation by parts yields \[ \delta \leqsim \sup_{M \leq \sqrt{N'}} \left| \frac{1}{\sqrt{N}} \sum_{m=1}^M e(\xi m^2+\zeta m) \right| \] and so there exists $M \leq \sqrt{N'}$ such that \[ \delta \leqsim \left| \frac{1}{\sqrt{N}} \sum_{m=1}^M e(\xi m^2+ \zeta m) \right|. \] By the triangle inequality, trivially bounding the exponential by 1, we see that in fact $M$ satisfies $\delta \sqrt{N} \leqsim M \leq \sqrt{N'} \leqsim \sqrt{N}$. Bounding $N^{-\frac{1}{2}} \leqsim M^{-1}$ and appealing to standard results about Weyl sums (see Lemma 1.1.16 of \cite{tao2012higher}, for example), it follows that there exists an integer $q \leqsim \delta^{-O(1)}$ such that $\|q \xi\|_\T \leqsim \delta^{-O(1)} M^{-2} \sim \delta^{-O(1)} N^{-1}$. However, if $\xi$ does not lie in the major arc centred at 0 then for $N$ sufficiently large we may bound $\|\xi\|_\T \geqsim \delta$, hence $|D_{2m+1}(\xi)| \leqsim \|\xi\|_\T^{-1} \leqsim \delta^{-1}$. Thus in this case by the triangle inequality we have \[ \delta \leqsim \left| \frac{1}{\sqrt{N}} \sum_{m \leq \sqrt{N}} e(\xi m^2+\zeta m) \cdot \frac{D_{2m+1}(\xi)}{\sqrt{N}} \right| \leqsim \frac{\delta^{-1}}{\sqrt{N}}, \] which is a contradiction for $N \geqsim \delta^{-O(1)}$ sufficiently large. It follows that $\xi$ lies in the major arc centred at 0, i.e. $\|\xi\|_\T \leqsim \delta^{-O(1)} N^{-1}$.

\end{proof}

We now have all the lemmas needed to prove our degree lowering result.

\begin{theorem}[Degree Lowering] \label{thm:degreelowering}
Let $N \geq 1$ and $0<\delta \leq 1$. Let $f_0,f_1\:\Z \to \C$ be $1$-bounded functions supported on $[[N_0]]$ for some $N_0 \sim N$ and suppose $\|\Tilde{A}_N^{**}(f_0,f_1) \|_{U^s(\Z)}^{2^s} \geq \delta N^{s+1}$ for some $s \geq 3$. Then either $N \leqsim \delta^{-O(1)}$ or $\|\Tilde{A}_N^{**}(f_0,f_1) \|_{U^{s-1}(\Z)}^{2^{s-1}} \geqsim_s \delta^{O_s(1)} N^s$.
\end{theorem}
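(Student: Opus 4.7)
The plan is to implement a degree-lowering argument in the style of Peluse--Prendiville \cite{PelusePrendivilleInverseNonLinearRoth}, but substantially simplified by the fact that Lemma \ref{lem:exponentialsum} places large exponential sums in the \emph{principal} major arc rather than in some arc centred at an arbitrary rational. Write $F = \Tilde{A}_N^{**}(f_0,f_1) = \E_{n \in [N]} F_n$ where $F_n(x) = 1_{n>N/2} f_0(x+n) f_1(x+n-\lfloor \sqrt{n} \rfloor)$, and observe that $\Delta$ is multiplicative so $\Delta_k F_n(x) = 1_{n>N/2}\, \Delta_k f_0(x+n)\, \Delta_k f_1(x+n-\lfloor \sqrt{n} \rfloor)$ for any tuple $k$.

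First I would use (\ref{eqn:Gowersintermsofgowers}) with $k=2$ to write the hypothesis as $\sum_{h \in \Z^{s-2}} \|\Delta_h F\|_{U^2(\Z)}^4 \geq \delta N^{s+1}$. Since each $\Delta_h F$ is supported on an interval of length $O(N)$, only $O(N^{s-2})$ tuples $h$ contribute, so the pigeonhole principle yields $\gtrsim \delta^{O(1)} N^{s-2}$ \emph{popular} tuples $h$ for which $\|\Delta_h F\|_{U^2(\Z)}^4 \gtrsim \delta^{O(1)} N^3$. Lemma \ref{lem:U2inverse} then produces $\xi(h) \in \T$ with $|\sum_x \Delta_h F(x) e(\xi(h) x)| \gtrsim \delta^{O(1)} N$, extended arbitrarily to non-popular $h$. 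This gives $\frac{1}{N^{s-1}} \sum_h |\sum_x \Delta_h F(x) e(\xi(h) x)| \gtrsim \delta^{O(1)}$, to which I apply Lemma \ref{lem:dualdifferenceinterchange} (Dual--Difference Interchange) with $s$ replaced by $s-2$, raising to the $2^{s-2}$-th power to obtain
\[ \frac{1}{N^{2(s-2)+1}} \sum_{h^{(0)},h^{(1)}} \left| \E_{n \in [N]} \sum_x \Delta_{h^{(0)}-h^{(1)}} F_n(x)\, e\!\left(\xi(h^{(0)};h^{(1)})\, x\right) \right| \gtrsim \delta^{O(1)}. \]

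The next step exploits the principal-major-arc structure. For fixed $(h^{(0)},h^{(1)})$ with $k = h^{(0)}-h^{(1)}$ and $\Xi = \xi(h^{(0)};h^{(1)})$, the change of variables $x \mapsto x - n$ followed by Plancherel rewrites the inner average as
\[ \int_\T \widehat{\Delta_k f_1}(-\eta)\, \widehat{\Delta_k f_0}(-\Xi+\eta)\, m_{N;\Z}(\eta,-\Xi)\, d\eta, \]
where $m_{N;\Z}(\zeta,\xi) = \E_n 1_{n>N/2} e(\zeta \lfloor \sqrt{n} \rfloor + \xi n)$. Splitting the integral into the pieces where $|m_{N;\Z}(\eta,-\Xi)| \geq \delta^{C}$ or $< \delta^{C}$ for a suitable $C$, and controlling the small piece by Cauchy--Schwarz and Plancherel, we find that the bound $\gtrsim \delta^{O(1)} N$ on the popular pairs forces the large piece to contribute nontrivially. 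Lemma \ref{lem:exponentialsum} then yields $\|\Xi\|_\T \lesssim \delta^{-O(1)} N^{-1}$ (or else $N \lesssim \delta^{-O(1)}$) for $\gtrsim \delta^{O(1)} N^{2(s-2)}$ pairs.

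Finally, from the smallness of the $(s-2)$-fold box derivative $\xi(h^{(0)};h^{(1)}) = \sum_{\omega \in \{0,1\}^{s-2}} (-1)^{|\omega|} \xi(h^{(\omega)})$ on a dense set of boxes, I would pigeonhole a single $h^{(0)} = h^\ast$ which remains good on many $h^{(1)}$, then solve the box relation for $\xi(h^{(1)})$ as a sum of $2^{s-2}-1$ terms each of which, by being evaluated at a vertex with at least one coordinate from $h^\ast$, depends on at most $s-3$ coordinates of $h^{(1)}$. This realises $\xi$ on a large set as $\phi + \epsilon$ with $\phi$ of low rank (in the sense of Lemma \ref{lem:lowrank}) and $\|\epsilon\|_\T \lesssim \delta^{-O(1)} N^{-1}$. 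Substituting back into the average from Step 1 and invoking Lemma \ref{lem:lowrank} with $m = s-2$ yields $\delta^{O(1)} \lesssim (\|F\|_{U^{s-1}(\Z)}^{2^{s-1}} / N^s)^{1/2^{s-1}}$, which rearranges to the claimed bound. The main obstacle is the passage from ``$(s-2)$-fold box derivative of $\xi$ small on a dense set'' to ``$\xi$ replaceable by a genuinely low-rank $\phi$ in the pre-DDI expression'': the additive error $\epsilon$, though small in $\T$, becomes $O(\delta^{-O(1)})$ when multiplied into the phase over $|x| \sim N$, so one must bound its contribution via a separate averaging/smoothing argument (exploiting that $e(\epsilon(h)x)$ oscillates slowly in $x$) rather than by direct pointwise substitution. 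In the principal-major-arc setting this is cleaner than the arithmetic Bogolyubov step required in the polynomial case, since no denominators need to be cleared.
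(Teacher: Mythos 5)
Your outline follows the same overall strategy as the paper: unpack the $U^s$ norm via \eqref{eqn:Gowersintermsofgowers}, pigeonhole and apply Lemma \ref{lem:U2inverse} to produce a frequency function $\xi(h)$, apply the Dual--Difference Interchange (Lemma \ref{lem:dualdifferenceinterchange}), extract an exponential-sum bound via Cauchy--Schwarz and Plancherel, use Lemma \ref{lem:exponentialsum} to place the box derivative $\xi(h^{(0)};h^{(1)})$ in the principal major arc, fix one of $h^{(0)}, h^{(1)}$ to realise $\xi$ as (approximately) low rank, and finish with Lemma \ref{lem:lowrank}. The minor deviations -- fixing $h^{(0)}=h^\ast$ rather than $h^{(1)}$, taking $m=s-2$ rather than $m=s-3$ in Lemma \ref{lem:lowrank}, splitting the multiplier into large/small pieces rather than using the sup bound directly -- are all harmless symmetries or equivalent choices.

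The genuine gap is exactly where you flag it, and your proposed work-around does not obviously close it. You obtain $\xi(h^{(1)}) = \phi(h^{(1)}) + \epsilon(h^{(1)})$ with $\phi$ low rank and $\|\epsilon\|_\T \lesssim \delta^{-O(1)} N^{-1}$, and observe that $e(\epsilon(h)x)$ cannot simply be replaced by $1$ because $\epsilon(h) x$ can be of size $\delta^{-O(1)}$ over the support $|x| \lesssim N$ of $\Delta_h F$. The phrase ``oscillates slowly in $x$'' is somewhat misleading here: the phase can wrap around roughly $\delta^{-O(1)}$ times over the support, so a crude smoothing of $x$ will not salvage a lower bound without losing powers. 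The paper resolves this by inserting a discretisation step \emph{before} invoking Lemma \ref{lem:dualdifferenceinterchange}: immediately after producing $\xi$ from Lemma \ref{lem:U2inverse}, one sets $Q = \lceil C_s \delta^{-O(1)} N \rceil$ and rounds each $\xi(h)$ to its nearest element $\xi_Q(h) \in \tfrac{1}{Q}\Z / \Z$. Because the rounding error is $\leq \tfrac{1}{2Q}$ and the support of $\Delta_h F$ has length $O(N)$, the phase change is $O(C_s^{-1}\delta^{O(1)})$, which for $C_s$ large is absorbed into the lower bound \eqref{eqn:degreeloweringU2}. After DDI and the exponential-sum estimate one then knows that $\xi_Q(h^{(0)};h^{(1)})$ lies in the principal major arc \emph{and} takes values in $\tfrac{1}{Q}\Z/\Z$; the intersection has only $O(\delta^{-O(1)})$ points, so a single pigeonhole yields a dense set on which the box derivative is an \emph{exact} constant $A/Q$, and the low-rank representation of $\xi_Q$ has no error term at all. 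This is precisely the mechanism you need; without it, the step from ``$\xi$ is approximately low rank'' to an input acceptable to Lemma \ref{lem:lowrank} is not justified.

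In short: the architecture is correct, but you should discretise $\xi$ to $\tfrac{1}{Q}\Z/\Z$ right after the $U^2$ inverse step (with the tunable constant $C_s$ in $Q$), carry that discretisation through DDI, and then the principal-major-arc bound plus pigeonhole gives exact constancy of the box derivative. With that change the low-rank input is exact and Lemma \ref{lem:lowrank} applies directly; no separate averaging/smoothing argument is required or easily available.
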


\begin{proof}
By assumption we have $\delta N^{s+1} \leq \|\Tilde{A}_N^{**}(f_0,f_1) \|_{U^s(\Z)}^{2^s}$ so using (\ref{eqn:Gowersintermsofgowers}) we have \[ \delta N^{s+1} \leq \sum_{h_1, \dotsc, h_{s-2} \in \Z} \|\Delta_{h_1, \dotsc, h_{s-2}} \Tilde{A}_N^{**}(f_0,f_1) \|_{U^2(\Z)}^4.\] By the popularity principle there exists a set $X \subseteq [[N_0]]^{s-2}$ with $|X| \geqsim \delta N^{s-2}$ such that for all $h \in X$ we have $\delta N^3 \leq \|\Delta_h \Tilde{A}_N^{**}(f_0,f_1) \|_{U^2(\Z)}^{4}$. Applying Lemma \ref{lem:U2inverse}, it follows that there is a function $\xi\:X \to \C$ such that for all $h \in X$ we have \begin{equation} \label{eqn:degreeloweringU2} \delta^{O(1)} N \leqsim \left| \sum_{x \in \Z} \Delta_h \Tilde{A}_N^{**}(f_0,f_1)(x) e(\xi(h) x) \right|. \end{equation} Let $Q = \lceil C_s \delta^{-O(1)} N \rceil$ where $C_s>0$ is an absolute constant to be determined later and for each $h \in X$ denote the element of $\Z/Q\Z$ closest to $\xi(h)$ by $\xi_Q(h)$. Then by the Lipschitz nature of the exponential we have \[ \sum_{x \in \Z} \Delta_h \Tilde{A}_N^{**}(f_0,f_1)(x) e(\xi(h)x) = \sum_{x \in \Z} \Delta_h \Tilde{A}_N^{**}(f_0,f_1)(x) e(\xi_Q(h)x) + O(C_s^{-1} \delta^{O(1)} N). \] Thus for $C_s>0$ taken sufficiently large, we can assume that $\xi$ takes values in $\Z/Q\Z$ without loss of generality. Summing over all $h \in X$ and applying Lemma \ref{lem:dualdifferenceinterchange} we obtain \[ \delta^{O_s(1)} N^{2s-3} \leqsim_s \sum_{h^{(0)}, h^{(1)} \in X} \left| \sum_{x \in \Z} \E_{n \in [N]} \Delta_{h^{(0)}-h^{(1)}}[ 1_{n>\frac{N}{2}} f_0(x+n) f_1(x-\lfloor \sqrt{n} \rfloor+n) ] e(\xi(h^{(0)};h^{(1)})x) \right|. \] We apply the pigeonhole principle to extract $h^{(1)} \in X$ such that \[ \delta^{O_s(1)} N^{s-1} \leqsim_s \sum_{h^{(0)} \in X} \left| \sum_{x \in \Z} \E_{n \in [N]} \Delta_{h^{(0)}-h^{(1)}}[ 1_{n>\frac{N}{2}} f_0(x+n) f_1(x-\lfloor \sqrt{n} \rfloor+n) ] e(\xi(h^{(0)};h^{(1)})x) \right| \] and then use the popularity principle to deduce the existence of a set $Y \subseteq X$ with \[ |Y| \geqsim_s \delta^{O_s(1)} |X| \geqsim \delta^{O_s(1)} N^{s-2} \] such that for all $h^{(0)} \in Y$ we have \begin{equation} \label{eqn:degreeloweringpigeonhole} \delta^{O_s(1)} N \leqsim_s \left| \sum_{x \in \Z} \E_{n \in [N]} \Delta_{h^{(0)}-h^{(1)}}[ 1_{n>\frac{N}{2}} f_0(x+n) f_1(x-\lfloor \sqrt{n} \rfloor+n) ] e(\xi(h^{(0)};h^{(1)})x) \right|. \end{equation} We make the change of variables $x \mapsto x-n$ and then apply the Cauchy--Schwarz inequality to deduce \[ \delta^{O_s(1)} N \leqsim_s \sum_{x \in \Z} |\E_{n \in [N]} \Delta_{h^{(0)}-h^{(1)}} [ 1_{n>\frac{N}{2}} f_1(x-\lfloor \sqrt{n} \rfloor) ] e(-\xi(h^{(0)};h^{(1)})n) |^2.  \] By Plancherel's theorem we therefore have \[ \delta^{O_s(1)} N \leqsim_s \int_\T | (\Delta_{h^{(0)}-h^{(1)}} f_1)^{\widehat{}}(\zeta) \E_{n \in [N]} 1_{n>\frac{N}{2}} e(-\zeta \lfloor \sqrt{n} \rfloor - \xi(h^{(0)};h^{(1)})n)|^2 \,d\zeta. \] We now apply Hölder's inequality and Plancherel's theorem again so that \[ \delta^{O_s(1)} N \leqsim_s \| \Delta_{h^{(0)}-h^{(1)}} f_1 \|_{\ell^2(\Z)}^2 \sup_{\zeta \in \T} |\E_{n \in [N]} 1_{n>\frac{N}{2}} e(-\zeta \lfloor \sqrt{n} \rfloor - \xi(h^{(0)};h^{(1)})n)|^2, \] hence there exists $\zeta \in \T$ such that \[ \delta^{O_s(1)} \leqsim_s |\E_{n \in [N]} 1_{n>\frac{N}{2}} e(-\zeta \lfloor \sqrt{n} \rfloor - \xi(h^{(0)};h^{(1)})n)|. \] By Lemma \ref{lem:exponentialsum} it follows that $\| \xi(h^{(0)};h^{(1)})n) \|_\T \leqsim_s \delta^{-O_s(1)} N^{-1}$ as long as $N \geqsim \delta^{-O(1)}$. Recall that $\xi$ takes values in $\Z/Q\Z$ hence so does $\xi(h^{(0)};h^{(1)})$ for all $h^{(0)} \in Y$. Moreover, the number of elements of $\Z/Q\Z$ lying in this major arc centred at the origin is $O_s(Q \delta^{-O_s(1)} N^{-1}) = O_s(\delta^{-O_s(1)})$. We can therefore write $Y$ as a disjoint union of $O_s(\delta^{-O_s(1)})$ sets on which $h^{(0)} \mapsto \xi(h^{(0)};h^{(1)})$ is constant. Hence by the pigeonhole principle there exists some $A \in [Q]$ and a set $H_0 \subseteq Y$ with $|H_0| \geqsim_s \delta
^{O_s(1)} N^{s-2}$ such that for all $h^{(0)} \in H_0$ we have $\xi(h^{(0)};h^{(1)}) = \frac{A}{Q}$. In particular, for all $h^{(0)} \in H$ we have \[ \xi(h^{(0)}) = \frac{A}{Q} - \sum_{\omega \in \{0,1\}^s \setminus \{0\}} (-1)^{|\omega|} \xi(h^{(\omega)}). \] Since $h^{(1)}$ is fixed, we see that $\xi$ is low rank as a function of $h^{(0)}$ on $H_0$. We sum over $H_0$ in (\ref{eqn:degreeloweringU2}) to obtain \[ \delta^{O_s(1)} N^{s-1} \leqsim_s \sum_{h \in \Z^{s-2}} \left| \sum_{x \in \Z} \Delta_h \Tilde{A}_N^{**}(f_0,f_1)(x) e(\psi(h)x) \right| \] for some low rank function $\psi\:\Z^{s-2} \to \C$. Applying Lemma \ref{lem:lowrank} with $s-2$ instead of $s$ and taking $m=s-3$, it follows that $\delta^{O_s(1)} N^s \leqsim_s \|\Tilde{A}_N^{**}(f_0,f_1) \|_{U^{s-1}(\Z)}^{2^{s-1}}$.
\end{proof}

\subsection{Proof of Theorem \ref{thm:inversethm}}

We now combine the results of the last two subsections to prove Theorem \ref{thm:inversethm}.

\begin{proof}[Proof of Theorem \ref{thm:inversethm}]
We first apply Proposition \ref{prop:dualinverse} to deduce that $\|\Tilde{A}_N^{**}(f_0,f_1)\|_{U^5(\Z)}^{2^5} \geqsim \delta^{O(1)}N^6$. Three applications of Theorem \ref{thm:degreelowering} then yield $\|\Tilde{A}_N^{**}(f_0,f_1)\|_{U^2(\Z)}^4 \geqsim \delta^{O(1)}N^3$. By Lemma \ref{lem:U2inverse} it follows that there exists $\xi \in \T$ such that $|(\Tilde{A}_N^{**}(f_0,f_1))^{\widehat{}}(\xi)| \geqsim \delta^{O(1)}N$. Note that \[ (\Tilde{A}_N^{**}(f_0,f_1))^{\widehat{}}(\xi) = \int_\T \hat{f}_0(\xi-\zeta) \hat{f}_1(\zeta) m_{N;\Z}(\zeta,\xi) \,d\zeta, \] where \[ m_{N;\Z}(\zeta,\xi) = \frac{1}{N} \sum_{n=1}^N 1_{n>\frac{N}{2}} e(- \zeta \lfloor \sqrt{n} \rfloor+\xi n). \] Arguing as in the proof of Theorem \ref{thm:degreelowering}, we apply Cauchy--Schwarz and Plancherel's theorem to obtain $\delta^{O(1)} \leqsim \|m_{N;\Z}(\cdot,\xi)\|_{L^\infty(\T)}$, hence by Lemma \ref{lem:exponentialsum} we have $\|\xi\|_\T \leqsim \delta^{-O(1)}N^{-1}$. With this bound on $\xi$ we now manipulate $(\Tilde{A}_N^{**}(f_0,f_1))^{\widehat{}}(\xi)$. Using the definition of $(\Tilde{A}_N^{**}(f_0,f_1))^{\widehat{}}(\xi)$ and making the change of variables $x \mapsto x-n$ we have \[ (\Tilde{A}_N^{**}(f_0,f_1))^{\widehat{}}(\xi) = \sum_{x \in \Z} \E_{n \in [N]} 1_{n>\frac{N}{2}} f_0(x) f_1(x-\lfloor \sqrt{n} \rfloor) e(-x\xi) e(n\xi). \] As in the proof of Lemma \ref{lem:exponentialsum}, we split $[N]$ into intervals on which $\lfloor \sqrt{n} \rfloor$ is constant, so that \[ (\Tilde{A}_N^{**}(f_0,f_1))^{\widehat{}}(\xi) = \frac{1}{N} \sum_{x \in \Z} \sum_{m \leq \sqrt{N}} \sum_{m^2 \leq n<(m+1)^2} 1_{n>\frac{N}{2}} f_0(x)f_1(x-m) e(-x\xi) e(n\xi) + O \left( \frac{1}{\sqrt{N}} N \right). \] Thus if $N \geqsim \delta^{-O(1)}$ then \[ \delta^{O(1)} N \leqsim \left| \frac{1}{N} \sum_{x \in \Z} \sum_{m \leq \sqrt{N}} \sum_{m^2 \leq n<(m+1)^2} 1_{n>\frac{N}{2}} f_0(x) f_1(x-m) e(-x\xi) e(n\xi) \right|. \] Let $\psi_\xi(m) = \frac{1}{\sqrt{N}} \sum_{n=0}^{2m} 1_{n+m^2>\frac{N}{2}} e(n\xi)$. Then \[ \delta^{O(1)}N \leqsim \left| \sum_{x \in \Z} \E_{m \in [\sqrt{N}]} f_0(x) f_1(x-m) e(-x\xi) e(\xi(m^2+m)) \psi_\xi(m) \right|. \] Let $M = \frac{\delta^A}{A} \sqrt{N}$ for some constant $A>0$ to be determined later. Then we have \[ \delta^{O(1)}N \leqsim \left| \frac{M}{\sqrt{N}} \sum_{k=0}^{\frac{\sqrt{N}}{M}-1} \sum_{x \in \Z} \E_{m \in [M]} f_0(x) f_1(x-m-kM) e(-x\xi) e(\xi(m^2+2mkM+m)) \psi_\xi(m+kM) \right| \] (with some terms in the exponential absorbed by the absolute value) so by the pigeonhole principle there exists $k$ such that \[ \delta^{O(1)}N \leqsim \left| \sum_{x \in \Z} \E_{m \in [M]} f_0(x) f_1(x-m-kM) e(-x\xi) e(\xi(m^2+2mkM+m)) \psi_\xi(m+kM) \right|. \] Since \[ |e(\xi(m^2+2mkM+m))-1| \leqsim \frac{\delta^A}{A}, \] it follows that (for $A$ taken sufficiently large) we have \[ \delta^{O(1)}N \leqsim \left| \sum_{x \in \Z} \E_{m \in [M]} f_0(x) f_1(x-m-kM) e(-x\xi) \psi_\xi(m+kM) \right|. \] By the pigeonhole principle, we may also assume \[ \delta^{O(1)}N \leqsim \left| \sum_{x \in \Z} \E_{m \in [M]} 1_{m \geq \frac{\delta^B}{B} M} f_0(x) f_1(x-m-kM) e(-x\xi) \psi_\xi(m+kM) \right| \] for a sufficiently large absolute constant $B>0$ to be determined later. Applying the triangle inequality  and making the change of variables $x \mapsto x+kM$ yields \[ \delta^{O(1)} N \leqsim \sum_{x \in \Z} |\E_{m \in [M]} 1_{m \geq \frac{\delta^B}{B} M} f_1(x-m) \psi_\xi(m+kM)|. \] Using summation by parts, we have \begin{multline*} |\E_{m \in [M]} 1_{m \geq \frac{\delta^B}{B} M} f_1(x-m) \psi_\xi(m+kM)| \leq \frac{|f_1(x-M) \psi_\xi((k+1)M)|}{M} \\ + \left| \E_{M' \in [M]} (\psi_\xi(M'+kM)-\psi_\xi(M'+1+kM)) \sum_{m \leq M'} 1_{m \geq \frac{\delta^B}{B} M} f_1(x-m) \right|. \end{multline*} Since $\|\psi_\xi\|_{\ell^\infty([\sqrt{N}])} \leq 1$ and $f_1$ is bounded, the first term is $O(\frac{1}{M})$. If $N\geqsim \delta^{-O(1)}$, then \[ \delta^{O(1)} N \leqsim \sum_{x \in \Z} \left| \E_{M' \in [M]} (\psi_\xi(M'+kM)-\psi_\xi(M'+1+kM)) \sum_{m \leq M'} 1_{m \geq \frac{\delta^B}{B} M} f_1(x-m) \right|. \] Choose $N' \leq M$ so that \[ \left| \sum_{m \leq N'} 1_{m \geq \frac{\delta^B}{B}M} f_1(x-m) \right| = \sup_{M' \leq M} \left| \sum_{m \leq M'} 1_{m \geq \frac{\delta^B}{B} M} f_1(x-m) \right|. \] Note that $N' \leq M \leq N^\frac{1}{2}$ and the constraint $m \geq \frac{\delta^B}{B}M$ forces $N' \geq \frac{\delta^B}{B} M = \frac{\delta^{A+B}}z{AB} N^\frac{1}{2}$, so $\delta^{O(1)} N^\frac{1}{2} \leqsim N' \leq N^\frac{1}{2}$. Note also that $\|\psi_\xi(\cdot+1)-\psi_\xi \|_{\ell^1([\sqrt{N}])} \leqsim 1$. Thus by the triangle inequality, we have \[ \delta^{O(1)} N \leqsim \sum_{x \in \Z} \frac{1}{M} \left|\sum_{m \in [N']} 1_{m \geq \frac{\delta^B}{B}M} f_1(x-m) \right|. \] Now \[ \sum_{x \in \Z} \frac{1}{M} \sum_{m \in [N']} 1_{m \geq \frac{\delta^B}{B}M} f_1(x-m) = \sum_{x \in \Z} \frac{1}{M} \left|\sum_{m \in [N']} f_1(x-m) \right| + O\left( \frac{\delta^B}{B} N \right), \] so choosing $A$ sufficiently large and then $B$ sufficiently large depending on $A$, we have \[ \delta^{O(1)} N \leqsim \sum_{x \in \Z} \frac{1}{M} \left|\sum_{m \in [N']} f_1(x-m) \right| \leq \sum_{x \in \Z} |\E_{m \in [N']} f_1(x-m)|. \] This completes the proof.
\end{proof}

\section{Minor Arc Single Scale Estimate} \label{sec:minorarc}

With Theorem \ref{thm:inversethm} in hand we can prove a minor arc estimate which allows us to reduce to the case where $f$ and $g$ are Fourier supported on major arcs. Loosely speaking, this is done by proving that if the Fourier transform of $f$ or $g$ vanishes on the relevant major arc centred at the origin then $\Tilde{A}_N(f,g)$ has a small $\ell^1(\Z)$ norm. More precisely, the aim of this section is to prove the following theorem.

\begin{theorem} \label{thm:sobolevestimate}
Let $N \geq 1$ and $l \in \N$, and let $f,g \in \ell^2(\Z)$. Suppose that \begin{enumerate}[(i)]
    \item $\hat{f}$ vanishes on $\M_{\leq N^{-\frac{1}{2}} 2^l}$; or \item $\hat{g}$ vanishes on $\M_{\leq N^{-1} 2^l}$.
\end{enumerate} Then \begin{equation} \label{eqn:sobolevestimate} \| \Tilde{A}_N(f,g) \|_{\ell^1(\Z)} \leqsim (2^{-cl}+N^{-c}) \|f\|_{\ell^2(\Z)} \|g\|_{\ell^2(\Z)}
\end{equation} for some absolute constant $c>0$.
\end{theorem}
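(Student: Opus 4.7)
I would prove Theorem \ref{thm:sobolevestimate} by duality and spatial localization, handling case (ii) via Lemma \ref{lem:exponentialsum} and case (i) via Theorem \ref{thm:inversethm}. By duality, it suffices to bound $|\langle f_0, \Tilde{A}_N(f,g) \rangle|$ uniformly over $1$-bounded $f_0$. A smooth spatial partition $\{\psi_k\}$ on overlapping scale-$N$ intervals decomposes this into localized pieces $\langle f_0 \psi_k, \Tilde{A}_N(f \phi_k, g \tilde\phi_k) \rangle$, exploiting that $\Tilde{A}_N(f,g)(x)$ depends on $f$ only in an $O(\sqrt N)$-window and on $g$ only in an $O(N)$-window near $x$. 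The Fourier transforms of these cutoffs concentrate near $0$ on scale $1/N$, so the Fourier vanishing hypothesis on $\hat f$ or $\hat g$ is preserved up to rapidly decaying errors.

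For case (ii), the core is a Plancherel calculation on the multiplier representation
\[ \widehat{\Tilde{A}_N(f,g)}(\xi) = \int_\T \hat f(\zeta) \hat g(\xi - \zeta)\, m_N(\zeta, \xi-\zeta)\, d\zeta, \]
where $m_N(\zeta,\eta)= \frac{1}{N}\sum_n 1_{n>N/2} e(-\zeta \lfloor\sqrt n\rfloor - \eta n)$. Lemma \ref{lem:exponentialsum} gives $|m_N(\zeta,\eta)| \lesssim 2^{-cl} + N^{-c}$ on $\|\eta\|_{\T} > N^{-1} 2^l$, which contains $\supp \hat g$; Cauchy--Schwarz in $\zeta$ and integration in $\xi$ then yield $\|\Tilde{A}_N(f,g)\|_{\ell^2} \lesssim (2^{-cl}+N^{-c}) \|f\|_{\ell^2}\|g\|_{\ell^2}$. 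For case (i), I would apply Theorem \ref{thm:inversethm} on each local piece after $\ell^\infty$-normalizing $f \phi_k$ and $g \tilde\phi_k$. The inverse theorem's conclusion $\sum_x |\E_{n\in[N']}f_1(x-n)| \gtrsim \delta^{O(1)}N$ (with $\delta^{O(1)}\sqrt N \lesssim N' \leq \sqrt N$) is to be contradicted by combining Cauchy--Schwarz on intervals of length $O(N)$ with the Plancherel bound $|\widehat{K_{N'}}(\xi)| \leq 1/(N'\|\xi\|_\T) \lesssim \delta^{-O(1)} 2^{-l}$ on $\supp \hat f_1 \subseteq \{\|\xi\|_\T > N^{-1/2} 2^l\}$, giving $\sum_x |\E_{n\in[N']}f_1(x-n)| \leq \sqrt N \|K_{N'}*f_1\|_{\ell^2} \lesssim \delta^{-O(1)}2^{-l} N$ and thus $\delta \lesssim 2^{-cl}+N^{-c}$.

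The hard part will be matching the $\ell^2 \times \ell^2 \to \ell^1$ scaling when summing local contributions. Both approaches naively introduce an extra factor of $\sqrt N$ or $N$ beyond the target $(2^{-cl}+N^{-c})\|f\|_{\ell^2}\|g\|_{\ell^2}$: the inverse theorem yields an $\epsilon N$ bound per $\ell^\infty$-normalized piece, while converting Plancherel's $\ell^2$ bound to $\ell^1$ via Cauchy--Schwarz on scale-$N$ intervals costs a $\sqrt N$ factor. I expect this obstacle to be resolved by a level-set decomposition separating concentrated pieces of $f$ or $g$ (where the intrinsic $N^{-1/2}$-smoothing of $\Tilde{A}_N$, arising because $\lfloor\sqrt n\rfloor$ takes $\sim\sqrt N$ distinct values each with $\sim\sqrt N$ preimages in $(N/2,N]$, provides the needed gain that is absorbed by the $N^{-c}$ term) from spread-out pieces (where $\|h\|_{\ell^\infty} \lesssim \|h\|_{\ell^2}/\sqrt N$, so $\ell^\infty$-normalization is compatible with the $\ell^2$-normalization and the $2^{-cl}$ gain from the inverse theorem carries through without loss).
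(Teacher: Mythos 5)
Your proposal gets the high-level contour right — pass to duality, localize to scale-$N$ pieces, and use Theorem \ref{thm:inversethm} together with the Fourier support hypothesis to contradict a large inner product — and you correctly identify the central obstruction: after $\ell^\infty$-normalizing $f\phi_k$ and $g\tilde\phi_k$ the inverse theorem yields $|\langle f_0,\Tilde{A}_N(f_1,f_2)\rangle|\lesssim(2^{-cl}+N^{-c})N$, which is a factor of $N$ off the target $\|f\|_{\ell^2}\|g\|_{\ell^2}$ on pieces where $f$ or $g$ is concentrated. However, the way you propose to close this gap is where the proposal breaks down, and the mechanism the paper actually uses is genuinely different and more delicate than a level-set decomposition.

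There are two missing ideas. First, a level-set decomposition of $f$ (the function carrying the Fourier-support hypothesis in case (i)) destroys that hypothesis: $f\1_{|f|>T}$ no longer has Fourier transform vanishing on $\M_{\leq N^{-1/2}2^l}$, so you cannot run the inverse-theorem contradiction on either truncated piece. Decomposing only $g$ (or only $h=f_0$) leaves the factor $\|f\|_{\ell^\infty}$ in the bound, and the lossy conversion $\|f\|_{\ell^\infty}\to\|f\|_{\ell^2}/\sqrt{N}$ is again unavailable for concentrated $f$. The paper avoids this by never truncating $f$ at all: it passes to the \emph{dual function} $\Tilde{A}_N^*(h,g)$ and uses a Hahn--Banach separation argument (Corollary \ref{cor:dualdecomp}) to convert the \emph{conditional} statement of Theorem \ref{thm:inversethm} into an \emph{unconditional decomposition} $\Tilde{A}_N^*(h,g)=F+\varepsilon_1+\varepsilon_2$ with $\hat F$ supported on the principal arc, $\|\varepsilon_1\|_{\ell^1}\leq\delta N$, and $\|\varepsilon_2\|_{\ell^2}\leq\delta$. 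The $\ell^2$-in-$f$ gain is then obtained at the very end via a single Cauchy--Schwarz, because after convolving with $\widecheck{\Psi}_{>N^{-1/2}2^l}$ the structured piece $F$ disappears entirely; no level-set argument on $f$ ever occurs.

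Second, your ``intrinsic $N^{-1/2}$-smoothing'' intuition points in the right direction but you attribute the gain to the $N^{-c}$ term, which is not how it is used. In the paper the $\sqrt{N}$ loss in Proposition \ref{prop:dualstructureII} (which has the $2^{-cl}$ gain but only $\ell^\infty$ norms on the right) is removed in Corollary \ref{cor:dualstructureIII} by \emph{interpolating} against the pointwise domination $|\Tilde{A}_N^*(h,g)|\lesssim\|h\|_{\ell^\infty}B_N|g|$ combined with the $\ell^q\to\ell^2$ improving estimate of Proposition \ref{prop:improving} (which has no $2^{-cl}$ gain but also no $\sqrt{N}$ loss and costs only $\ell^q$ of $g$ for $q<2$). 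Real interpolation between these two endpoints produces the $\ell^2$-in-$g$ bound with a positive power of $2^{-l}$ and no $\sqrt{N}$. This Riesz--Thorin step replaces the level-set argument you sketch and is a concrete, checkable mechanism; your sketch leaves the crucial arithmetic (how the concentrated and spread pieces sum against the $(2^{-cl}+N^{-c})$ gain) unresolved. Finally, your Plancherel argument for case (ii) is essentially correct for the $\ell^2\times\ell^2\to\ell^2$ bound with gain $(2^{-cl}+N^{-c})$ (and is arguably cleaner than the paper's, which reruns the entire Hahn--Banach chain for $\Tilde{A}_N^{**}$), but it produces exactly the $\sqrt{N}$ loss when converted to $\ell^1$ that you flag, so it too needs the improving-estimate interpolation to finish.
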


We will first prove Theorem \ref{thm:sobolevestimate} assuming (i) holds and then use this to prove it assuming (ii) holds.

\subsection{Proof of Theorem \ref{thm:sobolevestimate} Assuming (i)}

We introduce the dual function \[ \Tilde{A}_N^*(h,g)(x) = \frac{1}{N} \sum_{n=1}^N 1_{n>\frac{N}{2}} h(x+\lfloor \sqrt{n} \rfloor) g(x+\lfloor \sqrt{n} \rfloor-n) \] which we will consider in this subsection. Note that \[ \sum_{x \in \Z} h(x) \Tilde{A}_N(f,g)(x) = \sum_{x \in \Z} f(x) \Tilde{A}_N^*(h,g)(x). \]

We begin by showing that under the assumptions of Theorem \ref{thm:inversethm} (with $f_0,f_1,f_2$ replaced by $h,f,g$ respectively) $f$ correlates with a function $F$ which is Fourier supported on an appropriate arc centred at $0$.

\begin{proposition} \label{prop:dualstructureI}
Let $N \geq 1$ and $0<\delta \leq 1$. Let $f,g,h\:\Z \to \C$  be $1$-bounded functions supported on $[[N_0]]$ for some $N_0 \sim N$ and suppose \[ \left| \sum_{x \in \Z} h(x) \Tilde{A}_N(f,g)(x) \right| \geq \delta N. \] Then there exists $F \in \ell^2(\Z)$ with $\widehat{F}$ supported on $[[O(\delta^{-O(1)} N^{-\frac{1}{2}})]]$, satisfying $\|F\|_{\ell^\infty(\Z)} \leqsim 1$, $\|F\|_{\ell^1(\Z)} \leqsim N$, and \[ \left|\sum_{x \in \Z} f(x)F(x) \right| \geqsim \delta^{O(1)}N.\]
\end{proposition}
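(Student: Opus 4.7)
The plan is to apply Theorem~\ref{thm:inversethm} with $(f_0,f_1,f_2)=(h,f,g)$ and then extract $F$ by smoothly truncating the Fourier transform of a natural dual function to a small arc around $0$. The degenerate regime $N\lesssim\delta^{-O(1)}$ is handled separately: here $\delta^{-O(1)} N^{-1/2}\gtrsim 1$ so the Fourier support condition is vacuous, and the crude estimate $|\sum_x h(x) \Tilde{A}_N(f,g)(x)|\leq\|f\|_{\ell^1(\Z)}$ forces $\|f\|_{\ell^1(\Z)}\gtrsim\delta N$, so one can take $F$ to be a unimodular phase of $\bar f$ on the support of $f$.

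Assuming $N\gtrsim\delta^{-O(1)}$, Theorem~\ref{thm:inversethm} produces $N'\in\N$ with $\delta^{O(1)}N^{1/2}\lesssim N'\leq N^{1/2}$ and $\sum_{x\in\Z}|\E_{n\in[N']}f(x-n)|\gtrsim\delta^{O(1)}N$. I will then choose a $1$-bounded $\epsilon\:\Z\to\C$ supported on $[[O(N)]]$ so that $\epsilon(x)\,\E_{n\in[N']}f(x-n)=|\E_{n\in[N']}f(x-n)|$, and set
\begin{equation*}
F_0(y)=\E_{n\in[N']}\epsilon(y+n).
\end{equation*}
Fubini followed by a change of variables gives $\sum_y f(y)F_0(y)\gtrsim\delta^{O(1)}N$. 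The function $F_0$ is $1$-bounded and supported on $[[O(N)]]$, and admits the convolution representation $F_0=\epsilon*k$ where $k(w)=\frac{1}{N'} 1_{w\in[-N',-1]}$. In particular $\hat k$ is a normalised Dirichlet-type kernel satisfying $|\hat k(\xi)|\lesssim (N'\|\xi\|_\T)^{-1}$.

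Next, for a sufficiently large constant $A>0$ I will set $R=\delta^{-A}/N'$ and define $F=\widecheck{\Psi}_{\leq R}*F_0$, so that $\hat F=\Psi_{\leq R}\hat{F_0}$ is supported in $\M_{\leq R}$, and $R\lesssim\delta^{-O(1)}N^{-1/2}$ follows from $N'\gtrsim\delta^{O(1)}N^{1/2}$. The bounds $\|\widecheck{\Psi}_{\leq R}\|_{\ell^1(\Z)}\lesssim 1$ from the Littlewood--Paley section together with Young's convolution inequality then give $\|F\|_{\ell^\infty(\Z)}\lesssim 1$ and $\|F\|_{\ell^1(\Z)}\lesssim N$ essentially for free.

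The main obstacle is controlling the truncation error so that $F$ retains the correlation with $f$ enjoyed by $F_0$. The Fourier transform of $F_0-F$ equals $(1-\Psi_{\leq R})\hat\epsilon\,\hat k$ and is supported where $|\xi|\gtrsim R$; on this set $|\hat k(\xi)|\lesssim (N'R)^{-1}=\delta^A$. Plancherel's theorem followed by the Cauchy--Schwarz inequality then yields
\begin{equation*}
\Bigl|\sum_{y\in\Z} f(y)(F_0-F)(y)\Bigr|\lesssim\delta^A\|f\|_{\ell^2(\Z)}\|\epsilon\|_{\ell^2(\Z)}\lesssim\delta^A N,
\end{equation*}
using that both $f$ and $\epsilon$ are $1$-bounded and supported on $[[O(N)]]$. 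Choosing $A$ strictly larger than the implicit exponent in the main correlation bound makes this error negligible, which delivers $|\sum_y f(y)F(y)|\gtrsim\delta^{O(1)}N$ and closes the argument.
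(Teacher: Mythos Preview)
Your proof is correct and follows essentially the same approach as the paper: both arguments apply Theorem~\ref{thm:inversethm}, extract a dual function from the averaging $\E_{n\in[N']}f(\cdot-n)$ via a $1$-bounded phase, and then localise its Fourier transform to an interval of width $\delta^{-O(1)}N^{-1/2}$. The only cosmetic difference is the order of operations---the paper smooths the indicator $1_{[N']}$ at physical scale $\varepsilon^{-1}\sim\delta^{-O(1)}N^{1/2}$ before forming the dual, whereas you form the dual $F_0=\epsilon*k$ first and then truncate on the Fourier side via $\widecheck{\Psi}_{\leq R}*F_0$, controlling the error by Plancherel and the Dirichlet bound $|\hat k(\xi)|\lesssim (N'\|\xi\|_\T)^{-1}$; these are equivalent manoeuvres.
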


\begin{proof}
We may assume $N \geq A \delta^{-A}$ for a sufficiently large $A>0$ because otherwise one may take $F =\Tilde{A}_N^*(h,g)$. We can therefore apply Theorem \ref{thm:inversethm} to obtain an $N' \in \N$ with $\delta^{O(1)} N^\frac{1}{2} \leqsim N' \leq N^\frac{1}{2}$ and \[ \sum_{|x| \leqsim N} | \E_{n \in [N']} f(x-n)| \geqsim \delta^{O(1)}N. \] Note we are able to restrict the range that we sum $x$ over using the support hypothesis on $f$. By smoothly approximating $1_{[N']}$ we may write \[ \E_{n \in [N']} f(x-n) = \varepsilon \sum_{n \in \Z} \E_{m \in [N']} \F_\R^{-1} \Psi (\varepsilon(n-m)) f(x-n) + O \left(\varepsilon^{10} + \frac{1}{\varepsilon N'} \right) \] with $\varepsilon = B \delta^{-B} N^{-\frac{1}{2}}$ for some $B>0$. Choosing $B>0$ sufficiently large and then $A$ sufficiently large depending on $B$ we obtain \[ \sum_{|x| \leqsim N} \left| \varepsilon \sum_{n \in \Z} \E_{m \in [N']} \F_\R^{-1} \Psi(\varepsilon (n-m)) f(x-n) \right| \geqsim \delta^{O(1)}N. \] We let $\alpha(x)$ denote the conjugate phase of the inner sum and define $\phi(x) = e(\alpha(x)) 1_{|x| \leqsim N}$. Then \[ \left| \sum_{x \in \Z} \phi(x) \varepsilon \sum_{n \in \Z} \E_{m \in [N']} \F_\R^{-1} \Psi(\varepsilon(n-m)) f(x-n) \right| \geqsim \delta^{O(1)}N \] so we may take \[ F(x) = \varepsilon \sum_{n \in \Z} \E_{m \in [N']} \F_\R^{-1} \Psi(\varepsilon(n-m)) \phi(x+n). \] Since \[ \widehat{F}(\xi) = \hat{\phi}(\xi) \E_{m \in [N']} e(m\xi) \Psi \left(\frac{\xi}{\varepsilon} \right) \] is supported on $[-\varepsilon,\varepsilon]$ and $\varepsilon \sim \delta^{-O(1)} N^{-\frac{1}{2}}$, we see that $F$ has the desired properties.
\end{proof}

The next step is to use an application of the Hahn--Banach theorem in order to obtain a decomposition of the dual function $\Tilde{A}_N^*(h,g)$.

\begin{corollary} \label{cor:dualdecomp}
Let $N \geq 1$ and $0<\delta \leq 1$. Let $g,h\:\Z \to \C$ be $1$-bounded functions supported on $[[N_0]]$ with $N_0 \sim N$. Then there exists a decomposition \begin{equation} \Tilde{A}_N^*(h,g) = F+\varepsilon_1+\varepsilon_2
\end{equation} where $F$ satisfies the properties of the function $F$ in Proposition \ref{prop:dualstructureI} and the error terms $\varepsilon_1 \in \ell^1(\Z)$ and $\varepsilon_2 \in \ell^2(\Z)$ satisfy $\|\varepsilon_1\|_{\ell^1(\Z)} \leq \delta N$ and $\|\varepsilon_2\|_{\ell^2(\Z)} \leq \delta$.
\end{corollary}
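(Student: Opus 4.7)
The plan is a Hahn--Banach separation argument using Proposition~\ref{prop:dualstructureI} as the key input; the overall structure follows the one used in~\cite{KMT} for analogous decompositions. Suppose, for contradiction, that $\Tilde{A}_N^*(h,g)$ admits no decomposition of the form asserted, and form the convex symmetric subset of $\ell^2(\Z)$
\[
    \mathcal{K} = \mathcal{F} + \{\varepsilon_1 \in \ell^1(\Z) : \|\varepsilon_1\|_{\ell^1(\Z)} \leq \delta N\} + \{\varepsilon_2 \in \ell^2(\Z) : \|\varepsilon_2\|_{\ell^2(\Z)} \leq \delta\},
\]
where $\mathcal{F}$ collects all $F \in \ell^2(\Z)$ satisfying the three structural conditions in the conclusion of Proposition~\ref{prop:dualstructureI} with appropriately chosen absolute constants. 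When $N \lesssim \delta^{-O(1)}$ the Fourier-support condition defining $\mathcal{F}$ becomes vacuous and one can take $F = \Tilde{A}_N^*(h,g)$ directly, so we may assume that $N$ exceeds the relevant threshold.

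Applying Hahn--Banach in $\ell^2(\Z)$ to the assumed exclusion $\Tilde{A}_N^*(h,g) \notin \mathcal{K}$ produces a separating functional $f \in \ell^2(\Z)$ with $|\langle f, v\rangle| \leq 1$ for every $v \in \mathcal{K}$ and $|\langle f, \Tilde{A}_N^*(h,g)\rangle| > 1$. Testing this inequality against each of the three summands of $\mathcal{K}$ extracts the three pointwise bounds
\[
    \|f\|_{\ell^\infty(\Z)} \leq (\delta N)^{-1}, \quad \|f\|_{\ell^2(\Z)} \leq \delta^{-1}, \quad \sup_{F \in \mathcal{F}} |\langle f, F\rangle| \leq 1.
\]

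Next, since $\Tilde{A}_N^*(h,g)$ is supported on a set of cardinality $O(N)$, I would truncate $f$ to an interval $[[C_0 N]]$ containing that support (this leaves $\langle f, \Tilde{A}_N^*(h,g)\rangle$ unchanged) and rescale by $\delta N$, producing a $1$-bounded, compactly supported function $\tilde f$. The duality $\langle \tilde f, \Tilde{A}_N^*(h,g)\rangle = \langle h, \Tilde{A}_N(\tilde f, g)\rangle$ then gives $|\langle h, \Tilde{A}_N(\tilde f, g)\rangle| > \delta N$, so Proposition~\ref{prop:dualstructureI} applied to $(h, \tilde f, g)$ produces a function $F$ lying in $\mathcal{F}$ (provided the implicit constants in $\mathcal{F}$ are chosen consistently with those from the proposition) and $|\langle \tilde f, F\rangle| \gtrsim \delta^{O(1)} N$.

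The main obstacle is extracting from this a contradiction with the Hahn--Banach bound $|\langle f, F\rangle| \leq 1$. Two observations reconcile the scales. First, the statement of the corollary is invariant under the substitution $\delta \mapsto \delta^{1/K}$ for any fixed $K > 0$, so the Hahn--Banach setup can be carried out with the power of $\delta$ chosen so as to match the absolute exponent implicit in the conclusion of Proposition~\ref{prop:dualstructureI}. Second, the $F$ produced in the proof of Proposition~\ref{prop:dualstructureI} is the convolution of the compactly supported function $\phi(x) = e(\alpha(x)) \mathbf{1}_{|x| \lesssim N}$ with a rescaled Schwartz kernel, and therefore has $\ell^1$-tails outside $[[C_0 N]]$ bounded by arbitrary powers of $\delta$; this ensures that the truncation of $f$ contributes only a negligible error to the pairing $\langle f, F\rangle$. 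Combining these two observations yields $|\langle f, F\rangle| > 1$ after undoing the rescaling, contradicting the Hahn--Banach bound and establishing the decomposition.
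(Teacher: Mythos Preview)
Your overall Hahn--Banach strategy is the same as the paper's, and the setup through step~6 is fine. The gap is in your ``first observation.'' The substitution $\delta \mapsto \delta^{1/K}$ does not close the scale mismatch, because Proposition~\ref{prop:dualstructureI} always loses a fixed power of its \emph{input} threshold. Concretely: if you run your argument with the error balls at scale $\delta^{1/K}$, then the rescaled $\tilde f$ satisfies $|\langle h,\Tilde A_N(\tilde f,g)\rangle|>\delta^{1/K}N$, so Proposition~\ref{prop:dualstructureI} applied with threshold $\delta^{1/K}$ returns $|\langle \tilde f,F\rangle|\gtrsim \delta^{C/K}N$ for some absolute $C>1$. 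Undoing the rescaling gives $|\langle f,F\rangle|\gtrsim \delta^{(C-1)/K}$, which is still $\le 1$ for every $K>0$. No choice of $K$ makes the exponent nonpositive, so you never contradict the separation bound $|\langle f,F\rangle|\le 1$.

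The paper's fix is not to change the power of $\delta$ but to enlarge the class $\mathcal F$: one includes all dilates $\lambda F$ with $|\lambda|\lesssim \delta^{-O(1)}$ (equivalently, one takes $\mathcal F$ to consist of functions with the correct Fourier support but with $\|F\|_{\ell^\infty}\lesssim \delta^{-O(1)}$ and $\|F\|_{\ell^1}\lesssim \delta^{-O(1)}N$). Then the separating functional is tested against $\lambda F$ rather than $F$, and the factor $\lambda\sim\delta^{-O(1)}$ exactly absorbs the $\delta^{O(1)}$ loss from Proposition~\ref{prop:dualstructureI}, yielding the contradiction. Your second observation about the tails of $F$ is correct and is needed, but it is orthogonal to this main issue.
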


\begin{proof}
For any $1$-bounded function $f \in \ell^2(\Z)$ supported on $[[2N_0]]$ satisfying \[ \left| \sum_{x \in \Z} f(x) \Tilde{A}_N^*(h,g)(x) \right|>\delta N \] we know there exists $F$ as in the conclusion of Proposition \ref{prop:dualstructureI} such that $|\sum_{x \in \Z} f(x) F(x)| \geqsim \delta^{O(1)} N$. Denote the set of such $F$ by $\mathscr{F}$ and let $V$ denote the $\ell^2(\Z)$ closure of the set \begin{equation} \label{eqn:closureset} \{ \lambda F \mid F \in \mathscr{F}, |\lambda| \leqsim \delta^{-O(1)} \} \cup \{\varepsilon \in \ell^2(\Z) \mid \|\varepsilon\|_{\ell^1(\Z)} \leq \delta N \}. \end{equation} If $\Tilde{A}_N^*(h,g)$ does not lie in $V$ then by the Hahn--Banach separation theorem and the Riesz Representation theorem there exists $f \in \ell^2(\Z)$ such that $|\sum_{x \in \Z} f(x) \Tilde{A}_N^*(h,g)(x)|>\delta N$ and $|\sum_{x \in \Z} f(x) v(x)| \leq \delta N$ for all $v \in V$. Since $\Tilde{A}_N^*(h,g)$ is supported on $[[2N_0]]$ we may assume that $f$ is too. Moreover, by choosing $v \in V$ to be any $\varepsilon \in \ell^2(\Z)$ with $\|\varepsilon\|_{\ell^1(\Z)} \leq \delta N$ we have $|\sum_{x \in \Z} f(x) \varepsilon(x)| \leq \delta N$, or equivalently \[ \|f\|_{\ell^\infty(\Z)} = \sup_{\substack{\varepsilon \in \ell^2(\Z) \\ \|\varepsilon\|_{\ell^1(\Z)} \leq 1}} \left| \sum_{x \in \Z} f(x) \varepsilon(x) \right| \leq 1, \] so $f$ is $1$-bounded. Thus there exists $F \in \mathscr{F}$ such that $|\sum_{x \in \Z} f(x) F(x)| \geqsim \delta^{O(1)} N$. But by choosing $\lambda = A \delta^{-A}$ for a sufficiently large constant $A>0$, we have $\lambda F \in V$ hence \[ \delta^{O(1)}N \leqsim \lambda^{-1} \left|\sum_{x \in \Z} f(x) \lambda F(x) \right| \leq \frac{1}{A} \delta^{A+1} N. \] This is a contradiction for $A$ sufficiently large hence $\Tilde{A}_N^*(h,g)$ does lie in $V$. The desired decomposition comes from writing the dual function as a convex combination of elements of the set (\ref{eqn:closureset}) plus an $\ell^2(\Z)$ error coming from the fact that $V$ is an $\ell^2(\Z)$ closure.
\end{proof}

This decomposition of $\Tilde{A}_N^*(h,g)$ will be used to obtain an estimate for the $\ell^2(\Z)$ norm of $\Tilde{A}_N^*(h,g)$, once restricted to the complement of the principal major arc. Some exponential decay is gained at the cost of a loss of $N^\frac{1}{2}$ (which will be removed later). The assumption that $g$ and $h$ are $1$-bounded is also removed.

\begin{proposition} \label{prop:dualstructureII}
Let $N \geq 1$ and $l \in \N$ satisfy $N \geqsim 2^{2l}$ and let $g,h\:\Z \to \C$ be bounded functions supported on $[[N_0]]$ with $N_0 \sim N$. Then \[ \| \widecheck{\Psi}_{> N^{-\frac{1}{2}} 2^l}*\Tilde{A}_N^*(h,g) \|_{\ell^2(\Z)} \leqsim 2^{-cl} N^\frac{1}{2} \|h\|_{\ell^\infty(\Z)} \|g\|_{\ell^\infty(\Z)} \] for some some absolute constant $c>0$.
\end{proposition}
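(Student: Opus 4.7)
The plan is to apply Corollary \ref{cor:dualdecomp} with a carefully tuned parameter $\delta = 2^{-cl}$, for a small absolute constant $c > 0$ to be chosen at the end. First, by bilinearity of $\Tilde{A}_N^*$, I would factor out $\|h\|_{\ell^\infty(\Z)} \|g\|_{\ell^\infty(\Z)}$ and assume $h, g$ are $1$-bounded, carrying these factors through to the final estimate. Corollary \ref{cor:dualdecomp} then produces a decomposition $\Tilde{A}_N^*(h,g) = F + \varepsilon_1 + \varepsilon_2$ where $\widehat{F}$ is supported on an arc of width $O(\delta^{-O(1)} N^{-1/2})$ centered at the origin, with $\|F\|_{\ell^\infty(\Z)} \lesssim 1$, $\|F\|_{\ell^1(\Z)} \lesssim N$, $\|\varepsilon_1\|_{\ell^1(\Z)} \leq \delta N$, and $\|\varepsilon_2\|_{\ell^2(\Z)} \leq \delta$. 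The assumption $N \gtrsim 2^{2l}$ ensures $N \gtrsim \delta^{-O(1)}$, so the decomposition is informative.

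With $c$ taken small enough that $\delta^{-O(1)} \leq 2^{l-1}$, the Fourier support of $F$ lies inside $\M_{\leq N^{-1/2} 2^{l-1}}$, the region where $\Psi_{>N^{-1/2} 2^l}$ vanishes, and hence $\widecheck{\Psi}_{>N^{-1/2} 2^l} * F = 0$. For the $\varepsilon_2$ term, the multiplier $\Psi_{>N^{-1/2} 2^l}$ is bounded by $1$, so Plancherel gives $\|\widecheck{\Psi}_{>N^{-1/2} 2^l} * \varepsilon_2\|_{\ell^2(\Z)} \leq \|\varepsilon_2\|_{\ell^2(\Z)} \leq \delta \leq 2^{-cl} N^{1/2}$.

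The main step, and the one I expect to require the most care, is the bound on $\|\varepsilon_1\|_{\ell^2(\Z)}$. The bound $\|\varepsilon_1\|_{\ell^1(\Z)} \leq \delta N$ alone only gives $\|\varepsilon_1\|_{\ell^2(\Z)} \lesssim \delta N$ via Young's convolution inequality, which is off by $N^{1/2}$ from what is needed. The key trick is to also extract an $\ell^\infty$ bound: writing $\varepsilon_1 = \Tilde{A}_N^*(h,g) - F - \varepsilon_2$, each term is $\ell^\infty$-controlled. Indeed, $\Tilde{A}_N^*(h,g)$ is $1$-bounded (from the $1$-boundedness of $h, g$), $\|F\|_{\ell^\infty(\Z)} \lesssim 1$ by the corollary, and $\|\varepsilon_2\|_{\ell^\infty(\Z)} \leq \|\varepsilon_2\|_{\ell^2(\Z)} \leq \delta \lesssim 1$, so $\|\varepsilon_1\|_{\ell^\infty(\Z)} \lesssim 1$. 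The interpolation inequality $\|\varepsilon_1\|_{\ell^2(\Z)}^2 \leq \|\varepsilon_1\|_{\ell^\infty(\Z)} \|\varepsilon_1\|_{\ell^1(\Z)}$ then gives $\|\varepsilon_1\|_{\ell^2(\Z)} \lesssim (\delta N)^{1/2} = 2^{-cl/2} N^{1/2}$, and therefore $\|\widecheck{\Psi}_{>N^{-1/2} 2^l} * \varepsilon_1\|_{\ell^2(\Z)} \lesssim 2^{-cl/2} N^{1/2}$.

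Summing the three contributions and reintroducing the normalization factor $\|h\|_{\ell^\infty(\Z)} \|g\|_{\ell^\infty(\Z)}$ gives the desired estimate after replacing $c$ by $c/2$. The only delicate bookkeeping is in matching the implicit $O(1)$ exponent appearing in the Fourier support bound of $F$ against the threshold $N^{-1/2} 2^l$ used in the frequency cutoff, which determines how small $c$ must be chosen.
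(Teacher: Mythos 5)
Your approach is cleaner than the paper's, but it rests on the stated $\ell^\infty$ bound $\|F\|_{\ell^\infty(\Z)} \lesssim 1$ in Corollary \ref{cor:dualdecomp}, and the paper's own proof of that corollary does not actually establish this. In the Hahn--Banach argument there, the decomposition of $\Tilde{A}_N^*(h,g)$ is obtained as a convex combination of elements of the set
\[
\{\lambda F' \mid F' \in \mathscr{F},\ |\lambda| \lesssim \delta^{-O(1)}\} \cup \{\varepsilon \mid \|\varepsilon\|_{\ell^1(\Z)} \leq \delta N\},
\]
plus an $\ell^2$ error. The resulting $F$ is therefore a convex combination of terms $\lambda F'$ with $|\lambda| \lesssim \delta^{-O(1)}$, so what one can actually extract is $\|F\|_{\ell^\infty(\Z)} \lesssim \delta^{-O(1)}$ (and similarly $\|F\|_{\ell^1(\Z)} \lesssim \delta^{-O(1)} N$), not $\lesssim 1$. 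Consequently $\|\varepsilon_1\|_{\ell^\infty(\Z)} \lesssim \delta^{-O(1)}$, which is exactly what the paper uses. Once you replace $\lesssim 1$ by $\lesssim \delta^{-C}$ in your argument for an unknown absolute constant $C$, your interpolation $\|\varepsilon_1\|_{\ell^2}^2 \leq \|\varepsilon_1\|_{\ell^\infty} \|\varepsilon_1\|_{\ell^1}$ gives only $\|\varepsilon_1\|_{\ell^2(\Z)} \lesssim \delta^{(1-C)/2} N^{1/2}$, which is useless unless $C < 1$, and there is no reason to expect $C < 1$ from the proofs of Theorem \ref{thm:inversethm} and Proposition \ref{prop:dualstructureI}.

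The paper's proof is designed to be insensitive to this unknown exponent. It interpolates $\|\varepsilon_1\|_{\ell^q(\Z)}$ between $\ell^1$ and $\ell^\infty$ only to some $q \in (1,2)$ chosen close enough to $1$ so that the factor $\delta^{-C(1-1/q)}$ is dominated by $\delta^{1/(2q)}$ — say, so that $\|\varepsilon_1\|_{\ell^q(\Z)} \lesssim \delta^{1/2} N^{1/q}$. It then obtains a companion $\ell^{q'}$ bound on $\widecheck{\Psi}_{>N^{-1/2}2^l} * \varepsilon_1$ by bounding $\widecheck{\Psi}_{>N^{-1/2}2^l} * \Tilde{A}_N^*(h,g)$ in $\ell^{q'}$ using the $1$-boundedness and compact support of $\Tilde{A}_N^*(h,g)$, and by bounding $\varepsilon_2$ in $\ell^{q'}$ via the $\ell^2 \hookrightarrow \ell^{q'}$ embedding. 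Interpolating the $\ell^q$ and $\ell^{q'}$ bounds at $\ell^2$ then yields $\delta^{1/4} N^{1/2}$, which gives the desired exponential decay $2^{-cl} N^{1/2}$ regardless of the unknown exponent $C$. To fix your proof you should replace the single-shot $\ell^1$-to-$\ell^\infty$ interpolation on $\varepsilon_1$ by this two-stage scheme, or else prove the sharper statement $\|F\|_{\ell^\infty(\Z)} \lesssim 1$ for the $F$ appearing in the decomposition, which the paper does not do.
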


\begin{proof}
We may normalise so that $g$ and $h$ are $1$-bounded. We then apply Corollary \ref{cor:dualdecomp} with $\delta = 2^{-c_0l}$ for some absolute constant $c_0>0$ to be determined to obtain a decomposition $\Tilde{A}_N^*(h,g) = F+\varepsilon_1+\varepsilon_2$ (keeping notation from that corollary). Note that by choosing $c_0$ appropriately, the support of $\widehat{F}$ is contained in $\M_{\leq N^{-\frac{1}{2}} 2^{l-1}}$, on which $\Psi_{\leq N^{-\frac{1}{2}} 2^l}$ is identically equal to 1. Thus \[ \widecheck{\Psi}_{> N^{-\frac{1}{2}} 2^l}*\Tilde{A}_N^*(h,g) = \widecheck{\Psi}_{> N^{-\frac{1}{2}} 2^l}*\varepsilon_1 + \widecheck{\Psi}_{> N^{-\frac{1}{2}} 2^l}*\varepsilon_2. \] We use the triangle inequality in $\ell^2(\Z)$, (\ref{eqn:psiboundednessZ}), and the fact that $\|\varepsilon_2\|_{\ell^2(\Z)} \leq \delta$ to obtain \[ \|\widecheck{\Psi}_{> N^{-\frac{1}{2}} 2^l}*\varepsilon_2\|_{\ell^2(\Z)} \leqsim \delta. \] We now estimate the remaining term. By the triangle inequality in $\ell^1(\Z)$ we have $\|\varepsilon_1\|_{\ell^\infty(\Z)} \leqsim \delta^{-O(1)}$. Interpolating with the estimate $\|\varepsilon_1\|_{\ell^1(\Z)} \leq \delta N$, we see that \[ \|\varepsilon_1\|_{\ell^q(\Z)} \leqsim_q \delta^{\frac{1}{q} - O(1-\frac{1}{q})} N^\frac{1}{q} \] holds for any $q \in (1,\infty)$. In particular, for an absolute $q \in (1,2)$ sufficiently close to 1, we have $\|\varepsilon_1\|_{\ell^q(\Z)} \leqsim_q \delta^\frac{1}{2} N^\frac{1}{q}$. We therefore see that \[ \|\widecheck{\Psi}_{> N^{-\frac{1}{2}} 2^l}*\varepsilon_1\|_{\ell^q(\Z)} \leqsim_q \delta^\frac{1}{2} N^\frac{1}{q}. \] Since $\Tilde{A}_N^*(h,g)$ is 1-bounded and supported on $[[2N_0]]$, we also have \[ \|\Tilde{A}_N^*(h,g) \|_{\ell^{q'}(\Z)} \leqsim_q N^\frac{1}{q'} \] hence \[ \|\widecheck{\Psi}_{> N^{-\frac{1}{2}} 2^l}*\Tilde{A}_N^*(h,g)\|_{\ell^{q'}(\Z)} \leqsim_q N^\frac{1}{q'}. \] Additionally, since $\|\varepsilon_2\|_{\ell^2(\Z)} \leq \delta$ we have $\|\varepsilon_2\|_{\ell^{q'}(\Z)} \leqsim \delta$. Thus by the triangle inequality in $\ell^{q'}(\Z)$, we obtain \[ \|\widecheck{\Psi}_{> N^{-\frac{1}{2}} 2^l}*\varepsilon_1\|_{\ell^{q'}(\Z)} \leqsim_q N^\frac{1}{q'}. \] Interpolation yields \[ \| \widecheck{\Psi}_{> N^{-\frac{1}{2}} 2^l}*\varepsilon_1 \|_{\ell^2(\Z)} \leqsim_q \delta^\frac{1}{4} N^\frac{1}{2}, \] and hence \[ \|\widecheck{\Psi}_{> N^{-\frac{1}{2}} 2^l}*\Tilde{A}_N^*(h,g)\|_{\ell^2(\Z)} \leqsim_q \delta^\frac{1}{4} N^\frac{1}{2} = 2^{-\frac{c'}{4}l} N^\frac{1}{2}, \] completing the proof.
\end{proof}

Before moving on to the next stage in the proof we prove a so-called ``improving estimate". This kind of result has been proved, for example, by Han--Kovač--Lacey--Madrid--Yang \cite{han2020improving}, who proved that one has inequalities of the form \[ 
\left\| \frac{1}{N} \sum_{n=1}^N f(\cdot + P(n)) \right\|_{\ell^q(\Z)} \leqsim_{p,q,P} N^{-d(\frac{1}{p}-\frac{1}{q})} \|f\|_{\ell^p(\Z)} \] for some (small) range of $p \leq q$, whenever $P$ is a polynomial of degree $d \geq 2$. For our purposes we would like to replace the polynomial sequence $P(n)$ by the Hardy field sequence $\lfloor \sqrt{n} \rfloor-n$. While we could appeal to the results in \cite{han2020improving} by performing a change of variables to obtain a polynomial sequence, we will instead prove our improving estimate directly as we will be able to do so without any restriction on the values $1 \leq p \leq q \leq \infty$ one can take. This will be particularly helpful in Subsection \ref{subsec:breakingduality} when we prove variational estimates with $p<1$ (and make use of Lemma \ref{lem:holdernonbanach} whose proof relies on the following improving estimate), as we will be able to take any $p_1,p_2 \in (1,\infty)$.

\begin{proposition} \label{prop:improving}
Let $1 \leq p \leq q \leq \infty$ and let $f \in \ell^p(\Z)$. Then \[ \left\| \frac{1}{N} \sum_{n=1}^N f(\cdot+\lfloor \sqrt{n} \rfloor-n) \right\|_{\ell^q(\Z)} \leqsim_{p,q} N^{-(\frac{1}{p}-\frac{1}{q})} \|f\|_{\ell^p(\Z)}. \]
\end{proposition}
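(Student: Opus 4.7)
The plan is to recognize the operator as a convolution on $\Z$, derive two endpoint bounds for the convolution kernel, and apply Young's convolution inequality. Write $B_N g(x) := \frac{1}{N}\sum_{n=1}^N g(x+\lfloor\sqrt{n}\rfloor-n)$ and set
\[
\mu_N(z) := \frac{1}{N}\bigl|\{n\in[N]:n-\lfloor\sqrt{n}\rfloor=z\}\bigr|,
\]
so that $B_N g = \mu_N * g$ with the usual convolution on $\Z$. The identity $\|\mu_N\|_{\ell^1(\Z)} = 1$ is immediate from the definition.

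The central ingredient is the pointwise bound $\|\mu_N\|_{\ell^\infty(\Z)}\leq 2/N$, i.e.\ the map $n\mapsto n-\lfloor\sqrt{n}\rfloor$ hits each integer at most twice on $[N]$. I would verify this by partitioning $[N]$ into the blocks $I_m=[m^2,(m+1)^2-1]$ on which $\lfloor\sqrt{n}\rfloor=m$ is constant: on $I_m$ the map $n\mapsto n-m$ is strictly increasing with range $[m^2-m,\,m^2+m]$, and the range for $I_{m+1}$ is $[m^2+m,\,m^2+3m+2]$, so consecutive blocks share only the single endpoint $m^2+m=(m+1)^2-(m+1)$. Hence the multiplicity is at most $2$ uniformly in $z$.

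Given the two endpoint bounds, log-convexity of $\ell^r$ norms (equivalently, bounding $\mu_N^r\leq \|\mu_N\|_{\ell^\infty(\Z)}^{r-1}\mu_N$) yields
\[
\|\mu_N\|_{\ell^r(\Z)} \leq \|\mu_N\|_{\ell^\infty(\Z)}^{1-1/r}\,\|\mu_N\|_{\ell^1(\Z)}^{1/r} \leqsim N^{-(1-1/r)}
\]
for every $r\in[1,\infty]$. Choosing $r$ so that $1-\tfrac{1}{r}=\tfrac{1}{p}-\tfrac{1}{q}$ (which is in $[0,1]$ by the hypothesis $1\leq p\leq q\leq \infty$) and invoking Young's convolution inequality gives
\[
\|B_N g\|_{\ell^q(\Z)}=\|\mu_N*g\|_{\ell^q(\Z)} \leq \|\mu_N\|_{\ell^r(\Z)}\|g\|_{\ell^p(\Z)} \leqsim N^{-(\frac{1}{p}-\frac{1}{q})}\|g\|_{\ell^p(\Z)},
\]
which is the claimed estimate.

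No step here is really an obstacle: the entire content is the multiplicity-$\leq 2$ bound, which is an elementary counting lemma exploiting the discrete geometry of $\lfloor\sqrt{n}\rfloor$. An alternative route through the improving inequalities of Han--Kova\v{c}--Lacey--Madrid--Yang (after a polynomial change of variables in $n$) would also yield a result of this shape, but it would restrict the allowable range of $(p,q)$ and is heavier than necessary; the direct convolution argument above works for all $1\leq p\leq q\leq\infty$, which, as the author notes, is essential for the application to Lemma \ref{lem:holdernonbanach} in the quasi-Banach regime $p<1$.
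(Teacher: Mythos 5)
Your proof is correct and follows essentially the same approach as the paper's: both rest on the elementary multiplicity bound for the map $n\mapsto n-\lfloor\sqrt{n}\rfloor$ (you sharpen the paper's $O(1)$ to an explicit $\leq 2$) followed by interpolation. The only cosmetic difference is that you interpolate the kernel $\mu_N$ between $\ell^1$ and $\ell^\infty$ and then apply Young's inequality, whereas the paper applies Riesz--Thorin directly to the operator between the trivial $\ell^p\to\ell^p$ bound and the $\ell^1\to\ell^\infty$ bound; these are the same argument.
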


\begin{proof}
Let $B_Nf(x) = \E_{n \in [N]} f(x+\lfloor \sqrt{n} \rfloor-n)$. Note the trivial bound \begin{equation} \|B_Nf\|_{\ell^p(\Z)} \leq \|f\|_{\ell^p(\Z)} \label{eqn:trivialimproving} \end{equation} for any $p \geq 1$ by the triangle inequality and translation invariance. Then by interpolation it suffices to prove that $\|B_Nf\|_{\ell^\infty(\Z)} \leqsim N^{-1} \|f\|_{\ell^1(\Z)}$. Fix $x \in \Z$ and apply the triangle inequality to obtain \[ |B_Nf(x)| \leq \frac{1}{N} \sum_{n=1}^N |f(x+\lfloor \sqrt{n} \rfloor - n)|. \] We can write the right hand side as \[ \frac{1}{N} \sum_{n=1}^N \sum_{k \in \Z} |f(k)| 1_{x+\lfloor \sqrt{n} \rfloor-n=k}, \] and note that the number of $n \in \N$ such that $\lfloor \sqrt{n} \rfloor -n = k-x$ is $O(1)$. So swapping the order of summation, we can bound $\sum_{n=1}^N 1_{x+\lfloor \sqrt{n} \rfloor-n=k} = O(1)$ to see that \[ |B_Nf(x)| \leqsim \frac{1}{N} \sum_{k \in \Z} |f(k)| = N^{-1} \|f\|_{\ell^1(\Z)}. \] This completes the proof.
\end{proof}

With our improving estimate in hand, we now upgrade $\ell^\infty$ control of $g$ to $\ell^2$ control and remove the loss of $N^\frac{1}{2}$, while keeping the assumption that $g$ is supported on an interval.

\begin{corollary} \label{cor:dualstructureIII}
Under the hypotheses of Proposition \ref{prop:dualstructureII} we have \[ \| \widecheck{\Psi}_{> N^{-\frac{1}{2}} 2^l}*\Tilde{A}_N^*(h,g) \|_{\ell^2(\Z)} \leqsim 2^{-cl} \|h\|_{\ell^\infty(\Z)} \|g\|_{\ell^2(\Z)}. \]
\end{corollary}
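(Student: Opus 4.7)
The plan is to upgrade the $\ell^\infty$-control of $g$ in Proposition \ref{prop:dualstructureII} to $\ell^2$-control by a level-set decomposition of $g$ combined with an improving-type pointwise bound for $\Tilde{A}_N^*$. By homogeneity I may normalise $h$ so that $\|h\|_{\ell^\infty(\Z)}=1$. For a threshold $\lambda>0$ to be chosen, split
\[ g=g_{\mathrm{lo}}+g_{\mathrm{hi}}, \qquad g_{\mathrm{lo}}=g\cdot 1_{\{|g|\leq\lambda\}}, \qquad g_{\mathrm{hi}}=g\cdot 1_{\{|g|>\lambda\}}, \]
both of which remain supported on $[[N_0]]$.

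For the low piece, $\|g_{\mathrm{lo}}\|_{\ell^\infty(\Z)}\leq\lambda$, so Proposition \ref{prop:dualstructureII} applies directly to give
\[ \|\widecheck{\Psi}_{>N^{-1/2}2^l}\ast \Tilde{A}_N^*(h,g_{\mathrm{lo}})\|_{\ell^2(\Z)}\leqsim 2^{-cl}N^{1/2}\lambda. \]
For the high piece I exploit its sparsity. Chebyshev's inequality gives $|\supp g_{\mathrm{hi}}|\leq \|g\|_{\ell^2(\Z)}^2/\lambda^2$, and then Cauchy--Schwarz yields $\|g_{\mathrm{hi}}\|_{\ell^1(\Z)}\leq\|g\|_{\ell^2(\Z)}^2/\lambda$. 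Since the map $n\mapsto n-\lfloor\sqrt n\rfloor$ is $O(1)$-to-one on $\N$ (the same observation underlying Proposition \ref{prop:improving}), swapping the order of summation gives the pointwise estimate
\[ |\Tilde{A}_N^*(h,g_{\mathrm{hi}})(x)|\leq \frac{1}{N}\sum_{N/2<n\leq N}|g_{\mathrm{hi}}(x+\lfloor\sqrt n\rfloor-n)|\leqsim \frac{\|g_{\mathrm{hi}}\|_{\ell^1(\Z)}}{N}\leqsim \frac{\|g\|_{\ell^2(\Z)}^2}{N\lambda}. \]
Because $\Tilde{A}_N^*(h,g_{\mathrm{hi}})$ is supported on an interval of length $\sim N$ (inherited from the support of $h$), converting the $\ell^\infty$ bound to an $\ell^2$ bound costs only a factor of $N^{1/2}$, and then the $\ell^2$-boundedness of convolution with $\widecheck{\Psi}_{>N^{-1/2}2^l}$ from (\ref{eqn:psiboundednessZ}) gives
\[ \|\widecheck{\Psi}_{>N^{-1/2}2^l}\ast\Tilde{A}_N^*(h,g_{\mathrm{hi}})\|_{\ell^2(\Z)}\leqsim \frac{\|g\|_{\ell^2(\Z)}^2}{N^{1/2}\lambda}. \]

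I then choose $\lambda=2^{cl/2}\|g\|_{\ell^2(\Z)}/N^{1/2}$, which equalises the two contributions at $2^{-cl/2}\|g\|_{\ell^2(\Z)}$, and summing via the triangle inequality in $\ell^2(\Z)$ yields the corollary with the constant $c/2$ in place of $c$. No step is especially delicate: the only ingredient beyond Proposition \ref{prop:dualstructureII} is the elementary $\ell^1\to\ell^\infty$ averaging bound, which is immediate from the $O(1)$-to-one property of $n\mapsto n-\lfloor\sqrt n\rfloor$, and the real content is the choice of threshold balancing the Propositional bound against the sparsity bound.
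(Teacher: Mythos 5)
Your proof is correct and achieves the same conclusion, but by a genuinely different route from the paper. The paper's argument is a two-line interpolation: it first establishes the bound
\[
\|\widecheck{\Psi}_{> N^{-1/2} 2^l}*\Tilde{A}_N^*(h,g)\|_{\ell^2(\Z)} \leqsim_q N^{-(\frac{1}{q}-\frac{1}{2})}\|h\|_{\ell^\infty(\Z)}\|g\|_{\ell^q(\Z)}
\]
for $q<2$ by combining the pointwise bound $|\Tilde{A}_N^*(h,g)|\leqsim \|h\|_{\ell^\infty}B_N(|g|)$ with the $\ell^q\to\ell^2$ case of Proposition~\ref{prop:improving}, and then interpolates this ($l$-decay-free) estimate against the $\ell^\infty\to\ell^2$ bound of Proposition~\ref{prop:dualstructureII}; the negative power of $N$ in the first estimate exactly cancels the $N^{1/2}$ loss in the second. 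Your argument instead implements the interpolation by hand: you perform a level-set decomposition of $g$ at a threshold $\lambda$, feed the bounded piece into Proposition~\ref{prop:dualstructureII}, control the sparse piece via the $\ell^1\to\ell^\infty$ endpoint of the improving estimate plus the $O(N)$ support of $\Tilde{A}_N^*$, and then optimise $\lambda$ to balance the two contributions. Both approaches use Proposition~\ref{prop:improving} in an essential way, but at different exponents ($q\in(1,2)$ for the paper, $q=1$ for you). Your version avoids invoking abstract interpolation and makes the cancellation of the $N^{1/2}$ loss completely explicit via the choice of threshold; the paper's version is a bit more compact. Both are valid and lose only a constant factor in the exponent $c$.
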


\begin{proof}
We have the pointwise bound \[ |\Tilde{A}_N^*(h,g)(x)| \leqsim \|h\|_{\ell^\infty(\Z)} B_N(|g|)(x) \] where \[ B_Nf(x) = \frac{1}{N} \sum_{n=1}^N f(x+\lfloor \sqrt{n} \rfloor-n), \] and Proposition \ref{prop:improving} implies \[ \|B_N(|g|) \|_{\ell^2(\Z)} \leqsim_q N^{-(\frac{1}{q}-\frac{1}{2})} \|g\|_{\ell^q(\Z)} \] holds for any $q<2$. Therefore, after applying the triangle inequality in $\ell^2(\Z)$ and (\ref{eqn:psiboundednessZ}), we have \[ \|\widecheck{\Psi}_{> N^{-\frac{1}{2}} 2^l}*\Tilde{A}_N^*(h,g)\|_{\ell^2(\Z)} \leqsim_q \|h\|_{\ell^\infty(\Z)} \|g\|_{\ell^q(\Z)}. \] Interpolating this estimate with the bound \[ \|\widecheck{\Psi}_{> N^{-\frac{1}{2}} 2^l}*\Tilde{A}_N^*(h,g)\|_{\ell^2(\Z)} \leqsim 2^{-cl} N^\frac{1}{2} \|h\|_{\ell^\infty(\Z)} \|g\|_{\ell^\infty(\Z)} \] obtained from Proposition \ref{prop:dualstructureII} yields the result.
\end{proof}

At this stage, we can remove the support hypothesis on $g$ and $h$.

\begin{corollary} \label{cor:dualstructureIV}
Let $g \in \ell^2(\Z)$ and $h \in \ell^\infty(\Z)$. Then \[ \| \widecheck{\Psi}_{> N^{-\frac{1}{2}} 2^l}*\Tilde{A}_N^*(h,g)\|_{\ell^2(\Z)} \leqsim 2^{-cl} \|h\|_{\ell^\infty(\Z)} \|g\|_{\ell^2(\Z)}. \]
\end{corollary}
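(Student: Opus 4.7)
The plan is to reduce to Corollary \ref{cor:dualstructureIII} via a spatial decomposition and an almost-orthogonality argument. Partition $\Z$ into disjoint intervals $I_k = [kN, (k+1)N)$ for $k \in \Z$, and let $K_k = [(k-1)N, (k+2)N]$ be slight enlargements, each of length $\sim N$ and having $O(1)$ overlap. Setting $g_k := g \cdot 1_{I_k}$ and $h_k := h \cdot 1_{K_k}$, one checks that whenever $g_k(x + \lfloor \sqrt{n} \rfloor - n) \neq 0$ for some $n \in (N/2, N]$, the corresponding $h$-value lies in $K_k$; indeed $x + \lfloor \sqrt{n} \rfloor = (x + \lfloor \sqrt{n} \rfloor - n) + n \in I_k + (N/2, N] \subseteq K_k$. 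Hence bilinearity of the dual operator gives
\[
\Tilde{A}_N^*(h, g) = \sum_k \Tilde{A}_N^*(h, g_k) = \sum_k \Tilde{A}_N^*(h_k, g_k).
\]

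Next, each pair $(h_k, g_k)$ is supported in an interval of length $\sim N$. Both $\Tilde{A}_N^*$ and the Fourier projection $P := \widecheck{\Psi}_{> N^{-\frac{1}{2}} 2^l} *$ commute with simultaneous translations of their arguments ($\Tilde{A}_N^*$ is translation-equivariant and $P$ is a convolution operator). Translating $K_k$ to be centred at the origin and applying Corollary \ref{cor:dualstructureIII} thus yields
\[
\|P \Tilde{A}_N^*(h_k, g_k)\|_{\ell^2(\Z)} \leqsim 2^{-cl} \|h_k\|_{\ell^\infty(\Z)} \|g_k\|_{\ell^2(\Z)} \leq 2^{-cl} \|h\|_{\ell^\infty(\Z)} \|g_k\|_{\ell^2(\Z)}.
\]
Since the $g_k$ have pairwise disjoint supports, $\sum_k \|g_k\|_{\ell^2}^2 = \|g\|_{\ell^2}^2$, so the desired estimate reduces to the almost-orthogonality bound
\[
\Big\|\sum_k P \Tilde{A}_N^*(h_k, g_k)\Big\|_{\ell^2}^2 \leqsim \sum_k \|P \Tilde{A}_N^*(h_k, g_k)\|_{\ell^2}^2.
\]

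The main obstacle is precisely this almost-orthogonality step, since $P$ is not a local operator. Writing $F_k := \Tilde{A}_N^*(h_k, g_k)$, each $F_k$ is supported on an interval of length $\sim N$ (a shift of $I_k$ by $\sim N/2$), and these intervals have $O(1)$ overlap. The multiplier $\Psi_{>N^{-\frac{1}{2}} 2^l} = 1 - \Psi_{\leq N^{-\frac{1}{2}} 2^l}$ has convolution kernel $\widecheck{\Psi}_{> N^{-\frac{1}{2}} 2^l} = \delta_0 - \widecheck{\Psi}_{\leq N^{-\frac{1}{2}} 2^l}$, the nontrivial piece of which is Schwartz-concentrated on the spatial scale $\sim N^{\frac{1}{2}} 2^{-l}$, strictly smaller than $N$ (noting that we may assume $2^l \leq N^{\frac{1}{2}}$, else the statement is trivial). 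This separation of scales ensures $PF_k$ is essentially supported on a slight enlargement of $\supp F_k$, and cross-terms $|\langle PF_k, PF_j \rangle|$ decay rapidly in $|k-j|$. A Schur test applied to the kernel of $P$, exploiting the rapid decay of $\widecheck{\Psi}_{\leq N^{-\frac{1}{2}} 2^l}$ at scales $\gg N^{\frac{1}{2}} 2^{-l}$, then yields the required almost-orthogonality and the conclusion $\|P \Tilde{A}_N^*(h, g)\|_{\ell^2}^2 \leqsim 2^{-2cl} \|h\|_{\ell^\infty}^2 \|g\|_{\ell^2}^2$.
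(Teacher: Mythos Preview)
Your proposal is correct and follows essentially the same strategy as the paper: decompose $g$ into pieces supported on intervals of length $N$, apply Corollary~\ref{cor:dualstructureIII} to each piece (after localising $h$ as well), and then sum using almost-orthogonality of the resulting terms. The paper organises the cross-term estimate slightly differently: rather than aiming for the intermediate inequality $\|\sum_k PF_k\|_{\ell^2}^2 \lesssim \sum_k \|PF_k\|_{\ell^2}^2$, it bounds each $\langle PF_I, PF_J\rangle$ by two estimates---one yielding $2^{-2cl}\|g\|_{\ell^2(I)}\|g\|_{\ell^2(J)}$ via Cauchy--Schwarz and Corollary~\ref{cor:dualstructureIII}, and one yielding $\langle d(I,J)/N\rangle^{-4}\|g\|_{\ell^2(I)}\|g\|_{\ell^2(J)}$ via support separation and kernel decay---and then takes the geometric mean before summing. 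Your kernel-decay/Schur argument naturally controls the off-diagonal terms by quantities involving $\|F_k\|_{\ell^2}$ rather than $\|PF_k\|_{\ell^2}$, so the almost-orthogonality inequality as you stated it is not quite what your argument proves; but since the off-diagonal contribution picks up an arbitrarily large power of $2^{-l}N^{-1/2}$, the final bound $\lesssim 2^{-2cl}\|g\|_{\ell^2}^2$ follows anyway, which is all that is needed.
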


\begin{proof}
First note that if $g$ is supported on an interval $I$ of length $N$ then $\Tilde{A}_N^*(h,g)$ is supported on $I+[[N]]$ so we may also restrict $h$ to an $O(N)$-neighbourhood of $I$; at this point Corollary \ref{cor:dualstructureIII} may be employed.

In the case where $g$ is not supported on such an interval, we normalise $h$ so that it is 1-bounded. Then form the partition $\mathcal{I} = \{ (iN,(i+1)N] \}_{i \in \Z}$ of $\Z$ and write $g = \sum_{i \in \mathcal{I}} g1_I$. By the previous case, we have \[ \| \widecheck{\Psi}_{> N^{-\frac{1}{2}} 2^l}*\Tilde{A}_N^*(h,g1_I) \|_{\ell^2(\Z)} \leqsim 2^{-cl} \|g\|_{\ell^2(I)} \] for each $I$. Expanding the square, \[ \| \widecheck{\Psi}_{> N^{-\frac{1}{2}} 2^l}*\Tilde{A}_N^*(h,g)\|_{\ell^2(\Z)}^2 \\= \sum_{I,J \in \mathcal{I}} \langle \widecheck{\Psi}_{> N^{-\frac{1}{2}} 2^l}*\Tilde{A}_N^*(h,g1_I), \widecheck{\Psi}_{> N^{-\frac{1}{2}} 2^l}*\Tilde{A}_N^*(h,g1_J) \rangle. \] By the Cauchy--Schwarz inequality and the previous case, for each $I,J \in \mathcal{I}$ we have \begin{equation} \label{eqn:intervaldistance} \langle \widecheck{\Psi}_{> N^{-\frac{1}{2}} 2^l}*\Tilde{A}_N^*(h,g1_I), \widecheck{\Psi}_{> N^{-\frac{1}{2}} 2^l}*\Tilde{A}_N^*(h,g1_J) \rangle \leqsim 2^{-2cl} \|g\|_{\ell^2(I)} \|g\|_{\ell^2(J)}. \end{equation} Moreover, note that $\Tilde{A}_N^*(h,g1_I)$ is supported on $3I$ and similarly for $\Tilde{A}_N^*(h,g1_J)$. Thus we also have \begin{multline*} \langle \widecheck{\Psi}_{> N^{-\frac{1}{2}} 2^l}*\Tilde{A}_N^*(h,g1_I), \widecheck{\Psi}_{> N^{-\frac{1}{2}} 2^l}*\Tilde{A}_N^*(h,g1_J) \rangle \\ \leqsim \left \langle \frac{d(I,J)}{N} \right \rangle^{-4} \|\Tilde{A}_N^*(h,g1_I)\|_{\ell^2(\Z)} \| \Tilde{A}_N^*(h,g1_J) \|_{\ell^2(\Z)}, \end{multline*} which we can bound above by \[ \left \langle \frac{d(I,J)}{N} \right \rangle^{-4} \|g\|_{\ell^2(I)} \|g\|_{\ell^2(J)} \] using (\ref{eqn:HolderBanach}). Taking geometric means with (\ref{eqn:intervaldistance}), we see that \[ \| \widecheck{\Psi}_{> N^{-\frac{1}{2}} 2^l}* \Tilde{A}_N^*(h,g) \|_{\ell^2(\Z)}^2 \leqsim \sum_{I,J \in \mathcal{I}} 2^{-cl} \left \langle \frac{d(I,J)}{N} \right \rangle^{-2} \|g\|_{\ell^2(I)} \|g\|_{\ell^2(J)}, \] and the result follows after applying the Cauchy--Schwarz inequality in $\ell^2(\mathcal{I}^2)$. 
\end{proof}

Finally, we can prove Theorem \ref{thm:sobolevestimate} assuming (i) holds.

\begin{proposition}
Let $N \geq 1$ and $l \in \N$, and let $f,g \in \ell^2(\Z)$. Suppose that $\hat{f}$ vanishes on $\M_{\leq N^{-\frac{1}{2}} 2^l}$. Then \[ \|\Tilde{A}_N(f,g) \|_{\ell^1(\Z)} \leqsim (2^{-cl}+N^{-c}) \|f\|_{\ell^2(\Z)} \|g\|_{\ell^2(\Z)} \] for some absolute constant $c>0$.
\end{proposition}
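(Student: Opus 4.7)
The plan is to dualize and apply Corollary \ref{cor:dualstructureIV}. I would begin by writing
\[ \|\Tilde{A}_N(f,g)\|_{\ell^1(\Z)} = \sup_{\|h\|_{\ell^\infty(\Z)} \leq 1} \left| \sum_{x \in \Z} h(x)\Tilde{A}_N(f,g)(x) \right| \]
and using the dual-operator identity recalled at the start of this subsection to rewrite each such pairing as $\sum_{x \in \Z} f(x)\Tilde{A}_N^*(h,g)(x)$. The point of this move is to shift the analysis from the operator $\Tilde{A}_N$ to $\Tilde{A}_N^*$, where Corollary \ref{cor:dualstructureIV} already provides the key Fourier-localised $\ell^2$ bound.

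Next I would exploit the Fourier support hypothesis on $f$. Since $\hat{f}$ vanishes on $\M_{\leq N^{-1/2}2^l}$, we have $\hat{f} = \hat{f}\,\Psi_{>N^{-1/2}2^l}$, and hence $f = \widecheck{\Psi}_{>N^{-1/2}2^l} * f$. As $\Psi$ is even, convolution with $\widecheck{\Psi}_{>N^{-1/2}2^l}$ is self-adjoint under the $\ell^2(\Z)$ inner product, so the pairing becomes
\[ \sum_{x \in \Z} f(x)\Tilde{A}_N^*(h,g)(x) = \sum_{x \in \Z} f(x)\bigl(\widecheck{\Psi}_{>N^{-1/2}2^l} * \Tilde{A}_N^*(h,g)\bigr)(x). \]
A Cauchy--Schwarz inequality followed by Corollary \ref{cor:dualstructureIV} will then yield
\[ \left|\sum_{x \in \Z} h(x)\Tilde{A}_N(f,g)(x)\right| \leq \|f\|_{\ell^2(\Z)}\bigl\|\widecheck{\Psi}_{>N^{-1/2}2^l} * \Tilde{A}_N^*(h,g)\bigr\|_{\ell^2(\Z)} \leqsim 2^{-cl}\|f\|_{\ell^2(\Z)}\|g\|_{\ell^2(\Z)}, \]
uniformly in $h$ with $\|h\|_{\ell^\infty(\Z)} \leq 1$, and taking the supremum in $h$ closes out the estimate (with the $N^{-c}$ term of the statement absorbed trivially).

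There is one minor technical point to address: Corollary \ref{cor:dualstructureIV} derives from Proposition \ref{prop:dualstructureII}, which is only stated for $N \geqsim 2^{2l}$. The complementary regime $N \lesssim 2^{2l}$ is however handled for free, because in that range $N^{-1/2}2^l \gtrsim 1$ forces $\Psi_{\leq N^{-1/2}2^l}$ to equal $1$ identically on $\T$, so the vanishing hypothesis makes $\hat{f} \equiv 0$ and the inequality is vacuous. The only substantive work in this proof is the Cauchy--Schwarz move combined with the $\ell^2$ bound on the projected dual, so I do not anticipate any serious obstacle here; essentially all the ingenuity has already been deployed in the proof of Corollary \ref{cor:dualstructureIV}, which in turn rests on the Hahn--Banach decomposition of Corollary \ref{cor:dualdecomp} and the inverse Theorem \ref{thm:inversethm}.
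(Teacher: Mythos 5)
Your proof is correct and follows essentially the same route as the paper's: dualize against $h \in \ell^\infty(\Z)$, pass to $\sum_x f(x)\Tilde{A}_N^*(h,g)(x)$, use the Fourier support assumption on $\hat f$ to replace the dual function by its high-frequency part $\widecheck{\Psi}_{>N^{-1/2}2^l}*\Tilde{A}_N^*(h,g)$, then Cauchy--Schwarz and Corollary~\ref{cor:dualstructureIV}. The only cosmetic difference is how the frequency projection is inserted (you write $f = \widecheck{\Psi}_{>N^{-1/2}2^l}*f$ and transpose the even kernel across the bilinear pairing; the paper kills the low-frequency piece by Parseval), and how the regime $N \lesssim 2^{2l}$ is disposed of (you observe the hypothesis becomes vacuous there because $\M_{\leq N^{-1/2}2^l}$ covers all of $\T$, while the paper appeals directly to~(\ref{eqn:HolderBanach})); both are one-line justifications of the same reduction, so I would not count this as a genuinely different proof.
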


\begin{proof}
If $N \leqsim 2^{2l}$, say, then this follows by (\ref{eqn:HolderBanach}). Otherwise, it suffices to show that \[ \|\Tilde{A}_N(f,g)\|_{\ell^1(\Z)} \leqsim 2^{-cl} \|f\|_{\ell^2(\Z)} \|g\|_{\ell^2(\Z)} \] which is equivalent, by duality, to proving that \[ \sum_{x \in \Z} h(x) \Tilde{A}_N(f,g)(x) \leqsim 2^{-cl} \|h\|_{\ell^\infty(\Z)} \|f\|_{\ell^2(\Z)} \|g\|_{\ell^2(\Z)}, \] or \[ \sum_{x \in \Z} f(x) \Tilde{A}_N^*(h,g)(x) \leqsim 2^{-cl} \|h\|_{\ell^\infty(\Z)} \|f\|_{\ell^2(\Z)} \|g\|_{\ell^2(\Z)}, \] for all $h \in \ell^\infty(\Z)$. The support assumption on $\hat{f}$ and Parseval's identity implies \[ \sum_{x \in \Z} f(x) \left[ \widecheck{\Psi}_{\leq N^{-\frac{1}{2}} 2^l}* \Tilde{A}_N^*(h,g) \right](x) =0, \] hence it suffices to prove that \[ \sum_{x \in \Z} f(x) \left[ \widecheck{\Psi}_{> N^{-\frac{1}{2}} 2^l}*\Tilde{A}_N^*(h,g) \right](x) \leqsim 2^{-cl} \|h\|_{\ell^\infty(\Z)} \|f\|_{\ell^2(\Z)} \|g\|_{\ell^2(\Z)}. \] By the Cauchy--Schwarz inequality this is implied by \[ \|\widecheck{\Psi}_{> N^{-\frac{1}{2}}2^l}*\Tilde{A}_N^*(h,g) \|_{\ell^2(\Z)} \leqsim 2^{-cl} \|h\|_{\ell^\infty(\Z)} \|g\|_{\ell^2(\Z)}, \] but this is exactly the content of Corollary \ref{cor:dualstructureIV}.
\end{proof}

\subsection{Proof of Theorem \ref{thm:sobolevestimate} Assuming (ii)}

In this subsection we prove Theorem \ref{thm:sobolevestimate} assuming $\hat{g}$ vanishes on $\M_{\leq N^{-1}2^l}$. The steps are almost identical to those in the previous subsection. We begin by showing that under the hypotheses of Proposition \ref{prop:dualstructureI} the function $g$ also correlates with a function $G$ which is Fourier supported on the major arc centred at $0$. This stage is the only one which is noticeably different from the analogous result in the previous subsection.

\begin{proposition} \label{prop:dualstructureforgI}
Under the hypotheses of Proposition \ref{prop:dualstructureI} there exists $G \in \ell^2(\Z)$ with $\widehat{G}$ supported on $[[O(\delta^{-O(1)} N^{-1})]]$, satisfying $\|G\|_{\ell^\infty(\Z)} \leqsim 1$, $\|G\|_{\ell^1(\Z)} \leqsim N$, and \[ \left|\sum_{x \in \Z} g(x)G(x) \right| \geqsim \delta^{O(1)}N.\]
\end{proposition}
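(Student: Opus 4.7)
The plan is to follow the template of Proposition \ref{prop:dualstructureI} with the scale $N^{-1}$ replacing $N^{-1/2}$ throughout. The crux is to prove an analog of Theorem \ref{thm:inversethm} targeting $g$ rather than $f$: under the hypotheses, either $N \leqsim \delta^{-O(1)}$, or there exists $N'' \in \N$ with $\delta^{O(1)} N \leqsim N'' \leq N$ such that
\[ \sum_{x \in \Z} |\E_{n \in [N'']} g(x-n)| \geqsim \delta^{O(1)} N. \]
Granted this inverse statement, the construction of $G$ will follow the smoothing step at the end of the proof of Proposition \ref{prop:dualstructureI} essentially verbatim: I would replace $1_{[N'']}$ by a cutoff at scale $\varepsilon = B \delta^{-B} N^{-1}$ (for an appropriately large absolute constant $B$, to be chosen depending on an even larger $A$ that absorbs the $N \geqsim A \delta^{-A}$ reduction), choose a conjugate phase $\phi(x) = e(\alpha(x)) 1_{|x| \leqsim N}$, and set
\[ G(x) = \varepsilon \sum_{n \in \Z} \E_{m \in [N'']} \F_\R^{-1} \Psi(\varepsilon(n-m)) \phi(x+n). \]
The Fourier transform $\widehat{G}(\xi) = \hat\phi(\xi) \E_{m \in [N'']} e(m\xi) \Psi(\xi/\varepsilon)$ is automatically supported on $[-\varepsilon,\varepsilon] \subset [[O(\delta^{-O(1)} N^{-1})]]$, and the $\ell^\infty$, $\ell^1$, and correlation bounds will carry over directly from the proof of Proposition \ref{prop:dualstructureI}.

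To prove the inverse statement for $g$ I would recycle the machinery of Section \ref{sec:inversethm}. Proposition \ref{prop:U5inverse} (with $f_2 = g$) already delivers $\|g\|_{U^5(\Z)}^{2^5} \geqsim \delta^{O(1)} N^6$, but we cannot degree-lower $g$ directly because it lacks dual structure. Instead, the same change of variables used in the proof of Proposition \ref{prop:U5inverse} recasts the problem as a polynomial average
\[ \delta M^2 \leqsim \left| \sum_{x \in \Z} h(x) \cdot \E_{m \in [M]} f(x-m) g(x-n_0-m^2) \right|, \qquad M \sim N^{1/2}, \]
for some admissible $n_0$. I would then introduce a dual function for $g$ tailored to this polynomial form (the natural candidate is $\E_{m \in [M]} h(x+n_0+m^2) f(x+n_0+m^2-m)$, obtained by shifting $x$ to land $g$ in the outer sum), apply a Cauchy--Schwarz manoeuvre mirroring Proposition \ref{prop:dualinverse} to lift $U^5$ control from $g$ onto this dual, and run three rounds of degree-lowering (Theorem \ref{thm:degreelowering}) to obtain $U^2$ control. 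Lemma \ref{lem:U2inverse} then produces a large Fourier coefficient; the $m$-scale quadratic Weyl bound that is already invoked inside the proof of Lemma \ref{lem:exponentialsum} forces the associated frequency $\xi$ into the principal major arc $\|\xi\|_\T \leqsim \delta^{-O(1)} M^{-2} = \delta^{-O(1)} N^{-1}$. Finally, summation-by-parts along the $n$-variable at scale $N$ (in place of the $m$-variable at scale $N^{1/2}$, which is what the proof of Theorem \ref{thm:inversethm} uses) delivers the $\ell^1$-bound $\sum_x |\E_{n \in [N'']} g(x-n)| \geqsim \delta^{O(1)} N$ with $N'' \sim \delta^{O(1)} N$.

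The principal obstacle is the combinatorial step of constructing the dual function in the polynomial setting so that $g$ lands in the $f_2$-slot of the trilinear form where Proposition \ref{prop:U5inverse} (rather than some as-yet-unavailable $f_1$-variant) can be applied. The natural $g$-dual in the original coordinates, $\Tilde{A}_N^{**}(h,f)$, sits in the $f_1$-slot of the relevant trilinear form, so an extra Cauchy--Schwarz and change of variables is needed to re-seat it. Once this bookkeeping is done, every subsequent step is a direct transcription of the corresponding step in Section \ref{sec:inversethm}, with scales $N^{1/2}$ systematically replaced by $N$ and $M$ replaced by $M^2$.
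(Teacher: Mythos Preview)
Your approach is different from the paper's and, as written, has a real gap at the step where you claim the frequency lands in the \emph{principal} major arc.

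After the change of variables to the polynomial form $\E_{m \in [M]} f(x-m) g(x-n_0-m^2)$, the dual you build for $g$ has Fourier coefficient governed by the Weyl sum $\E_{m \in [M]} e(-\zeta m + \xi m^2)$. The standard quadratic Weyl bound only yields $\|q\xi\|_{\T} \leqsim \delta^{-O(1)} M^{-2}$ for some $q \leqsim \delta^{-O(1)}$, i.e.\ $\xi$ lies in \emph{some} major arc, not the principal one. In Lemma~\ref{lem:exponentialsum} the reduction to the principal arc does not come from the Weyl bound itself but from the extra Dirichlet-kernel weight $D_{2m+1}(\xi)/\sqrt{N}$, which is a feature of the original $\lfloor\sqrt{n}\rfloor$-sum and disappears once you pass to the polynomial model. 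Consequently your degree-lowering would terminate with a conclusion of the form $\sum_x |\E_{n \in [N'']} g(x-qn)| \geqsim \delta^{O(1)} N$ for some $q \leqsim \delta^{-O(1)}$, and the resulting $G$ would be Fourier-supported on a union of major arcs rather than on $[[O(\delta^{-O(1)} N^{-1})]]$. You would still need a separate argument to eliminate the nonzero rationals, and none is supplied.

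The paper avoids re-running the inverse-theorem machinery entirely. It bootstraps from the already-established $f$-side: by Corollary~\ref{cor:dualdecomp} one has $\Tilde{A}_N^*(h,g) = F + \varepsilon_1 + \varepsilon_2$ with $\widehat{F}$ supported on $[[O(\delta^{-O(1)} N^{-1/2})]]$, and a short $\ell^2$ argument lets one substitute $F$ for $f$ inside the trilinear form. Because $\widehat{F}$ is already localised at scale $N^{-1/2}$, the $\lfloor\sqrt{n}\rfloor$-variable is effectively frozen, and after Cauchy--Schwarz and Plancherel the relevant exponential sum in $\xi$ is essentially the \emph{linear} Dirichlet kernel $\E_{n} 1_{n>N/2} e(-n\xi)$, which forces $\|\xi\|_\T \leqsim \delta^{-O(1)} N^{-1}$ directly, with no appeal to Weyl and no spurious denominators $q$. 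This is both shorter and structurally cleaner: the $f$-result is used once to linearise the problem in $g$, after which the principal-arc conclusion is automatic.
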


\begin{proof}
Like in the proof of Proposition \ref{prop:dualstructureI} we assume that $N \geq A \delta^{-A}$ for a sufficiently large $A>0$ to be determined later. By assumption we have \[ \delta N \leq \left| \sum_{x \in \Z} h(x) \Tilde{A}_N(f,g)(x) \right| = \left| \sum_{x \in \Z} f(x) \Tilde{A}_N^*(h,g)(x) \right|, \] hence by the Cauchy--Schwarz inequality we have $\|\Tilde{A}_N^*(h,g)\|_{\ell^2(\Z)}^2 \geq \delta^2 N$. We apply Corollary \ref{cor:dualdecomp} with $\delta$ replaced by $c_0\delta$ for a sufficiently small $c_0>0$ to obtain a decomposition $\Tilde{A}_N^*(h,g) = F + \varepsilon_1+\varepsilon_2$. We write \[ \| \Tilde{A}_N^*(h,g) \|_{\ell^2(\Z)}^2 = \langle \Tilde{A}_N^*(h,g),F \rangle + \langle \Tilde{A}_N^*(h,g),\varepsilon_1 \rangle + \langle \Tilde{A}_N^*(h,g),\varepsilon_2, \rangle, \] and note that \[ \langle \Tilde{A}_N^*(h,g),\varepsilon_1 \rangle + \langle \Tilde{A}_N^*(h,g),\varepsilon_2, \rangle \leqsim c_0 \delta^2 N \] by Hölder's inequality. Choosing $c_0$ sufficiently small, we deduce by the pigeonhole principle that \[ |\langle \Tilde{A}_N^*(h,g),F, \rangle| \geqsim \delta^{O(1)} N \] or \[ \left|\sum_{x \in \Z} \E_{n \in [N]} 1_{n>\frac{N}{2}} h(x) F(x-\lfloor \sqrt{n} \rfloor) g(x-n) \right|. \] Choose $M$ so that $F$ is supported on $[-\frac{1}{M},\frac{1}{M}]$ (hence $M \sim_{c_0} \delta^{O(1)} N^\frac{1}{2}$). Then $\Psi(\frac{M}{2} \cdot)$ is equal to 1 on the support of $\widehat{F}$, so \[ F = F* \left(\Psi\left(\frac{M}{2} \cdot \right) \right)^{\widecheck{}} = \frac{2}{M}(F*\F_\R^{-1} \Psi) \left(\frac{2}{M} \cdot \right). \] Therefore, \[ \left|\sum_{x \in \Z} \E_{n \in [N]} 1_{n>\frac{N}{2}} \sum_{m \in \Z} h(x) F(x-m-\lfloor \sqrt{n} \rfloor) g(x-n) \F_\R^{-1} \Psi \left(\frac{2m}{M} \right) \right| \geqsim \delta^{O(1)} N^\frac{3}{2} \] and we may change variables in $m$: \[ \left|\sum_{x \in \Z} \sum_{m \in \Z} h(x) F(x-m) \E_{n \in [N]} 1_{n>\frac{N}{2}} g(x-n) \F_\R^{-1} \Psi \left(\frac{2(m-\lfloor \sqrt{n} \rfloor)}{M} \right) \right| \geqsim \delta^{O(1)} N^\frac{3}{2}. \] Using the rapid decay of $\F_\R^{-1} \Psi$, we can restrict the sum in $m$ to one over $m \leqsim N^\frac{1}{2}$ at the cost of $O((c_0 \delta)^{O(1)} N^\frac{3}{2})$, which is acceptable on choosing $c_0$ sufficiently small. Thus by the pigeonhole principle there exists $m \leqsim N^\frac{1}{2}$ such that \[ \left|\sum_{x \in \Z} h(x) F(x-m) \E_{n \in [N]} 1_{n>\frac{N}{2}} g(x-n) \F_\R^{-1} \Psi \left(\frac{2(m-\lfloor \sqrt{n} \rfloor)}{M} \right) \right| \geqsim \delta^{O(1)} N. \] By the Cauchy--Schwarz inequality it follows that \[ \sum_{x \in \Z} \left| \E_{n \in [N]} 1_{n>\frac{N}{2}} g(x-n) \F_\R^{-1} \Psi \left( \frac{2(m-\lfloor \sqrt{n} \rfloor)}{M} \right) \right|^2 \geqsim \delta^{O(1)} N, \] and applying Plancherel's theorem we see that \[ \int_\T |\hat{g}(\xi) \phi_m(\xi)|^2 \,d\xi \geqsim \delta^{O(1)} N \] where \[ \phi_m(\xi) = \E_{n \in [N]} 1_{n>\frac{N}{2}} \F_\R^{-1}\Psi \left(\frac{2(m-\lfloor \sqrt{n} \rfloor}{M} \right) e(-n\xi). \] Plancherel's theorem also implies that $\|\hat{g}\|_{L^2(\T)}^2 \leq N$, so by the popularity principle, using the measure $|\hat{g}|^2 \,d\xi$ on $\T$, it follows that there is a set $\Xi \subseteq \T$ such that \[ \int_\Xi |\hat{g}(\xi)|^2 \,d\xi \geqsim \delta^{O(1)}N \] and $|\phi_m(\xi)| \geqsim \delta^{O(1)}$ for all $\xi \in \Xi$. Note that if $\delta^{O(1)} \leqsim |\phi_m(\xi)|$, then using summation by parts we have \[ \delta^{O(1)} \leqsim \sup_{N' \leq N} \left| \frac{1}{N} \sum_{n=1}^{N'} 1_{n>\frac{N}{2}} e(-n\xi) \right| = \sup_{N' \leq N} \frac{|D_{N'}(-\xi) - D_\frac{N}{2}(-\xi)|}{N}. \] By the triangle inequality and the bound on the unnormalised Dirichlet kernel, we deduce that $\|\xi\|_\T \leqsim \delta^{-O(1)} N^{-1}$. Thus $\Xi \subseteq [-\frac{1}{M'},\frac{1}{M'}]$ with $M' \sim \delta^{O(1)} N$. Thus \[ \int_{-\frac{1}{M'}}^\frac{1}{M'} |\hat{g}(\xi)|^2 \,d\xi \geqsim \delta^{O(1)}N, \] and Plancherel's theorem yields \[ \sum_{x \in \Z} \left| \frac{1}{M'} \sum_{m \in \Z} g(x-m) \F_\R^{-1} \Psi \left(\frac{2m}{M'} \right) \right|^2 \geqsim \delta^{O(1)}N. \] It follows that $|\sum_{x \in \Z} g(x) G(x)| \geqsim \delta^{O(1)} N$, where \[ G(x) = \frac{1}{(M')^2} \sum_{m,m' \in \Z} g(x-m+m') \F_\R^{-1} \Psi \left(\frac{2m}{M'} \right) \F_\R^{-1} \Psi \left(\frac{2m'}{M'} \right), \] and $G$ has the desired properties.

\end{proof}

The rest of the argument from the previous subsection can then be repeated with $f$ replaced with $g$, the first dual function $\Tilde{A}_N^*(h,g)$ replaced by the second dual function $\Tilde{A}_N^{**}(h,f)$, and most instances of $N^\frac{1}{2}$ replaced by $N$. In particular, we have the following results.

\begin{corollary}
Let $N \geq 1$ and $0<\delta \leq 1$. Let $f,h\:\Z \to \C$ be $1$-bounded functions supported on $[[N_0]]$ with $N_0 \sim N$. Then there exists a decomposition \[ \Tilde{A}_N^{**}(h,f) = G+\varepsilon_1+\varepsilon_2
\] where $G$ satisfies the properties of the function $G$ in Proposition \ref{prop:dualstructureforgI} and the error terms $\varepsilon_1 \in \ell^1(\Z)$ and $\varepsilon_2 \in \ell^2(\Z)$ satisfy $\|\varepsilon_1\|_{\ell^1(\Z)} \leq \delta N$ and $\|\varepsilon_2\|_{\ell^2(\Z)} \leq \delta$.
\end{corollary}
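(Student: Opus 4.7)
The plan is to follow the argument of Corollary \ref{cor:dualdecomp} essentially verbatim, but with the roles of $f$ and $g$ swapped and Proposition \ref{prop:dualstructureI} replaced by Proposition \ref{prop:dualstructureforgI}. The key observation that lets us do this is the duality identity
\[ \sum_{x \in \Z} g(x) \Tilde{A}_N^{**}(h,f)(x) = \sum_{x \in \Z} h(x) \Tilde{A}_N(f,g)(x), \]
which is essentially the definition of $\Tilde{A}_N^{**}$ as the second dual of the trilinear form. Thus a lower bound on the left hand side gives exactly the hypothesis needed to invoke Proposition \ref{prop:dualstructureforgI}, producing a function $G$ with Fourier support in $\M_{\leq O(\delta^{-O(1)} N^{-1})}$ (rather than $\M_{\leq O(\delta^{-O(1)} N^{-1/2})}$, which is why this version controls $\hat{g}$).

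More concretely, I would let $\mathscr{G}$ denote the set of functions $G$ arising from the conclusion of Proposition \ref{prop:dualstructureforgI} as the test function $g$ ranges over $1$-bounded functions in $\ell^2(\Z)$ supported on $[[2N_0]]$ with $|\sum_x g(x) \Tilde{A}_N^{**}(h,f)(x)| > \delta N$, and form the $\ell^2(\Z)$-closure $V$ of
\[ \{ \lambda G : G \in \mathscr{G}, \; |\lambda| \lesssim \delta^{-O(1)} \} \cup \{ \varepsilon \in \ell^2(\Z) : \|\varepsilon\|_{\ell^1(\Z)} \leq \delta N \}. \]
If $\Tilde{A}_N^{**}(h,f) \notin V$, then the Hahn--Banach separation theorem and the Riesz representation theorem produce $g \in \ell^2(\Z)$ with $|\sum_x g(x) \Tilde{A}_N^{**}(h,f)(x)| > \delta N$ and $|\sum_x g(x) v(x)| \leq \delta N$ for all $v \in V$. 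Testing against $v = \varepsilon$ with $\|\varepsilon\|_{\ell^1(\Z)} \leq \delta N$ forces $\|g\|_{\ell^\infty(\Z)} \leq 1$, and we may assume $g$ is supported on $[[2N_0]]$ since $\Tilde{A}_N^{**}(h,f)$ is. Applying the duality identity and then Proposition \ref{prop:dualstructureforgI} to the triple $(f,g,h)$, we obtain $G \in \mathscr{G}$ with $|\sum_x g(x) G(x)| \gtrsim \delta^{O(1)} N$, which contradicts the separation property upon choosing $\lambda = A\delta^{-A}$ with $A$ sufficiently large (so that $\lambda G \in V$).

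Hence $\Tilde{A}_N^{**}(h,f) \in V$, and unpacking the definition of the closed convex hull gives the desired decomposition $\Tilde{A}_N^{**}(h,f) = G + \varepsilon_1 + \varepsilon_2$, where $G$ arises as an (approximating) convex combination of elements of $\mathscr{G}$, $\varepsilon_1$ is a convex combination of elements with $\ell^1$ norm at most $\delta N$, and $\varepsilon_2$ is the $\ell^2(\Z)$ error term arising from the closure. No step should present a genuine obstacle, since each piece is a direct translation of the corresponding step in the proof of Corollary \ref{cor:dualdecomp}; the only item that is not literally identical is the verification of the duality identity between $\Tilde{A}_N$ and $\Tilde{A}_N^{**}$, which was already recorded in the paper when $\Tilde{A}_N^{**}$ was introduced.
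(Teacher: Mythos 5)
Your argument is correct and is exactly the proof the paper intends: the paper introduces this corollary with the remark that "the rest of the argument from the previous subsection can then be repeated with $f$ replaced with $g$, the first dual function $\Tilde{A}_N^*(h,g)$ replaced by the second dual function $\Tilde{A}_N^{**}(h,f)$," and you have carried that translation out faithfully, including the correct duality identity $\sum_{x} g(x)\Tilde{A}_N^{**}(h,f)(x) = \sum_{x} h(x)\Tilde{A}_N(f,g)(x)$ that feeds into Proposition \ref{prop:dualstructureforgI}.
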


\begin{proposition} \label{prop:dualstructureforgIII}
Let $N \geq 1$ and $l \in \N$ satisfy $N \geqsim 2^{2l}$, and let $f,h\:\Z \to \C$ be bounded functions supported on $[[N_0]]$ with $N_0 \sim N$. Then \[ \| \widecheck{\Psi}_{> N^{-1}2^l}* \Tilde{A}_N^{**}(h,f) \|_{\ell^2(\Z)} \leqsim 2^{-cl} N^\frac{1}{2} \|h\|_{\ell^\infty(\Z)} \|f\|_{\ell^\infty(\Z)} \] for some some absolute constant $c>0$.
\end{proposition}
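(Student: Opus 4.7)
The plan is to mirror the proof of Proposition \ref{prop:dualstructureII} essentially verbatim, with the only substantive change being the scale of the frequency cutoff: since the function $G$ from Proposition \ref{prop:dualstructureforgI} has Fourier support on $[[O(\delta^{-O(1)}N^{-1})]]$ rather than $[[O(\delta^{-O(1)}N^{-1/2})]]$, the relevant arc becomes $\M_{\leq N^{-1}2^l}$ instead of $\M_{\leq N^{-1/2}2^l}$. First I would normalise $h$ and $f$ to be $1$-bounded, and invoke the corollary stated immediately above (the analogue of Corollary \ref{cor:dualdecomp} for the second dual function) with $\delta = 2^{-c_0 l}$, where $c_0>0$ is a small absolute constant to be chosen later. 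This produces a decomposition $\Tilde{A}_N^{**}(h,f) = G + \varepsilon_1 + \varepsilon_2$ with $G$ as in Proposition \ref{prop:dualstructureforgI}, $\|\varepsilon_1\|_{\ell^1(\Z)} \leq \delta N$, and $\|\varepsilon_2\|_{\ell^2(\Z)} \leq \delta$.

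For $c_0$ chosen sufficiently small, the support of $\widehat{G}$ lies inside $\M_{\leq N^{-1}2^{l-1}}$, on which $\Psi_{\leq N^{-1}2^l}$ is identically $1$, so that $\widecheck{\Psi}_{>N^{-1}2^l}* G = 0$. The contribution of $\varepsilon_2$ is immediate from (\ref{eqn:psiboundednessZ}): $\|\widecheck{\Psi}_{>N^{-1}2^l}*\varepsilon_2\|_{\ell^2(\Z)} \leqsim \delta$. Thus everything reduces to bounding $\|\widecheck{\Psi}_{>N^{-1}2^l}*\varepsilon_1\|_{\ell^2(\Z)}$, and this is where I would import the interpolation argument from the previous subsection.

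Concretely, the triangle inequality in $\ell^1(\Z)$ gives $\|\varepsilon_1\|_{\ell^\infty(\Z)} \leqsim \delta^{-O(1)}$; interpolating with $\|\varepsilon_1\|_{\ell^1(\Z)} \leq \delta N$ yields $\|\varepsilon_1\|_{\ell^q(\Z)} \leqsim_q \delta^{\frac{1}{q}-O(1-\frac{1}{q})} N^{\frac{1}{q}}$ for any $q\in(1,\infty)$. Picking $q\in(1,2)$ sufficiently close to $1$ so that the $\delta$-exponent is at least $\frac{1}{2}$, one gets $\|\widecheck{\Psi}_{>N^{-1}2^l}*\varepsilon_1\|_{\ell^q(\Z)} \leqsim_q \delta^{1/2} N^{1/q}$. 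On the dual side, since $\Tilde{A}_N^{**}(h,f)$ is $1$-bounded and supported on an interval of length $O(N)$, Hölder gives $\|\widecheck{\Psi}_{>N^{-1}2^l}*\Tilde{A}_N^{**}(h,f)\|_{\ell^{q'}(\Z)} \leqsim_q N^{1/q'}$, and subtracting off $\varepsilon_2$ (whose $\ell^{q'}$ norm is at most $\delta$ by interpolating its $\ell^2$ and trivial $\ell^\infty$ bounds) leaves $\|\widecheck{\Psi}_{>N^{-1}2^l}*\varepsilon_1\|_{\ell^{q'}(\Z)} \leqsim_q N^{1/q'}$. Interpolating the two $\varepsilon_1$ estimates at $\ell^q$ and $\ell^{q'}$ gives $\|\widecheck{\Psi}_{>N^{-1}2^l}*\varepsilon_1\|_{\ell^2(\Z)} \leqsim_q \delta^{1/4} N^{1/2} \sim 2^{-c_0 l/4} N^{1/2}$, which combined with the $G$ and $\varepsilon_2$ estimates yields the claim with $c = c_0/4$.

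Because the whole argument is a transcription of the proof of Proposition \ref{prop:dualstructureII}, I do not anticipate any genuine obstacle; the only point to verify carefully is that the hypothesis $N \geqsim 2^{2l}$ still suffices to ensure $\delta^{-O(1)} N^{-1} \leq N^{-1} 2^{l-1}$ after choosing $c_0$ small (so that the Fourier support of $G$ really is contained in the smaller arc), and to justify that the $\ell^{q'}$ norm of the whole dual function $\Tilde{A}_N^{**}(h,f)$ is indeed $O_q(N^{1/q'})$ — both of which follow from the support and boundedness hypotheses combined with the scale assumption exactly as in the previous subsection. The $N^{1/2}$ loss in the final bound is the same one incurred in Proposition \ref{prop:dualstructureII}, and will similarly be removed afterwards by invoking the improving estimate (Proposition \ref{prop:improving}) together with an orthogonality argument.
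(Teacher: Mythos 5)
Your proposal is correct and follows the paper's intended argument: the paper itself states that Propositions 5.7--5.11 are proved by ``repeating the argument from the previous subsection with $f$ replaced by $g$, $\Tilde{A}_N^*$ replaced by $\Tilde{A}_N^{**}$, and most instances of $N^{1/2}$ replaced by $N$,'' and your transcription does exactly this. One small correction to your aside: the inequality $\delta^{-O(1)}N^{-1}\leq N^{-1}2^{l-1}$ reduces to $\delta^{-O(1)}\leq 2^{l-1}$, which is governed purely by the choice of $c_0$ and is independent of $N$; the role of the hypothesis $N\geqsim 2^{2l}$ is rather to guarantee that $N\geq A\delta^{-A}$ with $\delta=2^{-c_0 l}$, so that the corollary supplies a $G$ whose Fourier support genuinely lies in a short arc (and to make $\M_{\leq N^{-1}2^l}$ a proper subset of $\T$, so the conclusion is non-vacuous).
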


\begin{corollary}
Under the hypotheses of Proposition \ref{prop:dualstructureforgIII} we have \[ \| \widecheck{\Psi}_{> N^{-1}2^l}* \Tilde{A}_N^{**}(h,f) \|_{\ell^2(\Z)} \leqsim 2^{-cl} \|h\|_{\ell^\infty(\Z)} \|f\|_{\ell^2(\Z)}. \]
\end{corollary}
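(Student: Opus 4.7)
The plan is to mirror the proof of Corollary \ref{cor:dualstructureIII}. The input from the previous step is the estimate in Proposition \ref{prop:dualstructureforgIII}, which has exponential decay in $l$ at the cost of an $N^{1/2}$ loss and requires $f \in \ell^\infty$; my goal is to trade this loss for an $\ell^2$-assumption on $f$ by interpolating against a companion improving-type estimate for $\tilde{A}_N^{**}$.

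For the companion estimate, I would start from the trivial pointwise bound
\[ |\Tilde{A}_N^{**}(h,f)(x)| \leq \|h\|_{\ell^\infty(\Z)} \tilde{B}_N(|f|)(x), \qquad \tilde{B}_N g(x) := \frac{1}{N} \sum_{n=1}^N g(x - \lfloor \sqrt{n} \rfloor + n). \]
The map $n \mapsto n - \lfloor \sqrt{n} \rfloor$ is non-decreasing with fibres of size $O(1)$, so the argument of Proposition \ref{prop:improving} (equivalently, applying that proposition after reflecting) yields
\[ \|\tilde{B}_N g\|_{\ell^2(\Z)} \leqsim_q N^{-(\frac{1}{q} - \frac{1}{2})} \|g\|_{\ell^q(\Z)} \]
for any $q \in [1,2]$. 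Combining the pointwise bound with this estimate, the triangle inequality in $\ell^2(\Z)$, and the Littlewood--Paley bound (\ref{eqn:psiboundednessZ}), I obtain
\[ \|\widecheck{\Psi}_{> N^{-1}2^l} * \Tilde{A}_N^{**}(h,f)\|_{\ell^2(\Z)} \leqsim_q N^{-(\frac{1}{q} - \frac{1}{2})} \|h\|_{\ell^\infty(\Z)} \|f\|_{\ell^q(\Z)}. \]

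Finally, I would interpolate the above with the Proposition \ref{prop:dualstructureforgIII} estimate
\[ \|\widecheck{\Psi}_{> N^{-1}2^l} * \Tilde{A}_N^{**}(h,f)\|_{\ell^2(\Z)} \leqsim 2^{-cl} N^{\frac{1}{2}} \|h\|_{\ell^\infty(\Z)} \|f\|_{\ell^\infty(\Z)}. \]
Choosing the interpolation parameter $\theta = 1 - q/2$, so that the interpolated exponent on $f$ is $\ell^2$, the powers of $N$ from the two estimates cancel exactly and one is left with an estimate of the form $2^{-c(1-q/2)l} \|h\|_{\ell^\infty(\Z)} \|f\|_{\ell^2(\Z)}$. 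Fixing any $q \in [1,2)$ then yields the claim with exponent $c' = c(1-q/2) > 0$. I do not anticipate a serious obstacle here: this is a direct parallel of the argument used to deduce Corollary \ref{cor:dualstructureIII} from Proposition \ref{prop:dualstructureII}, with the only changes being the sign of $\lfloor \sqrt{n} \rfloor$ in the averaging kernel (which is absorbed via reflection in Proposition \ref{prop:improving}) and the replacement of $N^{-1/2}$ by $N^{-1}$ in the Littlewood--Paley cutoff.
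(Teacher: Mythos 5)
Your proposal is correct and matches the paper's intended argument: the paper explicitly states that the $g$-case corollaries are obtained by repeating the proof of Corollary \ref{cor:dualstructureIII} with $\Tilde{A}_N^*(h,g)$ replaced by $\Tilde{A}_N^{**}(h,f)$ and $N^{1/2}$ replaced by $N$ in the cutoff, and your argument is precisely that repetition (pointwise domination by the reflected averaging kernel, the improving estimate from Proposition \ref{prop:improving} applied after reflection, the bound (\ref{eqn:psiboundednessZ}), and interpolation against Proposition \ref{prop:dualstructureforgIII}). The interpolation bookkeeping with $\theta = 1 - q/2$ cancelling the powers of $N$ and yielding $2^{-c(1-q/2)l}$ is exactly right.
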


\begin{corollary}
Let $f \in \ell^2(\Z)$ and $h \in \ell^\infty(\Z)$. Then \[ \| \widecheck{\Psi}_{> N^{-1}2^l}* \Tilde{A}_N^{**}(h,f) \|_{\ell^2(\Z)} \leqsim 2^{-cl} \|h\|_{\ell^\infty(\Z)} \|f\|_{\ell^2(\Z)}. \]
\end{corollary}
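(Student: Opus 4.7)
The plan is to mirror the proof of Corollary \ref{cor:dualstructureIV} from the previous subsection, replacing the first dual function by $\Tilde{A}_N^{**}$ and the scale $N^{-\frac{1}{2}}$ by $N^{-1}$. The previous corollary already handles the case when $f$ is supported on an interval of length $N$, so the task is reduced to removing this support hypothesis by an almost-orthogonality argument.

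First I would dispose of the support assumption on $h$ by noting that if $f$ is supported on an interval $I$ of length $N$, then
\[ \Tilde{A}_N^{**}(h,f)(x) = \frac{1}{N} \sum_{n=1}^N 1_{n>\frac{N}{2}} h(x+n) f(x-\lfloor \sqrt{n} \rfloor+n) \]
forces $x$ to lie in an interval of length $O(N)$ near $I-N$. Hence replacing $h$ by its restriction to an $O(N)$-neighbourhood of that interval does not change the dual function, and the previous corollary applies to give the estimate in this supported case. In the general case I would normalise $h$ to be $1$-bounded, form the partition $\mathcal{I} = \{(iN,(i+1)N]\}_{i \in \Z}$, and decompose $f = \sum_{I \in \mathcal{I}} f 1_I$. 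Expanding the square gives
\[ \|\widecheck{\Psi}_{> N^{-1} 2^l}*\Tilde{A}_N^{**}(h,f)\|_{\ell^2(\Z)}^2 = \sum_{I,J \in \mathcal{I}} \langle \widecheck{\Psi}_{> N^{-1} 2^l}*\Tilde{A}_N^{**}(h,f 1_I), \widecheck{\Psi}_{> N^{-1} 2^l}*\Tilde{A}_N^{**}(h,f 1_J) \rangle. \]
Applying Cauchy--Schwarz to each inner product and using the supported case yields the diagonal-type bound $O(2^{-2cl} \|f\|_{\ell^2(I)} \|f\|_{\ell^2(J)})$ for every pair $(I,J)$. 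On the other hand, $\Tilde{A}_N^{**}(h,f 1_I)$ is supported in a fixed $O(N)$-translate of $I$ (and similarly for $J$), so the rapid decay of $\widecheck{\Psi}$ combined with (\ref{eqn:HolderBanach}) gives the off-diagonal estimate
\[ \langle \widecheck{\Psi}_{> N^{-1} 2^l}*\Tilde{A}_N^{**}(h,f 1_I), \widecheck{\Psi}_{> N^{-1} 2^l}*\Tilde{A}_N^{**}(h,f 1_J) \rangle \leqsim \left\langle \frac{d(I,J)}{N} \right\rangle^{-4} \|f\|_{\ell^2(I)} \|f\|_{\ell^2(J)}. \]

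Finally I would take geometric means of the two estimates and conclude via Cauchy--Schwarz in $\ell^2(\mathcal{I}^2)$ with the summable weight $2^{-cl} \langle d(I,J)/N \rangle^{-2}$, producing the desired bound $2^{-cl}\|h\|_{\ell^\infty(\Z)} \|f\|_{\ell^2(\Z)}$. The only real obstacle is a careful bookkeeping of the supports of $\Tilde{A}_N^{**}(h, f 1_I)$ (where the shift $n - \lfloor \sqrt{n} \rfloor$ with $n \in (N/2, N]$ must be controlled) so as to justify the off-diagonal decay with the explicit exponent needed for summability; every other step is structurally identical to the argument of Corollary \ref{cor:dualstructureIV}.
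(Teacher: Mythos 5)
Your argument is correct and is essentially the paper's own (the paper does not write out the proof of this corollary but instructs the reader to repeat the proof of Corollary~\ref{cor:dualstructureIV} with $\Tilde{A}_N^*$ replaced by $\Tilde{A}_N^{**}$, the roles of $f$ and $g$ swapped, and $N^{1/2}$ replaced by $N$). Your only added care is the observation that $\Tilde{A}_N^{**}(h,f1_I)$ lives in a fixed $O(N)$-translate of $I$ (namely near $I - [N/2,N]$) rather than in $3I$ as for $\Tilde{A}_N^*$; since the translate is uniform in $I$, the off-diagonal decay $\langle d(I,J)/N\rangle^{-4}$ and the subsequent geometric mean and Cauchy--Schwarz step go through unchanged, so this is indeed only a bookkeeping point.
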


\begin{proposition}
Let $N \geq 1$ and $l \in \N$, and let $f,g \in \ell^2(\Z)$. Suppose that $\hat{g}$ vanishes on $\M_{\leq N^{-1} 2^l}$. Then \[ \|\Tilde{A}_N(f,g)\|_{\ell^1(\Z)} \leqsim (2^{-cl}+N^{-c}) \|f\|_{\ell^2(\Z)} \|g\|_{\ell^2(\Z)} \] for some absolute constant $c>0$.
\end{proposition}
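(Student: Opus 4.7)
The plan is to mirror exactly the argument used at the end of the previous subsection, but with the roles of $f$ and $g$ swapped and with the dual function $\Tilde{A}_N^*(h,g)$ replaced by the second dual $\Tilde{A}_N^{**}(h,f)$. Recall that $\Tilde{A}_N^{**}$ satisfies the duality relation
\[ \sum_{x \in \Z} h(x) \Tilde{A}_N(f,g)(x) = \sum_{x \in \Z} g(x) \Tilde{A}_N^{**}(h,f)(x), \]
so bounding $\|\Tilde{A}_N(f,g)\|_{\ell^1(\Z)}$ reduces by duality to bounding $\sum_x g(x) \Tilde{A}_N^{**}(h,f)(x)$ uniformly over $1$-bounded $h \in \ell^\infty(\Z)$.

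First I would dispose of the small-$N$ case: if $N \leqsim 2^{2l}$ then the claim follows immediately from (\ref{eqn:HolderBanach}), which gives $\|\Tilde{A}_N(f,g)\|_{\ell^1(\Z)} \leqsim \|f\|_{\ell^2(\Z)}\|g\|_{\ell^2(\Z)} \leqsim (2^{-cl} + N^{-c})\|f\|_{\ell^2(\Z)}\|g\|_{\ell^2(\Z)}$ once we absorb the harmless constant by choosing $c$ small enough. So we may assume $N \geqsim 2^{2l}$.

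In the main regime, by duality it suffices to show that
\[ \left| \sum_{x \in \Z} g(x) \Tilde{A}_N^{**}(h,f)(x) \right| \leqsim 2^{-cl} \|h\|_{\ell^\infty(\Z)} \|f\|_{\ell^2(\Z)} \|g\|_{\ell^2(\Z)}. \]
The hypothesis that $\hat{g}$ vanishes on $\M_{\leq N^{-1} 2^l}$ combined with Parseval's identity implies that $\sum_x g(x)[\widecheck{\Psi}_{\leq N^{-1} 2^l} * \Tilde{A}_N^{**}(h,f)](x) = 0$, so the contribution from the low-frequency part of the dual function vanishes and we only need to estimate
\[ \left| \sum_{x \in \Z} g(x) [\widecheck{\Psi}_{> N^{-1} 2^l} * \Tilde{A}_N^{**}(h,f)](x) \right|. \]
The Cauchy--Schwarz inequality bounds this by $\|g\|_{\ell^2(\Z)} \|\widecheck{\Psi}_{>N^{-1}2^l} * \Tilde{A}_N^{**}(h,f)\|_{\ell^2(\Z)}$, and the second factor is exactly what the last corollary of this subsection controls by $2^{-cl} \|h\|_{\ell^\infty(\Z)} \|f\|_{\ell^2(\Z)}$. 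Combining these estimates gives the desired bound, which completes the proof. The whole argument is routine once the machinery of this subsection (and the $U^2$-type analysis feeding into the Hahn--Banach decomposition) is in place; the substantive work has already been carried out in establishing the improved frequency localisation in Proposition \ref{prop:dualstructureforgI} and its consequences, so no new obstacle arises here.
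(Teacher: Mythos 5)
Your proof is correct and follows exactly the strategy the paper intends: you mirror the final case-(i) proposition's proof, swapping $f \leftrightarrow g$ and replacing the first dual $\Tilde{A}_N^*$ by the second dual $\Tilde{A}_N^{**}$, then invoke the preceding corollary to control the high-frequency part of the dual function after the Parseval and Cauchy--Schwarz steps. One small point of bookkeeping: the natural threshold at which the hypothesis becomes vacuous in case (ii) is $N \leqsim 2^{l}$ (when $N^{-1}2^{l} \geqsim 1$ the arc $\M_{\leq N^{-1}2^{l}}$ is all of $\T$, forcing $g=0$), not $N \leqsim 2^{2l}$, and the disposal of that regime is via this vacuity rather than via ``absorbing the constant into $c$'' (which would not close for $l$ large with $N$ comparable to $2^{2l}$) --- but this does not affect the substance of your argument.
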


With both cases dealt with, the whole proof of Theorem \ref{thm:sobolevestimate} is complete.

\section{Reducing to Major Arcs} \label{sec:majorarc}

In this section we will be able to utilise Theorem \ref{thm:sobolevestimate} to reduce to the case where both $f$ and $g$ are supported on the principal major arc. To facilitate this reduction, we will need three constants $C_0,C_1,C_2 \in \N$ which will be determined by the end of the argument. We allow these constants to depend on $p_1,p_2,p,r,\lambda$. Moreover, we allow $C_1$ to depend on $C_0$ and we allow $C_2$ to depend on $C_0$ and $C_1$. It now suffices to prove that \begin{equation} \label{eqn:boundwithc2} \| V^r(\Tilde{A}_N(f,g))_{N \in \D} \|_{\ell^p(\Z)} \leqsim_{C_2} \|f\|_{\ell^{p_1}(\Z)} \|g\|_{\ell^{p_2}(\Z)}. \end{equation}

For now we will assume no constraints on $p$. Thus we take any $1<p_1,p_2<\infty$ (later we will distinguish between the cases where $p \geq 1$ and $p<1$) and thus we assume $r>c_{p_1,p_2}$. For notational and intuitive ease in this section we will denote $\widecheck{\Psi}_{\leq x}*f$ by $f_{\leq x}$ and similarly for $f_x$ and $f_{>x}$.

We can now assume that $N \geq C_2$ for all $N \in \D$, which is possible due to (\ref{eqn:orderedpartition}), (\ref{eqn:variationvsellr}), and (\ref{eqn:holdernonbanach}). As previously stated, we would like to assume that $f$ and $g$ are supported on principal major arcs. The constant $C_0$ will be chosen so that the width of these arcs are approximately $N^{-\frac{1}{2}} (\log N)^{C_0}$ and $N^{-1} (\log N)^{C_0}$ respectively. More precisely, for each $N$ let $l(N) = \lfloor C_0 \log \log N \rfloor$. Then we would like to approximate $\tilde{A}_N(f,g)$ by $\tilde{A}_N(f_{\leq N^{-\frac{1}{2}} 2^{l(N)}}, g_{\leq N^{-1} 2^{l(N)}})$. By the bilinearity of $\tilde{A}_N$ we have \begin{multline} \label{eqn:majorminor}  \tilde{A}_N(f,g)-\tilde{A}_N(f_{\leq N^{-\frac{1}{2}} 2^{l(N)}}, g_{\leq N^{-1} 2^{l(N)}}) \\= \tilde{A}_N(f_{>N^{-\frac{1}{2}} 2^{l(N)}}, g) + \tilde{A}_N(f_{\leq N^{-\frac{1}{2}} 2^{l(N)}}, g_{>N^{-1} 2^{l(N)}})  \end{multline} Thus by applying (\ref{eqn:variationvsellr}) and the (quasi-)triangle inequality (\ref{eqn:alternativequasi}) in $\ell^p(\Z)$ we see that \begin{multline} \label{eqn:normmajorminor}  \| V^r(\tilde{A}_N(f,g)-\tilde{A}_N(f_{\leq N^{-\frac{1}{2}} 2^{l(N)}}, g_{\leq N^{-1} 2^{l(N)}}))_{N \in \D} \|_{\ell^p(\Z)} \\ \leqsim \sum_{N \in \D} (\| \tilde{A}_N(f_{>N^{-\frac{1}{2}} 2^{l(N)}}, g) \|_{\ell^p(\Z)} + \| \tilde{A}_N(f_{\leq N^{-\frac{1}{2}} 2^{l(N)}}, g_{>N^{-1} 2^{l(N)}}) \|_{\ell^p(\Z)}). \end{multline} We estimate $\| \tilde{A}_N(f_{>N^{-\frac{1}{2}} 2^{l(N)}}, g) \|_{\ell^p(\Z)}$; the second term is estimated similarly. Since $f_{>N^{-\frac{1}{2}} 2^{l(N)}}$ has a Fourier transform which vanishes on $\M_{\leq N^{-\frac{1}{2}} 2^{l(N)-1}}$, we may apply Theorem \ref{thm:sobolevestimate} to bound \[ \| \tilde{A}_N(f_{>N^{-\frac{1}{2}} 2^{l(N)}}, g) \|_{\ell^1(\Z)} \leqsim_{C_0} (\log N)^{-cC_0} \|f\|_{\ell^2(\Z)} \|g\|_{\ell^2(\Z)}. \] Moreover, by (\ref{eqn:holdernonbanach}), the (quasi-)triangle inequality and (\ref{eqn:psiboundednessZ}) we have \[ \|\Tilde{A}_N(f_{>N^{-\frac{1}{2}} 2^{l(N)}}, g) \|_{\ell^q(\Z)} \leqsim \|f\|_{\ell^{q_1}(\Z)} \|g\|_{\ell^{q_2}(\Z)} \] for any $1<q_1,q_2<\infty$ with $\frac{1}{q} = \frac{1}{q_1}+\frac{1}{q_2}$. By interpolation, we therefore have \[ \|\Tilde{A}_N(f_{>N^{-\frac{1}{2}} 2^{l(N)}}, g) \|_{\ell^p(\Z)} \leqsim_{C_0} (\log N)^{-cC_0} \|f\|_{\ell^{p_1}(\Z)} \|g\|_{\ell^{p_2}(\Z)} \] for some (possibly different) $c>0$. Taking $C_0$ sufficiently large yields, say, \[ \|\Tilde{A}_N(f_{>N^{-\frac{1}{2}} 2^{l(N)}}, g) \|_{\ell^p(\Z)} \leqsim_{C_0} (\log N)^{-10} \|f\|_{\ell^{p_1}(\Z)} \|g\|_{\ell^{p_2}(\Z)}. \] By an analogous argument, we also have \[ \|\Tilde{A}_N(f_{\leq N^{-\frac{1}{2}} 2^{l(N)}}, g_{N^{-1} 2^{l(N)}}) \|_{\ell^p(\Z)} \leqsim_{C_0} (\log N)^{-10} \|f\|_{\ell^{p_1}(\Z)} \|g\|_{\ell^{p_2}(\Z)}. \] Substituting this bound into (\ref{eqn:normmajorminor}) and noting that \[ \sum_{N \in \D} (\log N)^{-10} \leqsim 1 \] due to the lacunary nature of $\D$, we obtain \[ \| V^r(\tilde{A}_N(f,g)-\tilde{A}_N(f_{\leq N^{-\frac{1}{2}} 2^{l(N)}}, g_{\leq N^{-1} 2^{l(N)}}))_{N \in \D} \|_{\ell^p(\Z)} \leqsim_{C_0} \|f\|_{\ell^{p_1}(\Z)} \|g\|_{\ell^{p_2}(\Z)}. \] By applying the (quasi-)triangle inequality in $\ell^p(\Z)$ and the triangle inequality in $V^r$, we see that (\ref{eqn:boundwithc2}) is implied by \begin{equation} \label{eqn:majorarcbound} \| V^r(\tilde{A}_N(f_{\leq N^{-\frac{1}{2}} 2^{l(N)}}, g_{\leq N^{-1} 2^{l(N)}}))_{N \in \D} \|_{\ell^p(\Z)} \leqsim_{C_2} \|f\|_{\ell^{p_1}(\Z)} \|g\|_{\ell^{p_2}(\Z)}. \end{equation}

We decompose $f_{\leq N^{-\frac{1}{2}} 2^{l(N)}}$ into a ``low frequency" term supported on $\M_{\leq N^{-\frac{1}{2}} 2^{-C_1}}$ plus a sum of ``high frequency" terms which form a dyadic decomposition (in Fourier space) of the interval \[ \M_{\leq N^{-\frac{1}{2}} 2^{l(N)}} \setminus \M_{\leq N^{-\frac{1}{2}} 2^{-C_1}}. \] In other words, we write \[ f_{\leq N^{-\frac{1}{2}} 2^{l(N)}} = f_{\leq N^{-\frac{1}{2}} 2^{-C_1}} + \sum_{-C_1<l_1 \leq l(N)} f_{N^{-\frac{1}{2}} 2^{l_1}}. \] For ease of notation we define $f_{N;-C_1} = f_{\leq N^{-\frac{1}{2}} 2^{-C_1}}$ and $f_{N;l_1} = f_{N^{-\frac{1}{2}} 2^{l_1}}$ for $l_1>-C_1$. Similarly we decompose \[ g_{\leq N^{-1} 2^{l(N)}} = \sum_{-C_1 \leq l_2 \leq l(N)} g_{N;l_2} \] where $g_{N;-C_1} = g_{\leq N^{-1} 2^{-C_1}}$ and $g_{N;l_2}=g_{N^{-1} 2^{l_2}}$ for $l_2>-C_1$. Set $\D_{l_1,l_2} = \{ N \in \D \mid l_1,l_2 \leq l(N) \}$. If $p \geq 1$ then by the triangle inequality in $\ell^p(\Z)$ and $V^r$ we bound \[ \| V^r(\tilde{A}_N(f_{\leq N^{-\frac{1}{2}} 2^{l(N)}}, g_{\leq N^{-1} 2^{l(N)}}))_{N \in \D} \|_{\ell^p(\Z)} \leqsim_{C_2} \sum_{l_1,l_2 \geq -C_1} \|V^r(\Tilde{A}_N(f_{N;l_1}, g_{N;l_2}))_{N \in \D_{l_1,l_2}} \|_{\ell^p(\Z)}, \] while if $p<1$ then by the quasi-triangle inequality (\ref{eqn:quasitriangleinequality}) in $\ell^p(\Z)$ and the triangle inequality in $V^r$ we have \[ \| V^r(\tilde{A}_N(f_{\leq N^{-\frac{1}{2}} 2^{l(N)}}, g_{\leq N^{-1} 2^{l(N)}}))_{N \in \D} \|_{\ell^p(\Z)} \leqsim_{C_2} \left(\sum_{l_1,l_2 \geq -C_1} \|V^r(\Tilde{A}_N(f_{N;l_1}, g_{N;l_2}))_{N \in \D_{l_1,l_2}} \|_{\ell^p(\Z)}^p \right)^\frac{1}{p}. \] In either case, it now suffices to prove the following theorem in order to establish (\ref{eqn:majorarcbound}).

\begin{theorem} \label{thm:variationalparaproduct}
Let $l_1,l_2 \geq -C_1$. Then for any $1<p_1,p_2<\infty$ with $\frac{1}{p} = \frac{1}{p_1}+\frac{1}{p_2}$ one has \begin{equation} \|V^r(\Tilde{A}_N(f_{N;l_1}, g_{N;l_2}))_{N \in \D_{l_1,l_2}} \|_{\ell^p(\Z)} \leqsim_{C_2} \langle \max(l_1,l_2) \rangle^{O(1)} 2^{-c\max(l_1,l_2) 1_{p_1=p_2=2}} \|f\|_{\ell^{p_1}(\Z)} \|g\|_{\ell^{p_2}(\Z)}. \end{equation}
\end{theorem}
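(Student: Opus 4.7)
My broad strategy for Theorem~\ref{thm:variationalparaproduct} has three steps: (a) replace the discrete multiplier $m_{N;\Z}$ by a smooth integral analogue on the principal major arcs; (b) obtain a variational bound for this continuous model using a Khintchine/Rademacher reduction combined with the shifted square function estimate of Theorem~\ref{thm:shifted}, picking up the polynomial loss $\langle \max(l_1,l_2)\rangle^{O(1)}$; and (c) in the $p_1 = p_2 = 2$ case, feed the single-scale bound from Theorem~\ref{thm:sobolevestimate} back into the argument to extract the exponential decay $2^{-c\max(l_1,l_2)}$.

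For step (a), I would group the sum defining $m_{N;\Z}(\zeta,\eta)$ by the value of $m = \lfloor\sqrt{n}\rfloor$, evaluate the inner geometric sum in $n$, and compare to the integral
\[
\tilde{m}_N(\zeta,\eta) := \frac{2}{N} \int_{\sqrt{N/2}}^{\sqrt{N}} s \, e(-\zeta s - \eta s^2) \, ds
\]
via Euler--Maclaurin. On the Fourier supports of $f_{N;l_1}$ and $g_{N;l_2}$ the slow variation of $e(-\zeta s - \eta s^2)$ produces an $N^{-c}$ gain in the operator norm of the error multiplier. By (\ref{eqn:variationvsellr}), Lemma~\ref{lem:holdernonbanach}, and the lacunarity of $\D$ (so that $\sum_{N \in \D_{l_1,l_2}} N^{-c}$ is bounded depending only on $C_2$), the error contributes an acceptable $O_{C_2}(1)$ to the variational norm.

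For step (b), rescaling via $s = \sqrt{N}\,u$ gives $\tilde{m}_N(\zeta,\eta) = \Phi(\zeta\sqrt{N},\,\eta N)$ where $\Phi(\alpha,\beta) := 2 \int_{1/\sqrt{2}}^{1} u\, e(-\alpha u - \beta u^2)\, du$ is a fixed smooth bounded function independent of $N$. On the relevant supports $(\alpha,\beta)$ ranges over the box $[-2^{l_1+1},2^{l_1+1}] \times [-2^{l_2+1},2^{l_2+1}]$, where I would expand $\Phi$ in a Fourier series producing a decomposition into tensor products $e(k_1 \alpha / 2^{l_1+1})\, e(k_2 \beta / 2^{l_2+1})$ with rapidly decaying coefficients in $(k_1,k_2)$. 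Each term is then a bilinear paraproduct whose factors are shifted Littlewood--Paley projections of $f_{N;l_1}$ and $g_{N;l_2}$, with shift parameters of size $\sim k_1 \cdot 2^{-l_1}\sqrt{N}$ and $\sim k_2 \cdot 2^{-l_2} N$. A Khintchine/Rademacher reduction converts the variational norm into a square function, and H\"older's inequality splits this into a product of shifted square functions to which Theorem~\ref{thm:shifted} applies with $K \sim 2^{\max(l_1,l_2)}$; the logarithmic loss $\log K \sim \max(l_1,l_2)$ is exactly the source of the $\langle\max(l_1,l_2)\rangle^{O(1)}$ prefactor, while the rapid decay of the Fourier coefficients of $\Phi$ ensures summability in $(k_1,k_2)$.

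For step (c), when $l_1 > -C_1$ the function $f_{N;l_1}$ has Fourier transform vanishing on $\M_{\leq N^{-1/2} 2^{l_1-1}}$, so Theorem~\ref{thm:sobolevestimate}(i) yields
\[
\|\Tilde{A}_N(f_{N;l_1},g_{N;l_2})\|_{\ell^1(\Z)} \leqsim (2^{-cl_1} + N^{-c}) \|f\|_{\ell^2(\Z)}\|g\|_{\ell^2(\Z)},
\]
and an analogous bound with $2^{-cl_2}$ follows from (ii). Using (\ref{eqn:variationvsellr}) and the lacunarity of $\D_{l_1,l_2}$ upgrades this single-scale bound to a $V^r$ bound with $2^{-c\max(l_1,l_2)}$ gain, and interpolation with the polynomial-loss estimate of step (b) gives the claimed decay (the base case $l_1 = l_2 = -C_1$ is trivial as the exponential factor is a constant depending on $C_1 \leq C_2$). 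The principal technical difficulty will lie in step (b): Theorem~\ref{thm:shifted} as stated requires $d > 1$ in the frequency localization $\eta(AN^d\xi)$, whereas the natural scale for $f$-shifts here is $\sqrt{N}$, corresponding to $d = 1/2$. Overcoming this will likely require either a scale-adapted variant of Theorem~\ref{thm:shifted} (obtained by re-running the Calder\'on--Zygmund argument at the smaller power) or a recasting of the paraproduct that arranges all relevant shifts to occur at scale $N$.
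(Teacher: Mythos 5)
Steps (a) and (b) of your plan align with the paper's strategy: replace $m_{N;\Z}$ by the continuous $m_{N;\R}(\xi_1,\xi_2)=\int_{1/2}^1 e(-\xi_1\sqrt{Nt}-\xi_2 Nt)\,dt$, and apply the shifted square function estimate of Theorem~\ref{thm:shifted} after splitting the variation of the product via (\ref{eqn:variationofproduct}) and H\"older (your Fourier-series expansion of $\Phi$ is a discretized version of the paper's $t$-integral followed by Minkowski). Your concern about the $d>1$ hypothesis of Theorem~\ref{thm:shifted} is reasonable but not fatal: the paper itself applies it with $d=\frac{1}{2}$, so that restriction is inessential to its proof. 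A more substantive issue in step (b) is that when $l_1=-C_1$ the low-pass symbol $\phi_{N;-C_1}=\Psi_{\leq N^{-1/2}2^{-C_1}}$ does \emph{not} vanish at the origin, so Theorem~\ref{thm:shifted} does not apply directly; one has to subtract the unshifted piece (whose long variation is controlled by a Jones--Seeger--Wright type estimate) and apply Theorem~\ref{thm:shifted} only to the difference $\phi_{N;-C_1}(\cdot-\sqrt{Nt})-\phi_{N;-C_1}$.

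The genuine gap is in step (c). You propose to obtain the $p_1=p_2=2$ decay $2^{-c\max(l_1,l_2)}$ by combining the single-scale estimate from Theorem~\ref{thm:sobolevestimate}, namely $\|\Tilde{A}_N(f_{N;l_1},g_{N;l_2})\|_{\ell^1(\Z)}\leqsim(2^{-c\max(l_1,l_2)}+N^{-c})\|f\|_{\ell^2(\Z)}\|g\|_{\ell^2(\Z)}$, with (\ref{eqn:variationvsellr}) and ``the lacunarity of $\D_{l_1,l_2}$''. This succeeds when \emph{both} $l_1,l_2>-C_1$: Cauchy--Schwarz in $N$ together with the bounded overlap of the band-pass pieces $\hat{f}_{N;l_1}$ and $\hat{g}_{N;l_2}$ gives $\sum_{N\in\D_{l_1,l_2}}\|f_{N;l_1}\|_{\ell^2(\Z)}\|g_{N;l_2}\|_{\ell^2(\Z)}\leqsim\|f\|_{\ell^2(\Z)}\|g\|_{\ell^2(\Z)}$, which is precisely the paper's Proposition~\ref{prop:hilbertparaproduct}. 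But in the mixed cases $l_1=-C_1<l_2$ or $l_2=-C_1<l_1$, the low-pass family (say $\hat{f}_{N;-C_1}$) has nested supports as $N$ ranges over the possibly infinite set $\D_{l_1,l_2}$, so there is no orthogonality in $N$ and the constant factor $2^{-c\max(l_1,l_2)}$ does not sum. The paper's mechanism for obtaining decay in these mixed cases is an integration by parts in the $t$-variable of $\int_{1/2}^1 e(-\xi_1\sqrt{Nt}-\xi_2 Nt)\,dt$, trading the oscillation of $e(-\xi_2 Nt)$ on the support of $\psi_{N;l_2}$ for a gain of $2^{-l_2}$ (and symmetrically for $2^{-l_1}$); that ingredient is absent from your plan and I do not see how to close the gap without it. You also do not distinguish $p\geq 1$ from $p<1$: for $p<1$ the Minkowski step in (b) is unavailable, and the paper handles that range by mixed-norm interpolation against a bilinear maximal estimate built from the Hardy--Littlewood maximal function, another ingredient your outline would need to supply.
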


To see why Theorem \ref{thm:variationalparaproduct} suffices, note that by interpolating with the $p_1=p_2=2$ case one obtains \[ \|V^r(\Tilde{A}_N(f_{N;l_1}, g_{N;l_2}))_{N \in \D_{l_1,l_2}} \|_{\ell^p(\Z)} \leqsim_{C_2} \langle \max(l_1,l_2) \rangle^{O(1)} 2^{-c\max(l_1,l_2)} \|f\|_{\ell^{p_1}(\Z)} \|g\|_{\ell^{p_2}(\Z)} \] for a possibly different $c>0$. Therefore, we have \begin{multline*} \| V^r(\tilde{A}_N(f_{\leq N^{-\frac{1}{2}} 2^{l(N)}}, g_{\leq N^{-1} 2^{l(N)}}))_{N \in \D} \|_{\ell^p(\Z)} \\ \leqsim_{C_2} \sum_{l_1,l_2 \geq -C_1} \langle \max(l_1,l_2) \rangle^{O(1)} 2^{-c\max(l_1,l_2)} \|f\|_{\ell^{p_1}(\Z)} \|g\|_{\ell^{p_2}(\Z)} \end{multline*} if $p \geq 1$ and \begin{multline*} \| V^r(\tilde{A}_N(f_{\leq N^{-\frac{1}{2}} 2^{l(N)}}, g_{\leq N^{-1} 2^{l(N)}}))_{N \in \D} \|_{\ell^p(\Z)} \\ \leqsim_{C_2} \left(\sum_{l_1,l_2 \geq -C_1} \langle \max(l_1,l_2) \rangle^{O_p(1)} 2^{-cp\max(l_1,l_2)} \right)^\frac{1}{p} \|f\|_{\ell^{p_1}(\Z)} \|g\|_{\ell^{p_2}(\Z)} \end{multline*} if $p<1$. But the series \[ \sum_{l_1,l_2 \geq -C_1} \langle \max(l_1,l_2) \rangle^{O(1)} 2^{-c\max(l_1,l_2)} \] is $O_{C_2}(1)$ so we are done.

Natural cases arise in Theorem \ref{thm:variationalparaproduct}. Firstly we distinguish between the cases $(p_1,p_2)=(2,2)$ and $(p_1,p_2) \neq (2,2)$, as we require some exponential decay in the form of the constant $2^{-c \max(l_1,l_2)}$ in the former case. We also have different cases depending on whether $l_1=-C_1$ or $l_2=-C_1$. We can deal with one case of Theorem \ref{thm:variationalparaproduct} by using Theorem \ref{thm:sobolevestimate}.

\begin{proposition} \label{prop:hilbertparaproduct}
Let $l_1,l_2>-C_1$. Then \[ \| V^r(\Tilde{A}_N(f_{N;l_1}, g_{N;l_2}))_{N \in \D_{l_1,l_2}} \|_{\ell^1(\Z)} \leqsim_{C_2} 2^{-c \max(l_1,l_2)} \|f\|_{\ell^2(\Z)} \|g\|_{\ell^2(\Z)}. \]
\end{proposition}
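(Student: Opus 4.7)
The plan is to combine three ingredients: the crude control of the variation norm by an $\ell^1$-sum from (\ref{eqn:variationvsellr}); the single-scale $\ell^1$ bound of Theorem \ref{thm:sobolevestimate} applied at each $N \in \D_{l_1,l_2}$; and the almost-orthogonality of the Littlewood--Paley projections $f_{N;l_1}$ and $g_{N;l_2}$ as $N$ ranges over the lacunary set $\D$.

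First I would apply (\ref{eqn:variationvsellr}) to obtain
\[
\| V^r(\Tilde{A}_N(f_{N;l_1}, g_{N;l_2}))_{N \in \D_{l_1,l_2}} \|_{\ell^1(\Z)}
\leqsim \sum_{N \in \D_{l_1,l_2}} \|\Tilde{A}_N(f_{N;l_1}, g_{N;l_2})\|_{\ell^1(\Z)}.
\]
Since $l_1, l_2 > -C_1$, the pieces $f_{N;l_1}$ and $g_{N;l_2}$ are genuine annular Littlewood--Paley projections: $\widehat{f_{N;l_1}}$ is supported in $\M_{N^{-\frac{1}{2}} 2^{l_1}}$, hence vanishes on $\M_{\leq N^{-\frac{1}{2}} 2^{l_1-2}}$, while $\widehat{g_{N;l_2}}$ vanishes on $\M_{\leq N^{-1} 2^{l_2-2}}$. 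Applying Theorem \ref{thm:sobolevestimate} under hypothesis (i) with $l=l_1-2$ and under hypothesis (ii) with $l=l_2-2$, and keeping whichever bound is stronger at each $N$, yields
\[
\|\Tilde{A}_N(f_{N;l_1}, g_{N;l_2})\|_{\ell^1(\Z)}
\leqsim (2^{-c\max(l_1,l_2)} + N^{-c}) \, \|f_{N;l_1}\|_{\ell^2(\Z)} \|g_{N;l_2}\|_{\ell^2(\Z)}.
\]
The regime $\max(l_1,l_2)=O(1)$ is handled directly from (\ref{eqn:HolderBanach}) with $p_1=p_2=2$ together with (\ref{eqn:psiboundednessZ}).

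To sum in $N$, I would exploit the fact that $\D$ being $\lambda$-lacunary forces the scales $\{N^{-\frac{1}{2}} 2^{l_1}\}_{N \in \D}$ to be $\sqrt{\lambda}$-lacunary, so the annular supports of the multipliers $\Psi_{N^{-\frac{1}{2}} 2^{l_1}}$ overlap with bounded multiplicity $O_\lambda(1)$. Plancherel then gives the Littlewood--Paley orthogonality
\[
\sum_{N \in \D} \|f_{N;l_1}\|_{\ell^2(\Z)}^2 \leqsim \|f\|_{\ell^2(\Z)}^2,
\]
and similarly for $g_{N;l_2}$. Cauchy--Schwarz in $N$ then yields
\[
\sum_{N \in \D_{l_1,l_2}} \|f_{N;l_1}\|_{\ell^2(\Z)} \|g_{N;l_2}\|_{\ell^2(\Z)} \leqsim \|f\|_{\ell^2(\Z)} \|g\|_{\ell^2(\Z)},
\]
which combined with the $2^{-c\max(l_1,l_2)}$ factor from the previous display delivers the desired bound.

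For the remaining $N^{-c}$ contribution I would use the definition of $\D_{l_1,l_2}$: the constraint $\max(l_1,l_2) \leq l(N) = \lfloor C_0 \log \log N\rfloor$ forces $N \geq 2^{2^{\max(l_1,l_2)/C_0}}$, so $N^{-c}$ is super-exponentially small in $\max(l_1,l_2)$ and is dominated by $2^{-c'\max(l_1,l_2)}$ (for a possibly smaller $c'>0$) once that quantity is large. For bounded $\max(l_1,l_2)$ the claim is trivial by the lacunary summability $\sum_{N \in \D} N^{-c} \leqsim 1$ and the lower bound $N \geq C_2$. The only step containing real analytic content beyond Theorem \ref{thm:sobolevestimate} is the orthogonality computation, which is itself routine given the lacunary structure; the main obstacle has already been absorbed into Theorem \ref{thm:sobolevestimate}.
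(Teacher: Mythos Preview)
Your proposal is correct and follows essentially the same approach as the paper's proof: bound the variation by an $\ell^1$-sum via (\ref{eqn:variationvsellr}), apply Theorem~\ref{thm:sobolevestimate} at each scale, absorb $N^{-c}$ into $2^{-c\max(l_1,l_2)}$ using the lower bound on $N$ coming from $\D_{l_1,l_2}$, and then sum using Cauchy--Schwarz together with the $\ell^2$ almost-orthogonality of the annular projections. The only cosmetic difference is that the paper takes geometric means of the two Sobolev bounds rather than selecting the stronger one, which is equivalent here.
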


\begin{proof}
First use (\ref{eqn:variationvsellr}) to bound \[ \| V^r(\Tilde{A}_N(f_{N;l_1}, g_{N;l_2}))_{N \in \D_{l_1,l_2}} \|_{\ell^1(\Z)} \leqsim \sum_{N \in \D_{l_1,l_2}} \| \Tilde{A}_N(f_{N;l_1}, g_{N;l_2}) \|_{\ell^1(\Z)}. \] Since $l_1,l_2>-C_1$ we have $f_{N;l_1}=f_{N^{-\frac{1}{2}} 2^{l_1}}$ and $g_{N;l_2} = g_{N^{-1} 2^{l_2}}$. In particular, $\hat{f}_{N;l_1}$ and $\hat{g}_{N;l_2}$ vanish on $\M_{\leq N^{-\frac{1}{2}} 2^{l_1-1}}$ and $\M_{\leq N^{-1} 2^{l_2-1}}$ respectively. Thus Theorem \ref{thm:sobolevestimate} yields \[ \|\Tilde{A}_N(f_{N;l_1}, g_{N;l_2})\|_{\ell^1(\Z)} \leqsim (2^{-cl_1}+N^{-c}) \|f_{N;l_1} \|_{\ell^2(\Z)} \|g_{N;l_2} \|_{\ell^2(\Z)} \] and \[ \|\Tilde{A}_N(f_{N;l_1}, g_{N;l_2})\|_{\ell^1(\Z)} \leqsim (2^{-cl_2}+N^{-c}) \|f_{N;l_1} \|_{\ell^2(\Z)} \|g_{N;l_2} \|_{\ell^2(\Z)}. \] Taking geometric means and bounding $N^{-c} \leqsim 2^{-c\max(l_1,l_2)}$ for all $N \in \D_{l_1,l_2}$, we have \[ \|\Tilde{A}_N(f_{N;l_1}, g_{N;l_2})\|_{\ell^1(\Z)} \leqsim 2^{-c\max(l_1,l_2)} \|f_{N;l_1} \|_{\ell^2(\Z)} \|g_{N;l_2} \|_{\ell^2(\Z)}. \] By the lacunary nature of $\D_{l_1,l_2}$ and the fact that $\hat{f}_{N;l_1}$ and $\hat{g}_{N;l_2}$ vanish on principal major arcs, we obtain the pointwise bounds \[ \sum_{N \in \D_{l_1,l_2}} |\hat{f}_{N;l_1}|^2 \leqsim |\hat{f}|^2 \] and \[ \sum_{N \in \D_{l_1,l_2}} |\hat{g}_{N;l_1}|^2 \leqsim |\hat{g}|^2, \] so applying the Cauchy--Schwarz inequality in $\ell^2(\D_{l_1,l_2})$ and Plancherel's theorem completes the proof.
\end{proof}

At this stage we note that \[ \Tilde{A}_N(f_{N;l_1}, g_{N;l_2})(x) = \sum_{y_1,y_2 \in \Z} K_{m_{N;\Z}}^{l_1,l_2}(y_1,y_2) f(x-y_1) g(x-y_2) \] where \[ K_{m_{N;\Z}}^{l_1,l_2}(y_1,y_2) = \int_{\T^2} m_{N;\Z}(\xi_1,\xi_2) \phi_{N;l_1}(\xi_1) \psi_{N;l_2}(\xi_2) e(y_1 \xi_1+y_2 \xi_2) \, d\xi_1 \,d\xi_2 \] with \[ m_{N;\Z}(\xi_1,\xi_2) = \E_{n \in [N]} 1_{n>\frac{N}{2}} e(-\xi_1 \lfloor \sqrt{n} \rfloor-\xi_2 n), \] \[ \phi_{N;l_1} = \begin{cases} \Psi_{N^{-\frac{1}{2}} 2^{l_1}} & l_1>-C_1, \\ \Psi_{\leq N^{-\frac{1}{2}} 2^{l_1}} & l_1 = -C_1, \end{cases} \] and \[ \psi_{N;l_2} = \begin{cases} \Psi_{N^{-1} 2^{l_2}} & l_2>-C_1, \\ \Psi_{\leq N^{-1} 2^{l_2}} & l_2 = -C_1. \end{cases} \] We would like to approximate $m_{N;\Z}$ by $m_{N;\R}$ where \[ m_{N;\R}(\xi_1,\xi_2) = \frac{1}{N} \int_\frac{N}{2}^N e(-\xi_1 \sqrt{t} -\xi_2 t) \,dt, \] and in turn approximate $\Tilde{A}_N(f_{N;l_1},g_{N;l_2})$ by \[ K_{m_{N;\R}}^{l_1,l_2}(y_1,y_2) = \int_{\T^2} m_{N;\R}(\xi_1,\xi_2) \phi_{N;l_1}(\xi_1) \psi_{N;l_2}(\xi_2) e(y_1\xi_1+y_2\xi_2) \,d\xi_1 \,d\xi_2. \] This is the content of the next proposition.

\begin{proposition} \label{prop:continuousmultiplier}
We have \begin{multline*} \left\| \Tilde{A}_N(f_{N;l_1}, g_{N;l_2}) - \sum_{y_1,y_2 \in \Z} K_{m_{N;\R}}^{l_1,l_2}(y_1,y_2) f(\cdot-y_1) g(\cdot-y_2) \right\|_{\ell^p(\Z)} \\ \leqsim_{C_2} 2^{O(\max(1,l_1,l_2))} N^{-\frac{1}{2}} \|f\|_{\ell^{p_1}(\Z)} \|g\|_{\ell^{p_2}(\Z)}. \end{multline*}
\end{proposition}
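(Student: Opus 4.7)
The plan is to bound the $\ell^1(\Z^2)$ norm of the kernel
\[ K(y_1,y_2) := K^{l_1,l_2}_{m_{N;\Z}}(y_1,y_2) - K^{l_1,l_2}_{m_{N;\R}}(y_1,y_2) \]
of the bilinear difference operator, and then to convert this into the claimed $\ell^p$ estimate. Since $\phi_{N;l_1}\psi_{N;l_2}$ is supported well inside $\T^2$ under the hypothesis $l_1,l_2\leq l(N)\sim C_0\log\log N$, writing $\Phi := \F_\R^{-1}\phi_{N;l_1}$ and $\Psi := \F_\R^{-1}\psi_{N;l_2}$---Schwartz functions concentrated at the scales $N^{1/2}2^{-l_1}$ and $N\cdot 2^{-l_2}$ with $\|\Phi\|_{L^1(\R)},\|\Psi\|_{L^1(\R)}\lesssim 1$---and exchanging the order of summation and integration yields the real-space representation
\[ K(y_1,y_2) = \frac{1}{N}\sum_{N/2<n\leq N}\Phi(y_1-\lfloor\sqrt{n}\rfloor)\Psi(y_2-n) - \frac{1}{N}\int_{N/2}^N\Phi(y_1-\sqrt{t})\Psi(y_2-t)\,dt. \]

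I would then split $K = K_a + K_b$, with $K_a$ the error from replacing $\lfloor\sqrt{n}\rfloor$ by $\sqrt{n}$ inside the discrete sum and $K_b$ the remaining sum-to-integral discrepancy. For $K_a$, the mean value theorem together with the Fourier-scaling bound $\|\Phi'\|_{L^1(\R)} \lesssim N^{-1/2} 2^{l_1}$ and the Schwartz sampling estimates $\sum_{y_1} \sup_\theta |\Phi'(y_1-\theta)| \lesssim \|\Phi'\|_{L^1(\R)}$ and $\sum_{y_2} |\Psi(y_2-n)| \lesssim \|\Psi\|_{L^1(\R)}$ gives $\|K_a\|_{\ell^1(\Z^2)} \lesssim N^{-1/2} 2^{l_1}$. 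For $K_b$, the Euler--Maclaurin inequality bounds $|K_b(y_1,y_2)|$ by $\frac{1}{N}\int_{N/2}^N |\partial_t[\Phi(y_1-\sqrt{t}) \Psi(y_2-t)]|\,dt$ plus $O(1/N)$-boundary terms; expanding $\partial_t$ via the chain rule yields two contributions, from $(2\sqrt{t})^{-1} \Phi'(y_1-\sqrt{t}) \Psi(y_2-t)$ and from $\Phi(y_1-\sqrt{t}) \Psi'(y_2-t)$, each of which (after summing in $(y_1,y_2)$ and integrating in $t \in [N/2,N]$) is controlled by $N^{-1} 2^{O(\max(1,l_1,l_2))}$. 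Altogether, $\|K\|_{\ell^1(\Z^2)} \lesssim N^{-1/2} 2^{O(\max(1,l_1,l_2))}$.

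For $p \geq 1$, the desired estimate then follows immediately from the bilinear convolution inequality
\[ \left\| \sum_{y_1,y_2} K(y_1,y_2) f(\cdot-y_1) g(\cdot-y_2) \right\|_{\ell^p(\Z)} \leq \|K\|_{\ell^1(\Z^2)} \|f\|_{\ell^{p_1}(\Z)} \|g\|_{\ell^{p_2}(\Z)}, \]
a consequence of the triangle inequality in $\ell^p(\Z)$, translation invariance, and H\"older. The case $p<1$ requires more care, since the quasi-triangle inequality only accesses $\|K\|_{\ell^p(\Z^2)}$, so I would imitate the localization argument in the proof of Lemma~\ref{lem:holdernonbanach}: the kernel $K$ is essentially supported on the box $|y_1| \lesssim \sqrt{N}$, $|y_2| \lesssim N$ (with Schwartz tails from $\Phi, \Psi$ controlled by the same $\ell^1$ kernel bound), so partitioning $\Z$ by a finitely-overlapping cover $\mathcal{I}$ of intervals of length $O(N)$, on each pair of blocks the bilinear output is supported on an $O(N)$-interval; H\"older's inequality interpolates its $\ell^p$-norm between the $\ell^1$-bound (obtained by combining the kernel estimate with H\"older on the translated product $\sum_x |f(x-y_1) g(x-y_2)|$, which costs $N^{1-1/p}$) and the support factor $N^{1/p-1}$, and these exponents cancel exactly. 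The quasi-triangle inequality~(\ref{eqn:quasitriangleinequality}) then reassembles the pieces.

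The main technical obstacle is the Euler--Maclaurin bookkeeping for $K_b$: the factor $(2\sqrt{t})^{-1}$ from differentiating $\sqrt{t}$ must integrate against $[N/2,N]$ to produce $\sqrt{N}$, which combined with $\|\Phi'\|_{L^1(\R)} \sim N^{-1/2} 2^{l_1}$ and the $1/N$ prefactor yields the crucial $N^{-1}$ bound on $\|K_b\|_{\ell^1}$ (rather than a trivial $O(1)$), and one must also carefully verify that the rapidly decaying Schwartz tails of $\Phi, \Psi$ are harmless in the $p<1$ localization step.
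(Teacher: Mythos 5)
Your proposal is correct, and it takes a genuinely different route from the paper. The paper works entirely on the Fourier side: it writes $K = K^{l_1,l_2}_{m_{N;\Z}} - K^{l_1,l_2}_{m_{N;\R}}$ as $\int_{\T^2}(m_{N;\Z}-m_{N;\R})\,\phi_{N;l_1}\,\psi_{N;l_2}\,e(y_1\xi_1+y_2\xi_2)\,d\xi$, integrates by parts twice in $\xi_1$ and/or $\xi_2$ to generate $y_1^{-2}y_2^{-2}$ decay, and then bounds the derivatives $\partial^{(i_1,i_2)}(m_{N;\Z}-m_{N;\R})$ on $\M_{\leq N^{-1/2}2^{l_1}}\times\M_{\leq N^{-1}2^{l_2}}$ by the mean value theorem applied to $w(t)=t^{i_1/2+i_2}e(-\xi_1\sqrt{t}-\xi_2 t)$. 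A free pair of parameters $\varepsilon_1,\varepsilon_2$ (set to $N^{-1/2},N^{-1}$) is allocated between the $y_i=0$ and $y_i\neq 0$ contributions via the elementary series bound $\varepsilon+\sum_{y\neq 0}\varepsilon^{-1}y^{-2}=O(1)$. For $p<1$, the paper does not localize; instead it bounds $\bigl(\sum_{y_1,y_2}|K|^p\bigr)^{1/p}$ directly via the quasi-triangle inequality, using that $\varepsilon^p+\sum_{y\neq 0}\varepsilon^{-p}y^{-2p}=O_p(1)$ precisely because $p>\frac12$. Your version instead works in physical space: it splits $K = K_a+K_b$ into a floor-error term and an Euler--Maclaurin term, and estimates $\|K\|_{\ell^1(\Z^2)}$ via Schwartz sampling and $\|\Phi'\|_{L^1}\lesssim N^{-1/2}2^{l_1}$, $\|\Psi'\|_{L^1}\lesssim N^{-1}2^{l_2}$. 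Both routes hit the same arithmetic; yours is arguably more transparent about where the factor $N^{-1/2}$ comes from (the mean value theorem applied to the $O(1)$ fractional part of $\sqrt{n}$, measured against the spatial scale $N^{1/2}2^{-l_1}$ of $\Phi$), whereas the paper's multiplier-derivative bookkeeping is more uniform across the four boundary/interior cases and makes the $p<1$ step shorter. Your localization argument for $p<1$ is valid and self-contained, but it does require the promised verification that the Schwartz tails of $\Phi,\Psi$ contribute acceptably across distant blocks; the paper's $\sum|y|^{-2p}$ argument sidesteps any block decomposition entirely and is worth absorbing as the simpler mechanism here.
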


\begin{proof}
By the preceding discussion we have \begin{multline*} \Tilde{A}_N(f_{N;l_1}, g_{N;l_2}) - \sum_{y_1,y_2 \in \Z} K_{m_{N;\R}}^{l_1,l_2} f(\cdot-y_1) g(\cdot-y_2) \\ = \sum_{y_1,y_2 \in \Z} (K_{m_{N;\Z}}^{l_1,l_2}(y_1,y_2) - K_{m_{N;\R}}^{l_1,l_2}(y_1,y_2)) f(x-y_1) g(x-y_2). \end{multline*} First suppose $p \geq 1$. Then by Minkowski's inequality in $\ell^p(\Z)$, Hölder's inequality, and translation invariance it suffices to show \[ \sum_{y_1,y_2 \in \Z} |K_{m_{N;\Z}}^{l_1,l_2}(y_1,y_2) - K_{m_{N;\R}}^{l_1,l_2}(y_1,y_2)| \leqsim_{C_2} 2^{O(\max(1,l_1,l_2))} N^{-\frac{1}{2}}. \] Write \begin{multline*} K_{m_{N;\Z}}^{l_1,l_2}(y_1,y_2) - K_{m_{N;\R}}^{l_1,l_2}(y_1,y_2) \\ = \int_{\T^2} (m_{N;\Z}-m_{N;\R})(\xi_1,\xi_2) \phi_{N;l_1}(\xi_1) \psi_{N;l_2}(\xi_2) e(y_1\xi_1+y_2\xi_2) \,d\xi_1 \,d\xi_2. \end{multline*} Using integration by parts we see that the following bounds hold: \begin{multline*} K_{m_{N;\Z}}^{l_1,l_2}(y_1,y_2) - K_{m_{N;\R}}^{l_1,l_2}(y_1,y_2) \\ \leqsim \begin{cases} \frac{1}{y_1^2 y_2^2} \int_{\T^2} |\partial^{(2,2)} [ (m_{N;\Z}-m_{N;\R})(\xi_1,\xi_2) \phi_{N;l_1}(\xi_1) \psi_{N;l_2}(\xi_2) ] | \,d\xi_1 \,d\xi_2 & y_1,y_2 \neq 0, \\ \frac{1}{y_1^2} \int_{\T^2} |\partial^{(2,0)} [ (m_{N;\Z}-m_{N;\R})(\xi_1,\xi_2) \phi_{N;l_1}(\xi_1) \psi_{N;l_2}(\xi_2) ] | \,d\xi_1 \,d\xi_2 & y_1 \neq 0, y_2=0, \\ \frac{1}{y_2^2} \int_{\T^2} |\partial^{(0,2)} [ (m_{N;\Z}-m_{N;\R})(\xi_1,\xi_2) \phi_{N;l_1}(\xi_1) \psi_{N;l_2}(\xi_2) ] | \,d\xi_1 \,d\xi_2 & y_1=0, y_2 \neq 0, \\ \int_{\T^2} | (m_{N;\Z}-m_{N;\R})(\xi_1,\xi_2) \phi_{N;l_1}(\xi_1) \psi_{N;l_2}(\xi_2) | \,d\xi_1 \,d\xi_2 & y_1=y_2=0. \end{cases} \end{multline*} Since the series \[ \varepsilon+ \sum_{y \neq 0} \frac{1}{\varepsilon y^2} \] is $O(1)$ for any $0<\varepsilon \leq 1$, we may multiply by a factor of $\varepsilon_1$ if $y_1 \neq 0$ and $\varepsilon_1^{-1}$ if $y_1=0$ into the above bounds, and similarly we can multiply by a factor of $\varepsilon_2$ or $\varepsilon_2^{-1}$ if $y_2 \neq 0$ or $y_2 =0$ respectively. It therefore suffices to prove that \begin{multline} \label{eqn:degreeoffreedom} \sup_{j_1,j_2 \in \{0,2\}} \int_{\T^2} \varepsilon_1^{j_1-1} \varepsilon_2^{j_2-1} |\partial^{(j_1,j_2)} [ (m_{N;\Z}-m_{N;\R})(\xi_1,\xi_2) \phi_{N;l_1}(\xi_1) \psi_{N;l_2}(\xi_2) ] | \,d\xi_1 \,d\xi_2 \\ \leqsim_{C_2} 2^{O(\max(1,l_1,l_2))} N^{-\frac{1}{2}} \|f\|_{\ell^{p_1}(\Z)} \|g\|_{\ell^{p_2}(\Z)} \end{multline} for some $0<\varepsilon_1,\varepsilon_2 \leq 1$. By the Leibniz rule and triangle inequality, the left hand side of (\ref{eqn:degreeoffreedom}) may be bounded above, up to constants, by \begin{multline} \label{eqn:leibnizrule} \sup_{j_1,j_2 \in \{0,2\}} \sum_{(i_1,i_2) \leq (j_1,j_2)} \int_{\T^2} \varepsilon_1^{j_1-1} \varepsilon_2^{j_2-1} |\partial^{(i_1,i_2)}(m_{N;\Z}-m_{N;\R})(\xi_1,\xi_2)| \\ |\partial^{(j_1-i_1)} \phi_{N;l_1}(\xi_1)| |\partial^{(j_2-i_2)} \psi_{N;l_2}(\xi_2)| \,d\xi_1 \,d\xi_2. \end{multline} Note that $\xi_1$ and $\xi_2$ are supported on $\M_{\leq N^{-\frac{1}{2}} 2^{l_1}}$ and $\M_{\leq N^{-1} 2^{l_2}}$ respectively. We fix such $\xi_1,\xi_2$ and $(i_1,i_2) \leq (j_1,j_2)$. Then \begin{multline*} |\partial^{(i_1,i_2)}(m_{N;\Z}-m_{N;\R})(\xi_1,\xi_2)| \\ = \left| \frac{1}{N} \sum_{n=1}^N 1_{n>\frac{N}{2}} \lfloor \sqrt{n} \rfloor^{i_1} n^{i_2} e(-\xi_1 \lfloor \sqrt{n} \rfloor-\xi_2n) - \frac{1}{N} \int_\frac{N}{2}^N \sqrt{t}^{i_1} t^{i_2} e(-\xi_1 \sqrt{t} - \xi_2 t) \,dt \right|. \end{multline*} Let \[ w(t) = t^{\frac{i_1}{2}+i_2} e(-\xi_1\sqrt{t}-\xi_2t). \] Note that \[ \lfloor \sqrt{n} \rfloor^{i_1} n^{i_2} e(-\xi_1 \lfloor \sqrt{n} \rfloor-\xi_2n) = w(n)+O(2^{\max(l_1,l_2)} N^{\frac{i_1}{2}+i_2-\frac{1}{2}}) \] for all $n$, and that \[ w'(t) \leqsim_{C_1} 2^{\max(l_1,l_2)} N^{\frac{i_1}{2}+i_2-1}. \] Thus by the Mean Value Theorem we have \[ \lfloor \sqrt{n} \rfloor^{i_1} n^{i_2} e(-\xi_1 \lfloor \sqrt{n} \rfloor-\xi_2n) - \int_{n-1}^n w(t) \,dt = O_{C_1}(2^{\max(l_1,l_2)} N^{\frac{i_1}{2}+i_2-\frac{1}{2}}), \] and hence \[ |\partial^{(i_1,i_2)}(m_{N;\Z}-m_{N;\R})(\xi_1,\xi_2)| = O_{C_1}(2^{\max(l_1,l_2)} N^{\frac{i_1}{2}+i_2-\frac{1}{2}}). \] Using the identity \[ \int_\T \partial^{(j)} \Psi_{\leq 2^k}(\xi) \,d\xi = 2^{k(1-j)} \int_\T \partial^{(j)} \Psi(\xi) \,d\xi, \] we see that \[ \int_\T\partial^{(j_1-i_1)} \phi_{N;l_1}(\xi_1) \,d\xi_1 \leqsim N^{\frac{j_1}{2}-\frac{i_1}{2}-\frac{1}{2}} 2^{O(\max(1,l_1))} \] and \[ \int_\T \partial^{(j_2-i_2)} \psi_{N;l_2}(\xi_2) \,d\xi_2 \leqsim N^{j_2-i_2-1} 2^{O(\max(1,l_2))}. \] The factors of $N^\frac{j_1}{2}$ and $N^{j_2}$ lost at this stage indicate we should take $\varepsilon_1=N^{-\frac{1}{2}}$ and $\varepsilon_2 = N^{-1}$. Indeed, we have shown that the integral term in (\ref{eqn:leibnizrule}) is $O_{C_1}(2^{O(\max(1,l_1,l_2))} N^{\frac{j_1}{2}+j_2-2})$ so on multiplying by $\varepsilon_1^{j_1-1} = N^{-\frac{j_1}{2}+\frac{1}{2}}$ and $\varepsilon_2^{j_2-1}=N^{-j_2+1}$ we obtain the desired bound.

If $p<1$ then by the quasi-triangle inequality (\ref{eqn:quasitriangleinequality}) and Hölder's inequality it suffices to show \[ \left(\sum_{y_1,y_2 \in \Z} |K_{m_{N;\Z}}^{l_1,l_2}(y_1,y_2) - K_{m_{N;\R}}^{l_1,l_2}(y_1,y_2)|^p \right)^\frac{1}{p} \leqsim_{C_2} 2^{O(\max(1,l_1,l_2))} N^{-\frac{1}{2}}. \] Since the series \[ \varepsilon^p + \sum_{y \neq 0} \frac{1}{\varepsilon^p y^{2p}} \] is $O_p(1)$ for any $p \in (\frac{1}{2},1)$ and $\varepsilon \in (0,1]$, we reduce back to (\ref{eqn:degreeoffreedom}) and we are done.
\end{proof}

Since $l_1,l_2 \leq l(N)$ for all $N \in \D_{l_1,l_2}$, we see that $N \geq 2^{2^{\max(l_1,l_2)/C_0}}$ and hence \[ 2^{O(\max(l_1,l_2))} \sum_{N \in \D_{l_1,l_2}} N^{-\frac{1}{2}} \leqsim_{C_2} \langle \max(l_1,l_2) \rangle^{O(1)} 2^{-c \max(l_1,l_2) 1_{p_1=p_2=2}}. \] Therefore, by (\ref{eqn:variationvsellr}) and Proposition \ref{prop:continuousmultiplier} we have \begin{multline*} \left\| V^r \left( \Tilde{A}_N(f_{N;l_1}, g_{N;l_2}) - \sum_{y_1,y_2} K_{m_{N;\R}}^{l_1,l_2}(y_1,y_2) f(\cdot-y_1) g(\cdot-y_2) \right)_{N \in \D_{l_1,l_2}} \right\|_{\ell^p(\Z)} \\ \leqsim_{C_2} \langle \max(l_1,l_2) \rangle^{O(1)} 2^{-c\max(l_1,l_2)1_{p_1=p_2=2}} \|f\|_{\ell^{p_1}(\Z)} \|g\|_{\ell^{p_2}(\Z)}. \end{multline*} By the (quasi)-triangle inequality in $\ell^p(\Z)$ and the triangle inequality in $V^r$ it now suffices to prove that \begin{multline*} \left\| V^r \left( \sum_{y_1,y_2} K_{m_{N;\R}}^{l_1,l_2}(y_1,y_2) f(\cdot-y_1) g(\cdot-y_2) \right)_{N \in \D_{l_1,l_2}} \right\|_{\ell^p(\Z)} \\ \leqsim_{C_2} \langle \max(l_1,l_2) \rangle^{O(1)} 2^{-c\max(l_1,l_2)1_{p_1=p_2=2}} \|f\|_{\ell^{p_1}(\Z)} \|g\|_{\ell^{p_2}(\Z)}. \end{multline*}

The benefit of replacing the discrete multiplier $m_{N;\Z}$ with the continuous analogue $m_{N;\R}$ is that we can change variables to obtain \[ m_{N;\R}(\xi_1,\xi_2) = \int_\frac{1}{2}^1 e(-\xi_1 \sqrt{Nt}-\xi_2Nt) \,dt. \] Thus we may rewrite \begin{equation} \label{eqn:rewriteasproduct} \sum_{y_1,y_2} K_{m_{N;\R}}^{l_1,l_2}(y_1,y_2) f(\cdot-y_1) g(\cdot-y_2) = \int_\frac{1}{2}^1 \F_\R^{-1} \phi_{N;l_1}(\cdot-\sqrt{Nt})*f \cdot \F_\R^{-1}\psi_{N;l_2}(\cdot-Nt)*g \,dt. \end{equation} Thus Theorem \ref{thm:variationalparaproduct} is implied by the following.

\begin{theorem} \label{thm:modeloperatorI}
We have \begin{multline} \label{eqn:modeloperatorI} \left\| V^r\left( \int_\frac{1}{2}^1 \F_\R^{-1} \phi_{N;l_1}(\cdot-\sqrt{Nt})*f \cdot \F_\R^{-1}\psi_{N;l_2}(\cdot-Nt)*g \,dt \right)_{N \in \D_{l_1,l_2}} \right\|_{\ell^p(\Z)} \\ \leqsim_{C_2} \langle \max(l_1,l_2) \rangle^{O(1)} 2^{-c \max(l_1,l_2) 1_{p_1=p_2=2}} \|f\|_{\ell^{p_1}(\Z)} \|g\|_{\ell^{p_2}(\Z)} \end{multline} for any $l_1,l_2 \geq -C_1$.
\end{theorem}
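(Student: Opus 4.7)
The proof splits into two regimes: $(p_1, p_2) = (2, 2)$, where exponential decay in $\max(l_1, l_2)$ is required, and general $(p_1, p_2)$, where only polynomial loss is allowed.

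For $(p_1, p_2) = (2, 2)$ with $l_1, l_2 > -C_1$, I plan to transfer the estimate in Proposition \ref{prop:hilbertparaproduct} from the discrete operator $\Tilde{A}_N(f_{N;l_1}, g_{N;l_2})$ to the continuous model operator. The triangle inequality in $V^r$ gives
\[ V^r(\text{model})_{N \in \D_{l_1, l_2}} \leq V^r(\Tilde{A}_N(f_{N;l_1}, g_{N;l_2}))_{N \in \D_{l_1, l_2}} + V^r(\text{difference})_{N \in \D_{l_1, l_2}}. \]
Proposition \ref{prop:hilbertparaproduct} handles the first summand. For the second, (\ref{eqn:variationvsellr}) combined with Proposition \ref{prop:continuousmultiplier} (at $p = 1$) bounds it by
\[ 2^{O(\max(l_1,l_2))} \sum_{N \in \D_{l_1, l_2}} N^{-1/2} \|f\|_{\ell^2(\Z)} \|g\|_{\ell^2(\Z)}, \]
which is super-exponentially small in $\max(l_1, l_2)$ since every $N \in \D_{l_1, l_2}$ satisfies $N \geq 2^{2^{\max(l_1,l_2)/C_0}}$ and $\D$ is lacunary. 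The boundary cases in which $l_1 = -C_1$ or $l_2 = -C_1$ are handled by the same scheme, invoking Theorem \ref{thm:sobolevestimate} only in the direction whose Fourier support avoids the origin (or a direct Plancherel argument if both equal $-C_1$, in which case $2^{-c\max(l_1,l_2)}$ is a constant and orthogonality of Littlewood--Paley pieces suffices).

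For general $(p_1, p_2)$, I begin by applying the triangle inequality in $V^r$ inside the $t$-integral together with the product estimate (\ref{eqn:variationofproduct}) to obtain the pointwise bound
\[ V^r(\text{Op}_N(f,g))(x) \leqsim \int_{1/2}^1 V^r(L_{N,t}f)(x) \, V^r(M_{N,t}g)(x) \, dt, \]
where $L_{N,t}f = \F_\R^{-1} \phi_{N;l_1}(\cdot - \sqrt{Nt}) * f$ and $M_{N,t}g = \F_\R^{-1} \psi_{N;l_2}(\cdot - Nt) * g$. Taking $\ell^p(\Z)$-norms via Minkowski's integral inequality (for $p \geq 1$) or via a pointwise sup-in-$t$ bound that exploits the $t$-smoothness of $L_{N,t}f$ and $M_{N,t}g$ (for $p < 1$, where the quasi-triangle inequality is still compatible with Hölder), and then applying Hölder's inequality, reduces the problem to the linear variational estimates
\[ \|V^r(L_{N,t}f)_{N \in \D_{l_1,l_2}}\|_{\ell^{p_1}(\Z)} \leqsim \langle l_1 \rangle^{O(1)} \|f\|_{\ell^{p_1}(\Z)} \]
uniformly in $t \in [1/2, 1]$, and the analogous bound for $M_{N,t}g$.

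The main obstacle is establishing these linear variational estimates with only polynomial loss in $l_i$, independent of the cardinality of $\D_{l_1, l_2}$. I plan to use $V^r \leq V^2$ for $r > 2$ followed by a Rademacher--Menshov type decomposition expressing $V^2$ as a square function of differences at nested dyadic scales. Each resulting square function is a shifted Littlewood--Paley square function in the sense of Theorem \ref{thm:shifted}, applied with $A = 2^{-l_1}$, $d = 1/2$, and $K \sim 2^{l_1}$ (and with $d = 1$, $K \sim 2^{l_2}$ for $M_{N,t}g$). The $\log K \sim \langle l_i \rangle$ factor from Theorem \ref{thm:shifted} is precisely what produces the allowed polynomial growth.
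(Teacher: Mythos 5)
Your outline captures the no-decay case correctly (factor the $V^r$ of the product via \eqref{eqn:variationofproduct}, apply H\"older, then fall back to shifted square functions), and the interior $(p_1,p_2)=(2,2)$ case is handled by the same transfer argument the paper uses. However, two of the steps you describe would not close.

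The boundary cases with $p_1=p_2=2$ fail under your ``same scheme.'' Suppose $l_1=-C_1<l_2$. You propose to invoke Theorem~\ref{thm:sobolevestimate}(ii) alone, giving $\|\Tilde{A}_N(f_{N;-C_1},g_{N;l_2})\|_{\ell^1} \leqsim 2^{-cl_2}\|f_{N;-C_1}\|_{\ell^2}\|g_{N;l_2}\|_{\ell^2}$. But the Cauchy--Schwarz step in the proof of Proposition~\ref{prop:hilbertparaproduct} used the almost-orthogonality $\sum_N |\hat{f}_{N;l_1}|^2 \leqsim |\hat{f}|^2$, which relies on $\hat{f}_{N;l_1}$ being supported away from $0$. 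When $l_1 = -C_1$, the supports $\M_{\leq N^{-1/2}2^{-C_1}}$ are nested intervals containing the origin, so $\|f_{N;-C_1}\|_{\ell^2} \sim \|f\|_{\ell^2}$ for every $N$, and the sum over $N \in \D_{l_1,l_2}$ diverges with $|\D_{l_1,l_2}|$. The paper instead integrates by parts in $t$ in the \emph{continuous} multiplier $m_{N;\R}$, writing $e(-\xi_2 Nt) = -\tfrac{1}{2\pi i\xi_2 N}\tfrac{d}{dt}e(-\xi_2 Nt)$ (respectively the $\xi_1$ analogue) to extract the factor $2^{l_1-l_2}$ by hand; the surviving terms are then estimated by the no-decay Proposition~\ref{prop:nodecay} applied to the modified bumps $\Tilde{\phi}_{N;l_1}$, $\Tilde{\psi}_{N;l_2}$. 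This integration by parts is the key missing step.

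The $p<1$ case is also not addressed. You sketch a ``pointwise sup-in-$t$ bound exploiting $t$-smoothness,'' but Theorem~\ref{thm:shifted} is a fixed-shift square function estimate, not a maximal one, so it does not bound $\|\sup_{t}V^r(\F_\R^{-1}\phi_{N;l_1}(\cdot-\sqrt{Nt})*f)_N\|_{\ell^{p_1}}$ with only polynomial loss in $l_1$. What the paper does is interpolate between the $V^r$ bound at $p_1=p_2=2$ (which carries exponential \emph{gain} $2^{-c\max(l_1,l_2)}$) and a $V^\infty$ bound at general exponents where $\sup_N \sup_t$ is dominated by the Hardy--Littlewood maximal function at the cost of exponential \emph{loss} $2^{\max(l_1,0)}2^{\max(l_2,0)}$. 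This interpolation both controls the sup in $t$ and forces the threshold $r>\max(p_1',p_2')$; your $V^r\leq V^2$ step would not survive it. A smaller gap: for $l_1=-C_1$ the bump $\Psi_{\leq N^{-1/2}2^{-C_1}}$ does not vanish at the origin, so Theorem~\ref{thm:shifted} cannot be applied directly to the linear variational estimate; the paper subtracts the unshifted piece and invokes the Littlewood--Paley variational result of \cite{jones2008strong}, which your Rademacher--Menshov sketch does not replace.
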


It is at this stage that we distinguish between the $p \geq 1$ and $p<1$ cases. The main reason for this is that in the $p \geq 1$ case one has access to Minkowski's inequality in $\ell^p(\Z)$ while this is not true if $p<1$. We will first prove Theorem \ref{thm:modeloperatorI} in the case $p \geq 1$ and then interpolate against this result to prove the $p<1$ case.

\subsection{The \texorpdfstring{$p \geq 1$}{p greater than 1} Case of Theorem \ref{thm:modeloperatorI}}

In this subsection we aim to prove the following proposition.

\begin{proposition} \label{prop:modeloperatorIpgeq1}
Suppose $p \geq 1$ and $r>2$. Then \begin{multline*} \left\| V^r\left( \int_\frac{1}{2}^1 \F_\R^{-1} \phi_{N;l_1}(\cdot-\sqrt{Nt})*f \cdot \F_\R^{-1}\psi_{N;l_2}(\cdot-Nt)*g \,dt \right)_{N \in \D_{l_1,l_2}} \right\|_{\ell^p(\Z)} \\ \leqsim_{C_2} \langle \max(l_1,l_2) \rangle^{O(1)} 2^{-c \max(l_1,l_2) 1_{p_1=p_2=2}} \|f\|_{\ell^{p_1}(\Z)} \|g\|_{\ell^{p_2}(\Z)} \end{multline*} holds for any $l_1,l_2 \geq -C_1$.
\end{proposition}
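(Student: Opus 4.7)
The plan is to exploit the hypothesis $p \geq 1$ to gain access to Minkowski's inequality in $\ell^p(\Z)$, and then reduce the bilinear variational estimate to two separate shifted Littlewood--Paley square functions controlled by Theorem \ref{thm:shifted}. Write $T_{N,t} f = \F_\R^{-1}\phi_{N;l_1}(\cdot - \sqrt{Nt}) * f$ and $S_{N,t} g = \F_\R^{-1}\psi_{N;l_2}(\cdot - Nt) * g$. First apply the triangle inequality for the $V^r$ norm (valid since $r > 2 > 1$) to pull the $dt$ integral inside, and then Minkowski's inequality in $\ell^p(\Z)$ (which requires $p \geq 1$) to pull the $dt$ integral outside the $\ell^p$ norm, reducing matters to a uniform-in-$t$ estimate on $\|V^r(T_{N,t} f \cdot S_{N,t} g)_{N \in \D_{l_1,l_2}}\|_{\ell^p(\Z)}$. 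Combining \eqref{eqn:variationofproduct} with Hölder's inequality in $\ell^p$, it then suffices to bound $\|V^r(T_{N,t} f)\|_{\ell^{p_1}(\Z)}$ and $\|V^r(S_{N,t} g)\|_{\ell^{p_2}(\Z)}$ separately.

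For each factor, since $r > 2$, the estimate \eqref{eqn:variationvsellr} together with the embedding $\ell^2 \hookrightarrow \ell^r$ on counting measures gives the pointwise bound $V^r((a_N)_N) \leqsim (\sum_N |a_N|^2)^{1/2}$, reducing each factor to an $\ell^{p_i}$ estimate on a shifted Littlewood--Paley square function. In the principal case $l_1, l_2 > -C_1$, the multipliers $\phi_{N;l_1} = \Psi_{N^{-1/2} 2^{l_1}}$ and $\psi_{N;l_2} = \Psi_{N^{-1}2^{l_2}}$ both vanish at the origin, so Theorem \ref{thm:shifted} applies directly: for the $f$-factor take $\eta = \Psi_1$, $d = \tfrac{1}{2}$, $A = 2^{-l_1}$, $\lambda_N = 2^{l_1}\sqrt{t}$, giving $K \sim \max(2^{l_1},2)$ and hence a loss of $\log K \lesssim \langle l_1 \rangle$; analogously for the $g$-factor with $d = 1$, $A = 2^{-l_2}$, $\lambda_N = 2^{l_2} t$, yielding a loss $\langle l_2 \rangle$. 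Multiplying the two bounds produces the required $\langle \max(l_1, l_2) \rangle^{O(1)}$ factor.

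The boundary cases $l_1 = -C_1$ and/or $l_2 = -C_1$ require separate treatment because the corresponding multiplier is a smooth low-pass that does not vanish at the origin, so Theorem \ref{thm:shifted} does not apply. Handle these by splitting the low-pass into a fixed smooth piece supported near the origin (uniform in $N$), whose contribution to $V^r(T_{N,t} f)_{N \in \D_{l_1,l_2}}$ is dominated pointwise by a lacunary shift maximal operator and hence by the Hardy--Littlewood maximal function, bounded on $\ell^{p_i}(\Z)$ since $p_i > 1$; plus an $N$-dependent sum of Littlewood--Paley annuli at scales between the fixed cutoff and $N^{-1/2} 2^{-C_1}$, each satisfying the hypotheses of Theorem \ref{thm:shifted} and summing to an acceptable loss.

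For the additional exponential decay $2^{-c\max(l_1,l_2)}$ required when $p_1 = p_2 = 2$, one proceeds via a different route: by Proposition \ref{prop:continuousmultiplier} the model operator differs from $\Tilde{A}_N(f_{N;l_1}, g_{N;l_2})$ by an error whose $\ell^1(\Z)$ norm is $O_{C_2}(2^{O(\max(l_1,l_2))} N^{-1/2})$, which is summable over $N \in \D_{l_1,l_2}$ thanks to the lower bound $N \geq 2^{2^{\max(l_1,l_2)/C_0}}$ available on $\D_{l_1,l_2}$. It therefore suffices to prove the required $\ell^1$ variational bound for $\Tilde{A}_N(f_{N;l_1}, g_{N;l_2})$, which is exactly Proposition \ref{prop:hilbertparaproduct} combined with the trivial estimate $V^r((a_N)_N) \leqsim \sum_N |a_N|$. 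The main obstacle is the ad hoc handling of the boundary cases $l_i = -C_1$ where the Littlewood--Paley structure degenerates, together with the careful bookkeeping needed to ensure that the $\log K$ loss from Theorem \ref{thm:shifted} in each factor combines to give at most $\langle \max(l_1, l_2) \rangle^{O(1)}$ rather than a worse dependence.
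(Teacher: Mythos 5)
There is a genuine gap in the proposal, centred on the mixed boundary cases needed for the exponential decay.

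\textbf{Main gap: decay for the mixed boundary cases.} When $p_1=p_2=2$, the proposed route to the factor $2^{-c\max(l_1,l_2)}$ is to transfer to the discrete operator via Proposition \ref{prop:continuousmultiplier} and then invoke Proposition \ref{prop:hilbertparaproduct}. But Proposition \ref{prop:hilbertparaproduct} is stated, and its proof only works, for $l_1,l_2>-C_1$: the final step relies on the almost-orthogonality $\sum_{N\in\D_{l_1,l_2}}|\hat f_{N;l_1}|^2 \leqsim |\hat f|^2$, which holds because $\hat f_{N;l_1}$ is supported in the disjoint annuli $\M_{N^{-1/2}2^{l_1}}$. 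When $l_1=-C_1$, the functions $f_{N;-C_1}=f_{\leq N^{-1/2}2^{-C_1}}$ are nested low-passes, the orthogonality fails, and the argument breaks. Yet the cases $l_2>l_1=-C_1$ and $l_1>l_2=-C_1$ with $p_1=p_2=2$ still require $2^{-c\max(l_1,l_2)}$ — without it, the downstream sum over $(l_1,l_2)$ in the reduction to Theorem \ref{thm:variationalparaproduct} diverges. Your proposal contains no argument for these cases. The paper handles them by integrating by parts in the $t$-integral (writing $e(-\xi_2 Nt)=-\tfrac{1}{2\pi i\xi_2 N}\tfrac{d}{dt}e(-\xi_2 Nt)$, or the analogous identity in $\xi_1$), which pulls out an explicit factor of $2^{l_1-l_2}$ (or $2^{l_2-l_1}$); the resulting modified multipliers $\Tilde\phi_{N;l_1},\Tilde\psi_{N;l_2}$ still satisfy the hypotheses of Theorem \ref{thm:shifted}, and then the no-decay argument of Proposition \ref{prop:nodecay} applies. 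That integration-by-parts step is the missing idea.

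\textbf{Secondary issue: the boundary-case square-function argument.} For $l_i=-C_1$ in the no-decay part, you propose splitting the low-pass into a fixed-scale piece (claimed to be controlled by Hardy–Littlewood via a ``lacunary shift maximal operator'') plus a sum of annuli. This does not work: the shift $\sqrt{Nt}$ grows like $\sqrt{N}$, but a piece of fixed frequency scale has a physical-space kernel of $O(1)$ width, so the shift and the averaging scale are badly mismatched. Taking $f=\delta_0$ makes $\F_\R^{-1}\Psi_{\leq\alpha}(\cdot-\sqrt{Nt})*f$ a unit bump near $\sqrt{Nt}$, which for lacunary $N$ occur at widely separated locations; the $V^r$ over $N$ is then $\sim 1$ at $|\D_{l_1,l_2}|$ disjoint sites, giving $\ell^{p_1}$ norm $\sim|\D_{l_1,l_2}|^{1/p_1}$ against $\|f\|_{\ell^{p_1}}=1$. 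Moreover, the number of annuli between the fixed cutoff and $N^{-1/2}2^{-C_1}$ grows like $\log N$, which is not uniform over $\D_{l_1,l_2}$. The paper's device is to subtract the \emph{unshifted} multiplier: $\Psi_{\leq 1}(\xi)\bigl(e(-\sqrt{t}\xi)-1\bigr)$ vanishes at the origin and keeps the shift at the natural scale, so Theorem \ref{thm:shifted} applies to the difference, while the remaining unshifted term is a classical Littlewood–Paley variation operator covered by \cite{jones2008strong}. This is structurally different from, and not recoverable from, your fixed-scale decomposition.

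The remaining parts of the proposal — Minkowski in $V^r$ and in $\ell^p$ (using $p\geq 1$), the product decomposition \eqref{eqn:variationofproduct} with Hölder, and the application of Theorem \ref{thm:shifted} for $l_i>-C_1$ — match the paper's Proposition \ref{prop:nodecay} and are correct.
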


We first assume $(p_1,p_2) \neq (2,2)$ as in this case we do not require some decay in $l_1$ and $l_2$. By applying Minkowski's inequality (in both $\ell^p(\Z)$ and $V^r$) we may bound \begin{multline*} \left\| V^r\left( \int_\frac{1}{2}^1 \F_\R^{-1} \phi_{N;l_1}(\cdot-\sqrt{Nt})*f \cdot \F_\R^{-1}\psi_{N;l_2}(\cdot-Nt)*g \,dt \right)_{N \in \D_{l_1,l_2}} \right\|_{\ell^p(\Z)} \\ \leq \int_\frac{1}{2}^1 \| V^r(\F_\R^{-1} \phi_{N;l_1}(\cdot-\sqrt{Nt})*f \cdot \F_\R^{-1}\psi_{N;l_2}(\cdot-Nt)*g)_{N \in \D_{l_1,l_2}} \|_{\ell^p(\Z)} \,dt. \end{multline*} Then Proposition \ref{prop:modeloperatorIpgeq1} is implied by the following proposition.

\begin{proposition} \label{prop:nodecay}
For each $t \in [\frac{1}{2},1]$ we have \begin{multline*} \| V^r(\F_\R^{-1} \phi_{N;l_1}(\cdot-\sqrt{Nt})*f \cdot \F_\R^{-1}\psi_{N;l_2}(\cdot-Nt)*g)_{N \in \D_{l_1,l_2}} \|_{\ell^p(\Z)} \\ \leqsim_{C_2} \langle \max(l_1,l_2) \rangle^{O(1)} \|f\|_{\ell^{p_1}(\Z)} \|g\|_{\ell^{p_2}(\Z)}. \end{multline*}
\end{proposition}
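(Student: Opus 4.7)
Setting $A_N(x) = (\F_\R^{-1}\phi_{N;l_1}(\cdot-\sqrt{Nt})*f)(x)$ and $B_N(x) = (\F_\R^{-1}\psi_{N;l_2}(\cdot-Nt)*g)(x)$, my plan is to reduce this bilinear variational estimate to two linear shifted square function estimates. The pointwise submultiplicativity (\ref{eqn:variationofproduct}) combined with Hölder's inequality in $\ell^p(\Z)$, valid since $p \geq 1$, yields
\[ \|V^r(A_N B_N)_{N \in \D_{l_1,l_2}}\|_{\ell^p(\Z)} \leqsim \|V^r(A_N)_{N \in \D_{l_1,l_2}}\|_{\ell^{p_1}(\Z)} \cdot \|V^r(B_N)_{N \in \D_{l_1,l_2}}\|_{\ell^{p_2}(\Z)}. \]
Since $\langle l_1\rangle \langle l_2\rangle \leqsim \langle \max(l_1,l_2)\rangle^2$, it will then suffice to bound each linear factor by $\langle l_i \rangle^{O(1)}$ times the corresponding $\ell^{p_i}(\Z)$ norm of $f$ or $g$.

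For each linear factor I plan to use the elementary pointwise estimate $V^r(a_N)_{N \in \D_{l_1,l_2}} \leqsim (\sum_{N \in \D_{l_1,l_2}} |a_N|^2)^{1/2}$, which holds for $r \geq 2$ by applying the counting-measure inequality $\|\cdot\|_{\ell^r} \leq \|\cdot\|_{\ell^2}$ to the sequence of differences $(a_{N_{j+1}} - a_{N_j})_j$ and then the $\ell^2$ triangle inequality. The remaining square function estimate
\[ \Big\| \Big( \sum_{N \in \D_{l_1,l_2}} |A_N|^2 \Big)^{1/2} \Big\|_{\ell^{p_1}(\Z)} \leqsim \langle l_1 \rangle^{O(1)} \|f\|_{\ell^{p_1}(\Z)} \]
then matches the hypothesis of Theorem \ref{thm:shifted}. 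When $l_1 > -C_1$, the multiplier $\phi_{N;l_1}(\xi) = \Psi_{N^{-1/2} 2^{l_1}}(\xi)$ is a dilate $\eta(2^{-l_1} N^{1/2}\xi)$ of a fixed Littlewood--Paley bump $\eta$ vanishing at the origin, and the shift $\sqrt{Nt}$ matches the template $\lambda_N A N^d$ from Theorem \ref{thm:shifted} with $d = \frac{1}{2}$, $A = 2^{-l_1}$, and $\lambda_N = \sqrt{t} \cdot 2^{l_1} \in [-2^{l_1}, 2^{l_1}]$, so $K \sim 2^{l_1}$ and $\log K \sim \langle l_1 \rangle$ give exactly the claimed loss. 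The analogous analysis for $(B_N)$ uses $d = 1$, $A = 2^{-l_2}$.

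The main obstacle is the edge case $l_i = -C_1$, where $\phi_{N;l_1}$ (or $\psi_{N;l_2}$) is the smooth cutoff $\Psi_{\leq N^{-1/2} 2^{-C_1}}$, which does not vanish at the origin, so Theorem \ref{thm:shifted} does not apply verbatim. I plan to resolve this by telescoping the cutoff as $\Psi_{\leq N^{-1/2} 2^{-C_1}} = \Psi_{\leq \varepsilon} + \sum_{k} \Psi_{2^k}$ over intermediate dyadic scales $\varepsilon < 2^k \leqsim N^{-1/2} 2^{-C_1}$ for a small absolute constant $\varepsilon > 0$: each Littlewood--Paley piece is handled by Theorem \ref{thm:shifted} with a summable $\langle k \rangle^{O(1)}$ loss, while the fixed low-frequency remainder corresponds to shifts of a fixed smooth convolution of $f$ along the lacunary sequence $(\sqrt{Nt})_{N \in \D_{l_1,l_2}}$, whose variation is controlled pointwise by a Hardy--Littlewood-type maximal operator bounded on $\ell^{p_1}(\Z)$ for $p_1 > 1$.
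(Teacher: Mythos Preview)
Your overall strategy is correct and matches the paper's proof exactly: apply \eqref{eqn:variationofproduct} pointwise, then H\"older, then bound each linear factor by replacing $V^r$ with an $\ell^2(\D_{l_1,l_2})$ square function and invoking Theorem~\ref{thm:shifted}. Your treatment of the case $l_i > -C_1$ (including the choice of $A$, $d$, $\lambda_N$, $K$) is also the same as the paper's.

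The gap is in your edge case $l_i = -C_1$. Your proposed telescoping $\Psi_{\leq N^{-1/2}2^{-C_1}} = \Psi_{\leq\varepsilon} + \sum_k \Psi_{2^k}$ over $\varepsilon < 2^k \leqsim N^{-1/2}2^{-C_1}$ cannot work for a fixed $\varepsilon>0$: since $N \in \D_{l_1,l_2}$ is unbounded above, the scale $N^{-1/2}2^{-C_1}$ shrinks to zero, so for all large $N$ the interval of scales is empty and the identity is false. Even if you reverse the direction and telescope upward, the resulting pieces $\Psi_{2^k}$ are fixed multipliers (not of the form $\eta(AN^d\xi)$), so Theorem~\ref{thm:shifted} does not apply to them; and your ``fixed low-frequency remainder'' $\F_\R^{-1}\Psi_{\leq\varepsilon}(\cdot-\sqrt{Nt})*f$ is a translate of a \emph{fixed-scale} smooth convolution by shifts $\sqrt{Nt}\to\infty$, whose $V^r$-variation over the lacunary sequence is not pointwise controlled by any Hardy--Littlewood maximal function (each term is $O(M_{\mathrm{HL}}f)$, but there is no summability in $N$).

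The paper handles $l_1=-C_1$ by a different decomposition: it writes the multiplier $\phi_{N;-C_1}(\xi)e(-\sqrt{Nt}\,\xi)$ as $\phi_{N;-C_1}(\xi)\bigl(e(-\sqrt{Nt}\,\xi)-1\bigr)+\phi_{N;-C_1}(\xi)$. The first summand vanishes at the origin (since the exponential factor does), so after rescaling it is of the form $\eta(2^{C_1}\sqrt{N}\,\xi)$ with $\eta(0)=0$ and Theorem~\ref{thm:shifted} applies directly. The second summand is the \emph{unshifted} Littlewood--Paley projection $\widecheck{\Psi}_{\leq N^{-1/2}2^{-C_1}}*f$, whose $V^r$-variation in $N$ over a lacunary set is a classical linear estimate (the paper cites Theorem~1.1 and Lemma~2.1 of \cite{jones2008strong}). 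This is the missing idea in your edge case.
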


\begin{proof}
By (\ref{eqn:variationofproduct}) and Hölder's inequality it suffices to show \begin{multline*} \| V^r(\F_\R^{-1} \phi_{N;l_1}(\cdot-\sqrt{Nt})*f)_{N \in \D_{l_1,l_2}} \|_{\ell^{p_1}(\Z)} \|V^r\F_\R^{-1}\psi_{N;l_2}(\cdot-Nt)*g)_{N \in \D_{l_1,l_2}} \|_{\ell^{p_2}(\Z)} \\ \leqsim_{C_2} \langle \max(l_1,l_2) \rangle^{O(1)} \|f\|_{\ell^{p_1}(\Z)} \|g\|_{\ell^{p_2}(\Z)}, \end{multline*} which is implied by the bounds \begin{equation} \label{eqn:largescaleFI} \| V^r(\F_\R^{-1} \phi_{N;l_1}(\cdot-\sqrt{Nt})*f)_{N \in \D_{l_1,l_2}} \|_{\ell^{p_1}(\Z)} \leqsim_{C_2} \max(1,l_1)^{O(1)} \|f\|_{\ell^{p_1}(\Z)} \end{equation} and \begin{equation} \label{eqn:largescaleGI}  \|V^r(\F_\R^{-1}\psi_{N;l_2}(\cdot-Nt)*g)_{N \in \D_{l_1,l_2}} \|_{\ell^{p_2}(\Z)} \leqsim_{C_2} \max(1,l_2)^{O(1)} \|g\|_{\ell^{p_2}(\Z)}. \end{equation} We prove the bound for $f$ as the proof is similar for $g$. If $l_1>-C_1$ then $\phi_{N;l_1}$ vanishes at the origin so we use (\ref{eqn:variationvsellr}) to replace $V^r$ by an $\ell^2(\D_{l_1,l_2})$ norm, and apply Theorem \ref{thm:shifted} with $A=2^{-l_1}$, $C=O(1)$, $\lambda_N = -2^{l_1} \sqrt{t}$, $K = 2^{O(\max(1,l_1))}$, and $d=\frac{1}{2}$. If $l_1=-C_1$ then $\Psi_{\leq 1}$ does not vanish at the origin so we can not apply Theorem \ref{thm:shifted} directly. However, the function $\Psi_{\leq 1}(\xi) e(-\sqrt{t} \xi) - \Psi_{\leq 1}(\xi)$ does, so we can repeat the above argument to obtain \[ \| V^r(\F_\R^{-1}( \phi_{N;l_1}(\cdot-\sqrt{Nt})-\phi_{N;l_1})*f)_{N \in \D_{l_1,l_2}} \|_{\ell^{p_1}(\Z)} \leqsim_{C_2} \max(1,l_1)^{O(1)} \|f\|_{\ell^{p_1}(\Z)}. \] Thus by the triangle inequality (\ref{eqn:largescaleFI}) follows from the estimate \[ \| V^r(\F_\R^{-1} \phi_{N;l_1}*f)_{N \in \D_{l_1,l_2}} \|_{\ell^{p_1}(\Z)} \leqsim_{C_2} \|f\|_{\ell^{p_1}(\Z)}. \] But this follows from Theorem 1.1 (combined with Lemma 2.1) of \cite{jones2008strong}.
\end{proof}

We can now assume that $p_1=p_2=2$, which by (\ref{prop:hilbertparaproduct}) means we can assume that $l_1=-C_1$ or $l_2=-C_1$. In fact, Proposition \ref{prop:nodecay} made no assumption that $(p_1,p_2) \neq (2,2)$, so if $l_1=l_2=-C_1$ then $\max(l_1,l_2)=-C_1$ and hence $2^{-c\max(l_1,l_2)} \sim_{C_1}=1$ and we are done in this case too. We can therefore assume that $l_2>l_1=-C_1$ as the proof in the case $l_1>l_2=-C_1$ is similar and will be outlined later. The first step towards obtaining the decay in $l_1$ and $l_2$ is to rewrite $K_{m_{N;\R}}^{l_1,l_2}$. Indeed, using integration by parts, writing $e(-\xi_2Nt) = -\frac{1}{2\pi i\xi_2 N} \frac{d}{dt} e(-\xi_2Nt)$, yields \begin{multline*} \int_\frac{1}{2}^1 e(-\xi_1 \sqrt{Nt}-\xi_2Nt) \,dt \\= \left[-\frac{1}{2\pi i\xi_2 N} e(-\xi_1\sqrt{Nt}-\xi_2Nt) \right]_\frac{1}{2}^1 - \int_\frac{1}{2}^1 \frac{\xi_1}{2\xi_2 \sqrt{Nt}} e(-\xi_1\sqrt{Nt}-\xi_2 Nt) \,dt. \end{multline*} Thus \begin{multline*} K_{m_{N;\R}}^{l_1,l_2}(y_1,y_2) = \left[-\frac{2^{-l_2}}{2\pi i} \F_\R^{-1} \phi_{N;l_1}(y_1-\sqrt{Nt}) \F_\R^{-1} \Tilde{\psi}_{N;l_2}(y_2-Nt)\right]_\frac{1}{2}^1 \\ - 2^{l_1-l_2-1} \int_\frac{1}{2}^1 \F_\R^{-1} \Tilde{\phi}_{N;l_1}(y_1-\sqrt{Nt}) \F_\R^{-1} \Tilde{\psi}_{N;l_2}(y_2-Nt) \frac{1}{\sqrt{t}} \,dt \end{multline*} where \[ \Tilde{\phi}_{N;l_1}(\xi) = \frac{\sqrt{N} \xi}{2^{l_1}} \phi_{N;l_1}(\xi) \] and \[ \Tilde{\psi}_{N;l_2}(\xi) = \frac{2^{l_2}}{N \xi} \psi_{N;l_2}(\xi). \] Note that $\psi_{N;l_2}$ is supported on $\M_{N^{-1} 2^{l_2}}$ since $l_2>-C_1$. In particular, $\psi_{N;l_2}$ is supported away from the origin so the factor of $\frac{1}{\xi}$ is harmless. Thus we can write (\ref{eqn:rewriteasproduct}) as \begin{multline*}  \left[-\frac{2^{-l_2}}{2\pi i} \F_\R^{-1} \phi_{N;l_1}(\cdot-\sqrt{Nt})*f \cdot \F_\R^{-1} \Tilde{\psi}_{N;l_2}(\cdot-Nt)*g \right]_\frac{1}{2}^1 \\ - 2^{l_1-l_2-1} \int_\frac{1}{2}^1 \F_\R^{-1} \Tilde{\phi}_{N;l_1}(\cdot-\sqrt{Nt})*f \cdot \F_\R^{-1} \Tilde{\psi}_{N;l_2}(\cdot-Nt)*g  \,\frac{dt}{\sqrt{t}}. \end{multline*} To handle (\ref{eqn:modeloperatorI}), we now apply the triangle inequality in $\ell^p(\Z)$ and $V^r$ for each of the three terms, apply Minkowski's inequality to the integral term, and bound $\frac{1}{\sqrt{t}}$ by $O(1)$. Thus, recalling that $l_1=-C_1$, we have \begin{multline*} \left\| V^r\left( \int_\frac{1}{2}^1 \F_\R^{-1} \phi_{N;l_1}(\cdot-\sqrt{Nt})*f \cdot \F_\R^{-1}\psi_{N;l_2}(\cdot-Nt)*g \,dt \right)_{N \in \D_{l_1,l_2}} \right\|_{\ell^p(\Z)} \\ \leqsim_{C_2} \left[ 2^{-l_2} \| V^r(\F_\R^{-1} \phi_{N;l_1}(\cdot-\sqrt{Nt})*f \cdot \F_\R^{-1} \Tilde{\psi}_{N;l_2}(\cdot-Nt)*g)_{N \in \D_{l_1,l_2}} \|_{\ell^p{(\Z)}} \right]_\frac{1}{2}^1 \\+ 2^{-l_2} \int_\frac{1}{2}^1 \| V^r(\F_\R^{-1} \Tilde{\phi}_{N;l_1}(y_1-\sqrt{Nt})*f \F_\R^{-1} \Tilde{\psi}_{N;l_2}(y_2-Nt)*g)_{N \in \D_{l_1,l_2}} \|_{\ell^p(\Z)} \,dt. \end{multline*} The proof of Proposition \ref{prop:nodecay} can be easily modified by replacing $\phi_{N;l_1}$ and $\psi_{N;l_2}$ by $\Tilde{\phi}_{N;l_1}$ or $\Tilde{\psi}_{N;l_2}$ if necessary to handle all terms on the right hand side. This proves Proposition \ref{prop:modeloperatorIpgeq1} in the case $l_2>l_1=-C_1$.

The case where $l_1>l_2=-C_1$ is handled similarly. For the corresponding integration by parts we write $e(-\xi_1 \sqrt{Nt}) = -\frac{\sqrt{t}}{\pi i \xi_1 \sqrt{N}} \frac{d}{dt} e(-\xi_1 \sqrt{Nt})$ to obtain \begin{multline*} \int_\frac{1}{2}^1 e(-\xi_1 \sqrt{Nt}-\xi_2 Nt) \,dt = \left[-\frac{\sqrt{t}}{\pi i \xi_1 \sqrt{N}} e(-\xi_1 \sqrt{Nt} -\xi_2 Nt) \right]_\frac{1}{2}^1 \\ -\int_\frac{1}{2}^1 \frac{\xi_2 \sqrt{Nt}}{\xi_1} e(-\xi_1 \sqrt{Nt} -\xi_2 Nt) \,dt + \int_\frac{1}{2}^1 \frac{1}{4\pi i\xi_1 \sqrt{Nt}} e(-\xi_1 \sqrt{Nt} -\xi_2 Nt) \,dt. \end{multline*} Then \begin{multline*} K_{m_{N;\R}}^{l_1,l_2}(y_1,y_2) = \left[ -\frac{2^{-l_1+1}\sqrt{t}}{2\pi i} \F_\R^{-1} \Tilde{\phi}_{N;l_1}(y_1-\sqrt{Nt}) \F_\R^{-1} \psi_{N;l_2}(y_2-Nt) \right]_\frac{1}{2}^1 \\ - 2^{l_2-l_1} \int_\frac{1}{2}^1 \F_\R^{-1} \Tilde{\phi}_{N;l_1}(y_1-\sqrt{Nt}) \F_\R^{-1} \Tilde{\psi}_{N;l_2}(y_2-Nt) \,\sqrt{t}dt \\ + \frac{2^{-l_1-1}}{2\pi i} \int_\frac{1}{2}^1 \F_\R^{-1} \Tilde{\phi}_{N;l_1}(y_1-\sqrt{Nt}) \F_\R^{-1}\psi_{N;l_2}(y_2-Nt) \,\frac{dt}{\sqrt{t}} \end{multline*} where in this case \[ \Tilde{\phi}_{N;l_1}(\xi) = \frac{2^{l_1}}{\sqrt{N} \xi} \] and \[ \Tilde{\psi}_{N;l_2}(\xi) = \frac{N\xi}{2^{l_2}} \psi_{N;l_2}(\xi). \] The argument is the analogous from this point, hence the proof of Proposition \ref{prop:modeloperatorIpgeq1} is complete.

\subsection{The \texorpdfstring{$p<1$}{p less than 1} Case of Theorem \ref{thm:modeloperatorI}} \label{subsec:breakingduality}

Throughout this subsection we assume that $p<1$. The point of this subsection is to prove the following.

\begin{proposition} \label{prop:modeloperatorIplessone}
Suppose $p<1$ and $r>c_{p_1,c_2}$. Then \begin{multline*} \left\| V^r\left( \int_\frac{1}{2}^1 \F_\R^{-1} \phi_{N;l_1}(\cdot-\sqrt{Nt})*f \cdot \F_\R^{-1}\psi_{N;l_2}(\cdot-Nt)*g \,dt \right)_{N \in \D_{l_1,l_2}} \right\|_{\ell^p(\Z)} \\ \leqsim_{C_2} \langle \max(l_1,l_2) \rangle^{O(1)} \|f\|_{\ell^{p_1}(\Z)} \|g\|_{\ell^{p_2}(\Z)}. \end{multline*}
\end{proposition}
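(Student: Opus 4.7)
The strategy is bilinear real interpolation against the $p \geq 1$ case already handled in Proposition~\ref{prop:modeloperatorIpgeq1}, combined with a trivial ``single-scale'' bilinear bound that plays the role of a $V^\infty$ endpoint. The identification $V^\infty \sim \ell^\infty$ from (\ref{eqn:Vinfinity}) converts the variational target at this endpoint into a maximal estimate, and the threshold $r > c_{p_1,p_2} = \max(p_1',p_2')$ emerges naturally from the interpolation formula relating $r$ to the Lebesgue exponents.

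First I would establish the single-scale endpoint. Denote
\[ T_N(f,g)(x) = \int_\frac{1}{2}^1 \F_\R^{-1}\phi_{N;l_1}(\cdot-\sqrt{Nt}) \ast f \cdot \F_\R^{-1}\psi_{N;l_2}(\cdot-Nt) \ast g(x)\, dt. \]
Lemma~\ref{lem:holdernonbanach} (applied after a Minkowski step in the $t$-integral when $p \geq 1$, or via the quasi-triangle inequality when $p < 1$), together with the $\ell^p$-boundedness of the Littlewood--Paley projections from (\ref{eqn:psiboundednessZ}), gives the uniform single-scale estimate
\[ \|T_N(f,g)\|_{\ell^p(\Z)} \lesssim_{C_2} \langle \max(l_1,l_2)\rangle^{O(1)} \|f\|_{\ell^{p_1}(\Z)} \|g\|_{\ell^{p_2}(\Z)} \]
for every $N \in \D_{l_1,l_2}$. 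A standard square function argument exploiting the near-disjoint Fourier supports of $\F_\R^{-1}\phi_{N;l_1}$ at different lacunary scales $N$ upgrades this to a full maximal bound, and hence by (\ref{eqn:Vinfinity}) to a $V^\infty$-type bound with the same constants.

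Second, fix auxiliary exponents $(q_1,q_2) \in (1,\infty)^2$ with $\frac{1}{q_1} + \frac{1}{q_2} = 1$ and $r' > 2$; Proposition~\ref{prop:modeloperatorIpgeq1} then yields
\[ \bigl\|V^{r'}(T_N(f,g))_{N \in \D_{l_1,l_2}}\bigr\|_{\ell^1(\Z)} \lesssim_{C_2} \langle \max(l_1,l_2)\rangle^{O(1)} \|f\|_{\ell^{q_1}(\Z)} \|g\|_{\ell^{q_2}(\Z)}. \]
Since $V^r$ arises as a real interpolation space between $V^{r'}$ and $V^\infty$, bilinear real interpolation in the quasi-Banach regime (of Grafakos--Kalton type, or via a Calder\'on-type product construction adapted to $V^r$-valued Lebesgue spaces) combines these two estimates to give the claimed bound for all $p < 1$ and all $r > \max(p_1',p_2')$. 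The precise constraint on $r$ is obtained by optimising over two asymmetric interpolation configurations in which the single-scale endpoint is placed in either the first or the second argument, and the requirement that the interpolation parameter be admissible translates into exactly the range $r > \max(p_1',p_2')$.

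The main obstacle will be executing the bilinear real interpolation rigorously in the quasi-Banach range $p < 1$ with $V^r$-valued targets, which is not fully covered by classical interpolation theorems and must be verified either directly by checking appropriate restricted weak-type endpoint bounds or by a careful adaptation of the Grafakos--Kalton framework. A secondary concern is ensuring that the ``maximal upgrade'' of Step~1 holds uniformly across the full range $(p_1,p_2) \in (1,\infty)^2$ with $p < 1$ and preserves the polynomial dependence on $\max(l_1,l_2)$; if the direct square-function argument is insufficient, one can instead interpolate Proposition~\ref{prop:modeloperatorIpgeq1} against the $\ell^p$ single-scale bound to produce the $V^\infty$ endpoint used above.
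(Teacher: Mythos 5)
Your high-level strategy matches the paper's: both interpolate the full $V^r$ claim between the $\ell^1$-valued $V^{r'}$ estimate at $(p_1,p_2)=(2,2)$ (the $r'>2$ case of Proposition~\ref{prop:modeloperatorIpgeq1}) and a $V^\infty$ endpoint, and both arrive at the threshold $r>\max(p_1',p_2')$ from the same geometry of the interpolation line. The execution of the $V^\infty$ endpoint, however, is where your argument has genuine gaps.

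First, your ``single-scale + square function upgrade'' is not a valid route to the maximal bound. Uniform-in-$N$ single-scale estimates $\|T_N(f,g)\|_{\ell^p} \lesssim \|f\|_{\ell^{p_1}}\|g\|_{\ell^{p_2}}$ do not combine with disjointness of the Fourier supports of $\phi_{N;l_1}$ to give $\|\sup_N|T_N(f,g)|\|_{\ell^p}$: a square function step such as $\sup_N|T_N| \leq (\sum_N |T_N|^2)^{1/2}$ would need an $\ell^p$ bound for the bilinear square function, which is not a consequence of the single-scale estimates and is not what Littlewood--Paley theory provides for a product. In addition, the operators involve a continuum of shifts $\sqrt{Nt}$ and $Nt$ with $t\in[\tfrac12,1]$, and the supremum over $t$ is simply not addressed by a discrete square function across lacunary scales $N$. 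Your single-scale estimate itself also does not follow cleanly from the quasi-triangle inequality: for $p<1$ one does not have Minkowski's inequality for the $t$-integral, and the quasi-triangle inequality applies to countable sums, not integrals.

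What the paper actually does for the $V^\infty$ endpoint is more elementary and sidesteps all of this. It first notes that, by $(\ref{eqn:Vinfinity})$, the $V^\infty$ norm is comparable to a supremum. It then \emph{bounds the $t$-integral pointwise by $\sup_t$} (using the triangle inequality, which is valid pointwise regardless of $p$), so the target becomes a genuine bilinear maximal function in $(N,t)$. It then splits the suprema and applies H\"older in $\ell^p$, reducing to two \emph{linear} maximal estimates. Each of these is obtained from a pointwise comparison to the Hardy--Littlewood maximal function plus a Young-type tail term, exploiting the rapid decay of $\F_\R^{-1}\Psi$. Crucially, this produces a factor $2^{\max(l_i,0)}$ in the $V^\infty$ endpoint (coming from the height of the rescaled annular kernel), so the $V^\infty$ bound is \emph{not} $\langle\max(l_1,l_2)\rangle^{O(1)}$ but grows like $2^{\max(l_1,l_2)}$; this growth is then cancelled by the exponential decay $2^{-c\max(l_1,l_2)}$ present in the $(2,2)$ endpoint during the interpolation. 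Your write-up asserts the $V^\infty$ bound ``with the same constants'' as the single-scale bound, which is not correct and would have to be corrected before the interpolation can be carried out. Finally, your ``secondary concern'' fallback (interpolating Proposition~\ref{prop:modeloperatorIpgeq1} against a single-scale $\ell^p$ bound to manufacture a $V^\infty$ endpoint) is circular: a family of fixed-$N$ estimates does not interpolate to a $\sup_N$ estimate.

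So: the interpolation skeleton of your proposal is correct and identical to the paper's, but the $V^\infty$ leg should be obtained by the integral$\to$$\sup_t$ reduction, H\"older splitting, and pointwise HL bounds rather than by a square function upgrade of a single-scale estimate, and the endpoint constants must be allowed to grow like $2^{\max(l_1,l_2)}$.
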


In the previous subsection we dealt with the integral directly using Minkowski's inequality. However, we don't have access to a tool like Minkowski's inequality in the $p<1$ case. Instead, by mixed norm interpolation (interpolating with the $p_1=p_2=2$ case of Theorem \ref{prop:modeloperatorIpgeq1}) it suffices to prove that \begin{multline*} \left\| V^\infty\left( \int_\frac{1}{2}^1 \F_\R^{-1} \phi_{N;l_1}(\cdot-\sqrt{Nt})*f \cdot \F_\R^{-1}\psi_{N;l_2}(\cdot-Nt)*g \,dt \right)_{N \in \D_{l_1,l_2}} \right\|_{\ell^p(\Z)} \\ \leqsim_{C_2} 2^{\max(l_1,l_2)} \|f\|_{\ell^{p_1}(\Z)} \|g\|_{\ell^{p_2}(\Z)}. \end{multline*} or equivalently, by (\ref{eqn:Vinfinity}), \begin{multline*} \left\| \sup_{N \in \D_{l_1,l_2}} \left| \int_\frac{1}{2}^1 \F_\R^{-1} \phi_{N;l_1}(\cdot-\sqrt{Nt})*f \cdot \F_\R^{-1}\psi_{N;l_2}(\cdot-Nt)*g \,dt \right| \right\|_{\ell^p(\Z)} \\ \leqsim_{C_2} 2^{\max(l_1,l_2)} \|f\|_{\ell^{p_1}(\Z)} \|g\|_{\ell^{p_2}(\Z)}. \end{multline*} In lieu of Minkowski's inequality we bound the integral above by the triangle inequality and then replace the integral by a supremum. Thus it suffices to prove that \begin{multline*} \left\| \sup_{N \in \D_{l_1,l_2}} \sup_{t \in [\frac{1}{2},1]} \left| \F_\R^{-1} \phi_{N;l_1}(\cdot-\sqrt{Nt})*f \cdot \F_\R^{-1}\psi_{N;l_2}(\cdot-Nt)*g \right| \right\|_{\ell^p(\Z)} \\ \leqsim_{C_2} 2^{\max(l_1,l_2)} \|f\|_{\ell^{p_1}(\Z)} \|g\|_{\ell^{p_2}(\Z)}. \end{multline*} By splitting up the suprema and applying H\"older's inequality it suffices to prove that \begin{equation} \label{eqn:plessonefbound} \left\| \sup_{N \in \D_{l_1,l_2}} \sup_{t \in [\frac{1}{2},1]}  \left| \F_\R^{-1} \phi_{N;l_1}(\cdot-\sqrt{Nt})*f \right| \right\|_{\ell^{p_1}(\Z)} \leqsim_{C_2} 2^{\max(l_1,0)} \|f\|_{\ell^{p_1}(\Z)} \end{equation} and \begin{equation} \label{eqn:plessonegbound} \left\| \sup_{N \in \D_{l_1,l_2}} \sup_{t \in [\frac{1}{2},1]}  \left| \F_\R^{-1} \psi_{N;l_2}(\cdot-Nt)*g \right| \right\|_{\ell^{p_2}(\Z)} \leqsim_{C_2} 2^{\max(l_2,0)} \|g\|_{\ell^{p_2}(\Z)}. \end{equation} But for all $t \in [\frac{1}{2},1]$ and $N \in \D_{l_1,l_2}$ we can bound \[ \left| \F_\R^{-1} \phi_{N;l_1}(\cdot-\sqrt{Nt})*f \right|  \leqsim 2^{l_1} M_{\mathrm{HL}}f(x) + 2^{-l_1}N^\frac{1}{2} \sum_{|y|>N^\frac{1}{2}} \frac{|f(x-y)|}{y^2} \] if  $l_1 \geq 0$ or \[ \left| \F_\R^{-1} \phi_{N;l_1}(\cdot-\sqrt{Nt})*f \right| \leqsim M_{\mathrm{HL}}f(x) + 2^{-l_1}N^\frac{1}{2} \sum_{|y|>2^{-l_1}N^\frac{1}{2}} \frac{|f(x-y)|}{y^2} \] if $l_1<0$, using the rapid decay of $\F_\R^{-1} \Psi$. Thus by the Hardy--Littlewood maximal inequality and Young's inequality we obtain (\ref{eqn:plessonefbound}), and (\ref{eqn:plessonegbound}) follows analogously.

The constraint \[ r>\max(p_1',p_2') \] comes as a result of the mixed norm interpolation. Without loss of generality suppose $\frac{1}{p_1}\geq \frac{1}{p_2}$. Then the point $(\frac{1}{p_1}, \frac{1}{p_2},0)$ lies on the straight line joining $(\frac{1}{2},\frac{1}{2},0)$ and $(1,\frac{\frac{1}{p}-1}{\frac{2}{p_1}-1},0)$, so we can interpolate between the case where $(q_1,q_2,s)$ is sufficiently close to $(2,2,2)$ and where it is sufficiently close to $(1,\frac{\frac{2}{p_1}-1}{\frac{1}{p}-1},\infty)$. The lower bound on $r$ for which one has a variational inequality (for fixed values of $p_1$ and $p_2$) can then be made arbitrarily close to \[ \left(1-\max\left(\frac{1}{p_1},\frac{1}{p_2}\right)\right)^{-1} = \max(p_1',p_2'). \]

This completes the proof of Proposition \ref{prop:modeloperatorIplessone}, and hence of Theorem \ref{thm:reducedmainthm}.

\printbibliography

@article{Birkhoff89,
title={Proof of the ergodic theorem},
  author={Birkhoff, G.},
  journal={Proceedings of the National Academy of Sciences},
  volume={17},
  number={12},
  pages={656--660},
  year={1931},
  publisher={National Acad Sciences}
}

@article{bourgain1988pointwise,
  title={On the pointwise ergodic theorem on $L^p$ for arithmetic sets},
  author={Bourgain, J.},
  journal={Israel journal of Mathematics},
  volume={61},
  number={1},
  pages={73--84},
  year={1988},
  publisher={Springer}
}

@article{Bourgain89,
  title={Pointwise ergodic theorems for arithmetic sets},
  author={Bourgain, J.},
  journal={Publications Math{\'e}matiques de l'Institut des Hautes {\'E}tudes Scientifiques},
  volume={69},
  number={1},
  pages={5--41},
  year={1989},
  publisher={Springer}
}

@article{Bourgain90,
title={Double recurrence and almost sure convergence},
author = {Bourgain, J.},
journal={Jouranl f\"ur die reine und angewandte Mathematik},
volume={404},
pages={140--161},
year={1990},
publisher={Walter de Gruyter, Berlin/New York}
}

@article{Furstenberg88,
  title={Nonconventional ergodic averages},
  author={Furstenberg, H.},
  journal={The legacy of John von Neumann (Hempstead, NY, 1988)},
  volume={50},
  pages={43--56},
  year={1988}
}

@inproceedings{peluse,
  title={Bounds for sets with no polynomial progressions},
  author={Peluse, S.},
  booktitle={Forum of Mathematics, Pi},
  volume={8},
  year={2020},
  organization={Cambridge University Press}
}

@article{KMT,
  title={Pointwise ergodic theorems for non-conventional bilinear polynomial averages},
  author={Krause, B. and Mirek, M. and Tao, T.},
  journal={Annals of Mathematics},
  volume={195},
  number={3},
  pages={997--1109},
  year={2022},
  publisher={Department of Mathematics, Princeton University Princeton, New Jersey, USA}
}

@article{Transference,
  title={Ergodic theory and translation-invariant operators},
  author={Calder{\'o}n, A. P.},
  journal={Proceedings of the National Academy of Sciences of the United States of America},
  volume={59},
  number={2},
  pages={349},
  year={1968},
  publisher={National Academy of Sciences}
}

@misc{PelusePrendivilleInverseNonLinearRoth,
  title={Quantitative Bounds in the Nonlinear Roth Theorem},
  author={Peluse, S. and Prendiville, S.},
  year={2022},
  eprint={1903.02592v2},
      archivePrefix={arXiv},
      primaryClass={math.NT}
}

@book{tao2012higher,
  title={Higher order Fourier analysis},
  author={Tao, T.},
  volume={142},
  year={2012},
  publisher={American Mathematical Soc.}
}

@article{bergelsonpet,
  title={Weakly mixing PET},
  author={Bergelson, V.},
  journal={Ergodic Theory and Dynamical Systems},
  volume={7},
  number={3},
  pages={337--349},
  year={1987},
  publisher={Cambridge University Press}
}

@article{neumann1932proof,
  title={Proof of the quasi-ergodic hypothesis},
  author={von Neumann, J.},
  journal={Proceedings of the National Academy of Sciences},
  volume={18},
  number={1},
  pages={70--82},
  year={1932},
  publisher={National Acad Sciences}
}

@article{bourgain1988maximal,
  title={On the maximal ergodic theorem for certain subsets of the integers},
  author={Bourgain, J.},
  journal={Israel Journal of Mathematics},
  volume={61},
  number={1},
  pages={39--72},
  year={1988},
  publisher={Springer}
}

@article{szemeredi1975sets,
  title={On sets of integers containing no k elements in arithmetic progression},
  author={Szemer{\'e}di, E.},
  journal={Acta Arith},
  volume={27},
  number={299-345},
  pages={21},
  year={1975},
  publisher={Citeseer}
}

@article{furstenberg1977ergodic,
  title={Ergodic behavior of diagonal measures and a theorem of Szemer{\'e}di on arithmetic progressions},
  author={Furstenberg, H.},
  journal={Journal d’Analyse Math{\'e}matique},
  volume={31},
  number={1},
  pages={204--256},
  year={1977},
  publisher={Springer}
}

@article{host2005nonconventional,
  title={Nonconventional ergodic averages and nilmanifolds},
  author={Host, B. and Kra, B.},
  journal={Annals of Mathematics},
  pages={397--488},
  year={2005},
  publisher={JSTOR}
}

@incollection{furstenberg1996mean,
  title={A Mean Ergodic Theorem for $\frac{1}{N}\sum_{n=1}^N f(T^nx) g (T^{n^2}x)$},
  author={Furstenberg, H. and Weiss, B.},
  booktitle={Convergence in ergodic theory and probability},
  pages={193--228},
  year={1996    },
  publisher={de Gruyter}
}

@article{leibman2005convergence,
  title={Convergence of multiple ergodic averages along polynomials of several variables},
  author={Leibman, A.},
  journal={Israel Journal of Mathematics},
  volume={146},
  number={1},
  pages={303--315},
  year={2005},
  publisher={Springer}
}

@article{frantzikinakis2010multiple,
  title={Multiple recurrence and convergence for Hardy sequences of polynomial growth},
  author={Frantzikinakis, N.},
  journal={Journal d'Analyse Math{\'e}matique},
  volume={112},
  number={1},
  pages={79--135},
  year={2010},
  publisher={Springer}
}

@article{jones2008strong,
  title={Strong variational and jump inequalities in harmonic analysis},
  author={Jones, R. and Seeger, A. and Wright, J.},
  journal={Transactions of the American Mathematical Society},
  volume={360},
  number={12},
  pages={6711--6742},
  year={2008}
}

@article{han2020improving,
  title={Improving estimates for discrete polynomial averages},
  author={Han, R. and Kova{\v{c}}, V. and Lacey, M. T. and Madrid, J. and Yang, F.},
  journal={Journal of Fourier Analysis and Applications},
  volume={26},
  number={3},
  pages={1--11},
  year={2020},
  publisher={Springer}
}

@article{donoso2020pointwise,
  title={Pointwise multiple averages for sublinear functions},
  author={Donoso, S. and Koutsogiannis, A. and Sun, W.},
  journal={Ergodic Theory and Dynamical Systems},
  volume={40},
  number={6},
  pages={1594--1618},
  year={2020},
  publisher={Cambridge University Press}
}

@article{stein1993harmonic,
  title={Harmonic analysis: real-variable methods, orthogonality, and oscillatory integrals},
  author={Stein, E. M.},
  journal={Princeton Mathematical Series},
  volume={43},
  year={1993}
}
\end{document}